\documentclass[<3p>]{elsarticle}

 


\usepackage[utf8]{inputenc}
\usepackage[T1]{fontenc}
\usepackage{appendix}
\usepackage{lmodern}
\usepackage{newunicodechar}
\usepackage{amsmath, amsthm, amsfonts,amssymb}
\usepackage{xcolor,mhsetup}
\usepackage[extra,safe]{tipa}
\usepackage{mathtools}
\usepackage{geometry}\geometry{margin=1in}
\usepackage{mathrsfs}
\usepackage{hyperref}
\usepackage{url}
\usepackage{fancyhdr}
\pagestyle{fancy}

\theoremstyle{definition}
\newtheorem{theorem}{Theorem}[section]
\newtheorem{prop}{Proposition}[section]
\newtheorem{definition}[theorem]{Definition}

\newtheorem{lem}{Lemma}[section]

\theoremstyle{remark}
\newtheorem{remark}[theorem]{Remark}
\theoremstyle{cor}
\newtheorem{cor}[theorem]{Corollary}

\numberwithin{equation}{section}



\def \HR{(\textbf{HR})}
\def \HRp{(\textbf{HR$_+$})}
\def \HRpp{(\textbf{HR$_{++}$})}

\def \HE{(\textbf{HE})}

\def \tr{\mbox{trace}}
\newcommand \A[1]{{\bf (#1)}}

\def \mH{\mathcal{H}}

\def\my_c{c_\infty}

\newcommand{\mynewtheorem}[2]{
  \newaliascnt{#1}{dummy}
  \newtheorem{#1}[#1]{#2}
  \aliascntresetthe{#1}
  \expandafter\def\csname #1autorefname\endcsname{#2}
}

\newcommand{\seq}[1]{{\lbrace #1 \rbrace}}

\newcommand{\dcb}{\begin{array}{lll}}
\newcommand{\dce}{\end{array}}
\newcommand{\ebe}{\begin{enumerate}\setlength{\baselineskip}{13pt}\setlength{\parskip}{0pt}}
\newcommand{\dbe}{\end{enumerate}}


\newcommand{\E}{\mathcal{E}}

\def\pp{{\cal P}}

\def\rr{{\mathbb R}}

\def\P{{\mathbb P}}

\def\E{\mathbb{E}}
\def \pp{{\mathcal{P}_2(\mathbb{R}^d)}}
\def\I{\mathsf{1}}

\def \supp{\mbox{Supp} }

\def\0{{\mathbf{0}}}

\def\d{{{\rm det}}}

\def\gv{{\mathbf{v}}}

\begin{document}

\setcounter{tocdepth}{1}

\title{From the backward Kolmogorov PDE on the Wasserstein space to propagation of chaos for McKean-Vlasov SDEs}

 \author[1]{Paul-Eric Chaudru de Raynal
 \fnref{fn1}}
\ead{pe.deraynal@univ-smb.fr}
  \address[1]{Universit\'e Savoie Mont Blanc, CNRS, LAMA, 73000 Chamb\'ery, France}

\author[2]{Noufel Frikha \corref{cor1}}
\address[2]{Universit\'e de Paris, Laboratoire de Probabilit\'es, Statistique et Mod\'elisation (LPSM), F-75013 Paris, France}
\ead{frikha@lpsm.paris}
\cortext[cor1]{Corresponding author}

\fntext[fn1]{For the first Author, this work has been partially supported by the ANR project ANR-15-IDEX-02.}




\journal{Journal de Math\'ematiques Pures et Appliqu\'ees.}
\date{\today}

\begin{keyword}
\MSC{60H10, 93E03; 60H30, 35K40}.\\
McKean-Vlasov stochastic differential equations \sep propagation of chaos \sep backward Kolmogorov partial differential equation \sep Wasserstein space.
\end{keyword}

\begin{abstract} This article is a continuation of our first work \cite{chaudruraynal:frikha}. We here establish some new quantitative estimates for propagation of chaos of non-linear stochastic differential equations in the sense of McKean-Vlasov. We obtain explicit error estimates, at the level of the trajectories, at the level of the semi-group and at the level of the densities, for the mean-field approximation by systems of interacting particles under mild regularity assumptions on the coefficients. A first order expansion for the difference between the densities of one particle and its mean-field limit is also established. Our analysis relies on the well-posedness of classical solutions to the backward Kolmogorov partial differential equations defined on the strip $[0,T] \times \mathbb{R}^d \times \mathcal{P}_2(\mathbb{R}^d)$, $\mathcal{P}_2(\mathbb{R}^d)$ being the Wasserstein space, that is, the space of probability measures on $\mathbb{R}^d$ with a finite second-order moment and also on the existence and uniqueness of a fundamental solution for the related parabolic linear operator here stated on $[0,T]\times \mathcal{P}_2(\mathbb{R}^d)$. 
\end{abstract}

\maketitle

\section*{R\'esum\'e}
Cet article est la suite de notre premier travail \cite{chaudruraynal:frikha}. Nous \'etablissons ici de nouvelles estimations quantitatives pour la propagation du chaos des \'equations diff\'erentielles stochastiques non-lin\'eaires au sens de McKean-Vlasov. Nous obtenons des estimations d'erreurs explicites, au niveau des trajectoires, au niveau du semi-groupe et au niveau des densit\'es de transition, pour l'approximation champ moyen par des syst\`emes de particules en interaction sous de faibles hypoth\`eses de r\'egularit\'e sur les coefficients. Un d\'eveloppement \`a l'ordre un pour la diff\'erence entre les densit\'es d'une particule et celle de sa limite champ moyen est \'egalement \'etabli. Notre analyse repose sur le caract\`ere bien pos\'e de solutions classiques aux \'equations aux d\'eriv\'ees partielles de Kolmogorov r\'etrogrades d\'efinies sur la bande $[0,T] \times \mathbb{R}^d \times \mathcal{P}_2(\mathbb{R}^d)$, $\mathcal{P}_2(\mathbb{R}^d)$ \'etant l'espace de Wasserstein, c'est-\`a-dire l'espace des mesures de probabilit\'es sur $\mathbb{R}^d$ de moment d'ordre deux fini et aussi sur l'existence et l'unicit\'e d'une solution fondamentale pour l'op\'erateur parabolique lin\'eaire associ\'e \'enonc\'e ici sur $[0,T]\times \mathcal{P}_2(\mathbb{R}^d)$.

\section{Introduction}\label{introduction}
In this work, we are interested in some non-linear stochastic differential equations (SDEs for short) in the sense of McKean-Vlasov with dynamics:
\begin{equation}
\label{SDE:MCKEAN}
X^{\xi}_t = \xi + \int_0^t b(s, X^{\xi}_s, [X^{\xi}_s]) ds + \int_0^t \sigma(s, X^{\xi}_s, [X^{\xi}_s]) dW_s, \quad [\xi]=\mu \in \mathcal{P}(\mathbb{R}^d),
\end{equation}

\noindent where $\xi$ is an $\mathbb{R}^d$-valued random variable which is independent of the $q$-dimensional Brownian motion $W=(W^1,\cdots, W^q)$ and with coefficients $b: \mathbb{R}_+ \times \mathbb{R}^d \times \mathcal{P}(\mathbb{R}^d)\rightarrow \mathbb{R}^d$ and $\sigma: \mathbb{R}_+ \times \mathbb{R}^d \times \mathcal{P}(\mathbb{R}^d) \rightarrow \mathbb{R}^{d\times q}$, $[\theta]$ denoting the law of the random variable $\theta$ and its approximation by the associated system of $N$ particles $\left\{(X^{i}_t)_{t\in [0,T]}, 1\leq i \leq N\right\}$ interacting through its empirical measure
\begin{equation}
\label{SDE:particle:system}
X^{i}_t = \xi^{i} + \int_0^t b(s, X^{i}_s, \mu^{N}_s) ds + \int_0^t \sigma(s, X^{i}_s, \mu^{N}_s) dW^{i}_s, \quad \mu^{N}_t := \frac{1}{N} \sum_{i=1}^{N}\delta_{X^{i}_t}, \quad i=1, \cdots, N,
\end{equation}

\noindent where $\left\{(\xi^{i}, (W^{i}_t)_{t\in [0,T]}), 1\leq i \leq N\right\}$ are i.i.d. copies of $(\xi, W)$. The connection between the two above systems of SDEs comes from fact that the dynamics \eqref{SDE:MCKEAN} describes the limiting behaviour of an individual particle in \eqref{SDE:particle:system} when the size of the population $N$ grows to infinity as stated by the so-called propagation of chaos phenomenon, originally studied by McKean \cite{mckean1967propagation} and then investigated by Sznitman \cite{Sznitman}. Roughly speaking, it is expected that the dynamics of $k$ particles among $N$, say $(X^{1}, \cdots, X^{k})$, $k$ being a fixed positive integer, consists of $k$ independent copies $(\bar{X}^1, \cdots, \bar{X}^k)$ of a process following the law of the unique solution to the limiting equation \eqref{SDE:MCKEAN} as $N$ goes to infinity. Since the original works of Kac \cite{kac1956} in kinetic theory and of McKean \cite{McKean:1966} in non-linear parabolic partial differential equations (PDEs for short), theoretical and numerical aspects of McKean-Vlasov SDEs have been an active research area in several directions during the last decades such as the well-posedness of the related martingale problem, the propagation of chaos and other limit theorems, probabilistic representations to non-linear parabolic PDEs and their numerical approximation schemes. We refer to Tanaka \cite{tanaka1978probabilistic}, Funaki \cite{Funaki1984}, Oelschl\"ager \cite{oelschlager1984}, G\"artner \cite{gartner}, \cite{Sznitman}, Mishura and Veretenikov \cite{mishura:veretenikov}, Chaudru de Raynal \cite{CHAUDRUDERAYNAL2019}, Lacker \cite{la:18} for a small sample among others. \\

As a continuation of our first work \cite{chaudruraynal:frikha}, our main objective here consists in revisiting and rigorously justify the mean-field approximation of \eqref{SDE:MCKEAN} by its system of particles \eqref{SDE:particle:system} under mild assumptions on the coefficients. In particular, both the drift and diffusion coefficients are assumed to be uniformly H\"older continuous with respect to the space variable and less than Lipschitz continuous with respect to the first order Wasserstein distance with respect to the measure argument. We refer the reader to Subsections \ref{sec:some:remainders} and \ref{sec:additional:regularity} for the precise statement of our regularity assumptions on the coefficients. To do so, our analysis strongly relies on the smoothing properties of the McKean-Vlasov SDE under the assumption that $a=\sigma \sigma^{*}$ is uniformly elliptic. We achieve this goal by bringing to light some new quantitative estimates of propagation of chaos for the system of particles \eqref{SDE:particle:system} at three different levels. Namely, we prove the $L^{2}(\P)$-convergence of the trajectories of $(X^{i}_t)_{t\in [0,T]}$ to its McKean-Vlasov limit dynamics. We also establish an explicit error estimate and a first order expansion for the difference between the transition densities of one particle and its limit. Eventually, we provide some convergence rate for the difference between the flow of empirical measures $(\mu^{N}_t)_{t\in [0,T]}$ of the system of particles and its limit given by the flow of probability measures $(\mu_t)_{t\in [0,T]}$ associated to the dynamics \eqref{SDE:MCKEAN} when they both act on some irregular map defined on $\pp$.\\

A natural question to be addressed before investigating the convergence problem for the system of particles \eqref{SDE:particle:system} is the well-posedness in the weak or strong sense of its mean-field limit \eqref{SDE:MCKEAN}. This problem has been intensively investigated under various settings by many authors. We refer e.g. to \cite{gartner}, \cite{Sznitman}, Jourdain \cite{jourdain:1997}, and more recently, Li and Min \cite{Li:min:2},  \cite{mishura:veretenikov} and Hammersley et al. \cite{HSSzpruch:18} for a short sample. 

In our recent contribution \cite{chaudruraynal:frikha}, we revisited the problem of the unique solvability by tackling the corresponding formulation of the martingale problem under mild regularity assumptions on the coefficients. Namely, if $a= \sigma \sigma^{*}$ is uniformly elliptic, $b$ is bounded, measurable and Lipschitz continuous in its measure variable with respect to the total variation metric, $a$ is bounded, continuous, $\eta$-H\"older continuous in space uniformly with respect to the time and measure variables and admits a bounded and $\eta$-H\"older continuous linear functional derivative then the martingale problem associated to \eqref{SDE:MCKEAN} is well-posed. Under an additional regularity assumption, namely, if $\mathbb{R}^d \times \mathcal{P}(\mathbb{R}^d) \ni (x, m)\mapsto b(t, x, m), \, a(t, x, m)$ are uniformly H\"older continuous with respect to the space variable $x$ and admit two bounded and uniformly H\"older continuous linear functional derivatives, it then turns out that the transition density $p(\mu, s, t, z)$ of the SDE \eqref{SDE:MCKEAN} at time $t$ starting from the initial distribution $\mu$ at time $s$ exists and is smooth with respect to the variables $s$ and $\mu$, the derivatives in the measure argument being understood for a stronger notion of differentiation, namely in the sense of Lions. More precisely, the map $(s, \mu) \mapsto p(\mu, s, t, z) \in \mathcal{C}^{1, 2}([0,t) \times \pp)$ (see Section \ref{wasserstein:differentiation} for a precise definition of this space). The previous regularity properties of the density finally allows to establish the existence and uniqueness of classical solutions for a class of linear parabolic PDEs on the Wasserstein space, namely
\begin{align}
\begin{cases}
(\partial_t + \mathcal{L}_t) U(t, x, \mu)  = f(t, x, \mu) & \quad \mbox{ for } (t, x , \mu) \in [0,T) \times \mathbb{R}^d \times \pp,  \\
U(T, x, \mu)   = h(x, \mu) & \quad \mbox{ for } (x , \mu) \in \mathbb{R}^d \times \pp, 
\end{cases}
 \label{pde:wasserstein:space}
\end{align}

\noindent where the source term $f : \mathbb{R}_+ \times \mathbb{R}^d \times \pp \rightarrow \mathbb{R}$ and the terminal condition $h: \mathbb{R}^d \times \pp \rightarrow \mathbb{R}$ are some given functions and the operator $\mathcal{L}_t$ acts on sufficiently smooth test functions $g: \mathbb{R}^d \times \pp \rightarrow \mathbb{R}$ and is defined by
\begin{align}
\mathcal{L}_t g(x,  \mu) & = \sum_{i=1}^d b_i(t, x, \mu) \partial_{x_i}g(x, \mu) + \frac12 \sum_{i, j=1}^d a_{i, j}(t, x, \mu) \partial^2_{x_i, x_j}g( x, \mu) \nonumber \\
& \quad + \int_{\mathbb{R}^d} \left\{ \sum_{i=1}^{d} b_i(t, v, \mu) [\partial_{\mu}g(x,\mu)]_{i}(v) + \frac12 \sum_{i, j=1}^d a_{i, j}(t, z, \mu) \partial_{v_j}[\partial_{\mu} g(x, \mu)]_i(v)  \right\} \mu(dv) \label{inf:generator:mckean:vlasov}
\end{align}

\noindent where we recall that $a(t, x, \mu) = (\sigma \sigma^{*})(t, x, \mu)$. The aforementioned well-posedness and smoothing property results for the dynamics \eqref{SDE:MCKEAN} and the PDE \eqref{pde:wasserstein:space} allow us to investigate in turn the convergence problem of the particle system \eqref{SDE:particle:system} at the three levels mentioned above within the same framework.\\

The former convergence problem of the trajectories has been thoroughly investigated under the standard framework of Lipschitz continuous coefficients $b$ and $\sigma$ over $\rr^d \times \mathcal{P}_p(\rr^d)$, $\mathcal{P}_p(\rr^d)$ being the space of probability measures with finite moment of order $p$ equipped with the Wasserstein distance $W_p$, by using the very effective and now well-known coupling argument between the solution of the system of particles \eqref{SDE:particle:system} and $N$ independent copies of the unique strong solution of the nonlinear SDE \eqref{SDE:MCKEAN} taken with the same input $(\xi^i, W^i)_{1\leq i \leq N}$. We refer to \cite{Sznitman}, L\'eonard \cite{Leonard}, M\'el\'eard \cite{meleard} for a presentation of this argument and also to Jourdain and M\'el\'eard \cite{jourdain:meleard} and Malrieu \cite{malrieu2003} for some extensions to non-linear SDEs with coefficients depending locally on its density and to granular media equations respectively.

It actually turns out to be a challenging question to go beyond the aforementioned framework by weakening the Lipschitz regularity assumption on the coefficients. Let us however mention the recent work of Holding \cite{holding} in which some quantitative propagation of chaos estimates are established for systems of interacting particles with a constant diffusion coefficient and a drift coefficient with an H\"older continuous interacting kernel of first order type, that is, $b(t, x, m) = \int_{\mathbb{R}^d} K(x, y) \, m(dy)$ with $K \in \mathcal{C}^{0,\alpha}(\mathbb{R}^d \times \mathbb{R}^d; \mathbb{R}^d)$ or $K(x, y) = W(x-y)$ with $W\in W^{s, q}(\mathbb{R}^d; \mathbb{R}^d)$ for $s$ and $q$ such that $(2+d)/q < s \leq 1$ and $q\geq 2$. Therein, an error bound for the Wasserstein distance of order $1$ between the empirical measure $(\mu^{N}_t)_{t\in [0,T]}$ of the system of particles and its mean-field limit is obtained with a convergence rate depending on the H\"older exponent $\alpha$ of the interacting kernel.\\

Our first contribution is a general rate of convergence for the $L^{2}(\P)$-error on the trajectories of the solution of the system of particles \eqref{SDE:particle:system} and $N$ independent copies of its mean-field dynamics \eqref{SDE:MCKEAN} as well as for the Wasserstein distance of order $2$ between $\mu^{N}_t$ and its corresponding limit. The main novelty here compared to the aforementioned references is that we make the approach as systematic as possible by connecting the above convergence problem to the well-posedness and the regularity properties of the solution $U$ of the backward Kolmogorov PDE \eqref{pde:wasserstein:space} with source term $f(t, x, \mu) = b(t, x, \mu)$ and terminal condition $U(T, x, \mu) \equiv h \equiv 0$. This strategy is reminiscent of Zvonkin's method for solving SDEs driven by a bounded and measurable drift \cite{zvonkin_transformation_1974}. Indeed, testing the solution $U$ on the dynamics of the system of particles notably allows to remove the drift from the convergence analysis and to achieve the expected convergence rate of the framework of Lipschitz coefficients but with weaker conditions on the drift coefficient, namely the drift is assumed to be bounded, H\"older continuous in space and with two bounded and H\"older continuous linear functional derivatives. 

Our second contribution is an error bound together with a first order expansion for the difference between the densities of the one-dimensional marginal of the system of particles and its corresponding limit. Here again, the technique of proof is based on the well-posedness of the backward Kolmogorov PDE here stated on the strip $[0,T] \times \pp$ for which we introduce and study a notion of fundamental solution. The natural candidate for being the unique classical solution is the transition density of the McKean-Vlasov SDE \eqref{SDE:MCKEAN} with initial distribution $\mu$ at time $s$, namely the map $[0,t) \times \pp \ni (s, \mu) \mapsto p(\mu, s, t, z)$, $(t, z) \in (0,T] \times \rr^d$ being fixed. By taking advantage of its regularity properties, the key idea then consists in testing the fundamental solution along the empirical measure $(\mu^{N}_s)_{s\in [0,t]}$ of the system of particles. On the one hand, the proxies $\left\{[0,t] \ni s\mapsto p(\mu^{N}_s, s, t, z), N\geq1\right\}$ should get closer and closer in average to the (constant) map $s\mapsto p(\mu_s, s, t, z) = p(\mu, 0, t, z)$ up to a remainder term that vanishes as $N$ goes to infinity. On the other hand, as $(s, \mu) \mapsto p(\mu, s, t, z)$ is the fundamental solution of the backward Kolmogorov PDE and by symmetry of the dynamics \eqref{SDE:particle:system}, $s \mapsto p(\mu^{N}_s, s, t, z)$ converges weakly to the one-dimensional marginal density function of the system of particles as $s$ goes to $t$. Combining these two facts yields our results. 

Our third contribution consists in an analysis of a weak form of propagation of chaos. Inspired by Remark 5.110 in \cite{carmona2018probabilistic}, we quantify the distance between the empirical measure of the particle system and the law of the solution of the equation both acting on a large class of irregular functions of $\mathcal{P}(\mathbb{R}^d)$. We provide an explicit error estimate for the difference between the semigroup generated by the mean field system and its approximation by the system of particles. The key tool to prove such result is very closed to the one developed to handle the previous estimates on the densities. Namely, it first consists in investigating the regularity properties of the solution to the Cauchy problem related to the backward Kolmogorov PDE stated on the strip $[0,T] \times \pp$, without source term and with a terminal condition $h:\mathcal{P}(\mathbb{R}^d) \to \mathbb{R}$ admiting two bounded and H\"older continuous linear functional derivatives and then to test such a solution along the empirical measure and the limiting law. Although we refrain to go further in that direction, we are convinced that repeating the previous strategy in order to obtain a first order expansion for the difference between the densities would lead to a first order expansion for the semigroups.\\

Taking benefit of the well-posedness of classical solutions to the backward Kolmogorov PDE on the Wasserstein space to prove the aforementioned quantitative estimates of propagation of chaos for the system of particles thus plays a central role in our analysis. Let us however mention that the strategy developed here is clearly reminiscent of the point of view taken by Cardaliaguet \& al. \cite{cardaliaguet:delarue:lasry:lions}, Mischler and Mouhot \cite{mischler:mouhot} and by Mischler, Mouhot and Wennberg in their subsequent work \cite{mischler:mouhot:wennberg}. In \cite{cardaliaguet:delarue:lasry:lions}, the authors study the convergence problem, as $N\uparrow \infty$, of the $N$-Nash system consisting of a system of $N$ coupled Hamilton-Jacobi equations. The limit equation is no longer a linear backward Kolmogorov equation but a non-linear PDE of second order type also stated on the space of probability measures, the so-called master equation of mean-field games. The strategy developed by the authors to establish their estimates of the rate of convergence consists exactly in testing the solution of the master equation as an approximate solution to the $N$-Nash system. Obviously, the very nature of our approach is the same, except that, in our case we work with a linear PDE and its fundamental solution under mild regularity conditions while in \cite{cardaliaguet:delarue:lasry:lions}, the PDE is non-linear but has smooth coefficients. The point of view expressed to establish propagation of chaos estimates for systems of particles undergoing collisions in \cite{mischler:mouhot} and for mean-field systems undergoing jumps and/or diffusions in the subsequent work \cite{mischler:mouhot:wennberg} is also very close to ours. One of the main difference being that in \cite{mischler:mouhot} the quantitative estimates are uniform in time while ours are established on a finite time horizon. Moreover, in \cite{mischler:mouhot:wennberg}, the authors directly compares the semigroup generated by the system of particles and the \emph{lifted} one, that is, the one generated by the mean-field limit both acting on symmetric functions on $(\rr^d)^N$ while in our case we work at the level of the densities. An error bound of order $N^{-1/2}$ for the total variation distance between $k$ particles and $k$ independent copies of the mean-field limit for non-linear SDEs with a constant diffusion coefficient and a drift with general and singular interacting kernel of first order type has been established in Jabin and Wang \cite{jabin}. We also refer to the book of Kolokolstov \cite{Kolokoltsov} and to the work by Kolokoltsov, Troeva and Yang \cite{kolokolstov:troeva:yang} for a point of view based on measure-valued Markov processes and some quantitative estimates for mean-field games approximation. Let us finally mention the recent work of Chassagneux, Szpruch and Tse \cite{chassagneux:szpruch:tse} where an expansion for the difference $\E[h(\mu^{N}_t)] - h(\mu_t)$, $t\in  [0,T]$, is established by exploiting the well-posedness and the regularity properties of the backward Kolmogorov PDE \eqref{pde:wasserstein:space} (with $f \equiv 0$) stated on $[0,T] \times \pp$ in the spirit of Buckdhan \& al. \cite{buckdahn2017}, under the assumptions that $h$, $b$ and $\sigma$ are smooth functions of the space and measure variables.\\

The article is organized as follows. The basic notions of differentiation on the Wasserstein space with an emphasis on the smoothing properties of McKean-Vlasov SDEs that will play a key role in our analysis are presented in Section \ref{diff:wasserstein:structural:class}. The general set-up together with the assumptions and the main results are described in Section \ref{assumptions:results}. The proof of the existence and uniqueness of the fundamental solution of the backward Kolmogorov PDE on the Wasserstein space together with some additional regularity properties of the transition density associated to \eqref{SDE:MCKEAN} are addressed in Section \ref{fully:c2:regularity:section}. The propagation of chaos estimates are established in Section \ref{propagation:of:chaos}. The proof of some useful but auxiliary technical results are given in Appendix.

\subsection*{Notations:}
In the following we will denote by $C$ and $K$ some generic positive constants that may depend on the coefficients $b$ and $\sigma$. We reserve the notation $c$ for constants depending on $|\sigma|_\infty$, $\lambda$ (see assumption \HE\, in Section \ref{assumptions:results}) and possibly on $N$ in which case we write $c(N)$ but not on the time horizon $T$. Moreover, the value of both $C,\, K$ or $c$ may change from line to line.  

We will denote by $\mathcal{P}(\rr^d)$ the space of probability measures on $\rr^d$ and by $\pp \subset \mathcal{P}(\rr^d)$ the space of probability measures with finite second moment. For $\mu \in \mathcal{P}(\rr^d)$ and $q>0$, we set $M_q(\mu) := (\int_{\mathbb{R}^d} |x|^q \mu(dx))^{1/q}$ if $\int_{\mathbb{R}^d} |x|^q \mu(dx) < +\infty$ and $M_q(\mu) = + \infty$ otherwise.

For a positive variance-covariance matrix $\Sigma$, the function $y\mapsto g(\Sigma , y)$ stands for the $d$-dimensional Gaussian kernel with $\Sigma$ as covariance matrix $g(\Sigma, x) = (2\pi)^{-\frac{d}{2}} (\d \, \Sigma)^{-\frac12} \exp(-\frac12 \langle \Sigma^{-1} x, x \rangle)$. We also define the first and second order Hermite polynomials: $H^{i}_1(\Sigma, x) := -(\Sigma^{-1} x)_{i}$ and $H^{i, j}_2(\Sigma, x) := (\Sigma^{-1} x)_i (\Sigma^{-1} x)_j - (\Sigma^{-1} )_{i, j}$, $1\leq i, j \leq d$ which are related to the previous Gaussian density as follows $\partial_{x_i} g(\Sigma, x) = H^{i}_1(\Sigma, x) g(\Sigma, x)$, $\partial^2_{x_i, x_j} g(\Sigma, x) = H^{i, j}_2(\Sigma, x) g(\Sigma, x)$. 
Also, when $\Sigma= c I_d$, for some positive constant $c$, the latter notation is simplified to $g(c, x) := (1/(2\pi c))^{d/2} \exp(-|x|^2/(2c))$. 

One of the key inequality that will be used intensively in this work is the following: for any $p,q>0$ and any $x\in \mathbb{R}$, $|x|^p e^{-q x^2} \leq (p/(2qe))^{p/2}$. 
As a direct consequence, we obtain the {\it space-time inequality}, for any $p,\, c, \, t>0$ and any $c'>c$ there exists $C>0$ such that
\begin{gather}
 |x|^p g(c t, x)\leq C t^{p/2} g(c' t, x) \label{space:time:inequality}
\end{gather}
\noindent which in turn gives {the \it standard Gaussian estimates} for the following derivatives of Gaussian density
\begin{align}
 |H^{i}_1(c t, x)| g(c t, x) \leq \frac{C}{t^{\frac12}} g(c' t,x), \quad |H^{i, j}_2(c t, x)| g(c t, x) \leq \frac{C}{t}g(c' t, x). \label{standard}
\end{align}

Since we will employ it quite frequently, we will often omit to mention it explicitly at some places. We finally define the Mittag-Leffler function $E_{\alpha,\beta}(z) := \sum_{n\geq 0} z^{n}/\Gamma(\alpha n +\beta)$, $z\in \mathbb{R}$, $\alpha,\ \beta>0$.

\section{Preliminaries: Differentiation on the Wasserstein space and smoothing properties}\label{diff:wasserstein:structural:class}

\subsection{Differentiation on the Wasserstein space}\label{wasserstein:differentiation}
In this section, we briefly present the regularity notions we will use when working with mappings defined on $\mathcal{P}_2(\rr^d)$. We refer the reader to Lions' seminal lectures \cite{lecture:lions:college}, to Cardaliaguet's lectures notes \cite{cardaliaguet}, to the recent work Cardaliaguet et al. \cite{cardaliaguet:delarue:lasry:lions} or to Chapter 5 of Carmona and Delarue's monograph \cite{carmona2018probabilistic} for a more complete and detailed exposition. Unless otherwise specified, we equip the space $\mathcal{P}(\rr^d)$ with the topology induced by the total variation metric $d_{{\rm TV}}$ defined by
$$
d_{{\rm TV}}(\mu, \nu) = \sup_{A \in \mathcal{B}(\rr^d)}\int_{A} (\mu-\nu)(dx) .
$$

The space $\pp$ is equipped with the 2-Wasserstein metric
$$
W_2(\mu, \nu) = \inf_{\pi \in \mathcal{P}(\mu, \nu)} \left( \int_{\rr^d \times \rr^d} |x-y|^2 \, \pi(dx, dy) \right)^{\frac12}
$$

\noindent where, for given $\mu, \nu \in \pp$, $\mathcal{P}(\mu, \nu)$ denotes the set of measures on $\rr^d \times \rr^d$ with marginals $\mu$ and $\nu$.

Following our recent work \cite{chaudruraynal:frikha}, we will employ two notions of differentiation of a continuous map $U$ defined on $\mathcal{P}(\mathbb{R}^d)$. The first one, called the \emph{linear functional derivative} and denoted by $[\delta U/\delta m]$, will play an important role in our linearization procedure to strengthen the regularity properties of the transition densities of the McKean-Vlasov SDE \eqref{SDE:MCKEAN} and its corresponding decoupling field. \\

\noindent\textbf{Linear functional derivative.} 
\begin{definition}\label{continuous:L:derivative}
The continuous map $U: \mathcal{P}(\mathbb{R}^d) \rightarrow \mathbb{R}$ is said to have a linear functional derivative if there exists a real-valued bounded measurable function
$$
 \mathcal P(\mathbb{R}^d) \times \mathbb{R}^d  \ni (m, x) \mapsto \frac {\delta U}{\delta m}(m)(x) \in \mathbb{R},
$$
\noindent such that for all $x$ in $\mathbb{R}^d$, the map $\mathcal{P}(\mathbb{R}^d) \ni m \mapsto [\delta U/\delta m](m)(x)$ is continuous and such that for all $m$ and $m'$ in $\mathcal{P}(\mathbb{R}^d)$, it holds
\begin{equation}
\label{limit:equation:linear:functional:derivative}
\lim_{\varepsilon \downarrow 0} \frac{U((1-\varepsilon) m + \varepsilon m') - U(m)}{\varepsilon} = \int_{\mathbb{R}^d} \frac{\delta U}{\delta m}(m)(y) \, d(m'-m)(y).
\end{equation}
The map $y\mapsto [\delta U/\delta m](m)(y)$ being defined up to an additive constant, we will follow the usual normalization convention $\int_{\mathbb{R}^d} [\delta U/\delta m](m)(y) \, dm(y) = 0$. Observe from the above definition that for all $m$ and $m'$ in $\mathcal{P}(\mathbb{R}^d)$
\begin{equation}
\label{mean:value:theorem:flat:derivative}
 U(m')-U(m) = \int_0^1 \int_{\rr^d} \frac{\delta U}{\delta m}((1-\lambda) m + \lambda m')(y) \, d(m'-m)(y)\, d\lambda.
\end{equation}
Note that the boundedness assumption of the map $x\mapsto [\delta U/\delta m](m)(x)$, uniformly in $m$ guarantees the well-posedness of the integral appearing in the right-hand side of \eqref{mean:value:theorem:flat:derivative}.
\end{definition}

\begin{remark}
\label{rem:linderivandLip}
If a map $U$ admits a flat derivative in the above sense then one directly deduces that 
 for all $m$ and $m'$ in $\mathcal{P}(\mathbb{R}^d)$
\begin{equation}\label{eq:linFuncDerivtoLipdtv}
 |U(m) - U(m')| \leq \sup_{m'' \in \mathcal{P}(\mathbb{R}^d)} |\frac{\delta U}{\delta m}(m'')(.)|_\infty\, d_{{\rm TV}}(m, m').
\end{equation}

Therefore, if the map $U$ admits a linear functional derivative in the sense of Definition \ref{continuous:L:derivative} then it is Lipschitz continuous with respect to the total variation metric.

\end{remark}

%
%

We will also work with higher order derivatives. This is naturally defined by induction as follows.

\begin{definition}\label{continuous:L:derivative:higher:order}Let $p\geq1$. The continuous map $U: \mathcal{P}(\mathbb{R}^d) \rightarrow \mathbb{R}$ is said to have a continuous linear functional derivative at order $p$ if there exists a real valued bounded measurable map $[\delta^{p} U/\delta m^{p}]: \mathcal{P}(\mathbb{R}^d) \times (\mathbb{R}^d)^{p-1}\times \mathbb{R}^d \rightarrow \mathbb{R}$ such that for all $(\bold{y}_{p-1}, y_p) \in (\mathbb{R}^d)^{p-1}\times \mathbb{R}^d$, the map $\mathcal{P}(\mathbb{R}^d) \ni m \mapsto [\delta^{p} U/\delta m^{p}](m)(\bold{y}_{p-1}, y_p)$ is continuous and such that for any $m, m' \in \mathcal{P}(\mathbb{R}^d)$ and for any $\bold{y}_{p-1}\in (\mathbb{R}^d)^{p-1}$
$$
\lim_{\varepsilon \downarrow 0} \varepsilon^{-1} \Big(\frac{\delta^{p-1}}{\delta m^{p-1}}U((1-\varepsilon) m + \varepsilon m')(\bold{y}_{p-1}) - \frac{\delta^{p-1}}{\delta m^{p-1}}U(m)(\bold{y}_{p-1})\Big) = \int_{\rr^d} \frac{\delta^{p} U}{\delta m^{p}}(m)(\bold{y}_{p}) \, d(m'-m)(y_p)
$$
\noindent provided the $(p-1)$th order derivative is well-defined, with the notation $\bold{y}_p := (\bold{y}_{p-1}, y_p)$ and the convention $[\delta^{0}/\delta m^0]U(m) \equiv U(m)$. We again follow the usual normalization convention which ensures uniqueness 
$$
\int_{\mathbb{R}^d} \frac{\delta^{p} U}{\delta m^p}(m)(\bold{y}_{p-1}, y_p) \, dm(y_p) = 0
$$
\noindent for any $\bold{y}_{p-1} \in (\mathbb{R}^d)^{p-1}$.
\end{definition}

%

Again, for more details on the above notion of derivative, we refer to Chapter 5 of \cite{carmona2018probabilistic}.

 \medskip
 
We now briefly present the second notion of derivatives as originally introduced by Lions \cite{lecture:lions:college}. The basic strategy consists in considering the canonical lift of the real-valued function $U : \pp \ni \mu \mapsto U(\mu)$ into a function $\mathcal{U} : \mathbb{L}_2  \ni Z \mapsto \mathcal{U}(Z) = U([Z]) \in \rr$, $(\Omega, \mathcal{F}, \P)$ standing for an atomless probability space, with $\Omega$ a Polish space, $\mathcal{F}$ its Borel $\sigma$-algebra, $\mathbb{L}_2:=\mathbb{L}_2(\Omega,\mathcal{F},\mathbb{P}, \rr^d)$ standing for the space of $\rr^d$-valued random variables defined on $\Omega$ with finite second moment and $Z$ being a random variable with law $\mu$. The function $U$ is then said to be differentiable at $\mu\in \pp$ if its canonical lift $\mathcal{U}$ is Fr\'{e}chet differentiable at some point $Z$ such that $[Z]=\mu$. We denote by $D\mathcal{U}$ its gradient. The Riesz representation theorem then allows to identify $D\mathcal{U}$ as an element of $\mathbb{L}^2$. It turns out that $D\mathcal{U}$ is a random variable which is $\sigma(Z)$-measurable and given by a function $DU(\mu)(.)$ from $\rr^d$ to $\rr^d$, which depends on the law $\mu$ of $Z$ and satisfying $DU(\mu)(.) \in \mathbb{L}^2(\rr^d, \mathcal{B}(\rr^d), \mu; \rr^d)$. As in \cite{chaudruraynal:frikha}, we adopt the notation $\partial_\mu U(\mu)(.)$ in order to emphasize that we are taking the derivative of the map $U$ with respect to its measure argument. The $L$-derivative of $U$ at $\mu$ is the map $\partial_\mu U(\mu)(.): \rr^d \ni v \mapsto \partial_\mu U(\mu)(v) \in \rr^d$, satisfying $D \mathcal{U} = \partial_\mu U(\mu)(Z)$. 

It is important to note that this representation holds irrespectively of the choice of the original probability space $(\Omega, \mathcal{F}, \P)$. We will restrict our considerations to functions which are $\mathcal{C}^1$, that is, functions for which the associated canonical lift is $\mathcal{C}^1$ on $\mathbb{L}^2$ and for which there exists a continuous version of the mapping $\pp \times \rr^d \ni (\mu, v) \mapsto \partial_\mu U(\mu)(v) \in \rr^d$. It then appears that this version is unique. We straightforwardly extend the above discussion to $\rr^d$-valued or $\rr^{d \times d}$-valued maps $U$ defined on $\pp$, component by component.

%

In order to establish the existence and uniqueness of a fundamental solution of the Kolmogorov PDE on the Wasserstein space as well as our quantitative estimates for the mean-field approximation by systems of particles, we will employ at several places a chain rule formula for $(U(t, Y_t, [X_t]))_{t\geq0}$, where $(X_t)_{t\geq0}$ and $(Y_t)_{t\geq 0}$ are two It\^o processes defined for sake of simplicity on the same probability space $(\Omega, \mathcal{F}, \mathbb{F}, \mathbb{P})$ assumed to be equipped with a right-continuous and complete filtration $\mathbb{F}=(\mathcal{F}_t)_{t\geq0}$. Their dynamics are given by
\begin{align}
X_t &= X_0 + \int_0^t b_s \, ds + \int_0^t \sigma_s \, dW_s, \, X_0 \in \mathbb{L}_2, \label{ito:process:X} \\
Y_t & = Y_0 + \int_0^t \eta_s \, ds + \int_0^t \gamma_s \, dW_s \label{ito:process:Y}
\end{align}

\noindent where $W=(W_t)_{t\geq0}$ is an $\mathbb{F}$-adapted $d$-dimensional Brownian, $(b_t)_{t\geq 0}$, $(\eta_t)_{t\geq0}$, $(\sigma_t)_{t\geq 0}$ and $(\gamma_t)_{t\geq 0}$ are $\mathbb{F}$-progressively measurable processes, with values in $\rr^d$, $\rr^d$, $\rr^{d \times d}$ and $\rr^{d\times q}$ respectively, satisfying the following conditions 
\begin{equation}
\label{cond:integrab:ito:process}
\forall T>0, \quad \E\Big[\int_0^T (|b_t|^2 + |\sigma_t|^4) \, dt\Big] < \infty \quad \mbox{ and } \quad \P\left(\int_0^T (|\eta_t| + |\gamma_t|^2) \, dt < + \infty \right) = 1.
\end{equation}

We now introduce two spaces of smooth functions we will work with throughout the paper.

\begin{definition}(The spaces $\mathcal{C}^{p, 2, 2}([0,T] \times \rr^d \times \pp)$ and $\mathcal{C}^{p, 2, 2}_f([0,T] \times \rr^d \times \pp)$, for $p=0, \,1$)\label{def:space:c122}
Let $T>0$ and $p \in \left\{0, 1\right\}$.

The continuous function $U : [0,T] \times \rr^d \times \pp$ is in $\mathcal{C}^{p, 2, 2}([0,T] \times \rr^d \times \pp)$ if the following conditions hold:
\begin{itemize}
\item[(i)] For any $\mu \in \mathcal{P}_2(\rr^d)$, the mapping $[0,T] \times \rr^d \ni (t,x) \mapsto U(t, x,\mu)$ is in $\mathcal{C}^{p,2}([0,T] \times \rr^d)$ and the functions $[0,T] \times \rr^d \times \pp \ni (t, x, \mu) \mapsto \partial^{p}_t U(t, x, \mu),\, \partial_x U(t, x, \mu),\, \partial_{x}^2 U(t, x, \mu)$ are continuous.

\item[(ii)] For any $(t, x)\in [0,T] \times \rr^d$, the mapping $\mathcal{P}_2(\rr^d) \ni \mu \mapsto U(t ,x, \mu)$ is continuously L-differentiable and for any $\mu \in \mathcal{P}_2(\rr^d)$, we can find a version of the mapping $\rr^d \ni v \mapsto \partial_\mu U(t, x,\mu)(v)$ such that the mapping $[0,T]\times \rr^d \times \mathcal{P}_2(\rr^d) \times \rr^d \ni (t ,x, \mu, v) \mapsto \partial_\mu U(t, x, \mu)(v)$ is locally bounded and is continuous at any $(t, x, \mu, v)$ such that $v \in \supp(\mu)$.

\item[(iii)] For the version of $\partial_\mu U$ mentioned above and for any $(t, x,\mu)$ in $[0,T] \times \rr^d \times \mathcal{P}_2(\rr^d)$, the mapping $\rr^d \ni v \mapsto \partial_\mu U(t, x,\mu)(v)$ is continuously differentiable and its derivative $\partial_v [\partial_\mu U(t, x, \mu)](v) \in \rr^{d\times d}$ is jointly continuous in $(t, x, \mu, v)$ at any point $(t, x, \mu, v)$ such that $v \in \supp(\mu)$.
\end{itemize}

The continuous function $U : [0,T] \times \rr^d \times \pp$ is in $\mathcal{C}^{p, 2, 2}_f([0,T] \times \rr^d \times \pp)$ if $U \in \mathcal{C}^{p, 2, 2}([0,T] \times \mathbb{R}^d \times \pp)$ in the above sense and the following additional condition holds:
\begin{itemize}
\item[(iv)] For each $v\in \rr^d$, the version $\pp \ni \mu \mapsto \partial_\mu U(t, x, \mu)(v)$ discussed in (ii) is L-differentiable (component by component) with a derivative given by $(\mu, v, v')\mapsto \partial^2_\mu U(t, x, \mu)(v)(v') \in \rr^{d\times d}$ such that for any $\mu \in \pp$ and $X \in \mathbb{L}_2$ with $[X]=\mu$, the $\mathbb{R}^{d\times d}$-valued random variable $\partial^2_\mu U(t, x, \mu)(v)(X)$ gives the Fr\'echet derivative of the map $\mathbb{L}_2\ni X'\mapsto \partial^2_\mu U(t, x, [X'])(v)$ for every $v\in \rr^d$. Denoting $\partial^2_\mu U(t, x, \mu)(v)(v')$ by $\partial^2_\mu U(t, x, \mu)(v, v')$, the map $[0,T] \times \rr^d \times \pp \times (\rr^d)^2 \ni (t, x, \mu, v, v')\mapsto \partial^2_\mu U(t, x, \mu)(v, v')$ is also assumed to be continuous for the product topology.
\end{itemize}
\end{definition}

\begin{remark}\label{space:restriction} We will also consider the spaces $\mathcal{C}^{1,p}([0,T] \times \pp)$ for $p=1, \, 2$ and $\mathcal{C}^{1,2}_f([0,T] \times \pp)$, where we adequately remove the space variable in the Definition \ref{def:space:c122}. We will say that $U \in \mathcal{C}^{1, 1}([0,T] \times \pp)$ if $U$ is continuous, $t\mapsto U(t,\mu) \in \mathcal{C}^{1}([0,T])$ for any $\mu \in \pp$, $(t,\mu) \mapsto \partial_t U(t, \mu)$ being continuous and if for any $t\in [0,T]$, $\mu \mapsto U(t,\mu)$ is continuously L-differentiable such that we can find a version of $v\mapsto \partial_\mu U(t, \mu)(v)$ satisfying: $(t,\mu, v) \mapsto \partial_\mu U(t, \mu)(v)$ is locally bounded and continuous at any $(t, \mu, v)$ satisfying $v\in \supp(\mu)$. 

We will say that $U\in \mathcal{C}^{1,2}_f([0,T] \times \pp)$ if $U \in \mathcal{C}^{1,2}([0,T] \times \pp)$ and for the version of $\partial_\mu U$ previously considered, for any $(t, v) \in [0,T] \times \rr^d$, the mapping $\pp \ni \mu \mapsto \partial_\mu U(t, \mu)(v)$ is L-differentiable with a derivative given by $(t ,\mu, v, v') \mapsto \partial_\mu U(t, \mu)(v, v') \in \rr^{d\times d}$ such that for any $\mu \in \pp$ and $X \in \mathbb{L}_2$ with $[X]=\mu$, $\partial_\mu U(t, \mu)(v, X)$ gives the Fr\'echet derivative of the map $\mathbb{L}_2\ni X'\mapsto \partial_\mu U(t, [X'])(v)$ for every $(t, v)\in [0,T] \times \rr^d$. Moreover, the map $[0,T]  \times \pp \times (\rr^d)^2 \ni (t, \mu, v, v')\mapsto \partial^2_\mu U(t, \mu)(v, v')$ is assumed to be continuous for the product topology. 

\end{remark}

\noindent \textbf{Notations:} We will use the following notations throughout the paper. For a smooth map $U:\mathcal{P}_2(\rr^d)\rightarrow \rr$ and for $\mu \in \pp$, $v, v' \in \rr^d$ 
\begin{align*}
\partial_v[\partial_\mu U(\mu)(v)] & = (\partial_{v_j} [\partial_\mu U(\mu)]_i(v))_{1\leq i, j \leq d}\\
\partial^2_\mu U(\mu)(v, v') & = ([\partial_\mu [\partial_\mu U(\mu)]_i(v)]_j(v'))_{1\leq i, j \leq d}.
\end{align*}

With the above definitions and notations, we can now provide the chain rule formula on the Wasserstein space that will be play a central role in our analysis.

\begin{prop}[\cite{carmona2018probabilistic}, Proposition 5.102]\label{prop:chain:rule:joint:space:measure}
Let $X$ and $Y$ be two It\^o processes, with respective dynamics \eqref{ito:process:X} and \eqref{ito:process:Y}, satisfying \eqref{cond:integrab:ito:process}. Assume that $U\in \mathcal{C}^{1, 2, 2}([0,T]\times \rr^d \times \pp)$ in the sense of Definition \ref{def:space:c122} such that for any compact set $\mathcal{K}\subset \rr^d \times \pp$, 
\begin{equation}
\label{cond:integrab:ito:process:second:version}
\sup_{(t, x, \mu) \in [0,T] \times \mathcal{K}}\left\{\int_{\rr^d} | \partial_\mu U(t, x, \mu)(v)|^2 \, \mu(dv) +  \int_{\rr^d} |\partial_v [\partial_\mu U(t, x, \mu)](v)|^2 \, \mu(dv) \right\} < \infty.
\end{equation}


 Then, $\P$-a.s., $\forall t\in [0,T]$, one has
\begin{align}
  U(t, Y_t, [X_t]) & = U(0, Y_0, [X_0]) + \int_0^t \partial_x U(s, Y_s, [X_s]) \, . \gamma_s \, dW_s  \nonumber \\
&   + \int_0^t \left\{\partial_s U(s, Y_s, [X_s]) +  \partial_x U(s, Y_s, [X_s]) . \eta_s + \frac12 \tr(\partial^2_{x} U(s, Y_s, [X_s]) \gamma_s \gamma^{*}_s) \right\} ds \label{chain:rule:mes}\\
&   + \int_0^t \left\{ \widetilde{\E}\big[\partial_\mu U(s , Y_s, [X_s])(\widetilde{X}_s) . \widetilde{b}_s\big] + \frac12 \widetilde{\E}\big[\tr(\partial_v [\partial_\mu U(s, Y_s, [X_s])](\widetilde{X}_s)  \widetilde{\sigma}_s \widetilde{\sigma}_s^{*})\big] \right\} ds \nonumber
\end{align}

\noindent where the It\^o process $(\widetilde{X}_t, \widetilde{b}_t, \widetilde{\sigma}_t)_{0\leq t \leq T}$ is a copy of the original process $(X_t, b_t, \sigma_t)_{0\leq t \leq T}$ defined on a copy $(\widetilde{\Omega}, \widetilde{\mathcal{F}}, \widetilde{\P})$ of the original probability space $(\Omega, \mathcal{F}, \P)$.
\end{prop}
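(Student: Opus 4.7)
The plan is to decouple the three sources of time-dependence in $U(t, Y_t, [X_t])$: the explicit time variable, the spatial Itô process $Y$, and the deterministic curve $s \mapsto [X_s] \in \pp$. Classical Itô calculus handles the first two sources, while the chain rule on the Wasserstein space — applied to a deterministic curve of probability measures — handles the third. These are glued together by a telescoping time-partition argument; no cross term between ``$dY$'' and ``$d[X]$'' appears because $s \mapsto [X_s]$ is deterministic, hence has zero quadratic covariation with the Brownian motion driving $Y$.

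\textbf{Step 1: pure Wasserstein chain rule along $[X_\cdot]$.} For fixed $(t_0, y)$, consider the deterministic function $s \mapsto V(s) := U(t_0, y, [X_s])$. I would first prove that $V$ is absolutely continuous on $[0,T]$ with
\begin{align*}
\frac{d}{ds} V(s) = \widetilde{\E}\bigl[ \partial_\mu U(t_0, y, [X_s])(\widetilde{X}_s) \cdot \widetilde{b}_s \bigr] + \frac12 \widetilde{\E}\bigl[ \Tr\bigl(\partial_v [\partial_\mu U(t_0, y, [X_s])](\widetilde{X}_s)\, \widetilde{a}_s\bigr) \bigr].
\end{align*}
The cleanest route is via the canonical lift $\mathbb{L}_2 \ni Z \mapsto \mathcal{U}(Z) := U(t_0, y, [Z])$: under Definition \ref{def:space:c122}(ii)-(iii), $\mathcal{U}$ is Fréchet $\mathcal{C}^{1,1}$ on $\mathbb{L}_2$ with first and second derivatives expressible through $\partial_\mu U$ and $\partial_v[\partial_\mu U]$ respectively, so applying a standard Hilbert-space Itô formula to the $\mathbb{L}_2$-valued semimartingale $s \mapsto X_s$ yields the displayed expression, the integrability condition \eqref{cond:integrab:ito:process:second:version} ensuring the integrands lie in $L^1$.

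\textbf{Step 2: classical Itô in $(s, Y)$ and telescoping.} For any frozen $\mu_0 \in \pp$, the map $(s, y) \mapsto U(s, y, \mu_0)$ lies in $\mathcal{C}^{1, 2}$ by Definition \ref{def:space:c122}(i), so classical Itô applied to $Y$ produces the $\partial_s U$, $\partial_x U \cdot \eta$, $\partial_x U \cdot \gamma\, dW$ and $\frac12 \Tr(\partial_x^2 U\, \gamma \gamma^{T})$ terms. I would then introduce a partition $0 = s_0 < \dots < s_n = t$ and telescope each increment of $U(s, Y_s, [X_s])$ into two pieces: one where only the measure argument moves from $[X_{s_k}]$ to $[X_{s_{k+1}}]$ with $(s_{k+1}, Y_{s_{k+1}})$ frozen (handled by Step 1), and one where only $(s, y)$ moves with $\mu = [X_{s_k}]$ frozen (handled by classical Itô). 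Passing to the limit as the mesh tends to zero, the joint continuity of the various derivatives together with the integrability conditions \eqref{cond:integrab:ito:process} and \eqref{cond:integrab:ito:process:second:version} allow the Riemann sums to converge to the Lebesgue and stochastic integrals appearing in \eqref{chain:rule:mes}.

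The main obstacle is to justify Step 1 under only the local regularity posited in Definition \ref{def:space:c122}, since $\partial_\mu U$ need not be globally Lipschitz in $v$ and the relevant second derivative is only controlled in an $L^2$-sense along $[X_s]$ via \eqref{cond:integrab:ito:process:second:version}. I would handle this by first establishing the identity for a mollification of $U$ in the measure argument — most conveniently in the $L^2$-lift, where convolution of $\mathcal{U}$ with small centred Gaussian measures on $\mathbb{L}_2$ produces smooth approximants $\mathcal{U}_\varepsilon$ whose Fréchet derivatives translate back into smoothings of $\partial_\mu U$ and $\partial_v[\partial_\mu U]$ — and then passing to the limit $\varepsilon \downarrow 0$ using a localization on the paths of $X$ and $Y$ together with the continuity statements in (ii)-(iii). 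Once Step 1 is secured, its combination with Step 2 via the telescoping decomposition yields \eqref{chain:rule:mes}.
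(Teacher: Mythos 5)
The paper states this result as a citation (Carmona--Delarue, Proposition 5.102) and gives no proof of its own, so I judge your proposal against the standard argument. Your overall scaffolding---telescoping over a time partition, classical It\^o in $(s,y)$ with the measure frozen, a pure measure-argument expansion with $(s,y)$ frozen, no cross quadratic variation because $s \mapsto [X_s]$ is deterministic, and mollification to relax regularity---is exactly the right structure and matches the Carmona--Delarue route; Step 2 and the telescoping are fine.

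The genuine gap is in Step 1, and it is more than a missing detail. You assert that the lift $\mathcal{U}(Z) := U(t_0, y, [Z])$ is Fr\'echet $\mathcal{C}^{1,1}$ on $\mathbb{L}_2$ ``with first and second derivatives expressible through $\partial_\mu U$ and $\partial_v[\partial_\mu U]$.'' Under the stated hypothesis $U \in \mathcal{C}^{1,2,2}$ this is false. The class $\mathcal{C}^{1,2,2}$ in Definition \ref{def:space:c122}(ii)--(iii) is \emph{partial} $\mathcal{C}^2$ in the measure variable: it controls $\partial_v \partial_\mu U$ but never posits the existence of $\partial^2_\mu U$ (that belongs to $\mathcal{C}^{1,2,2}_f$). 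Consequently $\mathcal{U}$ is in general only Fr\'echet $\mathcal{C}^1$, and when the full second Fr\'echet derivative $D^2\mathcal{U}$ does exist it contains, besides the diagonal part built from $\partial_v\partial_\mu U$, an off-diagonal part built from $\partial^2_\mu U$ acting on two independent copies of the argument. A black-box second-order It\^o/Taylor formula for the lift therefore has no footing, with or without the adjective ``Hilbert-space.''

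Your mollification does fix the regularity (the smoothed $U_\varepsilon$ can be made fully $\mathcal{C}^2$ in the measure variable), but then the Taylor expansion of $\mathcal{U}_\varepsilon(X_{s+h}) - \mathcal{U}_\varepsilon(X_s)$ \emph{does} produce a $\partial^2_\mu U_\varepsilon$ term, and you must explain why it contributes nothing to the limiting formula---this is the crux of the proposition and your proposal never addresses it. The cancellation comes from independence across the two copies: with $\Delta X \approx b_s h + \sigma_s \Delta W_s$ on $\Omega$ and $\widetilde{\Delta X} \approx \widetilde{b}_s h + \widetilde{\sigma}_s \Delta\widetilde{W}_s$ on $\widetilde{\Omega}$, the leading off-diagonal second-order increment is
\begin{equation*}
\E\widetilde{\E}\Bigl[(\sigma_s \Delta W_s)^{T}\, \partial^2_\mu U_\varepsilon\bigl(t_0, y, [X_s]\bigr)(X_s, \widetilde{X}_s)\, \bigl(\widetilde{\sigma}_s \Delta\widetilde{W}_s\bigr)\Bigr],
\end{equation*}
which vanishes because $\Delta W_s$ and $\Delta\widetilde{W}_s$ are mutually independent, each independent of $\mathcal{F}_s \otimes \widetilde{\mathcal{F}}_s$, and centred; the remaining cross-terms are $O(h^2)$. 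Once this is recorded, the passage $\varepsilon \downarrow 0$ under the continuity hypotheses of Definition \ref{def:space:c122}(ii)--(iii) and the uniform bound \eqref{cond:integrab:ito:process:second:version} secures Step 1, and the rest of your argument assembles \eqref{chain:rule:mes} as you describe.
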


We conclude this subsection by enlightening the connection between the L-derivative of a map $U: \pp \rightarrow \rr$ and the standard partial derivatives of its empirical projection $U^{N}: (\rr^d)^N \rightarrow \rr$, $N$ being a positive integer, defined by 
\begin{equation}
U^{N}: (\rr^d)^{N} \ni (x_1, \cdots, x_N) \mapsto U\Big(\frac{1}{N}\sum_{i=1}^{N} \delta_{x_i}\Big).
\end{equation}

We refer to Propositions 5.35 and 5.91 of \cite{carmona2018probabilistic} for a proof of the following result.

\begin{prop}[Connection between L-derivatives and empirical projection]\label{empirical:projection:prop}
If $U$ is a real-valued function that belongs to $\mathcal{C}^{2}_f(\pp)$ (fully $\mathcal{C}^2$) then its empirical projection $U^{N}$ is two times differentiable on $(\mathbb{R}^d)^{N}$ and, for all $x_1, \cdots, x_N \in (\mathbb{R}^d)^N$, for all $(i, j)\in \left\{1,\cdots, N\right\}^{2}$
$$
\partial_{x_i} U^{N}(x_1, \cdots, x_N) = \frac{1}{N}\partial_\mu U(\frac{1}{N}\sum_{\ell=1}^{N} \delta_{x_\ell})(x_i)
$$
\noindent and
$$
\partial_{x_i, x_j} U^{N}(x_1, \cdots, x_N) = \frac{1}{N} \partial_v\partial_\mu U(\frac{1}{N}\sum_{\ell=1}^{N} \delta_{x_\ell})(x_i) \delta_{i, j} + \frac{1}{N^{2}}\partial^2_\mu U(\frac{1}{N}\sum_{\ell=1}^{N} \delta_{x_\ell})(x_i, x_j)
$$

\noindent with the notation $\delta_{i, j} = 1 $ if $i=j$ and $\delta_{i, j} = 0$ otherwise. 

\end{prop}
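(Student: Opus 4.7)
The plan is to compute the derivatives of $U^N$ via the canonical lift of $U$ on $\mathbb{L}_2$ using the representation of the empirical measure as the law of a discrete random variable. Fix a probability space rich enough to carry a random variable $\xi$ with uniform distribution on $\{1,\dots,N\}$, and, given $(x_1,\dots,x_N)\in(\rr^d)^N$, set $Z := \sum_{i=1}^N x_i \, \ind_{\{\xi=i\}}$. Then $[Z]=\frac{1}{N}\sum_{\ell=1}^N\delta_{x_\ell}$, so that $U^N(x_1,\dots,x_N)=\mathcal{U}(Z)$ where $\mathcal{U}$ is the canonical lift of $U$.

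For the first derivative, I would perturb the $j$-th coordinate in direction $e_k$: $Z$ becomes $Z+h\,e_k\ind_{\{\xi=j\}}$, which is a perturbation in $\mathbb{L}_2$ of norm $\mathcal{O}(h)$. Using the Fr\'echet differentiability of $\mathcal{U}$ and the Riesz identification $D\mathcal{U}(Z)=\partial_\mu U([Z])(Z)$,
\begin{align*}
U^N(x_1,\dots,x_j+he_k,\dots,x_N) - U^N(x_1,\dots,x_N)
&= h\,\E\big[\partial_\mu U([Z])(Z)\cdot e_k \,\ind_{\{\xi=j\}}\big] + o(h)\\
&= \tfrac{h}{N}\, \big[\partial_\mu U(\tfrac{1}{N}\textstyle\sum_\ell \delta_{x_\ell})(x_j)\big]_k + o(h),
\end{align*}
since on the event $\{\xi=j\}$ one has $Z=x_j$ and $\P(\xi=j)=1/N$. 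Dividing by $h$ and letting $h\downarrow 0$ yields the first formula. The continuity hypothesis on $(\mu,v)\mapsto\partial_\mu U(\mu)(v)$ at points where $v\in\supp(\mu)$ is what ensures the evaluation at $Z$ is unambiguous here.

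For the second derivative I would iterate the argument. When $i\neq j$, only the measure argument $\mu=\frac{1}{N}\sum_\ell\delta_{x_\ell}$ is affected by a perturbation of $x_i$. Applying the first-derivative formula to the scalar map $\mu\mapsto [\partial_\mu U(\mu)(x_j)]_k$, which is itself $L$-differentiable by the fully $\mathcal{C}^2_f$ assumption, gives
\[
\partial_{x_i}\big(\tfrac{1}{N}\partial_\mu U(\tfrac{1}{N}\textstyle\sum_\ell\delta_{x_\ell})(x_j)\big) = \tfrac{1}{N^2}\partial^2_\mu U(\tfrac{1}{N}\textstyle\sum_\ell\delta_{x_\ell})(x_j,x_i).
\]
When $i=j$, the coordinate $x_j$ enters the expression $\frac{1}{N}\partial_\mu U(\mu)(x_j)$ in two ways: through the measure $\mu$ and through the evaluation point. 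The chain rule, combined with the $L$-differentiability of $\mu\mapsto\partial_\mu U(\mu)(v)$ and the classical $v$-differentiability of $v\mapsto\partial_\mu U(\mu)(v)$ (both provided by the $\mathcal{C}^{2}_f$ assumption together with condition (ii) of Definition \ref{def:space:c122}), produces the two contributions
\[
\tfrac{1}{N^2}\partial^2_\mu U(\mu)(x_j,x_j) \;+\; \tfrac{1}{N}\partial_v\partial_\mu U(\mu)(x_j),
\]
which matches the claimed formula with the Kronecker $\delta_{i,j}$.

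The main technical point is the chain-rule justification in the diagonal case: one must simultaneously differentiate the law $[Z]$ and the argument $Z$. I would handle this rigorously by decomposing the perturbation $Z \mapsto Z + h e_k \ind_{\{\xi=j\}}$ into two successive moves — first shift the evaluation point while freezing the law, then shift the law while freezing the point — and controlling each increment by a first-order Taylor expansion, the cross term being $o(h)$ thanks to the joint continuity of $\partial_v\partial_\mu U$ and $\partial^2_\mu U$ built into Definition \ref{def:space:c122}(iii)-(iv). The continuity of all these second-order derivatives in $(x_1,\dots,x_N)$, needed to conclude that $U^N$ is genuinely $\mathcal{C}^2$ on $(\rr^d)^N$, then follows from the joint-continuity conditions imposed in Definition \ref{def:space:c122} at points $v\in\supp(\mu)$, which here is automatic since each $x_i$ belongs to $\supp(\frac{1}{N}\sum_\ell\delta_{x_\ell})$.
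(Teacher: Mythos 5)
The paper does not prove this proposition; it refers to Propositions 5.35 and 5.91 of Carmona--Delarue \cite{carmona2018probabilistic}. Your argument, based on the lift random variable $Z=\sum_i x_i\ind_{\{\xi=i\}}$ with $\xi$ uniform on $\{1,\dots,N\}$ and the Fr\'echet differentiability of the canonical lift on $\mathbb{L}_2$, is exactly the argument used there and it is correct. Two minor remarks: the underlying probability space carrying $\xi$ must itself be atomless (Lions' construction requires it), so one should take for instance $\Omega=[0,1]$ with Lebesgue measure and $\xi(\omega)=\lceil N\omega\rceil$ rather than a finite sample space; and the mixed second derivative your computation produces carries its arguments in the order $(x_j,x_i)$ whereas the statement displays $(x_i,x_j)$ --- the two agree up to a transpose by the Schwarz-type symmetry of $\partial^2_\mu U$ for $\mathcal{C}^2_f(\pp)$ maps, with the residual reordering absorbed by the matrix convention implicit in $\partial_{x_i,x_j}$.
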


\subsection{Regularization properties by smooth flow of probability measures}\label{subsec:structural} 

In order to establish quantitative estimates for propagation of chaos for the mean field approximation of the dynamics \eqref{SDE:MCKEAN} by its system of particles \eqref{SDE:particle:system}, an additional study of the regularity properties of the transition density of the McKean-Vlasov SDE is required. We build on our previous work \cite{chaudruraynal:frikha} which highlights the key feature to investigate the smoothing properties of the transition density in the uniformly elliptic framework. Namely, our analysis is mainly based on how a continuous map defined on $\mathcal{P}(\mathbb{R}^d)$ admitting only flat derivatives in the sense of Definition \ref{continuous:L:derivative} can be regularized in \emph{the intrinsic sense} by a smooth flow of probability measures. Assuming that the coefficients $b$ and $a$ are uniformly H\"older continuous in the space variable and admit bounded, uniformly H\"older continuous linear functional derivatives at order $2$, it turns out that the density taken at time $t>0$ of the unique weak solution of the McKean-Vlasov SDE with dynamics \eqref{SDE:MCKEAN} achieves better regularity with respect to its measure argument and is \emph{partially $\mathcal{C}^2$}. Clearly, this phenomenon has to be understood as a smoothing property of McKean-Vlasov SDEs in a uniformly elliptic setting. We refer to Section 2.2 in \cite{chaudruraynal:frikha} for a detailed introduction and discussion of this regularization property. 

We here want to go one step further by analyzing the \emph{full $\mathcal{C}^2$ regularity} of the density. The following result will play central role in our analysis.

\begin{prop}\label{structural:class} Let $h: \mathcal{P}(\mathbb{R}^d) \rightarrow \mathbb{R}$ be a continuous map that admits two bounded linear functional derivatives. For some prescribed $T>0$ and $z\in \mathbb{R}^d$, consider a map $ (t, x, \mu) \mapsto p(\mu, t, T, x, z) \in \mathcal{C}^{1,2, 2}_f([0,T)\times \mathbb{R}^d \times \pp)$, $z\mapsto p(\mu, t, T, x, z)$ being a density function, such that the probability measure given by $(p(\mu, t, T, ., z) \sharp\mu) dz$ belongs to $\pp$, locally uniformly with respect to $(t, \mu) \in [0,T) \times \pp$, i.e. uniformly in $(t, \mu)$ in bounded subsets of $[0,T) \times \pp$. Assume that for all $(\mu, t, x, z, v)\in \pp \times [0,T) \times (\mathbb{R}^d)^3$ the maps $\mathbb{R}^d \ni x\mapsto \partial_\mu p(\mu, t, T, x, z)(v)$ and $\pp \ni \mu\mapsto  \partial_x p(\mu, t, T, x, z)$ are continuously differentiable, with derivatives $\partial_x \partial_\mu p(\mu, t, T, x, z)(v)$, $ \partial_\mu \partial_x p(\mu, t, T, x, z)(v)$ being continuous in $\mu, x, v$, and of at most linear growth in $v$, uniformly in $(\mu, x)$ in bounded subsets of $\pp \times \mathbb{R}^d$\footnote{In this case, by Clairaut's theorem it holds $\partial_x \partial_\mu p(\mu, t, T, x, z)(v) =  (\partial_\mu \partial_x p(\mu, t, T, x, z)(v))^{*}$ for all $(\mu, x, v) \in \pp \times (\mathbb{R}^d)^2$. } and that the mappings $\mathbb{R}^d \ni x\mapsto  \int_{\mathbb{R}^d} |\partial^2_\mu p(\mu, t, T, x, z)(v, v')| \, dz$, $\int_{\mathbb{R}^d} |\partial^{n}_v[\partial_{\mu} p(\mu, t, T, x, z)](v)| \, dz$, $\int_{\mathbb{R}^d} |\partial_t p(\mu, t, T, x, z)| \, dz$, $\int_{\mathbb{R}^d} |\partial_x \partial_\mu p(\mu, t, T, x, z)(v)| \, dz$, $n \in \left\{0, 1\right\}$, are at most of quadratic growth, uniformly in $(t, \mu, v, v')$ in bounded subsets of $[0,T) \times \pp \times (\mathbb{R}^d)^2$ and such that for any bounded subset $\mathcal{K}' \subset [0,T) \times \pp \times (\mathbb{R}^d)^3$, for any $n \in \left\{ 0, 1\right\}$
\begin{eqnarray}
\label{integrability:condition}
&&\int_{\mathbb{R}^d} \sup_{(t, \mu, x, v, v') \in \mathcal{K}'}  \Big\{|\partial^{n}_t p(\mu, t, T, x, z)| + | \partial^{1+n}_x p(\mu, t, T, x, z)|  \notag\\
&& \qquad + |\partial^{n}_{v}[\partial_\mu p(\mu, t, T, x, z)](v)| + |\partial_x \partial_\mu p(\mu, t, T, x, z)(v)| + |\partial^2_\mu p(\mu, t, T, x, z)(v, v')|  \Big\} \, dz < \infty. 
\end{eqnarray}

\noindent Consider the map $\Theta: [0,T) \times \pp \rightarrow \mathcal{P}_2(\mathbb{R}^d)$ defined by 
$$
\Theta(t, \mu)(dz) = (p(\mu, t, T, ., z) \sharp\mu)(dz) = \int_{\mathbb{R}^d} p(\mu, t, T, x, z) \mu(dx) \, dz.
$$
 Then, the following statements hold:
\begin{itemize}
\item the map $ h( \Theta(., .))$ belongs to $\mathcal{C}^{1,2}_f([0,T) \times \pp)$,
\item the Lions and time derivatives satisfy for $n \in \left\{0,1\right\}$: 
\begin{align}
\partial^{n}_v[\partial_{\mu} [h(\Theta(t, \mu))]](v) & = \partial^{n}_v\Big[\partial_{\nu} \Big[\int_{(\mathbb{R}^d)^2} \frac{\delta h}{\delta m} (\Theta(t,\mu))(z) \, p(\nu, t, T, x, z) \, dz \, \nu(dx) \Big]_{|\nu =\mu}\Big](v) \nonumber \\
& =  \int_{\mathbb{R}^d} \Big[\frac{\delta h}{\delta m}(\Theta(t,\mu))(z) - \frac{\delta h}{\delta m} (\Theta(t,\mu))(v)\Big] \partial^{1+n}_x p(\mu, t, T, v, z) \, dz \label{condition1:structural:class:measure:derivative} \\
& \quad  + \int_{(\mathbb{R}^d)^2}  \Big[ \frac{\delta h}{\delta m}(\Theta(t,\mu))(z)  - \frac{\delta h}{\delta m}(\Theta(t,\mu))(x) \Big] \partial^{n}_v[ \partial_\mu p(\mu, t, T, x, z)](v) \, dz \,\mu(dx),  \nonumber 
\end{align}

 \begin{align}
 \partial_{t} [h(\Theta(t, \mu))] & = \partial_{s} \Big[\int_{(\mathbb{R}^d)^2} \frac{\delta h}{\delta m}(\Theta(t, \mu))(z) \, p(\mu, s, T, x, z) \, dz \, \mu(dx) \Big]_{|s = t} \nonumber \\
 & = \int_{(\mathbb{R}^d)^2} \Big[ \frac{\delta h}{\delta m} (\Theta(t,\mu))(z) -  \frac{\delta h}{\delta m}(\Theta(t,\mu))(x)\Big]\ \partial_t p(\mu, t, T, x, z) \, dz \, \mu(dx)\label{condition1:structural:class:time:derivative}
\end{align}

\noindent and

\begin{align}
\partial^2_{\mu} [h(\Theta(t, \mu))](v, v') & = \partial_{\mu} \Big[\int_{\rr^d} \frac{\delta h}{\delta m} (\Theta(t,\mu))(z) \, \partial_x p(\mu, t, T, v, z) \, dz  \Big](v') \nonumber \\
& \quad + \partial_{\mu} \Big[\int_{(\rr^d)^2} \frac{\delta h}{\delta m} (\Theta(t,\mu))(z) \, \partial_\mu p(\mu, t, T, x, z)(v) \, dz \, \mu(dx) \Big](v') \nonumber \\
& = \int_{\mathbb{R}^d} \partial_x p(\mu, t, T, v, z)\otimes  \partial_\mu \Big[\frac{\delta h}{\delta m} (\Theta(t,\mu))(z)\Big](v') \, dz \nonumber \\
& + \int_{\mathbb{R}^d}  \big[\frac{\delta h}{\delta m} (\Theta(t,\mu))(z) -\frac{\delta h}{\delta m} (\Theta(t,\mu))(v) \big]\, \partial_\mu \partial_x  p(\mu, t, T, v, z)(v') \, dz \nonumber \\
& +  \int_{(\mathbb{R}^d)^2}  \partial_\mu p(\mu, t, T, x, z)(v) \otimes  \partial_\mu \Big[\frac{\delta h}{\delta m} (\Theta(t,\mu))(z) \Big](v')\, dz \, \mu(dx) \label{condition2:structural:class:measure:derivative} \\
&  + \int_{(\mathbb{R}^d)^2} \big[ \frac{\delta h}{\delta m} (\Theta(t,\mu))(z) -  \frac{\delta h}{\delta m} (\Theta(t,\mu))(x) \big]  \partial^2_\mu p(\mu, t, T, x, z)(v, v') \, dz\,  \mu(dx) \nonumber \\ 
&  + \int_{\mathbb{R}^d} \big[ \frac{\delta h}{\delta m} (\Theta(t,\mu))(z) - \frac{\delta h}{\delta m} (\Theta(t,\mu))(v')  \big] \partial_{x} \partial_\mu p(\mu, t, T, v', z)(v) \, dz \nonumber
 \end{align}
 
\end{itemize}
\noindent with the notations $\partial_x \partial_\mu  p(\mu, t, T, v', z)(v) = (\partial_{x_j}[\partial_\mu p(\mu, t, T, v, z)]_i(v'))_{1\leq i, j \leq d}$ and $ \partial_\mu  \partial_x p(\mu, t, T, v, z)(v') = (\partial_{\mu}[\partial_{x_i} p(\mu, t, T, v, z)]_j(v'))_{1\leq i, j \leq d}$.

\end{prop}

\begin{proof} Under the current assumption, from \cite{chaudruraynal:frikha}, we already know that $(t, \mu) \mapsto h(\Theta(t, \mu)) \in \mathcal{C}^{1, 2}([0,T) \times\pp)$ and that \eqref{condition1:structural:class:measure:derivative} as well as \eqref{condition1:structural:class:time:derivative} are satisfied. It thus remains to prove that for any $(t, v) \in [0,T) \times \mathbb{R}^d$, the map $\mu \mapsto \partial_\mu [h(\Theta(t, \mu))](v)$ is $\mathcal{C}^{1} (\pp)$ and that for any $\mu \in \pp$, we can find a version of $v'\mapsto \partial^2_\mu [h(\Theta(t, \mu))](v, v')$ satisfying \eqref{condition2:structural:class:measure:derivative} and such that the mapping $(t, \mu, v, v')\mapsto  \partial^2_\mu h(\Theta(t, \mu))(v, v')$ is continuous for the product topology.

From \eqref{condition1:structural:class:measure:derivative} with $n=0$ 
\begin{align}
\partial_{\mu} [h(\Theta(t, \mu))](v) & = \int_{\mathbb{R}^d} \Big[\frac{\delta h}{\delta m}(\Theta(t,\mu))(z) - \frac{\delta h}{\delta m} (\Theta(t,\mu))(v)\Big] \partial_x p(\mu, t, T, v, z) \, dz \label{first:order:L:deriv:test:function:h} \\
& \quad  + \int_{(\mathbb{R}^d)^2}  \Big[ \frac{\delta h}{\delta m}(\Theta(t,\mu))(z)  - \frac{\delta h}{\delta m}(\Theta(t,\mu))(x) \Big]  \partial_\mu p(\mu, t, T, x, z)(v) \, dz \,\mu(dx).\notag
\end{align}

Observe now that for any $(t, x, z, v)\in [0,T) \times (\mathbb{R}^d)^3$, the maps $[\delta h / \delta m](\Theta(t, .))(v), \,  \partial_x p(., t, T, v, z)$ and $\partial_\mu p(., t, T, x, z)(v)$ are continuously L-differentiable. Moreover, from the above identity applied to the map $m\mapsto [\delta h/\delta m](m)(v)$ instead of $h$, for any fixed $z\in \mathbb{R}^d$, we deduce that $(t, \mu, v')\mapsto \partial_\mu [[\delta h/\delta m](\Theta(t,\mu))(z)](v')$ is continuous and thus locally bounded. Hence, the integrability condition \eqref{integrability:condition} allows to differentiate under the integral sign. We thus deduce the L-differentiability of $\mu \mapsto \partial_{\mu}[h(\Theta(t, \mu))](v)$ and \eqref{condition2:structural:class:measure:derivative} follows by differentiating under the integral sign in \eqref{first:order:L:deriv:test:function:h}. Finally, we remark that each integrand appearing in the five integrals of the right-hand side of \eqref{condition2:structural:class:measure:derivative} are continuous with respect to the variables $t, \mu, v, v'$. The integrability condition \eqref{integrability:condition} then allows to deduce the global continuity of each term with respect to the variables $(t, \mu, v, v') \in [0,T) \times \pp \times (\mathbb{R}^d)^2$.

\end{proof}

 An explicit expression of $\partial^2_\mu [h(\Theta(t, \mu))](v, v')$ can be derived by plugging the identity \eqref{first:order:L:deriv:test:function:h} applied to the map $\delta h/\delta m$ into \eqref{condition2:structural:class:measure:derivative}. We thus obtain
\begin{align}
 \partial^2_{\mu} & [h(\Theta(t, \mu))](v, v') \nonumber  \\
 & = \int_{(\mathbb{R}^d)^2} \Big\{\frac{\delta^2 h}{\delta m^2}(\Theta(t,\mu))(z, z') - \frac{\delta^2 h}{\delta m^2}(\Theta(t,\mu))(z, v')\Big\} \partial_x p(\mu, t, T, v, z)\otimes \partial_{x} p(\mu, t, T, v',z')\, dz\, dz' \nonumber \\
&+\int_{(\mathbb{R}^d)^3}  \Big\{\frac{\delta^2 h}{\delta m^2}(\Theta(t,\mu))(z, z')-\frac{\delta^2 h}{\delta m^2}(\Theta(t,\mu))(z, x)\Big\}\,\partial_x  p(\mu, t, T, v, z) \otimes \partial_\mu p(\mu, t, T, x, z')(v')\, dz\, dz'\, \mu(dx) \nonumber \\
&+\int_{\mathbb{R}^d}  \Big\{\frac{\delta h}{\delta m}(\Theta(t,\mu))(z)-\frac{\delta h}{\delta m}(\Theta(t,\mu))(v)\Big\}\, \partial_\mu\partial_x p(\mu, t, T, v ,z)(v')\, dz \nonumber \\
&+\int_{\mathbb{R}^d}  \Big\{\frac{\delta h}{\delta m} (\Theta(t, \mu))(z)-\frac{\delta h}{\delta m} (\Theta(t, \mu))(v') \Big\} \partial_{x}\partial_\mu p(\mu, t, T, v', z)(v) \, dz \label{full:expression:second:deriv}\\
&+\int_{(\mathbb{R}^d)^3} \Big\{\frac{\delta^2 h}{\delta m^2}(\Theta(t,\mu))(z, z') - \frac{\delta^2 h}{\delta m^2}(\Theta(t,\mu))(z, v')\Big\}\, \partial_\mu p(\mu, t, T, x, z)(v)\otimes  \partial_{x}p(\mu, t, T,v',z') \, dz\, dz'\, \mu(dx) \nonumber \\
&+ \int_{(\mathbb{R}^d)^4}\Big\{\frac{\delta^2 h}{\delta m^2}(\Theta(t,\mu))(z, z')-\frac{\delta^2 h}{\delta m^2}(\Theta(t,\mu))(z, x')\Big\} \partial_\mu p(\mu, t, T, x, z)(v)\otimes  \partial_\mu p(\mu, t, T, x', z')(v')\,dz\, dz'\, \mu(dx')\, \mu(dx) \nonumber\\
&+\int_{(\mathbb{R}^d)^2}  \Big\{\frac{\delta h}{\delta m} (\Theta(t, \mu))(z)- \frac{\delta h}{\delta m} (\Theta(t, \mu))(x)\Big\}\, \partial_\mu^2 p(\mu, t, T, x, z)(v, v')\,  dz\, \mu(dx). \nonumber
\end{align}

The three relations \eqref{condition1:structural:class:measure:derivative}, \eqref{condition1:structural:class:time:derivative} and \eqref{full:expression:second:deriv} play a central role for the analysis of the regularity properties of the transition density related to the dynamics \eqref{SDE:MCKEAN}. Indeed, under the additional assumption that the maps $v \mapsto [\delta h/\delta m](m)(v), \,  v'\mapsto [\delta^2h/\delta m^2](m)(v, v')$ are uniformly H\"older continuous and if $(t, \mu, x) \mapsto p(\mu, t, T, x, z)$ as well as its derivatives satisfy some suitable Gaussian-type bounds, they allow thanks to the space-time inequality \eqref{space:time:inequality} to match the diagonal regime of the underlying heat kernel and to benefit from the so-called smoothing property of Gaussian kernels. These key observation will be used repeatedly in the proofs of Lemma \ref{additional:regularity:lemma} and Proposition \ref{fully:c2:regularity:property:decoupling:density}.

\section{Overview, assumptions and main results}\label{assumptions:results}

\subsection{Some remainders from \cite{chaudruraynal:frikha}: well-posedness of \eqref{SDE:MCKEAN}, existence and regularity of its transition density}\label{sec:some:remainders}

Let us give a few practical reminders of our previous work \cite{chaudruraynal:frikha} concerning the well-posedness of \eqref{SDE:MCKEAN}, the existence and regularity properties of its transition density. We first provide some assumptions on the coefficients made therein. 
 \begin{itemize}

\item[\HRp]
\begin{itemize}
\item[(i)] The drift coefficient $b: \mathbb{R}_+ \times \mathbb{R}^d \times \mathcal{P}(\mathbb{R}^d) \rightarrow \mathbb{R}^d$ and the diffusion coefficient $a :  \mathbb{R}_+ \times \mathbb{R}^d \times \mathcal{P}(\mathbb{R}^d) \rightarrow \mathbb{R}^d \otimes \mathbb{R}^d$, where $a(t, x, m) = (\sigma \sigma^{*})(t, x, m)$, are bounded and continuous functions. The maps $ \mathbb{R}^d \ni x \mapsto b(t, x, m), \, a(t, x, m)$ are uniformly $\eta$-H\"older continuous for some $\eta \in (0,1]$,
$$
\sup_{t \geq 0, \, x \neq y, \, m \in \mathcal{P}(\mathbb{R}^d)} \left\{  \frac{| b(t, x,m) - b(t, y,m) |}{|x-y|^{\eta}}  + \frac{| a(t, x,m) - a(t, y,m) |}{|x-y|^{\eta}} \right\} < \infty.
$$

\item[(ii)] For any $(i, j) \in \left\{1, \cdots, d\right\}^2$ and any $(t, x, y) \in \mathbb{R}_+\times (\mathbb{R}^d)^2$, the map $m\mapsto a_{i, j}(t, x, m)$ has a bounded and continuous linear functional derivative, such that $(x, y)\mapsto [\delta a_{i,j}/\delta m](t, x, m)(y)$ is a bounded and $\eta$-H\"older continuous function, for some $\eta\in (0,1]$, uniformly with respect to the other variables. The map $m \mapsto [\delta a_{i, j}/\delta m](t, x, m)(y)$ has a bounded and continuous linear functional derivative, such that $(x, y')\mapsto [\delta^2a_{i, j}/\delta m^2](t, x, m)(y, y')$ is $\eta$-H\"older continuous uniformly with respect to the other variables.

\item[(iii)] For any $i \in \left\{1, \cdots, d\right\}$ and any $(t, x) \in \mathbb{R}_+\times\mathbb{R}^d$, the map $m \mapsto b_{i}(t, x, m)$ has a bounded and continuous linear functional derivative, such that $y\mapsto [\delta b_i/\delta m](t, x, m)(y)$ is $\eta$-H\"older continuous uniformly with respect to the other variables. Moreover, for any $(t, x, y) \in \rr_+\times (\mathbb{R}^d)^2$, the map $m \mapsto [\delta b_i/\delta m] (t, x, m)(y)$ has a bounded and continuous linear functional derivative, such that $y'\mapsto [\delta^2 b_i/\delta m^2](t, x, m)(y, y')$ is $\eta$-H\"older continuous uniformly with respect to the other variables.
\end{itemize} 
 \medskip
 
\item[\HE] The diffusion coefficient is uniformly elliptic, that is, there exists $\lambda \geq 1$ such that for every $(t, m) \in [0,\infty) \times \mathcal{P}(\mathbb{R}^d)$ and $(x,\xi) \in (\mathbb{R}^d)^2 $, $\lambda^{-1} |\xi|^2 \leq \langle a(t, x, m) \xi,\xi \rangle \leq \lambda |\xi|^2$ where $a(t, x, m) = (\sigma \sigma^{*})(t, x, m)$.
\bigskip

\end{itemize}

Throughout the paper, we will frequently use the following notation. We will denote by $K^{+}:=K(T, \HRp, \HE)$ some generic constant which depends only upon $T$ and the parameters appearing in \A{HR}$_+$ and \HE. With a slight abuse of notation, we will proceed similarly and denote by $K:=K(T, \HR, \HE)$ some generic constant which depends upon $T$, $a$, $b$, $\delta a/\delta m$, $\delta b/ \delta m$, $\lambda$ and $\eta$. In particular, their values may vary from line to line. We will emphasize the dependence of the constants $K$ or $K_+$ with respect to a prescribed parameter $\beta$ by writing $K_\beta$ or $K^+_\beta$. 

Under \HRp\, and \HE, the martingale problem associated with \eqref{SDE:MCKEAN} is well-posed for any initial distribution $\mu \in \mathcal{P}(\mathbb{R}^d)$. Note that in \cite{chaudruraynal:frikha}, the well-posedness is actually tackled under weaker regularity assumptions on the coefficients, especially with respect to the measure argument, see Theorem 3.4 therein. In particular, weak existence and uniqueness in law holds for the SDE \eqref{SDE:MCKEAN}. 
The law of the process $(X^{s, \xi}_t)_{t\geq s}$ given by the unique solution to the SDE \eqref{SDE:MCKEAN} starting from the initial distribution $\mu = [\xi]$ at time $s$ thus only depends upon $\xi$ through its law $\mu$. Given $\mu \in \mathcal{P}(\mathbb{R}^d)$, it thus makes sense to consider $([X^{s, \xi}_t])_{t\geq s}$ as a function of the initial distribution $\mu$ (and of the time variable $s$) without specifying the choice of the lifted random variable $\xi$ that has $\mu$ as distribution. We then introduce, for any $x\in \rr^d$, the following \emph{decoupled stochastic flow} associated to the SDE \eqref{SDE:MCKEAN}
\begin{equation}
\label{SDE:MCKEAN:decoupling}
X^{s, x, \mu}_t = x + \int_s^t b(r, X^{s, x, \mu}_r, [X^{s, \xi}_r]) \, dr + \int_s^t \sigma(r, X^{s, x , \mu}_r, [X^{s, \xi}_r]) \, dW_r.
\end{equation}

We note that the previous equation is not a McKean-Vlasov SDE since the law appearing in the coefficients is not $[X^{s, x, \mu}_r]$ but rather $[X^{s, \xi}_r]$, that is, the law of the solution to the SDE \eqref{SDE:MCKEAN} (starting at time $s$ from the initial distribution $\mu$) at time $r$. Under \HRp(i) and \HE, the time-inhomogeneous martingale problem associated to the SDE \eqref{SDE:MCKEAN:decoupling} is well-posed, see e.g. Stroock and Varadhan \cite{stroock:varadhan}. In particular, weak existence and uniqueness in law holds for the SDE \eqref{SDE:MCKEAN:decoupling}.

\smallskip
Once weak well-posedness holds for both the McKean-Vlasov SDE and its decoupled flow, one may consider the densities of the random variables $X^{s,\xi}_t$ and $X^{s, x, \mu}_t$. Indeed, under \HRp\, and \HE, it turns out that both random variables admit a density for any $t>0$. Moreover, denoting by $z \mapsto p(\mu,s,t,z)$ the density of $X^{s,\xi}_t$ and by $z \mapsto p(\mu,s,t,x,z)$ the one of $X^{s, x, \mu}_t$, the following key relation holds
\begin{equation}
\label{relation:density:mckean:decoupling:field}
p(\mu, s, t , z) = \int_{\mathbb{R}^d} p(\mu, s, t ,x ,z) \, \mu(dx).
\end{equation}
In other words, $z \mapsto p(\mu, s, t, z)$ is the density of the image measure of the map $x\mapsto p(\mu, s, t, x, z)$ by the measure $\mu$.\\

It now follows from Friedman \cite{friedman:64} (see also McKean and Singer \cite{mcke:sing:67}) that $ p(\mu, s, t ,x ,z)$ admits the following representation in infinite series also known as \emph{parametrix expansion} 
\begin{equation}
p(\mu, s, t, x, y) = \sum_{k\geq0} (\widehat{p} \otimes \mH^{(k)})(\mu, s, t, x, y) \label{parametrix:series:expansion}
\end{equation}

\noindent where for any $(\mu, r, x, y) \in \pp \times [s, t) \times (\mathbb{R}^d)^2$ the parametrix kernel $\mH(\mu, s, r, t, x, y)$ and the Gaussian type kernel $\widehat{p}^{y}(\mu, s, r, t, x, z)$ are given by 
\begin{align}
\widehat{p}(\mu, s, r, t, x, z) & := \widehat{p}^{z}(\mu, s, r, t, x, z), \notag\\
\widehat{p}^{y}(\mu, s, r, t, x, z) & := g\left(\int_{r}^{t} a(r, y, [X^{s, \xi}_{r'}]) \, dr', z-x\right), \label{kernel:p:hat:definition}\\
\mH(\mu, s, r, t, x, y) & := \sum_{i=1}^d b_i(r, x, [X^{s, \xi}_r]) \partial_{x_i} \widehat{p}(\mu, s,  r, t, x, y) \notag \\
& \quad + \frac12 \sum_{i, j=1}^d (a_{i, j}(r, x, [X^{s, \xi}_r]) - a_{i, j}(r, y, [X^{s, \xi}_r]))\partial^2_{x_i, x_j} \widehat{p}(\mu, s, r, t, x, y) \notag
\end{align}
\noindent and the space-time convolution operator
$$
(f \otimes h)(\mu, s, r, t, x, y) := \int_r^t \int_{\mathbb{R}^d} f(\mu, s, r, r', x, z) h(\mu, s, r', t, z, y) \, dz \, dr'
$$
\noindent together with its iterate $f \otimes \mH^{(k)} = (f\otimes \mH^{(k-1)}) \otimes \mH$ for $k\geq1$ with the convention that $f \otimes \mH^{(0)} \equiv f$. Note that to simplify the notation we will write $(f \otimes g)(\mu, s, t, x, y) := (f \otimes g)(\mu, s, s, t, x, y)$, $\mH(\mu, s, t, x, z) = \mH(\mu, s, s, t, x, z)$ and proceed similarly for other maps. 

We will also need the following estimates: for any $\beta \in [0,1]$, there exist positive constants $K:=K(T, b, a , \eta, \lambda)$, $K_\beta:=K(T, b, a , \eta, \lambda, \beta)$ and $c:= c(\lambda)$ such that for any positive integer $k$, any $(\mu, x, y, z) \in \pp \times (\mathbb{R}^d)^3$, any $0\leq s < t \leq T$, any $r\in [s, t)$ and any integer $n$
\begin{equation}
\Big |\partial_x^{n} \widehat{p}^{y}(\mu, s, r, t, x, z) - \partial_x^{n}\widehat{p}^{y}(\mu, s, r, t, x, z)\Big| \leq K_\beta \frac{|x-x'|^\beta}{(t-r)^{\frac{n+\beta}{2}}} \, \left\{ g(c(t-r), z-x) + g(c(t-r), z-x') \right\}, \label{holder:reg:deriv:p:hat}
\end{equation}

\begin{equation}
\label{iter:parametrix:kernel:true:kernel}
| \mH^{(k)}(\mu, s, r, t, x, z)| \leq \frac{K^{k}}{(t-r)^{1-k \frac{\eta}{2}}} \prod_{\ell=1}^{k-1} B\left(\ell \frac{\eta}{2}, \frac{\eta}{2}\right) \, g(c(t-r), z-x),
\end{equation}
\noindent and
\begin{equation}
\label{iter:param:convol:kernel}
| \widehat{p} \otimes \mH^{(k)}(\mu, s, t, x, z)| \leq K^{k} (t-s)^{k \frac{\eta}{2}} \prod_{\ell=1}^{k} B\left(1+ \frac{(\ell-1)\eta}{2}, \frac{\eta}{2}\right) g(c(t-s), z-x)
\end{equation}

\noindent where $B(k, \ell) = \int_0^1 (1-v)^{-1+k} v^{-1+\ell} dv$ stands for the Beta function. From the asymptotics of the Beta function, the series \eqref{parametrix:series:expansion} converges absolutely and uniformly for $(\mu, x, z) \in \pp \times (\mathbb{R}^d)^2$. Moreover, it satisfies the following Gaussian upper-bounds: for any $0\leq s < t \leq T$ and any $(\mu, x, y) \in \pp \times (\mathbb{R}^d)^2$ 
\begin{equation}
\label{bound:density:parametrix}
|\partial^{n}_x p(\mu, s, t, x, y)| \leq  \frac{C}{(t-s)^{\frac{n}{2}}} \,  g(c (t-s), y-x), \, n=0,1, 2
\end{equation}

\noindent and for any $n\in \left\{0, 1, 2\right\}$, for any $\beta \in [0,1]$ if $n \in \left\{0, 1\right\}$ or any $\beta \in [0,\eta)$ if $n=2$
\begin{equation}
\label{second:derivatives:holder:estimates:parametrix:series}
|\partial^{n}_x p(\mu, s, t, x_1, y) - \partial^n_x p(\mu, s, t, x_2, y) | \leq  C_\beta \frac{|x_1-x_2|^{\beta}}{(t-s)^{ \frac{n+\beta}{2}}} \, \left\{ g(c (t-s), y-x_1) + g(c (t-s), y-x_2)\right\}
\end{equation}

\noindent where $C:=C(T, b, a, \lambda, \eta)$, $C_\beta:=C(T, b, a, \lambda, \eta, \beta)$ and $c:=c(\lambda)$ are positive constants. 

We now introduce the solution $\Phi(\mu, s, r, t, y, z)$ to the following Volterra integral equation
\begin{equation}
\Phi(\mu, s, r, t, y , z) = \mH(\mu, s,  r, t, y, z) + (\mH \otimes \Phi)(\mu, s, r, t, y, z).  \label{volterra:equation:density:decoupling:field}
\end{equation}  

Observe from \eqref{iter:parametrix:kernel:true:kernel} for $k=1$ that the singular kernel $\mH(\mu, s, r, t, y, z)$ induces an integrable singularity in time in the above space-time convolution so that the solution to the above equation exists and is given by the (uniformly) convergent series
\begin{equation}
\label{infinite:series:Phi:step}
\Phi(\mu, s, r, t, y, z) = \sum_{k\geq1} \mH^{(k)}(\mu, s, r, t, y, z)
\end{equation}

\noindent which by \eqref{iter:parametrix:kernel:true:kernel} and the asymptotics of the Beta function satisfies
\begin{equation}
\label{Gaussian:estimate:Phi}
| \Phi(\mu, s, r, t, y, z) | \leq \frac{K}{(t-r)^{1-\frac{\eta}{2}}} \, g(c(t-r), z-y)
\end{equation}
\noindent for some positive constants $K:=K(T, b, a, \eta, \lambda), \, c:=c(\lambda)$.

In view of the relation \eqref{relation:density:mckean:decoupling:field} and the above discussion, the regularity properties of the map $(s, \mu) \mapsto p(\mu, s , t, z)$ stem from those satisfied by $(s, x, \mu) \mapsto p(\mu, s, t, x, z)$. In this regard, we recall the following result established in \cite{chaudruraynal:frikha}.

\begin{theorem}\label{derivative:density:sol:mckean:and:decoupling} Assume that \HE\, and \HRp\, hold. Let $T>0$ and $(t, z) \in (0,T] \times \mathbb{R}^d$. Then, the mapping $[0,t) \times \rr^d \times \pp \ni (s, x, \mu) \mapsto p(\mu, s, t, x, z)$ is in $\mathcal{C}^{1, 2, 2}([0,t) \times \mathbb{R}^d \times \pp)$. 
Moreover, for any $(\mu, \mu', s, x, x', v, v') \in (\mathcal{P}_2(\mathbb{R}^d))^2 \times [0,t) \times (\mathbb{R}^d)^4$ and any $(s_1, s_2) \in [0,t)$, 
\begin{align}
|\partial^{n}_v [\partial_\mu p(\mu, s, t, x, z)](v)| & \leq  \frac{C}{(t-s)^{\frac{1+n-\eta}{2}}} g(c (t-s), z-x), \, n=0,1, \label{first:second:lions:derivative:mckean:decoupling} \\
|\partial_s p(\mu, s, t, x, z)| & \leq  \frac{C}{t-s} g(c (t-s), z-x), \label{first:time:derivative:mckean:decoupling}
\end{align}



\noindent for any $\beta \in [0,1]$ if $n=0$ and any $\beta \in [0,\eta)$ if $n=1$,

\begin{align}
 \quad | \partial^{n}_v [\partial_\mu p(\mu, s, t, x, z)](v) & - \partial^{n}_v [\partial_\mu p(\mu, s, t , x', z)] (v) |\nonumber \\
&  \leq C_\beta \frac{|x-x'|^{\beta}}{(t-s)^{\frac{1+n+\beta-\eta}{2}}} \left\{ g(c(t-s), z-x) + g(c(t-s), z-x') \right\},  \label{equicontinuity:second:third:estimate:decoupling:mckean:final} 
\end{align}

\noindent for any $\beta \in [0,\eta)$, 
\begin{align}
 |\partial_v [\partial_\mu p(\mu, s, t, x, z)](v) - \partial_v [\partial_\mu p(\mu, s, t, x, z)](v')| & \leq C_\beta \frac{|v-v'|^{\beta}}{(t-s)^{1+ \frac{\beta-\eta}{2}}} g(c(t-s), z-x), \label{equicontinuity:first:estimate:mckean:decoupling}
\end{align}


\noindent for any $\beta \in [0,1]$ if $n \in \left\{0, \, 1\right\}$ and any $\beta \in [0,\eta)$ if $n=2$,
\begin{align}
 |\partial^{n}_x p(\mu, s, t, x, z) & - \partial^{n}_x p(\mu', s, t, x, z)](v)| \leq C_\beta \frac{W_2(\mu, \mu')^{\beta}}{(t-s)^{ \frac{n+\beta- \eta}{2}}} \, g(c(t-s), z-x), \label{regularity:measure:estimate:v1:v2:v3:mckean:decoupling} 
\end{align}

\noindent for any $\beta\in [0,1]$ if $n=0$ and any $\beta\in [0,\eta)$ if $n=1$,
\begin{align}
 |\partial^{n}_v [\partial_\mu p(\mu, s, t, x, z)](v) - \partial^{n}_v [\partial_\mu p(\mu', s, t, x, z)](v)| & \leq C_\beta^{+} \frac{W_2(\mu, \mu')^{\beta}}{(t-s)^{ \frac{1+n+\beta- \eta}{2}}} g(c(t-s), z-x) \label{regularity:measure:estimate:v1:v2:mckean:decoupling} 
\end{align}

\noindent for any $\beta \in [0,1]$ if $n=0$, any $\beta \in [0,\frac{1+\eta}{2})$ if $n=1$ and any $\beta \in [0, \frac{\eta}{2})$ if $n=2$, 
\begin{align}
 | & \partial^{n}_x p (\mu, s_1, t, x, z) - \partial^{n}_x p(\mu, s_2, t, x, z)  | \nonumber \\
 & \leq C_\beta \left\{ \frac{|s_1-s_2|^{\beta}}{(t-s_1)^{\frac{n}{2} + \beta }} \, g(c(t-s_1), z-x) + \frac{|s_1-s_2|^{\beta}}{(t- s_2)^{ \frac{n}{2} +\beta }} \, g(c(t-s_2), z-x) \right\}, \label{regularity:time:estimate:v1:v2:v3:mckean:decoupling} 
\end{align}


\noindent for any $\beta \in [0, \frac{1+\eta}{2})$ for $n=0$ and any $\beta \in [0, \frac{\eta}{2})$ for $n=1$
\begin{align}
 | & \partial^{n}_v [\partial_\mu p(\mu, s_1, t, x, z)](v) - \partial^{n}_v [\partial_\mu p(\mu, s_2, t, x, z)](v)| \nonumber \\
 & \leq C_\beta^{+} \left\{ \frac{|s_1-s_2|^{\beta}}{(t-s_1)^{ \frac{1+n-\eta}{2}+\beta}} \, g(c(t-s_1), z-x) + \frac{|s_1-s_2|^{\beta}}{(t- s_2)^{\frac{1+n-\eta}{2} + \beta}} \, g(c(t-s_2), z-x) \right\},  \label{regularity:time:estimate:v1:v2:mckean:decoupling} 
\end{align}
 \noindent where $C := C(T, \HR, \HE)$, $C_\beta := C(T, \HR, \HE, \beta)$, $C^{+}_\beta := C^{+}_\beta(T, \HRp,\, \HE, \beta)$ and $c:=c(\lambda)$ are positive constants.
 
\end{theorem}

\smallskip

\subsection{Additional regularity of the transition density}\label{sec:additional:regularity}

Our approach requires to investigate additional (with respect to the aforementioned results) regularity properties of the map $(s, \mu) \mapsto p(\mu, s , t, z)$ and $(s, x, \mu) \mapsto p(\mu, s, t, x, z)$. To be more specific, our aim is to establish that $(s, \mu) \mapsto p(\mu, s, t, z)$ belongs to $\mathcal{C}^{1,2}_f([0,t) \times \pp)$ and that the transition density of the decoupled flow \eqref{SDE:MCKEAN:decoupling} satisfies the assumptions of Proposition \ref{structural:class}. This second claim is the purpose of the following lemma whose proof follows from similar arguments as those employed to obtain Theorem \ref{derivative:density:sol:mckean:and:decoupling} in \cite{chaudruraynal:frikha} and is thus postponed to \ref{proof:lemma:additional:regularity}. 

\begin{lem}\label{additional:regularity:lemma}
Assume that \HE\, and \HRp\, hold. Let $T>0$ and $(t, z) \in (0,T] \times \mathbb{R}^d$. Then, for all $(\mu, s , x, v)\in \pp \times [0,t) \times (\mathbb{R}^d)^2$, the maps $\mathbb{R}^d \ni x\mapsto \partial_\mu p(\mu, s, t, x, z)(v)$ and $\pp \ni \mu \mapsto \partial_x p(\mu, s, t, x, z)(v)$ are continuously differentiable, with derivatives $\partial_x \partial_\mu p(\mu, s, t, x, z)(v)$ and $\partial_\mu \partial_x p(\mu, s, t, x, z)(v)$ being continuous in $\mu, x, v$ and bounded with respect to the same variables. 

Moreover, for any $\beta \in [0,\eta)$ there exist $C:= C(T, \HR, \HE)$, $C_\beta^+ := C(T, \HRp, \HE,\beta)$, $c:=c(\lambda)>0$ such that for all $(\mu, \mu', s , x, x', v, v')\in (\pp)^2 \times [0,t) \times (\mathbb{R}^d)^4$, one has
\begin{equation}\label{deriv:cross:space:mes:pm}
|\partial_x [\partial_\mu p(\mu, s, t, x, z)](v) | \leq  \frac{C}{(t-s)^{1-\frac{\eta}{2}}}\, g(c(t-s), z-x)
\end{equation}
and
\begin{align}
|& \partial_x [\partial_\mu p(\mu, s, t, x, z)](v)  - \partial_x[\partial_\mu p(\mu', s, t, x', z)](v')| \notag\\
& \leq \frac{C_\beta^+}{(t-s)^{1+ \frac{\beta-\eta}{2}}} [W_2(\mu, \mu')^{\beta} + |x-x'|^\beta + |v-v'|^\beta ] \,\left\{ g(c(t-s), z-x) + g(c(t-s), z-x') \right\}. \label{sensitivity:mes:deriv:cross:space:mes:p}
\end{align}
\end{lem}

In view of the results recalled from \cite{chaudruraynal:frikha} and the relation \eqref{relation:density:mckean:decoupling:field}, a sufficient condition to obtain the $\mathcal{C}^{1,2}_f([0,t) \times \pp)$ regularity of the map $(s, \mu) \mapsto p(\mu, s, t, z)$ consists in establishing that the second derivative $\partial^2_\mu p(\mu, s, t, x, z)(\gv)$ exists and is continuous in its arguments. This is the purpose of the following proposition which provides as well the H\"older regularity of $[0,t) \times \mathbb{R}^d \times \pp \times (\mathbb{R}^d)^2 \ni (s, x, \mu, \gv)\mapsto \partial^2_\mu p(\mu, s, t, x, z)(\gv)$ and some sharp Gaussian type estimates provided the coefficients satisfy the following additional regularity assumptions. Roughly speaking, we need the existence of an additional linear functional derivative which is H\"older continuous with respect to its space arguments.\\

%
%
 \begin{itemize}
\item[\A{HR$_{++}$}] The coefficients $b$ and $\sigma$ satisfy \A{HR$_+$}. Moreover, for any $(i, j) \in \left\{1, \cdots, d\right\}^2$ and any $(t, x, v, v') \in \rr_+\times (\rr^d)^3$, the maps $\mathcal{P}(\mathbb{R}^d) \ni m \mapsto [\delta^2 a_{i, j}/\delta m^2](t, x, m)(v, v'),\, [\delta^2 b_i/\delta m^2](t, x, m)(v, v')$ admit a bounded and continuous linear functional derivative, such that $(x, v'')\mapsto [\delta^3 a_{i, j}/\delta m^3] (t, x, m)(v, v', v'')$ and $v''\mapsto [\delta^3b_i/\delta m^3](t, x, m)(v, v', v'')$ are $\eta$-H\"older continuous uniformly with respect to the other variables.

\end{itemize}

As previously done, we will denote by $K^{++}:=K(T, \HRpp, \HE)$ some generic constant that depends only on $T$ and the parameters in \A{HR}$_{++}$, \A{HE}. We emphasize its dependence with respect to a prescribed parameter $\beta$ by writing $K^{++}_\beta$.

We are now in position to state the $\mathcal{C}^{1, 2, 2}_f([0,t) \times \mathbb{R}^d \times \pp)$ regularity of the map $(s, x, \mu)\mapsto p(\mu, s, t, x, z)$ which is a key step toward our quantitative estimates for propagation of chaos. Its proof being rather long and technical is postponed to Section \ref{full:c2:regularity:section}.

\begin{prop}\label{fully:c2:regularity:property:decoupling:density} Assume that \A{HE} and \A{HR$_{++}$} hold. Let $T>0$ and $(t, z) \in (0,T] \times \mathbb{R}^d$. Then, the map $(s, x, \mu)\mapsto p(\mu, s, t, x, z) \in \mathcal{C}^{1,2, 2}_f([0,t) \times \rr^d \times \pp)$. In particular, for any fixed $(s, x) \in [0,t) \times \mathbb{R}^d$, the map $\mu \mapsto p(\mu, s, t, x, z)$ is fully $\mathcal{C}^2$. Moreover, for any $\beta \in [0,\eta)$, there exist positive constants $C^+ := C(T, \HRp, \HE)$, $C_\beta^{++}=C(T, \HRpp, \HE, \beta)$ and $c:=c(\lambda)$ such that for any $(\mu, s, x, x', z, \gv) \in \pp \times [0,t) \times (\mathbb{R}^d)^3 \times (\mathbb{R}^d)^2$
\begin{align}
| \partial^{2}_\mu p(\mu, s, t, x, z)(\gv)| & \leq  \frac{C^+}{(t-s)^{1-\frac{\eta}{2}}} g(c (t-s), z-x)  \label{second:lions:derivative:mckean:decoupling} 
\end{align}

\noindent for any $\gv_1 = (v_1,v_1')$, $\gv_2 = (v_2,v_2')$ in $\mathbb{R}^d \times \mathbb{R}^d$,
\begin{align}
|\partial^2_\mu & p(\mu, s, t, x, z)(\gv_1)  - \partial^2_\mu p(\mu', s, t, x' , z)(\gv_2)| \nonumber \\
& \leq \frac{C_\beta^{++}}{(t-s)^{1+\frac{\beta-\eta}{2}}} \big[W_2(\mu, \mu')^{\beta} +|x-x'|^{\beta} + |\gv_1-\gv_2|^{\beta} \big]\, \Big\{ g(c (t-s), z-x) + g(c(t-s), z-x')\Big\}.\label{second:deriv:mes:reg:mes:space:estimate:induction:mckean:decoupling}
\end{align}

\noindent and for any $s_1, s_2 \in [0,t)$
\begin{align}
|\partial^2_\mu & p(\mu, s_1, t, x, z)(\gv)  - \partial^2_\mu p(\mu', s, t, x' , z)(\gv)| \nonumber \\
& \leq C_\beta^{++}  \left\{ \frac{|s_1-s_2|^{\frac{\beta}{2}}}{(t-s_1)^{1+\frac{\beta-\eta}{2}  }} \, g(c(t-s_1), z-x) + \frac{|s_1-s_2|^{\frac{\beta}{2}}}{(t- s_2)^{ 1+ \frac{\beta-\eta}{2} }} \, g(c(t-s_2), z-x) \right\}.\label{second:deriv:mes:reg:time:estimate:induction:mckean:decoupling}
\end{align}
\end{prop}

\subsection{Fundamental solution of the backward Kolmogorov PDE on the Wasserstein space.} 

A key feature of our analysis of convergence rate in the propagation of chaos phenomenom is to bring to light a connection between the transition density functions of the system of particles \eqref{SDE:particle:system} and of its mean-field limit \eqref{SDE:MCKEAN} by means of the notion of fundamental solution of the parabolic backward Kolmogorov PDE defined on the strip $[0,T] \times \pp$ that we now present. 

Let us consider the following linear differential operator
\begin{equation}\label{Gen:flow:weak:mckean}
\mathscr{L}_{t} U(\mu) = \int_{\rr^d}\left\{ \sum_{i= 1}^{d} b_i(t, v, \mu) [\partial_\mu U(\mu)]_i(v) + \frac12 \sum_{i, j=1}^d a_{i ,j}(t, v, \mu) \partial_{v_j}[\partial_\mu U(\mu)]_i(v)  \right\} \mu(dv), \, t\in [0,T]
\end{equation}

\noindent acting on a smooth real-valued function $U$ defined on $\pp$. The parabolic backward Kolmogorov PDE defined on the strip $[0,T] \times \pp$ is given by
\begin{align}
\begin{cases}
(\partial_t + \mathscr{L}_t) U(t, \mu)  = 0,& \quad  (t,  \mu) \in [0,T) \times \pp,  \\
U(T, \mu)   = h(\mu), & \quad \mu \in  \pp.
\end{cases}
 \label{kolmogorov:pde:wasserstein:space}
\end{align}

Let us underline that under mild assumptions on the functions $h$, $b$ and $a$, the above PDE admits a unique classical solution given by $U(t, \mu) = h([X^{t, \xi}_T])$, $(X^{t, \xi}_s)_{s\in [t, T]}$ being the unique weak solution to the SDE \eqref{SDE:MCKEAN} starting from the initial distribution $[\xi]=\mu$ at time $t$. We refer to \cite{chaudruraynal:frikha} for irregular terminal condition $h$ and coefficients $b$ and $a$, in the uniformly elliptic setting. We also refer to \cite{Crisan2017} when the terminal condition $h$ is irregular by means of Malliavin's calculus still in the uniformly elliptic setting. We finally mention the recent work \cite{buckdahn2017} for the case of smooth functions $h$, $b$ and $a$ without any non-degeneracy assumption. Let us now introduce the notion of fundamental solution related to \eqref{kolmogorov:pde:wasserstein:space}. 

\begin{definition}\label{def:fundamental:solution}
A fundamental solution of $\partial_s + \mathscr{L}_s = 0$ in $[0, T] \times \pp$ is a map $[0, t) \times \pp \ni (s, \mu) \mapsto p(\mu, s, t, z)$ defined for all $(t, z) \in (0,T] \times \mathbb{R}^d$ satisfying the following two conditions:
\begin{itemize}
\item[(i)] For every fixed $(t, z) \in (0,T] \times \rr^d$, the map $[0, t) \times \pp \ni (s, \mu) \mapsto p(\mu, s, t, z)$ belongs to $\mathcal{C}^{1, 2}([0,t) \times \pp)$ and satisfies the equation
\begin{equation}
\label{backward:kolmogorov:pde}
(\partial_s+\mathscr{L}_s) p(\mu, s, t, z)  = 0 \quad \mbox{ on } [0,t)\times \pp.
\end{equation}

\item[(ii)] For every real-valued continuous function $f$ defined on $\rr^d$ with at most quadratic growth, for any $\mu \in \pp$
\begin{equation}
\lim_{s\uparrow t} \int_{\mathbb{R}^d} f(z) \, p(\mu, s, t, z) \, dz = \int_{\mathbb{R}^d} f(z) \, \mu(dz). \label{terminal:condition}
\end{equation}

When there is no possible confusion, we will write $\lim_{s\uparrow t} p(\mu, s, t, z) = \delta_z(.)\star \mu$ (``$\star$'' denoting the usual convolution operator), instead of \eqref{terminal:condition}.

\end{itemize}
\end{definition}

\begin{theorem}\label{fundamental:solution:wasserstein:space}
Assume that \A{HE} and \A{HR$_+$} hold. Let $(t, z) \in (0,T] \times \rr^d$. The map $[0,t) \times \pp \ni (s, \mu) \mapsto p(\mu, s, t, z)$ defined by \eqref{relation:density:mckean:decoupling:field}-\eqref{parametrix:series:expansion} is a fundamental solution of $\partial_s + \mathscr{L}_s = 0$. 

Moreover, it is the unique solution among the class of fundamental solutions $(s, \mu) \mapsto q(\mu, s, t, z)$ defined for all $(t, z) \in (0,T] \times \mathbb{R}^d$, being continuous with respect to $z$, satisfying \eqref{cond:integrab:ito:process:second:version} for any fixed $(t, z) \in (0,T] \times \mathbb{R}^d$, $T$ being replaced by any $t' \in [0,t)$, and satisfying the terminal condition \eqref{terminal:condition} locally uniformly in $\mu \in \pp$, that is, uniformly in $\mu \in \mathcal{K}$, $\mathcal{K}$ being any compact set of $\pp$.
\end{theorem}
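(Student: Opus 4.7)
The plan is to exhibit $p(\mu, s, t, z)$ defined by \eqref{relation:density:mckean:decoupling:field} together with the parametrix expansion \eqref{parametrix:series:expansion} as a fundamental solution, and then to establish uniqueness via a duality argument based on the Wasserstein chain rule applied along the McKean--Vlasov flow.

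\textbf{Existence: regularity.} First I would combine the $\mathcal{C}^{1,2,2}$-regularity of $(s, x, \mu) \mapsto p(\mu, s, t, x, z)$ provided by Theorem \ref{derivative:density:sol:mckean:and:decoupling} and Lemma \ref{additional:regularity:lemma} with the standard differentiation formulas for functionals of the form $\mu \mapsto \int_{\rr^d} f(\mu, x)\, \mu(dx)$ to deduce that $(s, \mu) \mapsto p(\mu, s, t, z) \in \mathcal{C}^{1,2}([0,t) \times \pp)$. Explicitly,
\begin{align*}
\partial_s p(\mu, s, t, z) & = \int_{\rr^d} \partial_s p(\mu, s, t, x, z)\, \mu(dx), \\
\partial_\mu p(\mu, s, t, z)(v) & = \partial_x p(\mu, s, t, v, z) + \int_{\rr^d} \partial_\mu p(\mu, s, t, x, z)(v)\, \mu(dx),
\end{align*}
and an analogous formula for $\partial_v[\partial_\mu p(\mu, s, t, z)](v)$. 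The Gaussian bounds in Theorem \ref{derivative:density:sol:mckean:and:decoupling} and Lemma \ref{additional:regularity:lemma} guarantee integrability of all terms and joint continuity in $(s, \mu, v)$.

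\textbf{Existence: PDE and terminal condition.} To verify \eqref{backward:kolmogorov:pde}, I would apply the measure-only instance of the Wasserstein chain rule (the trivial-$Y$ case of Proposition \ref{prop:chain:rule:joint:space:measure}) to the process $r \mapsto p(\mu^{s,\mu}_r, r, t, z)$, where $\mu^{s,\mu}_r := [X^{s,\xi}_r]$ denotes the flow of marginal laws of the McKean--Vlasov solution started from $[\xi]=\mu$ at time $s$. This yields
\[
p(\mu^{s,\mu}_{r'}, r', t, z) = p(\mu, s, t, z) + \int_s^{r'} \bigl[(\partial_r + \mathscr{L}_r) p\bigr](\mu^{s,\mu}_r, r, t, z)\, dr, \qquad r' \in [s, t).
\]
On the other hand, the flow property $[X^{s,\xi}_{r'}] = [X^{r, X^{s,\xi}_r}_{r'}]$ combined with \eqref{relation:density:mckean:decoupling:field} gives $p(\mu^{s,\mu}_r, r, t, z) = p(\mu, s, t, z)$ for every $r \in [s, t)$, so the integral vanishes identically. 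Continuity of the integrand evaluated at $r = s$ then delivers the PDE at the arbitrary point $(s, \mu)$. Finally, \eqref{terminal:condition} follows from $\int f(z) p(\mu, s, t, z)\, dz = \E[f(X^{s,\xi}_t)]$ together with the $L^2(\P)$-convergence $X^{s,\xi}_t \to \xi$ as $s \uparrow t$ (standard Gr\"onwall under the boundedness of $b$ and $\sigma$) and uniform integrability for $f$ with at most quadratic growth.

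\textbf{Uniqueness.} Let $\tilde p$ be any fundamental solution satisfying \eqref{cond:integrab:ito:process:second:version} (with $T$ replaced by any $t' < t$) and the locally uniform terminal condition. I would apply Proposition \ref{prop:chain:rule:joint:space:measure} to $r \mapsto \tilde p(\mu^{s,\mu}_r, r, t, z)$ exactly as above; the integrability hypothesis, the boundedness of $b$ and $\sigma$ and Theorem \ref{derivative:density:sol:mckean:and:decoupling} ensure the chain rule is applicable on every compact subinterval of $[s, t)$. The PDE $(\partial_r + \mathscr{L}_r) \tilde p = 0$ forces the integral term to vanish, so $\tilde p(\mu, s, t, z) = \tilde p(\mu^{s,\mu}_{r'}, r', t, z)$ for every $r' \in [s, t)$. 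Letting $r' \uparrow t$, the trajectory $(\mu^{s,\mu}_{r'})_{r'}$ stays in a compact subset of $\pp$ (uniform second-moment bound on the McKean--Vlasov solution) and converges in $\pp$ to $\mu^{s,\mu}_t$. Testing against an arbitrary continuous $f$ of at most quadratic growth and invoking the locally uniform terminal condition yields
\[
\int_{\rr^d} f(z) \tilde p(\mu, s, t, z)\, dz = \int_{\rr^d} f\, d\mu^{s,\mu}_t = \int_{\rr^d} f(z) p(\mu, s, t, z)\, dz,
\]
so that $\tilde p = p$ by continuity in $z$. The main obstacle I anticipate is precisely in this uniqueness step: one must justify the chain rule for the a priori less regular $\tilde p$ using only the $\mathcal{C}^{1,2}$-hypothesis of Definition \ref{def:fundamental:solution} and the integrability bound \eqref{cond:integrab:ito:process:second:version}, and then control the passage to the terminal limit uniformly along the compact flow $\{\mu^{s,\mu}_{r'}\}_{r' \in [s,t)}$; by contrast, the regularity transfer in the existence part is routine bookkeeping given Theorem \ref{derivative:density:sol:mckean:and:decoupling} and Lemma \ref{additional:regularity:lemma}, and the flow-constancy observation delivers the PDE in essentially one line.
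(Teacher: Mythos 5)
Your proposal is correct and follows essentially the same route as the paper: establish the $\mathcal{C}^{1,2}$ regularity of $(s,\mu)\mapsto p(\mu,s,t,z)$ from Theorem \ref{derivative:density:sol:mckean:and:decoupling} via \eqref{relation:density:mckean:decoupling:field}, derive the PDE by combining the Markov/flow property with the Wasserstein chain rule of Proposition \ref{prop:chain:rule:joint:space:measure}, and prove uniqueness by testing any competitor along the flow $r\mapsto [X^{s,\xi}_r]$ and passing to the limit $r\uparrow t$ using the locally uniform terminal condition. The only cosmetic difference is that the paper checks the terminal condition \eqref{terminal:condition} (locally uniformly in $\mu$) through the parametrix decomposition $p=\widehat{p}+\mathcal{R}$ rather than through the $L^2$ estimate $\E|X^{s,\xi}_t-\xi|^2\le C(t-s)$, but both are routine.
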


\subsection{Three types of propagation of chaos estimates for the system of particles \eqref{SDE:particle:system}}
Our primary objective is to study the propagation of chaos for the system of particles \eqref{SDE:particle:system} by quantifying in an appropriate sense its distance from its mean field limit \eqref{SDE:MCKEAN}.\\

Let us first emphasize that under \HE\, and \HRp, the system of particles with dynamics \eqref{SDE:particle:system} is well posed in the weak sense. Indeed, for any $(t, x) \in \rr_+ \times \mathbb{R}^d$ and $\bold{x}, \bold{y} \in (\mathbb{R}^d)^{N}$: 
\begin{align*}
b_i( t, x, m^{N}_{\bold{x}}) - b_i(t, x, m^{N}_{\bold{y}}) & =  \int_0^{1} \int_{\rr^d} \frac{\delta}{\delta m} b_i(t, x,  m^{\lambda, N})(z) (m^{N}_{\bold{x}} -m^{N}_{\bold{y}})(dz) d\lambda
\end{align*}

\noindent where we used the notations $m^{N}_{\bold{x}} := N^{-1} \sum_{i=1}^{N} \delta_{x_i}$ and $m^{\lambda, N} := \lambda m^{N}_{\bold{x}} +(1-\lambda)m^{N}_{\bold{y}}$. From the uniform $\eta$-H\"older regularity of $z \mapsto [\delta b_i /\delta m] (t, x, m)(z)$, it is thus readily seen
$$
|b_i( t, x, m^{N}_{\bold{x}}) - b_i(t, x, m^{N}_{\bold{y}})| \leq \sup_{t \in \mathbb{R}_+, x \in \mathbb{R}^d, m \in \mathcal{P}(\mathbb{R}^d)}\Big[ \frac{\delta}{\delta m}b_i( t, x, m)(.) \Big]_{H} |\bold{x} - \bold{y}|^{\eta}
$$ 

\noindent where $\Big[ \frac{\delta}{\delta m}b_i( t, x, m)(.) \Big]_{H}$ stands for the H\"older norm of the map $[\delta b_i / \delta m]( t, x, m)(.)$. The same inequality also holds with the map $a_{i, j}$ instead of $b_i$. As a consequence, the measurable maps $\mathbb{R}^d \times (\mathbb{R}^d)^{N} \ni (x, \bold{x}) \mapsto b(t, x, m^{N}_{\bold{x}}), \, a(t, x, m^{N}_{\bold{x}})$ are bounded and $\eta$-H\"older continuous uniformly in time so that the martingale problem related to \eqref{SDE:particle:system} is well posed, see e.g. \cite{stroock:varadhan}. In particular, weak existence and uniqueness holds for the SDE \eqref{SDE:particle:system}.\\

Also, from \cite{friedman:64}, the $N\times d$ dimensional random variable $\mathbf{X}_t = (X^1_t, \cdots, X^N_t)$ given by the unique weak solution to \eqref{SDE:particle:system} taken at time $t>0$ starting from the $N$-fold product measure $\mu^{N}$ admits a density function $(\rr^d)^{N} \ni \bold{z} \mapsto \mathbf{p}^{N}(\mu, 0, t, \bold{z})$, $\bold{z}= (z_1, \cdots, z_{N})$, with respect to the Lebesgue measure on $(\rr^d)^{N}$. For any fixed $i$ in $\left\{1,\ldots, N\right\}$, we denote by $p^{i,N}$ the density of the $i^{{\rm th}}$ particle obtained by integrating the joint density of the particles $\bold{z} \mapsto \mathbf{p}^{N}(\mu, 0, t, \bold{z})$ over $z_{j}$ for $j\neq i$. By weak uniqueness of the SDE \eqref{SDE:particle:system} and exchangeability in law of the i.i.d. initial conditions $(\xi^{i})_{1\leq i \leq N}$, the one-dimensional marginal distributions of the random variable $\mathbf{X}^{N}_t$ are equal. In particular, one has $p^{i, N} \equiv p^{1,N}$ for any $i \in \left\{1, \cdots, N\right\}$. Moreover, for any fixed time $T>0$, there exist two constants $\bold{C}:=\bold{C}(T, a, b, N)>1, \, \bold{c}:= \bold{c}(\lambda, N)>0$ such that for any $(t, \mu, \bold{z}) \in (0,T] \times \mathcal{P}(\mathbb{R}^d) \times (\mathbb{R}^d)^N$ the following two sided Gaussian estimate holds
\begin{equation}
\bold{C}^{-1} \int_{(\rr^d)^{N}}g(\bold{c}^{-1}t, \bold{z} -\bold{x}) \, \mu^{N}(\bold{dx}) \leq \mathbf{p}^{N}(\mu, 0, t, \bold{z}) \leq \bold{C} \int_{(\rr^d)^{N}}g(\bold{c}t, \bold{z} -\bold{x}) \, \mu^{N}(\bold{dx}). \label{two:sided:gaussian:estimate:pN}
\end{equation}

\begin{remark}
Hence, it is readily seen that a similar two sided Gaussian estimate hold for $p^{1, N}$ instead of $\mathbf{p}^{N}$ but with constant $\bold{C}, \, \bold{c}$ that depend on $N$. As a by product of our result, we will establish below a Gaussian upper-bound with two constants $C$, $c$ that do not depend on $N$. To the best of our knowledge, this result is new.
\end{remark}

The first propagation of chaos estimate is an error bound of order $N^{-1}$ for the difference $(p^{1,N} - p)(\mu, 0, t, z)$ under \A{HR$_{++}$} and \HE. We then establish a first order expansion for this difference with an explicit control of the remainder term under the additional assumption that $\mathbb{R}^d \times \pp (x, \mu) \mapsto \sigma(t, x, \mu)$ is uniformly Lipschitz continuous and that $M_q(\mu)= (\int_{\mathbb{R}^d} |x|^q \, \mu(dx))^{1/q} <\infty$ for some $q>4$. The proof of the following result is postponed to Section \ref{propagation:of:chaos}.

\begin{theorem}\label{propagation:chaos:fundamental:sol:theorem}Assume that \HE\, and \HRpp\, hold. Then, there exist positive constants $ K^+ :=K(T, \HRp, \HE)$, $c:=c(\lambda)$, $T\mapsto K(T, \HRp, \HE)$ being non-decreasing, such that for any $(t, \mu, z) \in (0,T]\times \pp \times \mathbb{R}^d$
\begin{align}
p^{1, N}(\mu, 0, t, z) \leq K^+ \int_{\mathbb{R}^d} g(c t, z-x) \mu(dx) \label{Gaussian:upper:bound:density:p1N}
\end{align}
\noindent and 
\begin{align}
|(p^{1, N}-p)(\mu, 0, t, z)| \leq \frac{K^+}{N} \left\{ \frac{1}{t^{\frac{1-\eta}{2}}} \int_{\mathbb{R}^d} g(c t, z-x) |x| \mu(dx) + \frac{1}{t^{1-\frac{\eta}{2}}} \int_{\mathbb{R}^d} g(c t, z-x)  \mu(dx) \right\}.\label{error:bound:prop:chaos}
\end{align}

Assume additionally that $\mathbb{R}^d \times \pp \ni (x, \mu) \mapsto \sigma(t, x, \mu)$ is uniformly Lipschitz continuous, with modulus $[\sigma]_L$, and that $M_q(\mu) < \infty$ for some $q>4$. Then, for all $(t, \mu, z) \in (0,T]\times \pp \times \mathbb{R}^d$, the following first order expansion holds
\begin{align}
 (p^{1,N} - p)(\mu, 0, t, z) & =  \frac{1}{N}   \E\Big[\frac{\delta}{\delta m}p(\mu, 0, t, \xi^1, z)(\xi^1)- \frac{\delta}{\delta m}p(\mu, 0, t, \xi^1, z)(\widetilde{\xi})\Big]\nonumber \\
& \quad + \frac{1}{2N} \E\Big[\frac{\delta^2}{\delta m^2}p(\mu, 0, t, \xi^1, z)(\widetilde{\xi}, \widetilde{\xi}) - \frac{\delta^2}{\delta m^2}p(\mu, 0, t, \xi^1, z)(\widetilde{\xi}, \xi^2) \Big]  \nonumber\\
&  \quad +\frac{1}{N} \int_0^t \E[\mathcal{A}_sp(\mu_s, s, t, z)]\, ds + \frac{1}{N}\mathcal{R}_N(\mu, 0, t, z)\label{first:order:exp:prop:chaos}
\end{align}

\noindent where $\widetilde{\xi}$ is an $\mathbb{R}^d$-valued random variable independent of $(\xi^{i})_{1\leq i\leq N}$ with law $\mu$ and $\mathcal{A}_s$ is the differential operator on $\pp$ acting on smooth function $\phi : \pp \to \mathbb{R}$
$$
 \mathcal{A}_s \phi(\mu) = \frac{1}{2} \int_{\mathbb{R}^d} \tr\big(\partial^2_\mu \phi(\mu)(v, v) a(s, v, \mu) \big) \, \mu(dv)
$$ 

\noindent with the following estimate on the remainder term: for any $\phi: \mathbb{R}^d \rightarrow \mathbb{R}$ with at most quadratic growth and any $\beta \in [0,\eta)$ 
$$
 \int_{\mathbb{R}^d} |\phi(z)|  |\mathcal{R}_N(\mu, 0, t, z)| \, dz \leq K^{++} \left\{ \frac{\varepsilon^{1/2}_N}{t^{1-\frac{\eta}{2}}} + \frac{\varepsilon^{\beta/2}_N}{t^{1+ \frac{\beta-\eta}{2}}} \right\}
$$

\noindent where $T\mapsto K^{++}:=K(T, \HRpp, \HE, [\sigma]_L, q, M_q(\mu))$ is a positive non-decreasing function, $[\sigma]_L$ standing for the uniform Lipschitz modulus of the map $\sigma(t, ., .)$, and where $\varepsilon_N$ is defined by
\begin{equation}\label{convergence:rate}
\varepsilon_N :=  \left\lbrace\begin{array}{ll} \displaystyle N^{-1/2} \text{ if } d < 4,\\ \displaystyle  N^{-1/2}\log(1+N)  \text{ if } d=4,\\ \displaystyle N^{-2/d}  \text{ if } d > 4. \end{array}\right.
\end{equation}

\end{theorem}

 Inspired by the previous result as well as Remark 5.110 in \cite{carmona2018probabilistic}, we now provide a kind of weak propagation of chaos estimate as well as an error estimate for the difference between the semigroup generated by the system of particles \eqref{SDE:particle:system} and the semigroup associated to its mean-field limit both living on $\pp$. Below, for all $t \ge 0$, we denote by $\mu_t$ the law of the solution $X_t$ of \eqref{SDE:MCKEAN}. 

\begin{theorem}\label{theo:weakChaos} Assume that \A{HE} and \A{HR$_{++}$} hold. For $\alpha\in (0,1]$, let $\mathscr{C}^{2, \alpha}(\pp)$ be the class of continuous functions $\phi : \pp \rightarrow \mathbb{R}$ that admit two continuous linear functional derivatives on $\pp$ (see Definition 5.43 in section 5.4.1 of \cite{carmona2018probabilistic}) and satisfying the following regularity and growth assumptions: there exist $C\geq0$ such that for any $m\in \pp$, any $x, x' \in \mathbb{R}^d$ and any bounded set $D\subset \mathbb{R}^d$
\begin{equation}
\label{local:holder:reg:first:flat:deriv:terminal:condition}
\sup_{x\neq x', x,  x' \in D} |x-x'|^{-\alpha}\Big| \frac{\delta \phi}{\delta m}(m)(x) - \frac{\delta \phi}{\delta m}(m)(x') \Big|  \leq C (1 + M_2(m)),
\end{equation}

\begin{equation}
\label{local:holder:reg:second:flat:deriv:terminal:condition}
\sup_{x''\neq x', x', x'' \in D} |x'-x''|^{-\alpha}\Big| \frac{\delta^2 \phi}{\delta m^2}(m)(x, x') - \frac{\delta^2 \phi}{\delta m^2}(m)(x, x'') \Big|  \leq C (1 + |x| + M_2(m)),
\end{equation}

\noindent and
\begin{equation}
\label{growth:condtion:first:second:flat:deriv}
\Big|\frac{\delta \phi}{\delta m}(m)(x)\Big| + \Big|\frac{\delta^2 \phi}{\delta m^2}(m)(x, x')\Big| \leq C (1+ |x| + |x'| + M_2(m))
\end{equation}
\noindent where we recall that $M_2(m)= (\int_{\mathbb{R}^d}|x|^2 m(dx))^{1/2}$.

 Then, there exists a positive constant $K^{+}:=K(T, \HRp, \HE, \alpha, M_2(\mu) )$, $T\mapsto K(T,\HRp, \HE, \alpha, M_2(\mu))$ being non-decreasing such that for all $\phi \in \mathscr C^{2, \alpha}(\pp)$ it holds
\begin{eqnarray}
|\E[\phi(\mu_T^N)] -\phi(\mu_T)|& \leq& \frac{K^+}{T^{1-\frac \alpha 2}} \frac{1}{N}, \label{esti:sem:chaos:part1}\\
\E\left[|\phi(\mu_T^N) -\phi(\mu_T)|\right] &\leq & K^+ \left\{\frac{1}{T^{\frac{1-\alpha}{2}}}  \mathbb{E}[W_2(\mu_0^{N}, \mu)^2]^{1/2} + \frac{1}{N^{\frac12}}\right\}.\label{esti:sem:chaos:part2}
\end{eqnarray}
\end{theorem}
\begin{remark}
\begin{itemize}

\item The linear growth assumption with respect to the space variable and second order moment of the probability measure variable $m$ appearing in the definition of the space $\mathscr{C}^{2, \alpha}(\pp)$ is tailor-made to ensure the linear growth of the solution (as well as of its first and second order derivatives) of the corresponding backward Kolmogorov PDE stated in the strip $[0,T] \times \pp$ with terminal condition $\phi$. This together with the fact that the initial distribution satisfies $M_2(\mu)<\infty$ play a central role in the proof of \eqref{esti:sem:chaos:part2}. Larger spaces of test function could be considered under stronger integrability assumptions on the initial distribution $\mu$. 

\item Note that when $\alpha=1$, it holds 
$$
\big\{\phi : \pp\to \mathbb{R},\, \phi(\mu) = \int_{\mathbb{R}^d} \varphi(x) \mu(dx),\, \varphi \, {\rm is\ 1\mbox{-}Lipschitz}\big\} \subset \mathscr C^{2,1}(\pp)
$$ 

\noindent so that, in particular, \eqref{esti:sem:chaos:part1} implies the convergence of the probability measure $\mathbb{E}[\mu_T^{N}]$ toward $\mu_T$ with respect to the first order Wasserstein distance by the Kantorovitch-Rubinstein duality theorem. 
\item From the proof of \eqref{esti:sem:chaos:part1}, it will be apparent that one could also obtain a first order expansion at the level of the semigroup, that is, an ad-hoc version of \eqref{first:order:exp:prop:chaos}. However, we refrain from going further in this direction here.

\item Let us finally observe that since $M_2(\mu) < \infty$, one has $\lim_{N\rightarrow \infty} \mathbb{E}[W_2(\mu_0^{N}, \mu)^2] =0$ and that a non-asymptotic estimate which quantifies the rate of convergence in this limit is available under the assumption $M_q(\mu) < \infty$ for some $q>4$, see e.g. Theorem 1 of \cite{Fournier2015} and Theorem 5.8 of \cite{carmona2018probabilistic}. Indeed, if this stronger integrability condition on $\mu$ is satisfied, then there exists a positive constant $C:=C(d, q, M_q(\mu))$ such that for all $N\geq2$, $\mathbb{E}[W_2(\mu_0^{N}, \mu)^2]\leq C \varepsilon_N$, where $\varepsilon_N$ is defined by \eqref{convergence:rate}. 
\end{itemize}
\end{remark}

Our last objective is to prove that the system of particles \eqref{SDE:particle:system} converges in the strong sense to the solution of the McKean-Vlasov SDE \eqref{SDE:MCKEAN} by extending the classical result of propagation of chaos on the trajectories of the particles to our framework. As in the standard case, we shall quantify the convergence rate of propagation of chaos through a coupling argument with an auxiliary system of particles as in \cite{Sznitman}.

Under the additional assumption that $\mathbb{R}^d \times \pp \ni (x, \mu) \mapsto \sigma(t, x, \mu)$ is Lipschitz continuous uniformly in time, from \cite{veretennikov_strong_1980}, strong uniqueness holds for the system of particles \eqref{SDE:particle:system} and from Corollary 3.5 in \cite{chaudruraynal:frikha} the same conclusion holds for its mean-field limit \eqref{SDE:MCKEAN}. Hence, strong well-posedness for both SDEs follows from the Yamada-Watanabe theorem. 

In the above framework, we thus choose a probability space $(\Omega, \mathcal{F}, \P )$ as well as $N$ independent $q$-Brownian motion $(W^{i})_{1\leq i \leq N}$ on it. We also assume that the probability space carries the i.i.d. sequence of $\mathbb{R}^d$-valued and $\mathcal{F}_0$-measurable random variables $(\xi^{i})_{1\leq i\leq N}$ with common law $\mu$ satisfying $M_2(\mu)< \infty$.

For any $i\in \left\{1, \cdots, N\right\}$, we then introduce the process $\bar{X}^{i}=(\bar{X}^i_t)_{0\leq t\leq T}$ given by the unique strong solution to the McKean-Vlasov SDE \eqref{SDE:MCKEAN} but with the input $(\xi^{i}, W^{i})_{1\leq i \leq N}$ instead of $(\xi, W)$
\begin{equation}
\label{SDE:particle:system:coupling}
\bar{X}^{i}_t = \xi^{i} + \int_0^t b(s, \bar{X}^{i}_s, [\bar{X}^{i}_s]) ds + \int_0^t \sigma(s, \bar{X}^{i}_s, [\bar{X}^{i}_s]) dW^{i}_s, \quad i=1, \cdots, N.
\end{equation}

By weak uniqueness for the SDE \eqref{SDE:MCKEAN}, the two processes $\bar{X}^i$ and $X$ have the same law, in particular $[\bar X^i_t] = [X_t] =\mu_t$, for any $t\in [0,T]$ and for any $i \in \left\{1, \cdots, N\right\}$. Our last result quantifies the propagation of chaos at level of the trajectories. Its proof is postponed to Section \ref{propagation:of:chaos}.

\begin{theorem}\label{propagation:prop:path} Assume that \A{HE} and \A{HR$_{++}$} hold and that $M_{q}(\mu) < +\infty$, for some $q>4$. Assume that for any $t\in [0,T]$, the map $\mathbb{R}^d \times \pp \ni (x, \mu) \mapsto \sigma(t, x, \mu)$ is Lipschitz continuous, uniformly in time. Then, there exists a positive constant $K^+:=K(T, \HRp, \HE,  [\sigma]_L, M_q(\mu))$ such that
\begin{equation}\label{Esti:propchaos:path}
\sup_{0\leq t\leq T}\E[W_2(\mu_t,\mu_t^N)^2] + \max_{i=1,\ldots,N}\sup_{0\leq t \leq T}\E\Big[|X^i_t-\bar X^i_t|^2 \Big] \leq K^+ \varepsilon_N 
\end{equation}
\noindent and
\begin{equation}\label{Esti:propchaos:path:2}
\E[\sup_{0\leq t\leq T}W_2(\mu_t,\mu_t^N)^2] + \max_{i=1,\ldots,N}\E\Big[\sup_{0\leq t \leq T}|X^i_t-\bar X^i_t|^2 \Big] \leq K^+ \sqrt{\varepsilon_N} 
\end{equation}

\noindent where we recall that $\varepsilon_N$ is defined by \eqref{convergence:rate}. 

\end{theorem}

\begin{remark} The Zvonkin's transform applied in our framework shows that the rate of convergence provided in \eqref{Esti:propchaos:path:2} is actually ruled by the quantity $\E[\sup_{0\leq t\leq T} W^2_2(\mu_t, \bar{\mu}^N_t)]$ where $\bar \mu^{N}_t := \frac1N \sum_{i=1}^{N} \delta_{\bar X^{i}_t}$ which is in turn known to be of order $\sqrt{\varepsilon_N}$, see e.g. Briand et al. \cite{BCCdRH:19}. This last estimate could be improved under stronger integrability assumption on the initial distribution $\mu$. We also mention the fact that one could achieve a convergence rate of order $\varepsilon_N$ under the additional assumption that the map $\mu \mapsto b(t, x, \mu)$ is Lipschitz continuous uniformly with respect to the variables $t$ and $x$ but we do not engage into further reflections in this direction.  
\end{remark}

\section{The backward Kolmogorov equation}\label{fully:c2:regularity:section}
This section is dedicated to the proofs of Theorem \ref{fundamental:solution:wasserstein:space} and Proposition \ref{fully:c2:regularity:property:decoupling:density}. Hence, we assume that \HE\, and \HRp\, are in force in subsection \ref{section:kolmogorov:backward:pde} and that \HE\, and \HRpp \, are in force in subsection \ref{full:c2:regularity:section}.

\subsection{Proof of Theorem \ref{fundamental:solution:wasserstein:space}}\label{section:kolmogorov:backward:pde}

The proof is divided into two steps.\\

\noindent \emph{Step 1: Existence of a fundamental solution.}\\

We fix $T>0$ and $(t,z)\in (0,T] \times \mathbb{R}^d$. From the identity \eqref{relation:density:mckean:decoupling:field} and Theorem \ref{derivative:density:sol:mckean:and:decoupling}, we already know that the map $(s, \mu) \mapsto p(\mu, s, t, z)$ is in $\mathcal{C}^{1, 2}([0,t) \times \pp)$ with derivatives $\partial_s p(\mu, s, t, z) = \int_{\rr^d} \partial_s p(\mu, s, t, x,  z) \mu(dx)$ and 
\begin{equation}
\partial^{n}_v[\partial_\mu p(\mu, s, t, z)](v) = \partial^{1+n}_x p(\mu, s, t, v, z) + \int_{\rr^d} \partial^{n}_v[\partial_\mu p(\mu, s, t, x,  z)](v) \, \mu(dx), \quad n\in\left\{0, 1\right\}. \label{identity:deriv:mes:dens:mckean}
\end{equation} 

We now prove that it satisfies \eqref{backward:kolmogorov:pde}. 

From the Markov property satisfied by the SDE \eqref{SDE:MCKEAN}, stemming from the well-posedness of the related martingale problem, see Theorem 3.4 in \cite{chaudruraynal:frikha}, the following relation is satisfied for all $0<h<s$
$$
p(\mu, s-h, t,  z) = p([X^{s-h, \xi}_s], s, t, z).
$$

From the relation \eqref{identity:deriv:mes:dens:mckean} and the estimates \eqref{first:second:lions:derivative:mckean:decoupling}, we deduce that the condition \eqref{cond:integrab:ito:process:second:version} of the chain rule formula of Proposition \ref{prop:chain:rule:joint:space:measure} (with respect to the measure variable only) is satisfied so that
$$
 p([X^{s-h, \xi}_s], s, t, z) = p(\mu, s, t,  z) + \int_{s-h}^{s} \mathscr{L}_r p([X^{s-h, \xi}_r], s, t, z) \, dr 
$$

\noindent which in turn yields
$$
\frac{1}{h}(p(\mu, s-h, t, x, z)  - p(\mu, s, t, x, z)) = \frac{1}{h}\int_{s-h}^s \mathscr{L}_r p([X^{s-h, \xi}_r], s, t, z) \, dr.
$$

\noindent Letting $h\downarrow 0$, from the boundedness and the continuity of the coefficients as well as the continuity of the maps $(\mu, v) \mapsto \partial_\mu p(\mu, s, t, z)(v), \partial_v[\partial_\mu p(\mu, s, t, z)](v)$ and the differentiability of $[0, t) \ni s\mapsto p(\mu, s, t, x, z)$, we get that $(s, \mu) \mapsto p(\mu, s, t, z)$ satisfies \eqref{backward:kolmogorov:pde}.

 We now prove that \eqref{terminal:condition} is satisfied locally uniformly, that is, uniformly on compact sets $\mathcal{K} \subset \pp$. From \eqref{parametrix:series:expansion}, one gets
\begin{equation}
p(\mu, s, t, x, z) = \widehat{p}(\mu, s, t, x, z) + \mathcal{R}(\mu, s, t, x, z), \quad \mathcal{R}(\mu, s, t, x, z):= \sum_{k\geq1} (\widehat{p} \otimes \mathcal{H}^{(k)})(\mu, s, t, x, z) \label{decomposition:short:time:parametrix}
\end{equation}

\noindent and \eqref{iter:param:convol:kernel} implies that the infinite series defining $\mathcal{R}(\mu, s, t, x, z)$ converges and that the following estimate is satisfied
\begin{equation}
|\mathcal{R}(\mu, s, t, x, z)| \leq K (t-s)^{\frac{\eta}{2}} \, g(c(t-s), z-x) \label{rest:parametrix:series:time:estimate}
\end{equation}

\noindent for some positive constant $K:=K(T, b, a, \lambda, \eta)$. From the mean-value theorem, the uniform $\eta$-H\"older continuity of $x\mapsto a(t, x, m)$ and the space-time inequality \eqref{space:time:inequality}, one has 
\begin{equation}
|(\widehat{p}^{z}- \widehat{p}^{x})(\mu, s, t, x, z)| \leq K |z-x|^\eta g(c(t-s), z-x) \leq K (t-s)^{\frac{\eta}{2}} g(c(t-s), z-x). \label{diff:p:hat:frozen:coeff}
\end{equation}

Let $f$ be a real-valued continuous function defined on $\mathbb{R}^d$ with at most quadratic growth. The key relation \eqref{relation:density:mckean:decoupling:field} together with the fact that $\int_{\mathbb{R}^d}  \widehat{p}^{x}(\mu, s, t, x, z) \, dz=1$ yield
\begin{align*}
\int_{\mathbb{R}^d} f(z) \, p(\mu, s, t, z)\, dz - \int_{\mathbb{R}^d} f(x) \, \mu(dx) & = \int_{(\mathbb{R}^d)^2} [f(z)-f(x)] \, \widehat{p}^{x}(\mu, s, t, x, z) \, dz \mu(dx) \\
& +  \int_{(\mathbb{R}^d)^2} f(z) \,  [\widehat{p}^{z}(\mu, s, t, x, z) - \widehat{p}^{x}(\mu, s, t, x, z)] \, dz \mu(dx) \\
& \quad + \int_{(\mathbb{R}^d)^2} f(z) \mathcal{R}(\mu, s, t, x, z) \, dz \mu(dx).
\end{align*}

Thanks to \eqref{rest:parametrix:series:time:estimate}, \eqref{diff:p:hat:frozen:coeff} and using the fact that $f$ has at most quadratic growth, for any compact set $\mathcal{K} \subset \pp$, it holds
\begin{align*}
\sup_{\mu \in \mathcal{K}} & | \int_{(\rr^d)^2} f(z) [\widehat{p}^{z}(\mu, s, t, x, z) - \widehat{p}^{x}(\mu, s, t, x, z)] \, dz \mu(dx)| \\
& \quad \quad + \sup_{\mu \in \mathcal{K}} | \int_{(\mathbb{R}^d)^2} f(z) \mathcal{R}(\mu, s, t, x, z) \, dz \mu(dx)| \leq K (t-s)^{\frac{\eta}{2}} \big(1+ \sup_{\mu \in \mathcal{K}} M_2(\mu)\big).
\end{align*}

The uniform continuity of the map $[0,t] \times \mathcal{K} \ni (s, \mu) \mapsto \int_{(\mathbb{R}^d)^2} [f(z)-f(x)] \, \widehat{p}^{x}(\mu, s, t, x, z) \, dz \mu(dx) = \int_{(\mathbb{R}^d)^2} [f(x+ \Sigma^{1/2}_{s, t} z) -f(x)] e^{-\frac{|z|^2}{2}} (2\pi)^{-\frac{d}{2}} \, dz \mu(dx)$, where $\Sigma^{1/2}_{s, t}$ is the unique principal square root of the positive-semidefinite matrix $\int_{s}^t a(r, x, [X^{s, \xi}_r]) \, dr$, implies that $\left\{[0,t] \ni s\mapsto \int_{(\mathbb{R}^d)^2} [f(z)-f(x)] \, \widehat{p}^{x}(\mu, s, t, x, z) \, dz \mu(dx), \mu \in \mathcal{K}\right\}$ is equicontinuous and the quadratic growth of $f$ implies its boundedness. We thus deduce
$$
\lim_{s\uparrow t} \sup_{\mu \in \mathcal{K}} \Big|\int_{(\mathbb{R}^d)^2} [f(z)-f(x)] \, \widehat{p}^{x}(\mu, s, t, x, z) \, dz \mu(dx) \Big| = 0.
$$

Combining the previous results, we deduce that \eqref{terminal:condition} is satisfied locally uniformly on $\pp$. We thus conclude that $[0,t) \times \pp \ni (s, \mu) \mapsto p(\mu, s, t, z)$ is a fundamental solution of $\partial_s + \mathcal{L}_s=0$ in $[0,T] \times \pp$. \\

\noindent \emph{Step 2: Uniqueness}\\

In order to get the uniqueness result, let us consider any solution $(s, \mu) \mapsto q(\mu, s, t, z)$ to the backward Kolmogorov equation \eqref{backward:kolmogorov:pde} satisfying \eqref{cond:integrab:ito:process:second:version} on any interval $[0,t']$, with $t'<t$, and \eqref{terminal:condition} uniformly in $\mu \in \mathcal{K}$, $\mathcal{K}$ being a compact set of $\pp$. We apply the chain rule formula of Proposition \ref{prop:chain:rule:joint:space:measure} to $\left\{ q([X^{s, \xi}_r], r, t, z), \,  s \leq r  < t  \right\}$ and use the fact that $(\partial_s + \mathcal{L}_s)q(\mu, s, t, z)  = 0$, for any $(s, \mu) \in [0,t) \times \pp$ to get that for any $r\in [s, t)$
\begin{align}
 q([X^{s, \xi}_r], r, t, z) &  = q(\mu, s, t, z). \label{first:relation:toward:uniqueness}
\end{align}

We now aim to pass to the limit as $r \uparrow t$ in the previous relation. To do this, we first remark that from \eqref{parametrix:series:expansion}, \eqref{relation:density:mckean:decoupling:field}, the Gaussian upper-bound \eqref{bound:density:parametrix} and the continuity of $(s, t]\ni r\mapsto p(\mu, s, r, z)$, one has $\lim_{r \uparrow t} \int_{\mathbb{R}^d} f(z) \, p(\mu, s, r, z) \, dz = \int_{\mathbb{R}^d} f(z) \, p(\mu, s, t, z) \, dz$ for any real-valued measurable function $f$ with at most quadratic growth so that $\lim_{r\uparrow t} W_2([X^{s, \xi}_r], [X^{s, \xi}_t])=0$. Hence using the local uniform convergence in $\mu$ of $r\mapsto \int_{\mathbb{R}^d} f(z) \, q(\mu, r, t, z) \, dz$ towards $\int_{\mathbb{R}^d} f(x) \, \mu(dx)$ as $r\uparrow t$ and \eqref{first:relation:toward:uniqueness}, we obtain
$$
\int_{\mathbb{R}^d} f(z) \, q(\mu, s, t, z) \, dz =  \lim_{r \uparrow t} \int_{\mathbb{R}^d} f(z) \, q([X^{s, \xi}_r], r, t, z) \, dz =  \int_{\mathbb{R}^d} f(z) \, [X^{s, \xi}_t](dz) = \int_{\mathbb{R}^d} f(z)\,  p(\mu, s, t, z) \, dz.
$$

\noindent for any continuous function $f$ with at most quadratic growth. From the continuity of the maps $q(\mu, s, t, .)$ and $p(\mu, s, t, .)$, we deduce that $q(\mu, s, t, z) = p(\mu, s, t, z)$ which completes the proof of Theorem \ref{fundamental:solution:wasserstein:space}.

\subsection{Proof of Proposition \ref{fully:c2:regularity:property:decoupling:density}}\label{full:c2:regularity:section}

The proof of Proposition \ref{fully:c2:regularity:property:decoupling:density} relies on similar arguments as those employed to prove Theorem 3.6 in \cite{chaudruraynal:frikha}. To be more specific, our strategy is based on an approximation argument of the transition density $p(\mu, s, t, x, z)$ by a Picard iteration scheme and sharp uniform estimates on its derivatives from which we can extract a uniformly convergent subsequence by using Arzela-Ascoli's theorem.
 
\medskip

\noindent \emph{Step 1: Construction of an approximation sequence and related estimates}

\medskip

For a given initial condition $(s,\mu) \in \mathbb{R}_+ \times \pp$ and a probability measure $\nu \in \pp$, $\nu \neq \mu$, we let $\P^{(0)} = (\P^{(0)}(t))_{t\geq s}$ be the probability measure on $\mathcal{C}([s,\infty), \rr^d)$, endowed with its canonical filtration, satisfying $\P^{(0)}(t)=\nu$, $t\geq s$. Let us consider the following recursive sequence of probability measures $\left\{ \P^{(m)}; m\geq 0 \right\}$, with time marginals $(\P^{(m)}(t))_{t\geq s}$, where, $\P^{(m)}$ being given, $\P^{(m+1)}$ is the unique solution to the following martingale problem
\begin{itemize}
\item[(i)] $\P^{(m+1)}(y(r) \in \Gamma; 0\leq r \leq s)  =  \mu(\Gamma)$, for all $\Gamma \in \mathcal{B}(\rr^d)$.
\item[(ii)] For all $f\in \mathcal{C}^2_b(\rr^d)$, 
$$
f(y_t)  - f(y_s) - \int_s^t \left\{ \sum_{i=1}^d b_i(r, y_r, \P^{(m)}(r)) \partial_i f(y_r) + \sum_{i, j =1}^d \frac12 a_{i, j}(r, y_r, \P^{(m)}(r)) \partial^2_{i, j} f(y_r) \right\} \, dr
$$

\noindent is a continuous square-integrable martingale under $\mathbb{P}^{(m+1)}$.
\end{itemize}

Note that, under the considered assumptions, the well-posedness of the above standard martingale problem follows from classical results, see e.g. \cite{stroock:varadhan}. In particular, there exists a unique weak solution to the SDE with dynamics
\begin{align}
X_t^{s,\xi, (m+1)} & = \xi + \int_s^t b(r,X_r^{s,\xi, (m+1)},[X_r^{s,\xi, (m)}]) dr + \int_s^t \sigma(r,X_r^{s,\xi, (m+1)},[X_r^{s,\xi, (m)}]) d W_r. \label{iter:mckean} 
\end{align}

 We will also work with the decoupled stochastic flow or characterics given by the unique weak solution to the SDE with dynamics
\begin{align}
X_t^{s, x, \mu, (m+1)} & = x + \int_s^t b(r,X_r^{s, x,\mu, (m+1)},[X_r^{s,\xi, (m)}]) dr + \int_s^t \sigma(r,X_r^{s, x, \mu, (m+1)},[X_r^{s,\xi, (m)}]) d W_r.  \label{iter:mckean:decoupl}
\end{align}

We point out that the notation $X^{s, x, \mu, (m+1)}_t$ makes sense since by weak uniqueness of solution to the SDE \eqref{iter:mckean}, the law $[X^{s, \xi, (m)}_t]$ only depends on the initial condition $\xi$ through its law $\mu$. 

From \cite{friedman:64}, for any positive integer $m$, the two random variables $X^{s,\xi,(m)}_t$ and $X^{s, x, \mu,(m)}_t$ admit a density respectively denoted by $p_m(\mu, s, t, z)$ and $p_m(\mu, s, t, x, z)$. Moreover, the following relation is satisfied for any $z\in \mathbb{R}^d$
\begin{equation}
 p_m(\mu, s, t, z) = \int p_m(\mu, s, t, x, z) \,  \mu(dx) \label{conv:relation:step:m}
\end{equation}

\noindent where
\begin{align}
p_m(\mu, s, t, x, z) & = \sum_{k \geq 0} (\widehat{p}_{m} \otimes \mH^{(k)}_m)(\mu, s, t, x, z), \label{series:approx:mckean} 
\end{align}

\noindent with
\begin{align}
\widehat{p}_m(\mu, s, r,  t , x, z) & = \widehat{p}^{z}_m(\mu, s, r,  t , x, z), \nonumber \\
\widehat{p}^{y}_m(\mu, s, r,  t , x, z) & = g\left(\int_{r}^{t} a(r', y, [X^{s,\xi, (m-1)}_{r'}]) dr', z-x\right), \label{eq:def:de:phat:m}\\
\mH_m(\mu, s, r ,t ,x ,z) & = \left\{- \sum_{i=1}^d b_i(r, x, [X^{s, \xi, (m-1)}_r]) H^{i}_1\left(\int_r^{t} a(r', z, [X^{s, \xi, (m-1)}_{r'}]) dr', z-x\right) \right. \nonumber \\
&  \left. + \frac12 \Big(a_{i, j}(r, x, [X^{s, \xi, (m-1)}_{r}]) - a_{i, j}(r, z, [X^{s, \xi, (m-1)}_r])\Big) \right. \label{eq:def:de:mH} \\
& \left. \quad \quad \times H^{i, j}_2\left(\int_r^{t} a(r', z, [X^{s, \xi, (m-1)}_{r'}]) dr', z-x\right)  \right\} \widehat{p}_m(\mu, s, r, t, x, z)  \nonumber
\end{align}

\noindent and $\mH^{(k+1)}_m(\mu, s, t, x, z) = (\mH^{(k)}_m \otimes \mH_m)(\mu, s, t, x, z)$, $\mH^{(0)}_m = I_d$, with the convention that $[X^{s, \xi, (0)}_t] = \P^{(0)}(t)=\nu$, $t\geq0$. In what follows, we will often make use of the following estimates which are reminiscent of \eqref{iter:param:convol:kernel}, \eqref{bound:density:parametrix} and \eqref{second:derivatives:holder:estimates:parametrix:series}: there exist positive constant $c:= c(\lambda)$, $C:=C(T, b, a, \lambda, \eta)$, such that for any positive integer $k$, any $(\mu, x, z) \in \pp \times (\mathbb{R}^d)^2$, any $0\leq s < t \leq T$ and any $r\in [s, t)$, it holds
\begin{equation}
\label{iter:parametrix:kernel}
| \mH^{(k)}_m(\mu, s, r, t, x, z)| \leq \frac{C^{k}}{(t-r)^{1-k \frac{\eta}{2}}} \prod_{\ell=1}^{k-1} B\left(\ell \frac{\eta}{2}, \frac{\eta}{2}\right) \, g(c(t-r), z-x)
\end{equation}
\noindent and
\begin{equation}
\label{iter:param:classic}  
| \widehat{p}_{m} \otimes \mH^{(k)}_m(\mu, s, t, x, z)| \leq C^{k} (t-s)^{k \frac{\eta}{2}} \prod_{\ell=1}^{k} B\left(1+ \frac{(\ell-1)\eta}{2}, \frac{\eta}{2}\right) g(c(t-s), z-x)
\end{equation}

\noindent where we recall that $B(., .)$ stands for the Beta function. Consequently, the series \eqref{series:approx:mckean} converge absolutely and uniformly for $(\mu, x, z) \in \pp \times (\mathbb{R}^d)^2$ and satisfies: for any positive integer $m$, any $0\leq s < t \leq T$, any $(\mu, x, z) \in \pp \times (\mathbb{R}^d)^2$ and any $n\in \left\{0,1, 2\right\}$
\begin{equation}
\label{bound:derivative:heat:kernel}
| \partial^{n}_x p_m(\mu, s, t, x, z)| \leq C (t-s)^{-\frac{n}{2}} \, g(c (t-s), z-x), 
\end{equation}

\noindent and for all $(x, x')\in (\mathbb{R}^d)^2$, all $\beta \in [0,1]$ if $n =0,\, 1$ and all $\beta \in [0,\eta)$ if $n=2$
\begin{align}
\label{reg:heat:kernel:deriv}
 |\partial^{n}_x p_{m}(\mu, s, t, x, z) & - \partial^{n}_x p_{m}(\mu, s, t , x', z)| \nonumber \\
 & \leq C_\beta \frac{|x-x'|^{\beta}}{(t-s)^{\frac{n+\beta}{2}}} \left\{ g(c (t-s), z-x) + g(c(t-s), z-x') \right\},
\end{align}

\noindent for some positive constants $C:=C(T, b, a, \lambda, \eta)$, $C_\beta:=C(T, b, a, \lambda, \eta, \beta)$. We refer again to \cite{friedman:64} for a proof of the above estimate. 

Similarly to \eqref{volterra:equation:density:decoupling:field}, we denote by $\Phi_m(\mu, s, r, t, x_1, x_2)$ the unique solution to the following Volterra integral equation
\begin{equation}
\Phi_m(\mu, s, r, t, x_1, x_2) = \mH_m(\mu, s,  r, t, x_1, x_2) + (\mH_m \otimes \Phi_m)(\mu, s, r, t, x_1, x_2)
\end{equation}  

\noindent which is given by the (uniform) convergent series
\begin{equation}
\label{infinite:series:Phi:step:m}
\Phi_m(\mu, s, r, t, x_1, x_2) = \sum_{k\geq1} \mH^{(k)}_m(\mu, s, r, t, x_1, x_2)
\end{equation}

\noindent and \eqref{series:approx:mckean} now writes
\begin{equation}
p_m(\mu, s, t, x, z) = \widehat{p}_m(\mu, s,  t , x, z)  + \int_s^t \int_{\mathbb{R}^d} \widehat{p}_m(\mu, s,  r , x, y) \,\Phi_m(\mu, s, r, t, y, z) \, dy \, dr. \label{other:representation:parametrix:series}
\end{equation}

Finally, from Theorem 7, Chapter 1 in \cite{friedman:64}, for any positive integer $m$, the map $\Phi_m(\mu, s, r, t, x, z)$ satisfies the following estimates: for any $\beta \in[0, \eta)$, there exist positive constants $C_\beta:=C(T, a, b, \eta, \lambda, \beta), C:=C(T, a, b, \eta, \lambda), \, c:=c(\lambda)$, which do not depend on $m$, such that for any $(\mu, x, y, z)\in \pp \times (\mathbb{R}^d)^{3}$ and any $0\leq s \leq r < t \leq T$
\begin{equation}
\label{Gaussian:estimate:Phim}
| \Phi_m(\mu, s, r, t, x, z) | \leq \frac{C}{(t-r)^{1-\frac{\eta}{2}}} \, g(c(t-r), z-x)
\end{equation}
\noindent and
\begin{align}
|\Phi_m(\mu, s, r, t, x, z) & - \Phi_m(\mu, s, r, t, y, z) |\nonumber \\
& \leq C_\beta \frac{|x-y|^{\beta}}{(t-r)^{1+ \frac{\beta-\eta}{2}}} \left\{ g(c(t-r), z -x) + g(c(t-r), z-y)\right\}. \label{holder:reg:voltera:kernel}
\end{align}

We now recall from \cite{chaudruraynal:frikha} some important notations, properties and estimates. For some positive integer $m$, $n\in \left\{0,1\right\}$, $\beta \in [0, 1+\eta)$ if $n=0$ or $\beta \in [0,\eta)$ if $n=1$, $C \geq0$ and $t\in [0,T]$, we define
\begin{equation}\label{definition:series:Cmnbeta}
\mathscr{C}^{n ,\beta}_m(C, t):=  \sum_{k=1}^{m} C^{k} t^{(k-1) \frac{\eta}{2}} \prod_{i=1}^{k-1} B\left(\frac{\eta}{2}, \frac{1-n+\eta-\beta}{2} +  (i-1) \frac{\eta}{2} \right).
\end{equation}

Let $T>0$. For any fixed $(t, z) \in (0,T] \times \mathbb{R}^d$ and any positive integer $m$, it holds:
\begin{itemize}
\item The mapping $[0,t) \times \rr^d \times \pp \ni (s, x, \mu) \mapsto p_m(\mu, s, t, x, z)$ is in $\mathcal{C}^{1, 2, 2}([0,t) \times \rr^d \times \pp)$.

\item  There exist positive constants $C := C(T, \HR, \HE)$, $C_\beta := C(T, \HR, \HE, \beta)$, $c:=c(\lambda)$, which do not depend on $m$, such that for any $(\mu, s, x, x', z, v, v') \in \mathcal{P}_2(\mathbb{R}^d) \times [0,t) \times (\mathbb{R}^d)^5$ and any $n\in \left\{0, 1\right\}$
\begin{align}
|\partial^{n}_v [\partial_\mu p_m(\mu, s, t, x, z)](v)| & \leq  \frac{\mathscr{C}^{n, 0}_m(C, t-s)}{(t-s)^{\frac{1+n-\eta}{2}}} \, g(c (t-s), z-x), \label{first:second:estimate:induction:decoupling:mckean} \\
| \partial_s p_m(\mu, s, t, x, z) | & \leq  \frac{\mathscr{C}^{1, 0}_m(C, t-s)}{t-s} \, g(c (t-s), z-x), \label{time:derivative:induction:decoupling:mckean}
\end{align}

\begin{align}
 \quad | \partial^{n}_v [\partial_\mu p_{m}(\mu, s, t, x, z)](v) & - \partial^{n}_v [\partial_\mu p_{m}(\mu, s, t , x', z)] (v) |\nonumber \\
&  \leq \mathscr{C}^{n, \beta}_{m}(C_\beta, t-s) \frac{|x-x'|^{\beta}}{(t-s)^{\frac{1+n+\beta-\eta}{2}}} \left\{ g(c(t-s), z-x) + g(c(t-s), z-x') \right\},  \label{equicontinuity:second:third:estimate:decoupling:mckean} 
\end{align}

\noindent where $\beta \in [0,1]$ for $n=0$ and $\beta \in [0, \eta)$ for $n=1$,

\begin{align}
 |\partial_v [\partial_\mu p_m(\mu, s, t, x, z)](v) & - \partial_v [\partial_\mu p_m(\mu, s, t, x, z)](v')| \nonumber \\
& \leq \mathscr{C}^{1, \beta}_{m}(C_\beta, t-s) \frac{|v-v'|^{\beta}}{(t-s)^{1+\frac{\beta-\eta}{2}}} \, g(c(t-s), z-x) \label{equicontinuity:first:estimate:decoupling:mckean}
\end{align}
\noindent where $\beta\in [0,\eta)$.
\item There exist three positive constants $C^{+}_\beta := C(T, \HRp, \HE, \beta)$, $C_\beta:= C(T, \HR, \HE, \beta)$, $c:=c(\lambda)$, which do not depend on $m$, such that for any $(\mu, \mu', s, x, z, v) \in (\mathcal{P}_2(\mathbb{R}^d))^2 \times [0,t) \times (\mathbb{R}^d)^3$ and any $(s_1, s_2) \in [0,t)^2$

\begin{align}
 |\partial^{n}_x p_{m}(\mu, s, t, x, z) & - \partial^{n}_x p_{m}(\mu', s, t, x, z)]| \leq C_\beta \frac{W_2(\mu, \mu')^{\beta}}{(t-s)^{ \frac{n+\beta}{2}}} \, g(c(t-s), z-x), \label{regularity:measure:estimate:v1:v2:v3:decoupling:mckean} 
\end{align}

\noindent where $\beta \in [0,1]$ for $n \in \left\{0, \, 1\right\}$ and $\beta \in [0,\eta)$ for $n=2$,
\begin{align}
 |\partial^{n}_v [\partial_\mu p_{m}(\mu, s, t, x, z)](v) & - \partial^{n}_v [\partial_\mu p_{m}(\mu', s, t, x, z)](v)| \nonumber \\
 & \leq\mathscr{C}^{n, \beta}_{m}(C^+_{\beta}, t-s) \frac{W_2(\mu, \mu')^{\beta}}{(t-s)^{ \frac{1+n+\beta- \eta}{2}}} \, g(c(t-s), z-x), \label{regularity:measure:estimate:v1:v2:decoupling:mckean} 
\end{align}

\noindent where $\beta \in [0,1]$ for $n=0$ and $\beta \in [0,\eta)$ for $n=1$,
\begin{align}
 | & \partial^{n}_x p_{m}(\mu, s_1, t, x, z) - \partial^{n}_x p_{m}(\mu, s_2, t, x, z)  | \nonumber \\
 & \leq C_\beta \left\{ \frac{|s_1-s_2|^{\beta}}{(t-s_1)^{\frac{n}{2} + \beta }} \, g(c(t-s_1), z-x) + \frac{|s_1-s_2|^{\beta}}{(t- s_2)^{ \frac{n}{2} +\beta }} \, g(c(t-s_2), z-x) \right\}, \label{regularity:time:estimate:v1:v2:v3:decoupling:mckean} 
\end{align}

\noindent where $\beta \in [0,1]$ for $n=0$, $\beta \in [0,(1+\eta)/2)$ for $n=1$ and $\beta \in [0, \eta/2)$ for $n=2$ and
\begin{align}
 | & \partial^{n}_v [\partial_\mu p_{m}(\mu, s_1, t, x, z)](v) - \partial^{n}_v [\partial_\mu p_{m}(\mu, s_2, t, x, z)](v)| \nonumber \\
 & \leq  \mathscr{C}^{n, 2 \beta}_{m}(C^+_{\beta}, t-s_1 \vee s_2) \left\{ \frac{|s_1-s_2|^{\beta}}{(t-s_1)^{\frac{1+n-\eta}{2} + \beta }} \, g(c(t-s_1), z-x) + \frac{|s_1-s_2|^{\beta}}{(t- s_2)^{ \frac{1+n-\eta}{2} +\beta }} \, g(c(t-s_2), z-x) \right\}, \label{regularity:time:estimate:v1:v2:decoupling:mckean} 
\end{align}

\noindent where $\beta \in [0,(1+\eta)/2)$ if $n=0$ and $\beta \in [0,\eta/2)$ if $n=1$.
 
 \end{itemize}
 
With the above notations and properties at hand, we can now state the following key proposition whose proof is postponed to \ref{proof:main:prop}.

\begin{prop}\label{proposition:reg:density:recursive:scheme:mckean} Let $T>0$. Assume that \HE\, and \HRp\, hold. Then, for any fixed $(t, z) \in (0,T] \times \mathbb{R}^d$ and any positive integer $m$, the following properties hold:
\begin{itemize}
\item The mapping $[0,t) \times \mathbb{R}^d \times \pp \ni (s, x, \mu) \mapsto p_m(\mu, s, t, x, z)$ is in $\mathcal{C}^{1, 2, 2}_f([0,t) \times \mathbb{R}^d \times \pp)$.

\item For any $(\mu, s, x, v)\in \pp \times [0,t) \times (\mathbb{R}^d)^2$, the maps $\mathbb{R}^d \ni x\mapsto \partial_\mu p_m(\mu, s, t, x, z)(v)$ and $\pp \ni \mu\mapsto  \partial_x p_m(\mu, s, t, x, z)$ are continuously differentiable, with derivatives $\partial_x \partial_\mu p_m(\mu, s, t, x, z)(v)$, $ \partial_\mu \partial_x p_m(\mu, s, t, x, z)(v)$ being continuous in $\mu, s, x, v$ and bounded with respect to $\mu, x, v$. 

Moreover, for any $\beta \in [0,\eta)$, there exist positive constants $C := C(T, \HR, \HE)$, $C^{+}_\beta:= C(T, \HRp, \HE, \beta)$ and $c:=c(\lambda)$ such that for any $\mu, \mu' \in \pp$, any $s, s_1, s_2 \in [0,t)$ and any $x, x', v, v' \in \mathbb{R}^d$
\begin{align}
| \partial_{x} \partial_\mu p_m(\mu, s, t, x, z)(v)| \leq \frac{C}{(t-s)^{1-\frac{\eta}{2}}} \, g(c(t-s), z-x), \label{cross:deriv:mes:space:induction:decoupling:mckean}
\end{align}
\begin{align}
|& \partial_x [\partial_\mu p_m(\mu, s, t, x, z)(v)]  - \partial_x[\partial_\mu p_m(\mu', s, t, x', z)(v')]| \label{sensitivity:mes:deriv:cross:space:mes:pm} \\
& \leq \frac{C_\beta^+}{(t-s)^{1+ \frac{\beta-\eta}{2}}} [W_2(\mu, \mu')^{\beta} + |x-x'|^\beta + |v-v'|^\beta ] \,\left\{ g(c(t-s), z-x) + g(c(t-s), z-x') \right\} \notag
\end{align}
\noindent and
\begin{align}
|& \partial_x [\partial_\mu p_m(\mu, s_1, t, x, z)(v)]  - \partial_x[\partial_\mu p_m(\mu, s_2, t, x, z)(v)]| \label{sensitivity:time:deriv:cross:space:mes:pm} \\
& \leq C_\beta^+\left\{ \frac{|s_1-s_2|^\frac{\beta}{2}}{(t-s_1)^{1+\frac{\beta-\eta}{2}}} g(c(t-s_1), z-x) + \frac{|s_1-s_2|^\frac{\beta}{2}}{(t-s_2)^{1+\frac{\beta-\eta}{2}}} g(c(t-s_2), z-x) \right\}. \notag 
\end{align}

\item For any $\beta \in [0,\eta)$, there exist positive constants $C^+ := C(T, \HRp, \HE)$, $C_\beta^+ := C(T, \HRp, \HE, \beta)$, $c:=c(\lambda)$, such that for any $(\mu, s, x, x', z) \in \mathcal{P}_2(\rr^d) \times [0,t) \times (\mathbb{R}^d)^3$ and any $\gv=(v, v')$, $\gv_1 = (v_1,v_1')$, $\gv_2 = (v_2,v_2')$ in $\mathbb{R}^{d} \times \mathbb{R}^d$
\begin{align}
|\partial^2_\mu p_m(\mu, s, t, x, z)(\gv)| & \leq  \frac{\mathscr C_m^{1,0}(C^+,t-s)}{(t-s)^{1-\frac{\eta}{2}}} \, g(c (t-s), z-x), \label{second:deriv:mes:induction:decoupling:mckean}
\end{align}
\begin{align}
|\partial^2_\mu p_m(\mu, s, t, x, z)(\gv_1) & - \partial^2_\mu p_m(\mu, s, t, x, z)(\gv_2)| \label{second:deriv:mes:reg:space:deriv:arg:estimate:induction:decoupling:mckean} \\
& \leq \mathscr C_m^{1,\beta}(C_\beta^+,t-s)  \frac{|\gv_1-\gv_2|^{\beta}}{(t-s)^{1+\frac{\beta-\eta}{2}}} \, g(c (t-s), z-x), \notag
\end{align}
\begin{align}
|\partial^2_\mu p_m(\mu, s, t, x, z)(\gv) & - \partial^2_\mu p_m(\mu, s, t, x', z)( \gv)| \label{second:deriv:mes:reg:space:deriv:arg:estimate:induction:decoupling:mckean2} \\
& \leq  \mathscr C_{m}^{1,\beta}(C_\beta^+,t-s) \frac{|x-x'|^{\beta}}{(t-s)^{1+\frac{\beta-\eta}{2}}} \, \left\{ g(c (t-s), z-x) + g(c(t-s), z-x')\right\}. \notag
\end{align}

\item  If additionally \HRpp\, is satisfied, then for any $\beta \in [0,\eta)$, there exist positive constants $C^{++}_\beta := C(T, \HRpp, \HE, \beta), \, c:=c(\lambda)>0$ such that for any $(\mu, \mu', s, x, z) \in (\mathcal{P}_2(\mathbb{R}^d))^2 \times [0,t) \times (\mathbb{R}^d)^2$, any $\gv \in \mathbb{R}^d \times \mathbb{R}^d$ and any $s_1,s_2$ in $[0,t)^2$
\begin{align}
|\partial^2_\mu p_m(\mu, s, t, x, z)(\gv) - \partial^2_\mu p_m(\mu', s, t, x, z)(\gv)| & \leq  \mathscr C_m^{1,\beta}(C_\beta^{++},t-s) \frac{W_2(\mu, \mu')^{\beta}}{(t-s)^{1+\frac{\beta-\eta}{2}}} \, g(c (t-s), z-x), \label{second:deriv:mes:reg:mes:estimate:induction:decoupling:mckean}
\end{align}

\noindent and
\begin{align}
 | & \partial_\mu^2 p_{m}(\mu, s_1, t, x, z)(\gv) - \partial_\mu^2 p_{m}(\mu, s_2, t, x, z)(\gv)| \nonumber \\
 & \leq \mathscr{C}^{1, \beta}_{m}(C^{++}_{\beta}, t-s_1 \vee s_2) \left\{ \frac{|s_1-s_2|^{\frac{\beta}{2}}}{(t-s_1)^{1+\frac{\beta-\eta}{2} }} \, g(c(t-s_1), z-x) + \frac{|s_1-s_2|^{\frac{\beta}{2}}}{(t- s_2)^{1+ \frac{\beta-\eta}{2} }} \, g(c(t-s_2), z-x) \right\}.\label{regularity:time:estimate:secon:deriv:decoupling:mckean} 
\end{align}

\end{itemize}

\end{prop}

\medskip

\noindent \emph{Step 2: Extraction of a convergent subsequence}

\medskip

Our next step now is to extract from the sequences $\left\{\mathbb{L}^{2} \ni \xi \mapsto \widetilde{p}_m(\xi, s, t, x, z) , m \geq 0 \right\}$ (the lifting of $\mu \mapsto p_m(\mu, s, t, x, z)$), $\left\{ [0,t) \times \mathbb{R}^d \times \pp \times \mathbb{R}^d \ni (s, x, \mu, v)  \mapsto \partial_\mu p_m(\mu, s, t, x, z)(v) , m \geq 0 \right\}$,\\ $\left\{ [0,t) \times \mathbb{R}^d \times \pp \times (\mathbb{R}^d)^2 \ni (s, x, \mu, \gv) \mapsto \partial^2_\mu p_m(\mu, s, t, x, z)(\gv) , m \geq 0 \right\}$ the corresponding subsequences which converge locally uniformly using the Arzel\`a-Ascoli theorem.

Since the coefficients $b_i, \, a_{i, j}$ are bounded and the initial condition $\mu \in \pp$, the sequence $(\P^{(m)})_{m\geq0}$ constructed in \emph{Step 1} is tight. Relabelling the indices if necessary, we may assert that $(\P^{(m)})_{m \geq 0}$ converges weakly to a probability measure $\P^{\infty}$. From standard arguments that we omit (passing to the limit in the characterisation of the martingale problem solved by $\P^{(m)}$), we deduce that $\P^\infty$ is the probability measure $\P$ induced by the unique weak solution to the McKean-Vlasov SDE \eqref{SDE:MCKEAN}. As a consequence, every convergent subsequence converges to the same limit $\P$ and so does the original sequence $(\P^{(m)})_{m\geq1}$. 

By Lebesgue's dominated convergence theorem, for any fixed $t >0$ and $z \in \rr^d$, using \eqref{iter:param:classic}, one may pass to the limit as $m\uparrow \infty$ in the parametrix infinite series \eqref{series:approx:mckean} and thus deduce that the sequence of functions $\left\{\mathcal{K} \ni (s, x, \mu) \mapsto p_m(\mu, s, t, x, z), \, m\geq1\right\}$, $\mathcal{K}$ being a compact set of $[0,t) \times \mathbb{R}^d \times \pp$, converges to $(s, x, \mu) \mapsto p(\mu, s, t, x, z)$ given by the infinite series \eqref{parametrix:series:expansion} for any fixed $(s, x, \mu)$. Moreover, it is clearly uniformly bounded and from \eqref{first:second:estimate:induction:decoupling:mckean}, \eqref{time:derivative:induction:decoupling:mckean} and \eqref{bound:derivative:heat:kernel}, it is equicontinuous. Relabelling the indices if necessary, from the Arzel\`a-Ascoli theorem, we may assert that it converges uniformly. Hence, $[0,t) \times \rr^d \times \pp \ni (s, x, \mu) \mapsto p(\mu, s, t, x, z)$ is continuous.

For any $\mu \in \pp$ and any positive integer $m$, the mapping $ (s, x) \mapsto p_m(\mu, s, t ,x ,z)$ is in $\mathcal{C}^{0, 2 }([0,t) \times \rr^d)$. Moreover, from the estimates \eqref{regularity:measure:estimate:v1:v2:v3:decoupling:mckean}, \eqref{regularity:time:estimate:v1:v2:v3:decoupling:mckean} and \eqref{reg:heat:kernel:deriv} (for $n=1,2$), the sequence of functions $\mathcal{K} \ni (s, x, \mu) \mapsto \partial_x p_m(\mu, s, t, x, z)$, $\partial^2_x p_m(\mu, s, t, x, z)$,  $\mathcal{K}$ being again a compact set of $[0,t) \times \mathbb{R}^d \times \pp$, are uniformly bounded and equicontinuous. Hence, from Arzel\`a-Ascoli's theorem, we may assert that $(s, x) \mapsto p(\mu, s, t, x, z)\in \mathcal{C}^{0, 2 }([0,t) \times \mathbb{R}^d)$ and that the mappings $[0,t) \times \mathbb{R}^d \times \pp \ni (s, x, \mu) \mapsto \partial_x p(\mu, s, t, x, z)$, $\partial^2_x p(\mu, s, t, x, z)$ are continuous.

Considering now the sequence $\left\{ \mathcal{K} \ni \xi \mapsto D\widetilde{p}_{m}(\xi, s, t, x, z) = \partial_\mu p _m([\xi], s, t, x, z)(\xi),\, m\geq 1 \right\}$, $\mathcal{K}$ being any compact set of $\mathbb{L}^{2}$, from \eqref{first:second:estimate:induction:decoupling:mckean} (with $n=0$), we deduce that it is uniformly bounded. From \eqref{first:second:estimate:induction:decoupling:mckean} (with $n=1$) and \eqref{regularity:measure:estimate:v1:v2:decoupling:mckean} (with $n=0$), it is equicontinuous. Relabelling the indices if necessary, from the Arzel\`a-Ascoli theorem, we may assert that it converges uniformly. We thus deduce that the map $\mathcal{K} \ni \xi \mapsto \widetilde{p}(\xi, s, t, x, z)$ is continuously differentiable. As a consequence, $\pp \ni \mu \mapsto p(\mu, s, t, x, z)$ is continuously L-differentiable. 

From \eqref{equicontinuity:second:third:estimate:decoupling:mckean}, \eqref{regularity:measure:estimate:v1:v2:decoupling:mckean} and \eqref{regularity:time:estimate:v1:v2:decoupling:mckean} (with $n=0$) and \eqref{first:second:estimate:induction:decoupling:mckean} (with $n=0$ and $n=1$), the sequence $ \{\mathcal{K} \ni (s, x, \mu, v) \mapsto \partial_\mu p_{m}(\mu, s, t, x, z)(v)$, $m\geq 1\}$, $\mathcal{K}$ being a compact set of $[0,t) \times \rr^d \times \pp \times \rr^d$, is uniformly bounded and equicontinuous so that the map $[0,t) \times \mathbb{R}^d \times \pp \times \mathbb{R}^d \ni (s, x, \mu, v) \mapsto \partial_\mu p (\mu, s, t, x , z)(v)$ is continuous. 

From \eqref{first:second:estimate:induction:decoupling:mckean} (with $n=1$) and \eqref{equicontinuity:first:estimate:decoupling:mckean}, the sequence $\big\{\mathbb{R}^d \supset B(0,R) \ni v \mapsto \partial_v [\partial_\mu p_{m}(\mu, s, t, x, z)](v), m\geq1\big\}$, is bounded and equicontinuous so that we deduce that the map $\mathbb{R}^d \ni v \mapsto \partial_\mu p(\mu, s, t, x, z)(v)$ is continuously differentiable. Also, the continuity of the map $[0,t) \times \mathbb{R}^d \times \pp \times \mathbb{R}^d \ni(s, x, \mu, v) \mapsto \partial_v [\partial_\mu p(\mu, s, t, x, z)](v)$ can be deduced from the uniform convergence of the sequence of continuous mappings $\big\{ \mathcal{K}  \ni (s, x, \mu, v) \mapsto \partial_v [\partial_\mu p_{m}(\mu, s, t, x, z)](v), m\geq1\big\}$, $\mathcal{K}$ being a compact set of $[0,t) \times \rr^d \times \pp \times\rr^d$, along a subsequence, derived by combining the estimates \eqref{first:second:estimate:induction:decoupling:mckean}, \eqref{equicontinuity:second:third:estimate:decoupling:mckean}, \eqref{equicontinuity:first:estimate:decoupling:mckean}, \eqref{regularity:measure:estimate:v1:v2:decoupling:mckean} and \eqref{regularity:time:estimate:v1:v2:decoupling:mckean} for $n=1$ with the Arzel\`a-Ascoli theorem.

For each fixed $v\in \mathbb{R}^d$, we now consider the following sequence of Fr\'echet derivatives of the map $L^{2}(\Omega, \mathcal{A}, \P) \ni \xi' \mapsto \partial_\mu p_m([\xi'], s, t, x, z)(v)$ given by $\left\{ \mathcal{K} \ni \xi \mapsto  \partial^2_\mu p _m([\xi], s, t, x, z)(v)(\xi),\, m\geq 1 \right\}$, $\mathcal{K}$ being a compact set of $L^{2}(\Omega, \mathcal{A}, \P)$. From \eqref{second:deriv:mes:induction:decoupling:mckean}, \eqref{second:deriv:mes:reg:space:deriv:arg:estimate:induction:decoupling:mckean} and \eqref{second:deriv:mes:reg:mes:estimate:induction:decoupling:mckean}, this sequence is uniformly bounded and equicontinuous. Relabelling the indices if necessary, from the Arzel\`a-Ascoli theorem, we may assert that it converges uniformly. Hence, for each fixed $v\in \mathbb{R}^d$, $ \pp \ni \mu \mapsto \partial_\mu p(\mu, s, t, x, z)(v)$ is continuously L-differentiable and we denote its derivative $\partial^2_\mu p(\mu, s, t, x, z)(v)(v')$ by $\partial^2_\mu p(\mu, s, t, x, z)(v, v')$. 

From the estimates \eqref{equicontinuity:second:third:estimate:decoupling:mckean}, \eqref{regularity:measure:estimate:v1:v2:decoupling:mckean} and \eqref{regularity:time:estimate:v1:v2:decoupling:mckean} (the three for $n=0$) and \eqref{first:second:estimate:induction:decoupling:mckean} on the one hand and \eqref{second:deriv:mes:induction:decoupling:mckean}, \eqref{second:deriv:mes:reg:space:deriv:arg:estimate:induction:decoupling:mckean2}, \eqref{second:deriv:mes:reg:mes:estimate:induction:decoupling:mckean}, \eqref{second:deriv:mes:reg:space:deriv:arg:estimate:induction:decoupling:mckean} and \eqref{regularity:time:estimate:secon:deriv:decoupling:mckean} on the other hand, both sequences $ \mathcal{K} \ni (s, x, \mu, v) \mapsto \partial_\mu p_{m}(\mu, s, t, x, z)(v)$ and $ \mathcal{K}' \ni (s, x, \mu, \gv) \mapsto \partial^2_\mu p_{m}(\mu, s, t, x, z)(\gv)$, $m\geq 1$, $\mathcal{K}$ and $\mathcal{K}'$ being compact sets of $[0,t) \times \mathbb{R}^d \times \pp \times \mathbb{R}^d$ and $[0,t) \times \mathbb{R}^d \times \pp \times (\mathbb{R}^d)^2$, are uniformly bounded and equicontinuous so that, from the Arzel\`a-Ascoli theorem, the map $[0,t) \times \mathbb{R}^d \times \pp \times \mathbb{R}^d \ni (s, x, \mu, v) \mapsto \partial_\mu p (\mu, s, t, x , z)(v)$, $[0,t) \times \mathbb{R}^d \times \pp \times (\mathbb{R}^d)^2 \ni (s, x, \mu, \gv) \mapsto \partial^2_\mu p (\mu, s, t, x , z)(\gv)$ are continuous. 



The estimates \eqref{second:lions:derivative:mckean:decoupling}, \eqref{second:deriv:mes:reg:mes:space:estimate:induction:mckean:decoupling} and \eqref{second:deriv:mes:reg:time:estimate:induction:mckean:decoupling} then follow by passing to the limit in the corresponding upper-bounds proved in the first step.

 \medskip

\noindent \emph{Step 3: $\mathcal{C}^{1, 2, 2}_f([0,t) \times \rr^d \times \pp)$ regularity and related estimates.}

\medskip

Let us now prove that $(s, x, \mu) \mapsto p(\mu, s, t, x, z)$ is in $\mathcal{C}^{1, 2, 2}_f([0,t) \times \rr^d \times \pp)$. We here follow the same lines of reasonings as those employed in \cite{chaudruraynal:frikha}. From the Markov property satisfied by the SDE \eqref{SDE:MCKEAN}, stemming from the well-posedness of the related martingale problem, the following relation is satisfied for all $h>0$
$$
p(\mu, s-h, t, x, z) = \E[p([X^{s-h, \xi}_s], s, t, X^{s-h, x, \mu}_s, z)].
$$

Combining estimates \eqref{first:second:lions:derivative:mckean:decoupling} and \eqref{bound:density:parametrix} (for $n=1$) with the chain rule formula of Proposition \ref{prop:chain:rule:joint:space:measure} (with respect to the space and measure variables only) we obtain
$$
  \E[p([X^{s-h, \xi}_s], s, t, X^{s-h, x, \mu}_s, z)] = p(\mu, s, t, x, z) + \E\left[\int_{s-h}^{s} \mathcal{L}_r p([X^{s-h, \xi}_r], s, t, X^{s-h, x, \mu}_r, z) \, dr \right]
$$

\noindent \noindent where the operator $\mathcal{L}_r$ is given by \eqref{inf:generator:mckean:vlasov}.

 Hence, one has
$$
\frac{1}{h}(p(\mu, s-h, t, x, z)  - p(\mu, s, t, x, z)) = \frac{1}{h} \E\left[ \int_{s-h}^s \mathcal{L}_r p([X^{s-h, \xi}_r], s, t, X^{s-h, x, \mu}_r, z) \, dr \right]
$$


\noindent so that, letting $h\downarrow 0$, from the differentiability of $[0, t) \ni s\mapsto p(\mu, s, t, x, z)$, the boundedness and continuity of the coefficients as well as the continuity of the maps $(\mu, x, v) \mapsto p(\mu, s, t, x, z), \, \partial^{1+n}_x p(\mu, s, t, x, z), \, \partial^{n}_v[\partial_\mu p(\mu, s, t, x, z)](v)$, for $n=0,1$, we deduce  
 \begin{eqnarray*}
\partial_s p(\mu, s, t, x, z) = - \mathcal{L}_s p(\mu, s, t, x, z) \quad \mbox{ on } [0,t) \times \mathbb{R}^d \times \pp
\end{eqnarray*} 

\noindent so that $[0,t) \times \mathbb{R}^d \times \pp \ni (s, x, \mu) \mapsto \partial_s p(\mu, s, t, x, z)$ is continuous.

\section{Propagation of chaos}\label{propagation:of:chaos}

This section is devoted to the proof of Theorems \ref{propagation:chaos:fundamental:sol:theorem}, \ref{theo:weakChaos} and \ref{propagation:prop:path}. As already mentioned, our propagation of chaos results crucially rely on the regularity properties provided by Theorems \ref{derivative:density:sol:mckean:and:decoupling}, \ref{fundamental:solution:wasserstein:space} and Proposition \ref{fully:c2:regularity:property:decoupling:density}. 

%

\subsection{Proof of Theorem \ref{propagation:chaos:fundamental:sol:theorem}.}

The strategy  consists in testing the fundamental solution $p(\mu, s, t, z)$ to the backward Kolmogorov PDE \eqref{backward:kolmogorov:pde} stated on the Wasserstein space as an approximate solution to the one-dimensional marginal density of the $N$-dimensional particle systems. For any fixed $(t, z) \in (0,T] \times \mathbb{R}^d$, the natural candidate for being an approximate solution is 
$$
p(\mu^{N}_s, s, t, z), \quad \mbox{ with } \quad \mu^{N}_s := \frac{1}{N} \sum_{i=1}^{N} \delta_{X^{i}_s}
$$

\noindent where $\left\{ (X^{i}_t)_{t\in [0,T]}, 1\leq i \leq N\right\}$ are given by the unique weak solution to the system of particles with dynamics given by \eqref{SDE:particle:system}. We start with the following lemma concerning the control of the initial error induced by the difference of the fundamental solution taken along the initial empirical measure $\mu_0^N$ and $\mu$.

\begin{lem}\label{lem:first:order:expansion:initial:error} Under \HE\, and \HRp, for any $(t, \mu, z) \in (0,T] \times \pp \times \mathbb{R}^d$ and any positive integer $N$, the following error bound is satisfied
\begin{align}
|\E[ p(\mu^{N}_0, 0, t, z) - p(\mu, 0, t, z)]| \leq \frac{K^+}{N} \left\{ \frac{1}{t^{\frac{1-\eta}{2}}} \int_{\mathbb{R}^d} g(ct, z-x) |x| \mu(dx) + \frac{1}{t^{1-\frac{\eta}{2}}} \int_{\mathbb{R}^d} g(c t, z-x)  \mu(dx) \right\} \label{error:bound:initial:density}
\end{align}
\noindent for some positive constants $K^+:=K(T, \HRp, \HE)$, $c:=c(\lambda)$, $T\mapsto K(T, \HRp, \HE)$ being non-decreasing.

Assume additionally that \HRpp\, holds. Then, recalling that $\widetilde{\xi}$ stands for an $\mathbb{R}^d$-valued random variable independent of $(\xi^{i})_{1\leq i\leq N}$ with law $\mu$, the following first order expansion holds
\begin{align}
\E[ p(\mu^{N}_0, 0, t, z) - p(\mu, 0, t, z)] & = \frac{1}{N}   \E\Big[\frac{\delta}{\delta m}p(\mu, 0, t, \xi^1, z)(\xi^1)- \frac{\delta}{\delta m}p(\mu, 0, t, \xi^1, z)(\widetilde{\xi})\Big] \notag \\
& \quad + \frac{1}{2N} \E\Big[\frac{\delta^2}{\delta m^2}p(\mu, 0, t, \xi^1, z)(\widetilde{\xi}, \widetilde{\xi}) - \frac{\delta^2}{\delta m^2}p(\mu, 0, t, \xi^1, z)(\widetilde{\xi}, \xi^2) \Big] \label{first:order:expansion:initial:error}\\
& \quad + \frac{1}{N} \mathcal{R}^{N}_2(\mu, 0, t, z) \notag
\end{align}
\noindent where for any $\beta \in [0,\eta)$
\begin{align}
 |& \mathcal{R}^{N}_2(\mu, 0, t, z) |  \leq K_\beta^{++} \Big(\frac{1}{t^{1-\frac{\eta}{2}}} \E\Big[g(ct, z-\xi^1) \left(W_2(\mu^{N}_0, \mu)  (1+ |\xi^1|) + \frac{1}{N}\right) \Big] \label{bound:remainder:term:2:initial:error} \\
 & \quad + \frac{1}{t^{1+\frac{\beta-\eta}{2}}} \E\Big[ g(c t, z-\xi^1) W_2(\mu^{N}_0, \mu)^{\beta}(1+|\xi^2|)\Big] \Big) \notag
\end{align}
\noindent for some positive constants $K^{++}_\beta:=K(T, \HRpp, \HE, \beta)$, $c:=c(\lambda)$, $T\mapsto K(T, \HRpp, \HE, \beta)$ being non-decreasing.

\end{lem}

\begin{proof}

\emph{Step 1: proof of the error bound \eqref{error:bound:initial:density}.} \\
We consider the sequence $\left\{[0,t) \times \pp \ni (s, \mu) \mapsto p_m(\mu, s , t, z), \, m\geq1 \right\}$ of $\mathcal{C}^{1, 2}_f([0,t)\times\pp)$ maps constructed in Section \ref{full:c2:regularity:section} and recall that $p_m(\mu, 0, t, z)$ converges to $p(\mu, 0, t, z)$ for any fixed $\mu,\, t, \, z$. Hence, using the relation \eqref{conv:relation:step:m}, the estimate \eqref{bound:derivative:heat:kernel} together with the dominated convergence theorem, we get $\lim_{m \uparrow \infty} \E[ p_m(\mu^{N}_0, 0, t, z) - p_m(\mu, 0, t, z)] = \E[ p(\mu^{N}_0, 0, t, z) - p(\mu, 0, t, z)] $. It thus suffices to prove the error bound \eqref{error:bound:initial:density} for the difference $\E[ p_m(\mu^{N}_0, 0, t, z) - p_m(\mu, 0, t, z)] $. \\

 By exchangeability in law of the random variables $(\xi^{i})_{1\leq i\leq N}$ and the mean-value theorem
\begin{align}
\E[p_m(\mu^{N}_0, 0, t, z) & - p_m(\mu, 0, t, z)]  \notag\\
 & = \E[p_m(\mu^{N}_0, 0, t, \xi^1, z) - p_m(\mu, 0, t, \xi^1, z)] \notag \\
& = \int_{0}^1\int_{\mathbb{R}^d} \E\Big[\frac{\delta}{\delta m}p_m(\mu^{\lambda_1, N}_0, 0, t, \xi^1, z)(y) \,  (\mu^{N}_0 - \mu)(dy)\Big] \, d\lambda_1\notag \\
& = \frac{1}{N} \int_{0}^1  \E\Big[\frac{\delta}{\delta m}p_m(\mu^{\lambda_1, N}_0, 0, t, \xi^1, z)(\xi^1)- \frac{\delta}{\delta m}p_m(\mu^{\lambda_1, N}_0, 0, t, \xi^1, z)(\widetilde{\xi})\Big] \, d\lambda_1\label{eq:cont:p0} \\
& \quad + \frac{N-1}{N} \int_0^1  \E\Big[\frac{\delta}{\delta m}p_m(\mu^{\lambda_1, N}_0, 0, t, \xi^1, z)(\xi^2) - \frac{\delta}{\delta m}p_m(\mu^{\lambda_1, N}_0, 0, t, \xi^1, z)(\widetilde{\xi})\Big] d\lambda_1\notag
\end{align}

\noindent where we introduced the notation $\mu^{\lambda_1, N}_0 := \lambda_1 \mu^{N}_0 + (1-\lambda_1) \mu$ and recall that $\widetilde{\xi}$ is a random variable independent of the sequence $(\xi^{i})_{1\leq i \leq N}$ with law $\mu$. We now introduce the measure $\widetilde{\mu}^{\lambda_1, N}_0 := \lambda_1 \widetilde{\mu}^{N}_0 + (1-\lambda_1) \mu$ with $\widetilde{\mu}^{N}_0 := \mu^{N}_0  + \frac{1}{N}(\delta_{\widetilde{\xi}} - \delta_{\xi^2})$ and notice that
$$
\E\Big[\frac{\delta}{\delta m} p_m(\mu^{\lambda_1, N}_0, 0, t,\xi^1, z)(\xi^2)\Big] = \E\Big[\frac{\delta}{\delta m}p_m(\widetilde{\mu}^{\lambda_1, N}_0, 0, t,\xi^1, z)(\widetilde{\xi})\Big].
$$

\noindent so that, again by the mean-value theorem
\begin{align*}
\E\Big[\frac{\delta}{\delta m} &p_m(\mu^{\lambda_1, N}_0, 0, t, \xi^1, z)(\xi^2)  - \frac{\delta}{\delta m} p_m(\mu^{\lambda_1, N}_0, 0, t,\xi^1, z)(\widetilde{\xi})\Big] \\
& =  \frac{\lambda_1}{N}\int_0^1 \E\Big[\frac{\delta^2}{\delta m^2} p_m(\widetilde{\mu}^{\lambda_1, \lambda_2, N}_0, 0, t, \xi^{1}, z)(\widetilde{\xi},\widetilde{\xi}) -\frac{\delta^2}{\delta m^2} p_m(\widetilde{\mu}^{\lambda_1, \lambda_2, N}_0, 0, t, \xi^1, z)(\widetilde{\xi}, \xi^2) \Big]  \, d\lambda_2
\end{align*}


\noindent with the notation $\widetilde{\mu}^{\lambda_1, \lambda_2, N}_0 := \lambda_2 \widetilde{\mu}^{\lambda_1, N}_0 + (1-\lambda_2) \mu^{\lambda_1, N}_0$. Plugging the previous identity into \eqref{eq:cont:p0}, we derive
\begin{align}
\E[& p_m(\mu^{N}_0, 0, t, z)  - p_m(\mu, 0, t, z)]  \notag \\
& = \frac{1}{N} \int_{0}^1  \E\Big[\frac{\delta}{\delta m}p_m(\mu^{\lambda_1, N}_0, 0, t, \xi^1, z)(\xi^1)- \frac{\delta}{\delta m}p_m(\mu^{\lambda_1, N}_0, 0, t, \xi^1, z)(\widetilde{\xi})\Big] \, d\lambda_1\notag \\
& \quad + \frac{N-1}{N^2} \int_{[0,1]^2} \lambda_1 \E\Big[\frac{\delta^2}{\delta m^2} p_m(\widetilde{\mu}^{\lambda_1, \lambda_2, N}_0, 0, t, \xi^{1}, z)(\widetilde{\xi},\widetilde{\xi}) -\frac{\delta^2}{\delta m^2} p_m(\widetilde{\mu}^{\lambda_1, \lambda_2, N}_0, 0, t, \xi^1, z)(\widetilde{\xi}, \xi^2) \Big]  \,  d\lambda_1d\lambda_2 \label{first:order:expansion:pm} \\
& = \frac{1}{N} \int_{[0,1]^2}  \E\Big[\partial_\mu p_m(\mu^{\lambda_1, N}_0, 0, t, \xi^1, z)(\lambda_2 \xi^1 + (1-\lambda_2) \widetilde{\xi})\cdot (\xi^1-\widetilde{\xi})\Big] \, d\lambda_1 d\lambda_2\notag \\
& +\frac{(N-1)}{N^2} \int_{[0, 1]^4} \lambda_1 \E\Big[ \widetilde{\xi}\cdot \partial^2_\mu p_m(\widetilde{\mu}^{\lambda_1, \lambda_2, N}_0, 0, t, \xi^{1}, z)(\lambda_3 \widetilde{\xi},\lambda_4 \widetilde{\xi})\cdot\widetilde{\xi} \notag \\
&\qquad  - \xi^2\cdot \partial^2_\mu p_m(\widetilde{\mu}^{\lambda_1, \lambda_2, N}_0, 0, t, \xi^1, z)(\lambda_3 \widetilde{\xi}, \lambda_4 \xi^2)\cdot\widetilde{\xi} \Big] \, d\lambda_1 d\lambda_2 d\lambda_3 d\lambda_4 \notag
\end{align}


\noindent which in turn by using \eqref{first:second:estimate:induction:decoupling:mckean} and \eqref{second:deriv:mes:induction:decoupling:mckean} eventually yield
\begin{align*}
|\E[ p_m(\mu^{N}_0, 0, t, z) - p_m(\mu, 0, t, z)]| \leq \frac{K^+}{N} \left\{ \frac{1}{t^{\frac{1-\eta}{2}}} \int_{\rr^d} g(ct, z-x) |x| \mu(dx) + \frac{1}{t^{1-\frac{\eta}{2}}} \int_{\rr^d} g(c t, z-x)  \mu(dx) \right\}.
\end{align*}

The proof of \eqref{error:bound:initial:density} is now complete. \\

\noindent \emph{Step 2: proof of the first order expansion \eqref{first:order:expansion:initial:error}.} \\

We here assume that \HRpp\, holds. In a completely analogous manner, one obtains the identity \eqref{first:order:expansion:pm} for $\E[p(\mu^{N}_0, 0, t, z)  - p(\mu, 0, t, z)]$. We thus write
\begin{align*}
\E[& p(\mu^{N}_0, 0, t, z)  - p(\mu, 0, t, z)]  \notag \\
& = \frac{1}{N} \int_{0}^1  \E\Big[\frac{\delta}{\delta m}p(\mu^{\lambda_1, N}_0, 0, t, \xi^1, z)(\xi^1)- \frac{\delta}{\delta m}p(\mu^{\lambda_1, N}_0, 0, t, \xi^1, z)(\widetilde{\xi})\Big] \, d\lambda_1\notag \\
& \quad + \frac{N-1}{N^2} \int_{[0,1]^2} \lambda_1 \E\Big[\frac{\delta^2}{\delta m^2} p(\widetilde{\mu}^{\lambda_1, \lambda_2, N}_0, 0, t, \xi^{1}, z)(\widetilde{\xi},\widetilde{\xi}) -\frac{\delta^2}{\delta m^2} p(\widetilde{\mu}^{\lambda_1, \lambda_2, N}_0, 0, t, \xi^1, z)(\widetilde{\xi}, \xi^2) \Big]  \,  d\lambda_1d\lambda_2\\
& = \frac{1}{N}   \E\Big[\frac{\delta}{\delta m}p(\mu, 0, t, \xi^1, z)(\xi^1)- \frac{\delta}{\delta m}p(\mu, 0, t, \xi^1, z)(\widetilde{\xi})\Big] \\
& \quad + \frac{1}{2N} \E\Big[\frac{\delta^2}{\delta m^2}p(\mu, 0, t, \xi^1, z)(\widetilde{\xi}, \widetilde{\xi}) - \frac{\delta^2}{\delta m^2}p(\mu, 0, t, \xi^1, z)(\widetilde{\xi}, \xi^2) \Big] \\
& \quad + \frac{1}{N} \mathcal{R}^{N}_2(\mu, 0, t, z)
\end{align*}

\noindent with
\begin{align*}
\mathcal{R}^{N}_2(\mu, 0, t, z) & :=  \int_{0}^1 \Big( \E\Big[\frac{\delta}{\delta m}p(\mu^{\lambda_1, N}_0, 0, t, \xi^1, z)(\xi^1)- \frac{\delta}{\delta m}p(\mu, 0, t, \xi^1, z)(\xi^1)\Big] \\
& \quad - \E\Big[\frac{\delta}{\delta m}p(\mu^{\lambda_1, N}_0, 0, t, \xi^1, z)(\widetilde{\xi})- \frac{\delta}{\delta m}p(\mu, 0, t, \xi^1, z)(\widetilde{\xi})\Big] \Big) \, d\lambda_1 \\
& \quad +  \int_{[0,1]^2} \Big(\E\Big[\frac{\delta^2}{\delta m^2}p(\widetilde{\mu}^{\lambda_1, \lambda_2, N}_0, 0, t, \xi^1, z)(\widetilde{\xi}, \widetilde{\xi}) -\frac{\delta^2}{\delta m^2}p(\mu, 0, t, \xi^1, z)(\widetilde{\xi}, \widetilde{\xi}) \Big]  \\
& \quad - \E\Big[\frac{\delta^2}{\delta m^2}p(\widetilde{\mu}^{\lambda_1, \lambda_2, N}_0, 0, t, \xi^1, z)(\widetilde{\xi}, \xi^2) -\frac{\delta^2}{\delta m^2}p(\mu, 0, t, \xi^1, z)(\widetilde{\xi}, \xi^2) \Big] \Big) \lambda_1 \, d\lambda_1 d\lambda_2 \\
& \quad -\frac{1}{N} \int_{[0,1]^2} \E\Big[\frac{\delta^2}{\delta m^2}p(\widetilde{\mu}^{\lambda_1, \lambda_2, N}_0, 0, t, \xi^1, z)(\widetilde{\xi}, \widetilde{\xi}) - \frac{\delta^2}{\delta m^2}p(\widetilde{\mu}^{\lambda_1, \lambda_2, N}_0, 0, t, \xi^1, z)(\widetilde{\xi}, \xi^2) \Big] \lambda_1 \, d\lambda_1 d\lambda_2\\
& =: \mathcal{R}^{N, 1}_2(\mu, 0, t, z)  + \mathcal{R}^{N, 2}_2(\mu, 0, t, z) + \mathcal{R}^{N, 3}_2(\mu, 0, t, z). 
\end{align*}

It thus remains to provide an estimate for the three terms of the remainder $\mathcal{R}^{N}_2(\mu, 0, t, z) $. In order to deal with $\mathcal{R}^{N, 1}_2(\mu, 0, t, z)$, we first write
\begin{align*}
\frac{\delta}{\delta m}p(\mu^{\lambda_1, N}_0, 0, t, \xi^1, z)(\xi^1)& - \frac{\delta}{\delta m}p(\mu, 0, t, \xi^1, z)(\xi^1) \\
& = \int_0^1 \Big[\partial_\mu p(\mu^{\lambda_1, N}_0, 0, t, \xi^1, z)(\lambda_2 \xi^1) - \partial_\mu p(\mu, 0, t, \xi^1, z)(\lambda_2 \xi^1) \Big] \cdot \xi^1 \, d\lambda_2
\end{align*}

\noindent so that, from \eqref{regularity:measure:estimate:v1:v2:mckean:decoupling} with $n=0, \beta=1$ and noting that $W^2_2(\mu^{\lambda_1, N}_0, \mu) = \lambda_1 W^2_2(\mu^{N}_0, \mu)$, we get
$$
|\frac{\delta}{\delta m}p(\mu^{\lambda_1, N}_0, 0, t, \xi^1, z)(\xi^1)- \frac{\delta}{\delta m}p(\mu, 0, t, \xi^1, z)(\xi^1) | \leq K^+ \frac{W_2(\mu^{N}_0, \mu)}{t^{1-\frac{\eta}{2}}} g(ct, z-\xi^1)|\xi^1|.
$$

Similarly, we obtain
$$
|\frac{\delta}{\delta m}p(\mu^{\lambda_1, N}_0, 0, t, \xi^1, z)(\widetilde{\xi})- \frac{\delta}{\delta m}p(\mu, 0, t, \xi^1, z)(\widetilde{\xi})| \leq K^+ \frac{W_2(\mu^{N}_0, \mu) }{t^{1-\frac{\eta}{2}}} g(ct, z-\xi^1)|\widetilde{\xi}|.
$$

Gathering the two previous estimates and using the fact that $\widetilde{\xi}$ is independent of $\mu_0^{N}$, we conclude
$$
|\mathcal{R}^{N, 1}_2(\mu, 0, t, z)| \leq \frac{K^+}{t^{1-\frac{\eta}{2}}} \E\Big[ g(ct, z-\xi^1)W_2(\mu^{N}_0, \mu)(1+ |\xi^1|) \Big].
$$

%

From similar arguments, using \eqref{second:deriv:mes:reg:mes:space:estimate:induction:mckean:decoupling} and the fact that $W_2(\widetilde{\mu}^{\lambda_1, \lambda_2, N}_0, \mu) \leq W_2(\mu^{N}_0, \mu)$, for any $\beta \in [0,\eta)$ it holds
$$
\Big| \E\Big[\frac{\delta^2}{\delta m^2}p(\widetilde{\mu}^{\lambda_1, \lambda_2, N}_0, 0, t, \xi^1, z)(\widetilde{\xi}, \widetilde{\xi}) -\frac{\delta^2}{\delta m^2}p(\mu, 0, t, \xi^1, z)(\widetilde{\xi}, \widetilde{\xi}) \Big] \Big| \leq \frac{K^{++}_\beta}{t^{1+ \frac{\beta-\eta}{2}}} \E\Big[ g(ct, z-\xi^1) W_2(\mu^{N}_0, \mu)^{\beta}\Big] 
$$
\noindent and
$$
\Big| \E\Big[\frac{\delta^2}{\delta m^2}p(\widetilde{\mu}^{\lambda_1, \lambda_2, N}_0, 0, t, \xi^1, z)(\widetilde{\xi}, \xi^2) -\frac{\delta^2}{\delta m^2}p(\mu, 0, t, \xi^1, z)(\widetilde{\xi}, \xi^2) \Big] \Big| \leq \frac{K^{++}_\beta}{t^{1+\frac{\beta-\eta}{2}}} \E[ g(ct, z-\xi^1)W_2(\mu^{N}_0, \mu)^{\beta} |\xi^2|]
$$

\noindent so that
$$
|\mathcal{R}^{N, 2}_2(\mu, 0, t, z)| \leq \frac{K^{++}_\beta}{t^{1+\frac{\beta-\eta}{2}}} \E[ g(ct, z-\xi_1) W_2(\mu^{N}_0, \mu)^{\beta} (1+|\xi^2|)].
$$

For the last term, from \eqref{second:lions:derivative:mckean:decoupling}, we directly get
$$
|\mathcal{R}^{N, 3}_2(\mu, 0, t, z)| \leq \frac{K^+}{t^{1-\frac{\eta}{2}}} \frac{1}{N}\E[g(c t, z-\xi_1)]
$$

Gathering the previous estimates on $\mathcal{R}^{N, 1}_2$ $\mathcal{R}^{N, 2}_2$ and $\mathcal{R}^{N, 3}_2$ concludes the proof of \eqref{first:order:expansion:initial:error}.

\end{proof}

We now move to the proof of Theorem \ref{propagation:chaos:fundamental:sol:theorem}.\\

\noindent \emph{Step 1: proof of the Gaussian upper-bound \eqref{Gaussian:upper:bound:density:p1N}.}\\ 

Under \HE\, and \HRpp, the map $(s, \mu) \mapsto p(\mu, s , t, z)$ belongs to $\mathcal{C}^{1, 2}_f([0,t)\times\pp)$ so that, from Proposition \ref{empirical:projection:prop}, we deduce that the empirical projection function defined by 
$$
[0,t) \times (\mathbb{R}^d)^{N} \ni (s, (x_1,\cdots, x_N)) \mapsto p(\frac{1}{N}\sum_{i=1}^{N} \delta_{x_i}, s, t, z)
$$

\noindent belongs to the space $\mathcal{C}^{1,2}([0,t)\times (\mathbb{R}^d)^{N})$. Hence, from standard It\^{o}'s lemma
\begin{align}
p(\mu^{N}_s, s, t, z) & = p(\mu^{N}_0, 0, t, z) + \int_0^s  (\partial_r + \mathscr{L}_r) p(\mu^{N}_r, r, t, z) \, dr  \nonumber  \\
& \quad + \int_0^s \frac{1}{N} \sum_{i=1}^{N} \partial_\mu p(\mu^{N}_r, r, t, z)(X^{i}_r). \big(\sigma(r, X^{i}_r, \mu^{N}_r) dW^{i}_r \big) \nonumber \\
& \quad + \int_0^s \frac{1}{2 N^2}\sum_{i=1}^{N} \tr\Big( a(r, X^{i}_r, \mu^{N}_r) \partial^2_\mu p(\mu^{N}_r, r, t, z)(X^{i}_r, X^{i}_r)\Big) \, dr \nonumber \\
& = p(\mu^{N}_0, 0, t, z)  +  \int_0^s \frac{1}{N} \sum_{i=1}^{N} \partial_\mu p(\mu^{N}_r, r, t, z)(X^{i}_r). \big(\sigma(r, X^{i}_r, \mu^{N}_r) dW^{i}_r \big)  \label{pm:ito:rule} \\
& \quad +  \int_0^s \frac{1}{2 N^2}\sum_{i=1}^{N} \tr\Big( a(r, X^{i}_r, \mu^{N}_r) \partial^2_\mu p(\mu^{N}_r, r, t, z)(X^{i}_r, X^{i}_r)\Big) \, dr\nonumber
\end{align}

\noindent where we used the fact that $(\partial_s + \mathscr{L}_s) p(\mu, s, t, z) = 0$ for any $(\mu, s) \in \pp \times [0,t)$.

From the relation 
\begin{align}
\partial_\mu p(\mu, r, t, z)(v) = \partial_x p(\mu, r, t, v,  z)+ \int_{\mathbb{R}^d} \partial_\mu p(\mu, r, t, x, z)(v) \, \mu(dx) \label{first:mes:deriv:relation}
\end{align}

\noindent and the estimates \eqref{bound:density:parametrix} and \eqref{first:second:lions:derivative:mckean:decoupling}, we get $| \partial_\mu p(\mu^{N}_r, r, t, z)(v)| \leq K:=K(t-s, \HR, \HE)$, for any $r\in [0,s]$, so that the local martingale appearing in the right-hand side of \eqref{pm:ito:rule} is a true martingale. Taking expectation in both sides of \eqref{pm:ito:rule}, we thus obtain
\begin{align}
\E\Big[&p(\mu^{N}_s, s, t, z)   \Big] \nonumber \\
& = \E[p(\mu^{N}_0, 0, t, z) ]  + \int_0^s \frac{1}{2N} \E\left[\tr\Big( a(r, X^{1}_r, \mu^{N}_r) \partial^2_\mu p(\mu^{N}_r, r, t, z)(X^{1}_r, X^{1}_r)\Big)\right] \,dr. \label{first:step:approximation:density}
\end{align}

Now, in order to handle the second term appearing in the right-hand side of the above identity, we first use the relation 
\begin{align}
\partial^2_\mu p(\mu, r, t, z)(v, v') & = \partial_\mu[\partial_x p(\mu, r, t, v, z)](v') + \partial_{x}[\partial_\mu p(\mu, r, t, v', z)(v)] \label{second:mes:deriv:relation} \\
& \quad + \int_{\mathbb{R}^d} \partial^2_\mu p(\mu, r, t, x, z)(v, v') \, \mu(dx) \nonumber
\end{align}

\noindent and the estimates \eqref{deriv:cross:space:mes:pm} and \eqref{second:lions:derivative:mckean:decoupling}, so that we get the following upper-bound
\begin{align}
|\partial^2_\mu&  p(\mu, r, t, z)(v, v')| \notag \\
& \leq \frac{K^+}{{(t-r)^{1-\frac{\eta}{2}}}} \left\{ g(c(t-r), z-v) + g(c(t-r), z-v') +  \int_{\mathbb{R}^d} g(c(t-r), z-x) \, \mu(dx) \right\} \label{gaussian:estimate:second:order:derivative:mes}
\end{align}

\noindent for some positive constants $K^+:=K(T, \HRp, \HE)$ and $c:=c(\lambda)$. Hence, using the boundedness of $a$ as well as the previous estimate, we derive the following estimate for the integrand of the second term appearing in the right-hand side of \eqref{first:step:approximation:density}
\begin{align}
& \left|\E\left[\tr\Big( a(r, X^{1}_r, \mu^{N}_r)  \partial^2_\mu p(\mu^{N}_r, r, t, z)(X^{1}_r, X^{1}_r)\Big)\right] \right| \notag \\
& \quad \quad   \leq  \frac{K^+}{(t-r)^{1-\frac{\eta}{2}}} \int_{\mathbb{R}^d} g(c(t-r), z-y) \, p^{1, N}(\mu, 0, r, y) \, dy  \label{bound:second:term:exp}
\end{align}

\noindent which, plugged into \eqref{first:step:approximation:density}, in turn yields
\begin{align}
\E\Big[p(\mu^{N}_s, s, t, z) \Big] & \leq K^+\left\{ \E[p(\mu^{N}_0, 0, t, z) ]  + \frac{1}{N} \int_0^s \frac{1}{(t-r)^{1-\frac{\eta}{2}}} \int_{\mathbb{R}^d} g(c(t-r), z-y) p^{1, N}(\mu, 0, r, y) \, dy \, dr \right\}. \label{first:bound:density:before:time:limit}
\end{align}

In order to conclude the proof of the Gaussian upper-estimate \eqref{Gaussian:upper:bound:density:p1N}, it remains to pass to the limit as $s\uparrow t$ in the previous inequality. We first note that by interchangeability in law  
$$
\E[p(\mu^{N}_s, s, t, z)] = \E\Big[\int_{\mathbb{R}^d} p(\mu^N_s, s, t, x, z) \mu^N_s(dx)\Big] = \E\Big[ p(\mu^N_s, s, t, X^{1}_s, z)\Big] 
$$ 
\noindent so that
\begin{equation}
\label{identity:density:expectation}
\E[p(\mu^{N}_s, s, t, z)]  = \int_{(\mathbb{R}^d)^{N}} p(m^{N}_{\bold{x}}, s, t, x_1, z) \, \bold{p}^N(\mu, 0, s, \bold{x})\, d\bold{x}
\end{equation}

\noindent where we recall that we use the notations $\bold{x} = (x_1, \cdots, x_N)$, $d\bold{x} = dx_1 \cdots dx_N$, $m^N_{\bold{x}} = \frac{1}{N} \sum_{i=1}^{N} \delta_{x_i}$ and denoted by $\bold{x}\mapsto \bold{p}^N(\mu, 0, s, \bold{x})$ the density function of the $N$-tuple $\bold{X}_s = (X^{1}_s, \cdots, X^{N}_s)$ given by the unique weak solution to the particle system at time $s$ starting at time $0$ from the $N$-fold product measure $\mu^{N}$. We then make use again of the decomposition \eqref{decomposition:short:time:parametrix} and the computations that appear shortly after, namely, we write
$$
p(m^{N}_{\bold{x}}, s, t, x_1, z) = \widehat{p}^{z}(m^{N}_{\bold{x}}, s, t, x_1, z) +  \mathcal{R}(m^{N}_{\bold{x}}, s, t, x_1, z)
$$

\noindent with
$$
 | \mathcal{R}(m^{N}_{\bold{x}}, s, t, x_1, z)| \leq C (t-s)^{\frac{\eta}{2}} g(c(t-s), z-x_1).
$$

Denoting $\widetilde{\bold{x}} = (z, x_2, \cdots, x_N)$, $\xi, \xi'$ two random variables with $[\xi] = m_{\bold{x}}^{N}$, $[\xi']=m_{\widetilde{\bold{x}}}^N$ and using the estimate (A.45) of Lemma A.2 (with $\beta=\eta$) in \cite{chaudruraynal:frikha}, we get 
\begin{align*}
& | a_{i, j}(r, z, [X^{s, \xi }_r]) - a_{i, j}(r, z, [X^{s, \xi'}_r]) |\\
&= \lim_{m \uparrow \infty} | a_{i, j}(r, z, [X^{s, \xi , (m)}_r]) - a_{i, j}(r, z, [X^{s, \xi', (m)}_r]) |  \leq C W_2(m_{\bold{x}}^{N}, m_{\widetilde{\bold{x}}}^N)^{\eta} \leq C |z-x_1|^{\eta}
\end{align*}


\noindent which in turn by recalling \eqref{kernel:p:hat:definition} and using the mean-value theorem and the space-time inequality \eqref{space:time:inequality} yield
$$
|\widehat{p}^{z}(m^{N}_{\bold{x}}, s, t, x_1, z) - \widehat{p}^{z}(m^{N}_{\widetilde{\bold{x}}}, s, t, x_1, z)| \leq C (t-s)^{\frac{\eta}{2}} g(c(t-s), z-x_1).
$$

Hence, plugging the previous estimates into \eqref{identity:density:expectation} we deduce
\begin{align*}
\E[p(\mu^{N}_s, s, t, z)]  & =  \int_{(\mathbb{R}^d)^{N}} p(m^{N}_{\bold{x}}, s, t, x_1, z) \bold{p}^N(\mu, 0, s, \bold{x})\, d\bold{x} \\
& =  \int_{(\rr^d)^{N}} \widehat{p}^{z}(m^{N}_{\widetilde{\bold{x}}}, s, t, x_1, z) \bold{p}^N(\mu, 0, s, \bold{x})\, d\bold{x} + \mathcal{O}((t-s)^{\frac{\eta}{2}}) \int g(c t, z - x) \mu(dx).
\end{align*}

In order to pass to the limit as $s \uparrow t$ in the previous identity, we finally perform the change of variable $x_1 = \Sigma^{1/2}_{s, t} y_1 + z $, where $\Sigma^{1/2}_{s, t}$ is the unique principal square root of the positive definite matrix $\int_s^t a(r, z, [X^{s, \xi'}_r]) \, dr$, recalling that $[\xi']= m_{\widetilde{\bold{x}}}^{N}$, in the integral appearing in the right-hand side of the previous equality and then let $s\uparrow t$, by dominated convergence
$$
\lim_{s\uparrow t} \int_{(\mathbb{R}^d)^{N}} p(m^{N}_{\bold{x}}, s, t, x_1, z) \bold{p}^N(\mu, 0, s, \bold{x})\, d\bold{x} = \lim_{s \uparrow t} \int_{(\rr^d)^{N}} \widehat{p}^{z}(m^{N}_{\widetilde{\bold{x}}}, s, t, x_1, z) \bold{p}^N(\mu, 0, s, \bold{x})\, d\bold{x} = p^{1,N}(\mu, 0, t, z).
$$

Coming back to  \eqref{first:bound:density:before:time:limit}, passing to the limit as $s\uparrow t$ in \eqref{identity:density:expectation} and using the previous identity, we thus obtain
%
\begin{align}
p^{1, N}(\mu, 0, t, z) & = \lim_{s \uparrow t}\E\Big[p(\mu^{N}_s, s, t, z) \Big] \notag \\
&\leq K^+ \bigg\{ \E[p(\mu^{N}_0, 0, t, z) ]     \label{esti:lim:approx:part}\\
&  + \frac{1}{N} \int_0^t \frac{1}{(t-r)^{1-\frac{\eta}{2}}} \int_{\mathbb{R}^d} g(c(t-r), z-y) p^{1, N}(\mu, 0, r, y) \, dy \, dr \bigg\}.\notag\\
&\leq K^+ \left\{ \int_{\mathbb{R}^d} g(c t, z-x) \mu(dx) \right.  \notag \\
& \left. + \frac{1}{N} \int_0^t \frac{1}{(t-r)^{1-\frac{\eta}{2}}} \int_{\mathbb{R}^d} g(c(t-r), z-y) p^{1, N}(\mu, 0, r, y) \, dy \, dr \right\}.\notag
\end{align}

\noindent where we also used the Gaussian upper-bound \eqref{bound:density:parametrix} with $n=0$ for the last inequality.

Observe now that the space-time convolution kernel $\mathbb{R}^d \times [0,t) \ni (y, r) \mapsto (t-r)^{-1+\frac{\eta}{2}} g(c(t-r), z-y)$ leads to an integrable time singularity so that the previous inequality can be iterated and by an induction argument that we omit, we conclude
\begin{align}
p^{1, N}(\mu, 0, t, z) \leq K^+ \int_{\mathbb{R}^d} g(c t, z-x) \mu(dx) \label{bound:density:p1N}.
\end{align}

The proof of the Gaussian upper-estimate \eqref{Gaussian:upper:bound:density:p1N} is thus complete.\\

\noindent \emph{Step 2: proof of the error bound \eqref{error:bound:prop:chaos}.}\\

We now come back to the identity \eqref{first:step:approximation:density}, substract $p(\mu, 0, t, z)$ from its both sides, then use \eqref{bound:second:term:exp} together with \eqref{bound:density:p1N} so that  
\begin{align}
\Big|\E\Big[p(\mu^{N}_s, s, t, z) & - p(\mu, 0, t, z)  \Big] \Big| \leq \Big|\E\Big[p(\mu^{N}_0, 0, t, z) - p(\mu, 0, t, z)\Big]\Big|  + \frac{K^+}{N} \int_{\mathbb{R}^d} g(ct, z-x) \mu(dx) \notag
\end{align}

\noindent which in turn combined with \eqref{error:bound:initial:density} yields
\begin{align*}
\Big|\E\Big[p(\mu^{N}_s, s, t, z)  - p(\mu, 0, t, z) \Big] \Big|  & \leq \frac{K^+}{N} \left\{  \frac{1}{t^{\frac{1-\eta}{2}}} \int_{\rr^d} g(ct, z-x) |x| \mu(dx) + \frac{1}{t^{1-\frac{\eta}{2}}} \int_{\rr^d} g(c t, z-x)  \mu(dx) \right\}.
\end{align*}

We eventually conclude the proof of \eqref{error:bound:prop:chaos} by letting $s \uparrow t$ in the previous inequality following similar arguments as those used in the previous step
\begin{align*}
\Big| p^{1, N}(\mu, 0, t, z)  - p(\mu, 0, t, z) \Big|&  = \lim_{s \uparrow t}\Big|\E\Big[p(\mu^{N}_s, s, t, z)  - p(\mu, 0, t, z) \Big] \Big| \\
&  \leq \frac{K^+}{N} \left\{  \frac{1}{t^{\frac{1-\eta}{2}}} \int_{\mathbb{R}^d} g(ct, z-x) |x| \mu(dx) + \frac{1}{t^{1-\frac{\eta}{2}}} \int_{\mathbb{R}^d} g(c t, z-x)  \mu(dx) \right\}.\\
\end{align*}

%

\noindent \emph{Step 3: proof of the first order expansion \eqref{first:order:exp:prop:chaos}.}\\

We here establish the first order expansion \eqref{first:order:exp:prop:chaos} under the additional assumption that $(x, \mu) \mapsto \sigma(t, x, \mu)$ is uniformly Lipschitz continuous and that $M_q(\mu) < \infty$ for some $q>4$. Coming back to \eqref{first:step:approximation:density} and substracting $p(\mu, 0, t, z)$ from its both sides, we get
\begin{align}
\E\Big[p(\mu^{N}_s, s, t, z) & - p(\mu, 0, t, z)  \Big] \nonumber \\
& = \E[p(\mu^{N}_0, 0, t, z) - p(\mu, 0, t, z)]  + \int_0^s \frac{1}{2N} \E\left[\tr\Big( a(r, X^{1}_r, \mu^{N}_r) \partial^2_\mu p(\mu^{N}_r, r, t, z)(X^{1}_r, X^{1}_r)\Big)\right] \,dr. \label{first:step:exp:prop:chaos:density}
\end{align}

We then pass to the limit as $s\uparrow t$ in the previous identity using similar arguments as those previously employed and apply the first order expansion \eqref{first:order:expansion:initial:error} of Lemma \ref{lem:first:order:expansion:initial:error}. We thus obtain
\begin{align*}
(p^{1,N} - p)(\mu, 0, t, z) & =  \E[p(\mu^{N}_0, 0, t, z) - p(\mu, 0, t, z)]  + \int_0^t \frac{1}{2N} \E\left[\tr\Big( a(r, X^{1}_r, \mu^{N}_r) \partial^2_\mu p(\mu^{N}_r, r, t, z)(X^{1}_r, X^{1}_r)\Big)\right] \,dr \\
& = \frac{1}{N}   \E\Big[\frac{\delta}{\delta m}p(\mu, 0, t, \xi^1, z)(\xi^1)- \frac{\delta}{\delta m}p(\mu, 0, t, \xi^1, z)(\widetilde{\xi})\Big] \\
& \quad + \frac{1}{2N} \E\Big[\frac{\delta^2}{\delta m^2}p(\mu, 0, t, \xi^1, z)(\widetilde{\xi}, \widetilde{\xi}) - \frac{\delta^2}{\delta m^2}p(\mu, 0, t, \xi^1, z)(\widetilde{\xi}, \xi^2) \Big] \\
& \quad \quad + \int_0^t \frac{1}{2N} \E[\mathcal{A}_sp(\mu_s, s, t, z)]\, ds +  \frac{1}{N} \left(\mathcal{R}^{N}_1(\mu, 0, t, z) + \mathcal{R}^{N}_2(\mu, 0, t, z)\right)
\end{align*}

\noindent where $\mathcal{R}^{N}_2(\mu, 0, t, z)$ is defined in \eqref{first:order:expansion:initial:error} and
$$
\mathcal{R}^{N}_1(\mu, 0, t, z) :=\frac{1}{2}   \int_0^t \E\left[\tr\Big( a(r, X^{1}_r, \mu^{N}_r) \partial^2_\mu p(\mu^{N}_r, r, t, z)(X^{1}_r, X^{1}_r) - a(r, X_r, \mu_r) \partial^2_\mu p(\mu_r, r, t, z)(X_r, X_r)\Big)\right] \,dr.
$$

\smallskip

Observe that from \eqref{bound:remainder:term:2:initial:error}, for any $\phi$ with at most quadratic growth and any $\beta \in [0,\eta)$, it holds
\begin{align*}
& \int_{\mathbb{R}^d} |\phi(z)| |\mathcal{R}^{N}_2(\mu, 0, t, z)| \, dz \\
& \quad \leq K^{++} \left\{ \frac{1}{t^{1-\frac{\eta}{2}}} \mathbb{E}\Big[(1+|\xi^1|^3) W_2(\mu_0^{N}, \mu) + \frac{(1+|\xi^1|^2)}{N}\Big] + \frac{1}{t^{1+ \frac{\beta-\eta}{2}}} \mathbb{E}\Big[ (1+|\xi^1|^2) (1+|\xi^2|) W_2(\mu_0^N, \mu)^\beta \Big] \right\}  \\
& \quad \leq K^{++} \left\{ \frac{1}{t^{1-\frac{\eta}{2}}} \left(\mathbb{E}[W_2(\mu_0^{N}, \mu)^4]^{1/4} + \frac{1}{N}\right) + \frac{1}{t^{1+ \frac{\beta-\eta}{2}}} \mathbb{E}[W_2(\mu_0^N, \mu)^2]^{\beta/2}\right\} 
\end{align*}
\noindent where we used H\"older's inequality in the last inequality. Now, using the fact that $M_q(\mu) < \infty$ for some $q>4$, one deduces from the concentration inequality established in Theorem 2 by Fournier and Guillin \cite{Fournier2015} that $\mathbb{E}[W_2(\mu^{N}_0, \mu)^4]^{1/4} \leq K \varepsilon^{1/2}_N$. We thus conclude
\begin{equation}
\label{integral:test:function:R2N}
\int_{\mathbb{R}^d} |\phi(z)| |\mathcal{R}^{N}_2(\mu, 0, t, z)| \, dz \leq K^{++} \left\{ \frac{\varepsilon^{1/2}_N}{t^{1-\frac{\eta}{2}}} + \frac{\varepsilon^{\beta/2}_N}{t^{1+ \frac{\beta-\eta}{2}}} \right\}.
\end{equation}
It thus remains to establish an appropriate estimate for $\mathcal{R}_{1}^{N}(\mu, 0, t, z)$. Introducing the coupling dynamics $\bar{X}^{1}$, we write
\begin{align*}
\mathcal{R}^{N}_1(\mu, 0, t, z) & =\frac{1}{2}   \int_0^t \E\left[\tr\Big( a(r, X^{1}_r, \mu^{N}_r) \partial^2_\mu p(\mu^{N}_r, r, t, z)(X^{1}_r, X^{1}_r)\Big) - \tr\Big( a(r, \bar{X}^{1}_r, \mu_r) \partial^2_\mu p(\mu_r, r, t, z)(\bar{X}^{1}_r, \bar{X}^{1}_r) \Big)\right] \,dr
\end{align*}

\noindent and decompose the integrand appearing in right-hand side as the sum of the three following terms $\mathcal{R}^{N,1}_1(\mu, r, t, z)$, $\mathcal{R}^{N, 2}_1(\mu, r, t, z)$ and $\mathcal{R}^{N,3}_1(\mu, r, t, z)$ defined by
\begin{align*}
\mathcal{R}^{N,1}_1(\mu, r, t, z) & := \E\left[\tr\Big( [a(r, X^{1}_r, \mu^{N}_r) - a(r, \bar{X}^{1}_r, \mu_r)] \partial^2_\mu p(\mu^{N}_r, r, t, z)(X^{1}_r, X^{1}_r)\Big)\right],  \\
\mathcal{R}^{N,2}_1(\mu, r, t, z) & := \E\left[\tr\Big( a(r, \bar{X}^{1}_r, \mu_r) [\partial^2_\mu p(\mu^{N}_r, r, t, z)(X^{1}_r, X^{1}_r) - \partial^2_\mu p(\mu_r, r, t, z)(X^{1}_r, X^{1}_r)]\Big)\right], \\
\mathcal{R}^{N,3}_1(\mu, r, t, z) & := \E\left[\tr\Big( a(r, \bar{X}^{1}_r, \mu_r) [\partial^2_\mu p(\mu_r, r, t, z)(X^{1}_r, X^{1}_r) - \partial^2_\mu p(\mu_r, r, t, z)(\bar{X}^{1}_r, \bar{X}^{1}_r)]\Big)\right].
\end{align*}

From \eqref{gaussian:estimate:second:order:derivative:mes}, we first obtain
\begin{align}
|  \partial^2_\mu p(\mu^{N}_r, r, t, z)(X^{1}_r, X^{1}_r) | & \leq \frac{K^+}{(t-r)^{1-\frac{\eta}{2}}} \left\{ g(c(t-r), z-X^{1}_r) + \int g(c(t-r), z-x) \mu^{N}_r(dx) \right\} \nonumber \\
& = \frac{K^+}{(t-r)^{1-\frac{\eta}{2}}} \left\{ g(c(t-r), z-X^{1}_r) + \frac{1}{N}\sum_{i=1}^{N} g(c(t-r), z-X^{i}_r) \right\}. \label{bound:deriv:second:mes:part:system}
\end{align}

This estimate will be used in the sequel. The uniform Lipschitz regularity of the map $(x, \mu) \mapsto a(t, x, \mu)$ then gives
\begin{equation}
|a(r, X^{1}_r, \mu^{N}_r) - a(r, \bar{X}^{1}_r, \mu_r)| \leq K \big[|X^{1}_r - \bar{X}^{1}_r| + W_2(\mu^{N}_r, \mu_r) \big]. \label{lip:reg:a:particle:system}
\end{equation}

Combining the two previous estimates with the Fubini theorem, the Cauchy-Schwarz inequality, the fact that $\sup_{0\leq t \leq T} \max_{1\leq i \leq N} \mathbb{E}[|X^i_t|^4]^{1/4}< C M_4(\mu)$ and eventually using the estimate \eqref{Esti:propchaos:path} of Theorem \ref{propagation:prop:path} yield
\begin{align*}
\int_{\mathbb{R}^d}  |\phi(z)| |\mathcal{R}^{N,1}_1(\mu, 0, r, z)| \, dz & \leq \frac{K^+}{(t-r)^{1-\frac{\eta}{2}}} \left\{ \mathbb{E}[|X^1_r - \bar{X}^1_r|^2]^{1/2} + \E[W_2(\mu^{N}_r, \mu_r)^2]^{1/2}\right\} \E[|X^1_r|^4]^{1/2}\\
&  \leq \frac{K^+}{(t-r)^{1-\frac{\eta}{2}}} \varepsilon_N^{1/2}.
\end{align*}

In order to deal with $\mathcal{R}^{N,2}_1(\mu, r, t, z)$, we first establish an estimate for the difference $ \partial^2_\mu p(\mu, r, t, z)(v, v) - \partial^2_\mu p(\nu, r, t, z)(v, v) $, for $\mu, \, \nu \in \pp$. From \eqref{second:mes:deriv:relation}, the estimates \eqref{sensitivity:mes:deriv:cross:space:mes:p}, \eqref{second:deriv:mes:reg:mes:space:estimate:induction:mckean:decoupling}, for any $\mu, \nu \in \pp$ and for any coupling $\pi$ between $\mu$ and $\nu$, we get
\begin{align*}
| \partial^2_\mu p(\mu, r, t, z)(v, v) & - \partial^2_\mu p(\nu, r, t, z)(v, v) | \\
& \leq  | \partial_x\partial_\mu p(\mu, r, t, v, z)(v) - \partial_x \partial_\mu p(\nu, r, t, v, z)(v)| \\
& \quad +  | \int_{(\mathbb{R}^d)^2} [\partial^2_\mu p(\mu, r, t, x, z)(v, v) - \partial^2_\mu p(\mu, r, t, y, z)(v, v)] \, \pi(dx, dy)| \\
& \quad +  \int_{\mathbb{R}^d} |\partial^2_\mu p(\mu, r, t, x, z)(v, v) - \partial^2_\mu p(\nu, r, t, x, z)(v,v)| \nu(dx) \\
& \leq K^{++}_\beta \frac{W_2(\mu, \nu)^\beta}{(t-r)^{1+\frac{\beta-\eta}{2}}} \left\{ g(c(t-r), z-v) + \int_{\mathbb{R}^d} g(c(t-r), z-x) \nu(dx) \right\} \\
& + \frac{K^{++}_\beta}{(t-r)^{1+\frac{\beta-\eta}{2}}} \int_{(\rr^d)^2}  | x- y|^\beta \left\{g(c(t-r), z-x) + g(c(t-r), z-y) \right\} \, \pi(dx, dy)
\end{align*}

\noindent which directly yields
\begin{align*}
\int_{\mathbb{R}^d} |\phi(z)| &| \partial^2_\mu p(\mu, r, t, z)(v, v)  - \partial^2_\mu p(\nu, r, t, z)(v, v) |\, dz \\
&  \leq  \frac{K^{++}_\beta}{(t-r)^{1+\frac{\beta-\eta}{2}}}\left\{ W_2(\mu, \nu)^\beta (1+|v|^2 + M_2(\nu)^2) + \int_{(\mathbb{R}^d)^2} |x-y|^\beta (1+ |x|^2+ |y|^2) \, \pi(dx, dy) \right\} \\
& \leq K^{++}_\beta \frac{W_2(\mu, \nu)^\beta}{(t-r)^{1+\frac{\beta-\eta}{2}}} (1+|v|^2 + M_4(\mu)^2 + M_4(\nu)^2)
\end{align*}

\noindent where we used Cauchy-Schwarz's inequality and then took the infimum over $\pi$ for the last inequality.

Now, having in mind the preceding estimate and using again the Fubini theorem, the Cauchy-Schwarz inequality and the fact that $\sup_{0\leq t \leq T}\E[M_4(\mu^{N}_t)^4] \leq K M_4(\mu)^4$ and $\sup_{0\leq t \leq T} M_4(\mu_t) \leq K M_4(\mu)$, we get
$$
\int_{\mathbb{R}^d} |\phi(z)| \, \E[| \partial^2_\mu p(\mu^{N}_r, r, t, z)(X^{1}_r, X^{1}_r) - \partial^2_\mu p(\mu_r, r, t, z)(X^{1}_r, X^{1}_r) |] \, dz \leq K^{++}_\beta  \frac{\E[W_2(\mu^{N}_r, \mu_r)^2]^{\beta /2}}{(t-r)^{1+\frac{\beta-\eta}{2}}}  
$$

\noindent so that
$$
\int_{\mathbb{R}^d} |\phi(z)|\,  |\mathcal{R}^{N,2}_1(\mu, r, t, z)|  \, dz \leq  \frac{K^{++}_\beta}{(t-r)^{1+\frac{\beta-\eta}{2}}} \varepsilon^{\beta/2}_N.
$$

We finally deal with $\mathcal{R}^{N, 3}_1(\mu, r, t, z)$. From \eqref{second:mes:deriv:relation}, \eqref{sensitivity:mes:deriv:cross:space:mes:p} and \eqref{second:deriv:mes:reg:mes:space:estimate:induction:mckean:decoupling}, for any $(v_1, v_2) \in \mathbb{R}^d$, we obtain
\begin{align*}
& | \partial^2_\mu p(\mu, r, t, z)(v_1, v_1) - \partial^2_\mu p(\mu, r, t, z)(v_2, v_2) | \\
& \leq K^{++}_\beta \frac{ |v_1-v_2|^{\beta}}{(t-r)^{1+ \frac{\beta-\eta}{2}}} \left\{ g(c(t-r), z-v_1) + g(c(t-r), z-v_2) +\int_{\mathbb{R}^d} g(c(t-r), z-x) \, \mu(dx) \right\} .   
\end{align*}

Hence, from the preceding estimate, Fubini's theorem, Cauchy-Schwarz's inequality and \eqref{Esti:propchaos:path}, we obtain 
\begin{align*}
\int_{\mathbb{R}^d} |\phi(z)| |\mathcal{R}^{N,3}_1(\mu, r, t, z)|  \, dz & \leq \frac{K^{++}_\beta}{(t-r)^{1+ \frac{\beta-\eta}{2}}} \E[|X^1_r - \bar{X}^1_r|^\beta (1+ |X^1_r|^2 + |\bar{X}^1_r|^2 + M_2(\mu)^2)] \\
&  \leq  \frac{K^{++}_\beta}{(t-r)^{1+ \frac{\beta-\eta}{2}}}  \varepsilon^{\beta / 2}_N.
\end{align*}

Gathering the previous estimates, we eventually conclude that for any $\beta \in [0,\eta)$
$$
\int_{\mathbb{R}^d} |\phi(z)| \, |\mathcal{R}^{N}_1(\mu, 0, t, z)| \, dz \leq K^{++}_\beta \left\{ t^{\frac{\eta}{2}} \varepsilon^{1/2}_N + t^{\frac{\eta-\beta}{2}}  \varepsilon^{\beta / 2}_N \right\}
$$
\noindent for some positive constant $K^{++}_\beta:=K(T, \HRpp, \HE, [\sigma]_L, q, M_q(\mu), \beta)$. The previous estimate together with \eqref{integral:test:function:R2N} allows to concluide the proof.

\subsection{Proof of Theorem \ref{theo:weakChaos}.} 
For a fixed function $\phi$ in $\mathscr C^{2, \alpha}(\pp)$, we consider the following PDE on the Wasserstein space
\begin{eqnarray}\label{PDEChaosSemGroup:weakchaos}
(\partial_t + \mathscr L_t)U = 0,\qquad U(T,\cdot) = \phi(\cdot),
\end{eqnarray}
where the operator $\mathscr L_t$ is given by \eqref{Gen:flow:weak:mckean}. From Theorem 3.8 in \cite{chaudruraynal:frikha}, under \HE\, and \HRp, there exists a unique solution $U \in \mathcal{C}^{1, 2}([0,T) \times \pp)$ to the above PDE \eqref{PDEChaosSemGroup:weakchaos} given by
\begin{equation*}
\forall (t,\mu)\in [0,T]\times \pp,\quad U(t,\mu) = \phi([X_T^{t,\mu}]).
\end{equation*}



Using Proposition \ref{fully:c2:regularity:property:decoupling:density} together with the estimates \eqref{bound:density:parametrix}, \eqref{first:second:lions:derivative:mckean:decoupling}, \eqref{first:time:derivative:mckean:decoupling}, \eqref{deriv:cross:space:mes:pm} and \eqref{second:lions:derivative:mckean:decoupling}, one may apply Proposition \ref{structural:class} to deduce that $U \in \mathcal C_f^{1,2}([0,T)\times \pp)$. Note carefully that in Proposition \ref{structural:class}, the linear functional derivatives of $h$ are assumed to be bounded for sake of simplicity while here the linear function derivatives of $\phi$ is of linear growth, see \eqref{growth:condtion:first:second:flat:deriv}. However, using the pointwise Gaussian estimates \eqref{bound:density:parametrix}, \eqref{first:second:lions:derivative:mckean:decoupling}, \eqref{deriv:cross:space:mes:pm} and \eqref{second:lions:derivative:mckean:decoupling}, one can extend the analysis performed in the proof of Proposition \ref{structural:class} to the current setting.

 Moreover, the first and second order L-derivatives satisfy the identities \eqref{condition1:structural:class:measure:derivative} and \eqref{full:expression:second:deriv}. Now, proceeding as in the proof of Proposition 6.1 in \cite{chaudruraynal:frikha}, namely, using \eqref{local:holder:reg:first:flat:deriv:terminal:condition} and \eqref{local:holder:reg:second:flat:deriv:terminal:condition} as well as the estimates provided by Proposition \ref{fully:c2:regularity:property:decoupling:density}, one may prove the following estimates: there exists a positive constant $K^{+}:=K(T, \HRp,\HE)$, $T\mapsto K(T, \HRp,\HE)$ being non-decreasing, such that for all $(t,\mu)\in [0,T)\times \pp$ and $\gv=(v,v') \in (\mathbb{R}^d)^2$:
\begin{align}
\partial_v^{n}\partial_\mu U(t,\mu)(v) & \leq  K^{+} (T-t)^{-\frac{n+1-\alpha}{2}} (1+ |v| + M_2(\mu)),\, n=0,1, \, \label{estimate:approxim:kolmo:backward:part1}\\
 \partial_\mu^2 U(t,\mu)(\gv) & \leq  K^{+}(T-t)^{-1+\frac \alpha 2} (1+ |\gv| + M_2(\mu)). \label{estimate:approxim:kolmo:backward:part2}
\end{align}

Since the arguments are completely analogous to those employed in the proof of Proposition 6.1 in \cite{chaudruraynal:frikha}, we omit the proof of the above estimates.

Note carefully that the time singularities appearing in the previous bounds on the first and second L-derivatives of $U$ are integrable over $[0,T)$. 

On the one hand, from standard It\^o's formula and \eqref{PDEChaosSemGroup:weakchaos}, we have
\begin{eqnarray*}
U(t,\mu_t^N) &=& U(0,\mu_0^N) + \frac 1N \sum_{i=1}^N \int_0^t  \partial_\mu U(s,\mu_s^N) (X_s^i) . (\sigma(s,X_s^i,\mu^{N}_s) d W_s^i) \\
&&+ \frac 1{2N^2} \sum_{i=1}^{N} \int_0^t  \tr\left(a(s, X^{i}_s,\mu_s^N)\partial_{\mu}^2U(s,\mu_s^N)( X^{i}_s, X^{i}_s) \right)  ds.
\end{eqnarray*}
On the other hand, from the Markov property stemming from the well-posedness of the martingale problem related to \eqref{SDE:MCKEAN}
$$
U(t,[X^{s, \xi}_t]) = \phi([X^{t,X^{s, \xi}_t}_T]) = \phi([X_T^{s,\xi}]) = U(s,\mu),
$$
for all $s$ in $[0,t]$ and especially for $s=0$. Hence

\begin{eqnarray}\label{weakChaos:inter}
U(t,\mu_t^N) - U(t,\mu_t) &=& \big(U(0,\mu_0^N)-U(0,\mu)\big) + \frac 1N \sum_{i=1}^N \int_0^t  \partial_\mu U(s,\mu_s^N) (X_s^i) . (\sigma(s,X_s^i,\mu^{N}_s)d W_s^i) \notag\\
&&+ \frac 1{2N^2} \sum_{i=1}^N \int_0^t \tr\left(a(s,X_s^i,\mu_s^N)\partial_{\mu}^2U(s,\mu_s^N)(X_s^i,X_s^i) \right) ds.
\end{eqnarray}
Using the Burkholder-Davis-Gundy inequality, the estimates \eqref{estimate:approxim:kolmo:backward:part1}, \eqref{estimate:approxim:kolmo:backward:part2} and the fact that there exists a positive constant $C$ such that for any $1\leq i \leq N$ and any $s\in [0,T]$, $\mathbb{E}[|X^{i}_s|^2] + \mathbb{E}[M_2(\mu^{N}_s)^2] \leq C(1+M_2(\mu)^2)$, which directly stems from the dynamics \eqref{SDE:particle:system} together with the boundedness of the coefficients, we get
$$
\E[|U(t,\mu_t^N) - U(t,\mu_t) |] \leq   \E\left[|U(0,\mu)-U(0,\mu_0^N)|\right]  + \frac{K^{+}}{N^{\frac12}} 
$$
\noindent where $T\mapsto K^{+}:=K(T, \HRp, \HE, M_2(\mu))$ is a non-decreasing positive function. Finally, letting $t \uparrow T$
$$
\E\left[|U(T,\mu_T) - U(T,\mu_T^N)| \right] \leq   \E\left[|U(0,\mu)-U(0,\mu_0^N)|\right] + \frac {K^{+}}{N^{\frac12}} \leq \frac{K^{+}}{T^{\frac{1-\alpha}{2}}}  \E[W_2(\mu,\mu_0^N)^2]^{1/2} + \frac{K^{+}}{N^{\frac12}}
$$
\noindent where we used the Lipschitz continuity of the map $\pp \ni \mu \mapsto U(0, \mu)$ thanks to \eqref{estimate:approxim:kolmo:backward:part1} for the last inequality together with the Cauchy-Schwarz inequality and the fact that $M_2(\mu) < \infty$. This completes the proof of \eqref{esti:sem:chaos:part2}.


In order to prove \eqref{esti:sem:chaos:part1}, we first take the expectation in \eqref{weakChaos:inter}. Doing so we get ride of the martingale part therein. Then, using \eqref{estimate:approxim:kolmo:backward:part2}, we hence obtain
\begin{equation}
\left|U(t,\mu_t) - \E\left[U(t,\mu_t^N)\right]\right| \leq \left|U(0,\mu)-\E\left[U(0,\mu_0^N)\right]\right| + \frac {K^{+}}{N} . \label{strong:error:regularized:semigroup}
\end{equation}
It remains to establish an error bound for the quantity $\E[U(0, \mu^{N}_0)]  - U(0, \mu)$. We follow similar lines of reasonings as those employed in \eqref{eq:cont:p0}. One may also refer to \cite{chassagneux:szpruch:tse} for a similar argument. We briefly repeat the proof here for sake of completeness.  From the mean-value theorem and the exchangeability in law of $(\xi^{i})_{1\leq i \leq N}$
\begin{align*}
\E[U(0, \mu^{N}_0) & - U(0, \mu)]  \notag\\
& = \int_{0}^1\int_{\mathbb{R}^d} \E\Big[\frac{\delta}{\delta m}U(0, \mu^{\lambda_1, N}_0)(y) \,  (\mu^{N}_0 - \mu)(dy)\Big] \, d\lambda_1\notag \\
& =  \int_{0}^1  \E\Big[\frac{\delta}{\delta m} U(0, \mu^{\lambda_1, N}_0)(\xi^1)- \frac{\delta}{\delta m}U(0, \mu^{\lambda_1, N}_0)(\widetilde{\xi})\Big] \, d\lambda_1\notag\\
& = \int_{0}^1  \E\Big[\frac{\delta}{\delta m} U(0, \widetilde{\mu}^{\lambda_1, N}_0)(\widetilde{\xi})- \frac{\delta}{\delta m}U(0, \mu^{\lambda_1, N}_0)(\widetilde{\xi})\Big] \, d\lambda_1\\
& = \frac{1}{N} \int_{[0,1]^2} \lambda_1 \E\Big[\frac{\delta^2}{\delta m^2} U(0, \widetilde{\mu}^{\lambda_1, \lambda_2, N}_0)(\widetilde{\xi}, \widetilde{\xi}) - \frac{\delta^2}{\delta m^2} U(0, \widetilde{\mu}^{\lambda_1, \lambda_2, N}_0)(\widetilde{\xi}, \xi^{1})\Big] \, d\lambda_1 d\lambda_2
\end{align*}

\noindent where we used the notations: $\mu^{\lambda_1, N}_0 := \lambda_1 \mu^{N}_0 + (1-\lambda_1) \mu$, $\widetilde{\mu}^{\lambda_1, N}_0 := \lambda_1 \widetilde{\mu}^{N}_0 + (1-\lambda_1) \mu$, $\widetilde{\mu}^{N}_0 := \mu^{N}_0  + \frac{1}{N}(\delta_{\widetilde{\xi}} - \delta_{\xi^1})$, $\widetilde{\mu}^{\lambda_1, \lambda_2, N}_0 := \lambda_2 \widetilde{\mu}^{\lambda_1, N}_0 + (1-\lambda_2) \mu^{\lambda_1, N}_0$,
 $\widetilde{\xi}$ being a random variable independent of $(\xi^{i})_{1\leq i \leq N}$ with law $\mu$. The previous identity together with \eqref{estimate:approxim:kolmo:backward:part2} finally yield
\begin{equation*}
\left|U(0,\mu_0)-\E\left[U(0,\mu_0^N)\right]\right| \leq T^{-1+\frac{\alpha}{2}}\frac{K^{+}}{N}
\end{equation*}

\noindent for some positive constant $K^{+}:=K(T, \HRp, \HE, M_2(\mu))$, $T\mapsto K(T, \HRp, \HE, M_2(\mu))$ being non decreasing. Plugging the previous bound into \eqref{strong:error:regularized:semigroup} and finally letting $t\uparrow T$ allows to conclude the proof of \eqref{esti:sem:chaos:part1}.

\subsection{Proof of Theorem \ref{propagation:prop:path}} As already mentioned in the introduction, the strategy relies on Zvonkin's transform. To do so, we introduce the following PDE
\begin{equation}\label{PDE_zvon}
(\partial_t+ \mathcal L_t)U(t, x,\mu) = b(t, x,\mu),\quad U(T,\cdot,\cdot) = 0_d,
\end{equation}
where the operator $(\mathcal L_t)_{t\geq 0}$ is given by \eqref{inf:generator:mckean:vlasov}. Note that under \A{HR}$_{+}$(iii) the drift coefficient $b$ is continuous on $[0,T]\times \mathbb{R}^d \times \pp$ and satisfies the assumption of Theorem 3.8 in \cite{chaudruraynal:frikha} so that there exists a unique solution $U \in \mathcal{C}^{1, 2, 2}([0,T)\times \mathbb{R}^d \times \pp)$ to the above PDE \eqref{PDE_zvon} satisfying for any $(t, x,\mu)\in [0,T)\times \mathbb{R}^d \times \pp$,
\begin{align*} 
U(t, x,\mu) & = \E\left[ \int_t^T b(s,X_s^{t, x,\mu},[X_s^{t,\xi}]) \, ds\right]  = \int_t^T \int_{\mathbb{R}^d} b(s, y,[X_s^{t,\xi}]) \, p(\mu, t, s, x, y) \, dy \, ds.
\end{align*}

Following the lines of proof of Proposition 6.1 in \cite{chaudruraynal:frikha}, we readily obtain that $U$ satisfies the following estimates: there exists a positive constant $K:=K(T, \HR, \HE)$, $T\mapsto K(T, \HR, \HE)$ being non-decreasing, such that for all $(t, x, \mu) \in [0,T) \times \mathbb{R}^d \times \pp$
\begin{eqnarray}\label{esi:U:1}
|\partial_v^n\partial_\mu U(t, x, \mu)(v)| +  |\partial_x^{n+1} U(t, x, \mu)| \leq  C (T-t)^{\frac{1-n+\eta}{2}},\quad n=0,1.
\end{eqnarray} 
%
From Theorem \ref{derivative:density:sol:mckean:and:decoupling}, Lemma \ref{additional:regularity:lemma} and Proposition \ref{fully:c2:regularity:property:decoupling:density}, in particular the estimates \eqref{bound:density:parametrix}, \eqref{first:second:lions:derivative:mckean:decoupling}, \eqref{first:time:derivative:mckean:decoupling}, \eqref{deriv:cross:space:mes:pm}, \eqref{second:lions:derivative:mckean:decoupling}, one may apply Proposition \ref{structural:class} to the density function $(t, x, \mu) \mapsto p(\mu, t, s, x, y)$ and the map $m \mapsto b(s, y, m)$ to deduce that $(t, \mu) \mapsto b(s, y, [X_s^{t, \xi}]) \in \mathcal{C}^{1,2}_f([0,s)\times \pp)$.

Moreover, we deduce from the identities \eqref{condition1:structural:class:measure:derivative}, \eqref{condition1:structural:class:time:derivative}, \eqref{full:expression:second:deriv} combined with the estimates \eqref{bound:density:parametrix}, \eqref{first:second:lions:derivative:mckean:decoupling}, \eqref{first:time:derivative:mckean:decoupling}, \eqref{deriv:cross:space:mes:pm}, \eqref{second:lions:derivative:mckean:decoupling} as well as the uniform $\eta$-H\"older regularity of the linear functional derivative $[\delta b_i/\delta m](t, x, m)(.)$ and the space time inequality \eqref{space:time:inequality} that the following estimates hold:
\begin{eqnarray*}
| \partial_v^n[\partial_\mu [b(s, y ,[X_s^{t,\xi,(m)}])]](v)| \leq K^+ (s-t)^{\frac{-1-n+\eta}{2}},\, n=0,1,\quad |\partial_\mu^2 [b(s, y,[X_s^{t,\xi,(m)}])](v,v')| \leq K^+ (s-t)^{-1+\frac{\eta}{2}}
\end{eqnarray*}
\noindent and
$$
|\partial_t [b(s, y ,[X_s^{t,\xi,(m)}])]| \leq K^+ (s-t)^{-1+ \frac{\eta}{2}},
$$
\noindent for some positive constant $K^+ := K(T, \HRp, \HE)$.

Then, combining the above estimates together with the estimates \eqref{bound:density:parametrix}, \eqref{first:second:lions:derivative:mckean:decoupling}, \eqref{first:time:derivative:mckean:decoupling}, \eqref{deriv:cross:space:mes:pm}, \eqref{second:lions:derivative:mckean:decoupling} as well as the dominated convergence theorem, we eventually deduce that $U$ is in $\mathcal C_f^{1,2,2}([0,T)\times \mathbb{R}^d \times \pp)$ with a cross derivative $\partial_x \partial_\mu U(t, x, \mu)(v) = \partial_\mu[\partial_x U(t, x, \mu)](v)^{*}$ and an L-derivative of second order $\partial^2_\mu U(t, x, m)(v, v')$ given by
\begin{align*}
\partial_x \partial_\mu U(t, x, \mu)(v) & = \int_t^T \int_{\mathbb{R}^d} \Big[\partial_\mu [b(s, y,[X_s^{t,\xi}])](v) \otimes \partial_x p(\mu, t, s, x, y) + b(s, y,[X_s^{t,\xi}])(v)  \partial_x \partial_\mu p(\mu, t, s, x, y)(v) \Big] \, dy \, ds,
\end{align*}

\begin{align*}
\partial_\mu^2 U(t, x, \mu)(v, v') & = \int_t^T \int_{\mathbb{R}^d} \Big[\partial_\mu^2 [b(s, y,[X_s^{t,\xi}])](v, v') \, p(\mu, t, s, x, y) + \partial_\mu [b(s, y,[X_s^{t,\xi}])](v) \otimes \partial_\mu p(\mu, t, s, x, y)(v')  \\
& \quad +  \partial_\mu p(\mu, t, s, x, y)(v) \otimes  \partial_\mu [b(s, y,[X_s^{t,\xi}])](v') +  b(s, y,[X_s^{t,\xi}]) \partial^2_\mu p(\mu, t, s, x, y)(v, v')  \Big] \, dy \, ds 
\end{align*}
\noindent and satisfying
\begin{align}
|\partial_x \partial_\mu U(t, x, \mu)(v)| + |\partial^2_\mu U(t, x, \mu)(v, v')| \leq  K^{+}(T-t)^{\frac{\eta}{2}}. \label{esi:U:2}
\end{align}

We are now ready to complete the proof of Theorem \ref{propagation:prop:path}. The chain rule formula of Proposition \ref{prop:chain:rule:joint:space:measure} yields

\begin{eqnarray}\label{repZvonIndep}
&&\qquad \bar X^i_t - U(t,\bar X^i_t,\mu_t)  \\
&=&  \xi^i- U(0,\xi^i,\mu_0) - \int_{0}^t \left(\left[\partial_{x} U- \mathbf{1}\right]\sigma \right) (s,\bar X^i_s,\mu_s ) dW_s^i - \int_0^t[(\partial_s+\mathcal L_s)U-b](s,\bar X_s^i, \mu_s) ds\notag
\end{eqnarray}

\noindent and from classical Itô's formula
\begin{eqnarray}\label{repZvonPart}
&&\qquad X_t^i - U(t,X_t^i,\mu_t^N)  \\
&=&  \xi^i- U(0,\xi^i,\mu_0^N) - \int_{0}^t \left(\left[\partial_{x}U - \mathbf{1}\right]\sigma\right) (s,X_s^i, \mu_s^N ) dW_s^i - \int_0^t [(\partial_s+\mathcal L_s)U-b](s,X_s^i,\mu_s^N) ds\notag\\
&& + \frac 1{2N^2} \sum_{j=1}^N \int_0^t \tr\left(a(s,X_s^i,\mu_s^N)\partial_{\mu}^2U(s,X_s^i,\mu_s^N)( X_s^j, X_s^j) \right) ds\notag\\
&& \qquad + \frac1N \sum_{j=1}^N \int_0^t \partial_\mu U(s,X_s^i,\mu_s^N)(X_s^j) . (\sigma(s, X_s^{j}, \mu_s^N) dW^j_s).\notag
\end{eqnarray}

Taking the difference between \eqref{repZvonIndep} and \eqref{repZvonPart} and using the fact that $U$ solves the PDE \eqref{PDE_zvon} yield
\begin{eqnarray}\label{eq:diffZvonkp}
\bar X^i_t - X_t^i  & =  &  U(0,\xi^i,\mu_0^N) - U(0,\xi^i,\mu_0) +  U(t,\bar X_t^i,\mu_t) - U(t,X_t^i,\mu_t^N) \\
&&- \frac 1{2N^2} \sum_{j=1}^N \int_0^t \tr\left(a(s,X_s^i,\mu_s^N)\partial_{\mu}^2U (s,X_s^i,\mu_s^N)( X_s^j, X_s^j) \right) ds\notag\\
&&-\int_{0}^t \left(\left[\partial_{x}U- \mathbf{1}\right]\sigma\right) (s,\bar X_s^i, \mu_s )-\left(\left[\partial_{x}U - \mathbf{1}\right]\sigma \right)(s,X_s^i, \mu_s^N ) dW_s^i\notag \\
&&- \frac1N \sum_{j=1}^N \int_0^t \partial_\mu U (s,X_s^i,\mu_s^N)(X_s^j). (\sigma(s, X_s^{j}, \mu_s^N) dW^j_s). \notag
\end{eqnarray}

Now, it follows from the estimates \eqref{esi:U:1}, \eqref{esi:U:2} and the boundedness of $\sigma$ that the terms in the third and fourth lines appearing in the right-hand side of the above identity are true square integrable martingales and that the term in the second line of the above equality is of order $N^{-1}$.

 Therefore, taking the square of the norm in both sides of the identity \eqref{eq:diffZvonkp}, then summing over $i$ and eventually using Burkholder-Davis-Gundy's inequality give
\begin{align*}
\E\left[ \frac{1}{N}\sum_{i=1}^N |\bar X_t^i - X_t^i|^2\right] &\leq C\Bigg\{ \E\left[|U(0,\xi^1,\mu_0^N) - U(0,\xi^1,\mu_0)|^2\right] +  \frac 1N \sum_{i=1}^N\E\left[|U(t,X_t^i,\mu_t) - U(t,\bar X_t^i,\mu_t^N)|^2\right]\\
& + \frac 1N \sum_{i=1}^N\int_{0}^t \E\left[\left|\left(\left[\partial_{x}U- \mathbf{1}\right]\sigma\right) (s,\bar X_s^i, \mu_s )-\left(\left[\partial_{x}U - \mathbf{1}\right]\sigma \right)(s,X_s^i, \mu_s^N )\right|^2\right]ds\Bigg\}+\frac{K^{+}}{N}.
\end{align*}
Using \eqref{esi:U:1}, \eqref{esi:U:2} and the uniform Lipschitz regularity of $(x, \mu) \mapsto \sigma(t, x, \mu)$, we deduce that there exists $K^{+}_T:=K(T, \HRp, \HE)>0$ satisfying $K^{+}_T \downarrow 0$ when $T\downarrow 0$ such that
\begin{align*}
\E\left[ \frac{1}{N}\sum_{i=1}^N |\bar X_t^i - X_t^i|^2\right] &\leq  K^{+}_{T} \Bigg\{\E\left[W_2(\mu_0,\mu_0^N)^2\right] + \frac{1}{N} \sum_{i=1}^N\E[|\bar X_t^i-X_t^i|^2] + \E[W_2(\mu_t,\mu_t^N)^2] \Bigg\}\\
 &+\frac{K^{+}}{N} \sum_{i=1}^N \int_0^t (\E[|\bar X_s^i-X_s^i|^2] + \E\left[W_2(\mu_s, \mu_s^N)^2\right]) ds +\frac{K^{+}}{N} .
\end{align*}
We now introduce $\bar \mu_t^N := N^{-1} \sum_{i=1}^N \delta_{\bar X_t^i}$, $t\in [0,T]$, the empirical measure associated with the i.i.d. random variable $(\bar X_t^i)_{1\leq i \leq N}$. Noticing that for all $t\in [0,T]$, 
\begin{equation}
W_2(\mu_t,\mu_t^N)^2 \leq  2W_2(\mu_t,\bar \mu_t^N)^2 +  2W_2(\bar \mu_t^N, \mu_t^N)^2 \leq 2W_2(\mu_t,\bar \mu_t^N)^2 +  \frac{2}{N}\sum_{i=1}^N |X_t^i -\bar X_t^i|^2 \label{triangular:inequality}
\end{equation}
 
 \noindent and choosing $T$ small enough\footnote{There exists $\mathcal{T}=\mathcal T(\HRp, \HE)>0$ such that for all $T \leq \mathcal T$ we have $K^{+}_{T}\leq 1/4$.} so that $K^{+}_{T}\leq 1/4$ and using Gr\"onwall's lemma lead to 
\begin{eqnarray*}
 \frac{1}{N} \sum_{i=1}^{N}\E\left[ |\bar X_t^i - X_t^i|^2\right] \leq  K^{+} \left\{\E\left[W_2(\mu_0,\bar \mu_0^N)^2\right] + \sup_{0\leq t\leq T} \E[W_2(\mu_t,\bar \mu_t^N)^2] +\int_0^T \E\left[W_2(\mu_s,\bar \mu_s^N)^2\right] ds + \frac{1}{N}\right\}.
\end{eqnarray*}

Finally, the strong well-posedness of the SDEs \eqref{SDE:particle:system:coupling} and \eqref{SDE:particle:system} together with the exchangeability of $(\xi^{i}, W^{i})_{1\leq i \leq N}$ imply that the random variables $(\bar{X}^i, X^i)_{1\leq i\leq N}$ are identically distributed so that $N^{-1}  \sum_{i=1}^{N}\E\left[ |\bar X_t^i - X_t^i|^2\right] = \E\left[ |\bar X_t^1 - X_t^1|^2\right] $. Hence,
$$
\sup_{0\leq t \leq T} \E\left[ |\bar X_t^i - X_t^i|^2\right] \leq  K^{+} \left\{\E\left[W_2(\mu_0,\bar \mu_0^N)^2\right] + \sup_{0\leq t\leq T} \E[W_2(\mu_t,\bar \mu_t^N)^2] + \frac{1}{N} \right\}\leq K^{+} \varepsilon_N
$$

\noindent with $K^{+}:=K(T, \HRp, \HE, q, M_q(\mu))>0$ and where we used Theorem 1 in Fournier and Guillin \cite{Fournier2015} or Theorem 5.8 of \cite{carmona2018probabilistic} for the last inequality. One may then extend the above estimate to an arbitrary finite time horizon $T$ by considering a partition of the time interval $[0,T]$ with a sufficiently small time mesh and repeating the above argument, observing that the estimates \eqref{esi:U:1} and \eqref{esi:U:2} are uniform in $x$ and $\mu$. Taking expectation in \eqref{triangular:inequality}, one then concludes that a similar estimate holds for the quantity
$$
\sup_{0\leq t \leq T} \E[W_2(\mu_t,\mu_t^N)^2]. 
$$

Finally, coming back to \eqref{eq:diffZvonkp}, one can apply similar lines of reasonings but taking first the square of the norm, then the supremum in time and obtain, thanks to the above estimate, for $T$ small enough
\begin{equation*}
\max_{1\leq i \leq N}\E\left[ \sup_{0\leq t \leq T} |X_t^i- \bar X^i_t|^2\right] \leq K^{+} \left\{\E\left[W_2(\mu_0,\bar \mu_0^N)^2\right] +  \E[\sup_{0\leq t\leq T}W_2(\mu_t, \bar \mu_t^N)^2] +\int_0^T \E\left[W_2(\mu_s,\bar \mu_s^N)^2\right] ds + \frac{1}{N}\right\}.
\end{equation*}
The first and third terms appearing in the right-hand side of the above inequality are handled using Theorem 1 in \cite{Fournier2015}. The second term provides the rate of convergence and requires the following lemma borrowed from \cite{BCCdRH:19}.
\begin{lem}\label{ConvPart}
Let $\{Y^i_\cdot\}_{1 \leq i \leq N}$ be an i.i.d. sequence of copies of a process $Y$ satisfying $\sup_{t \in [0,1]} \E |Y_t|^q < +\infty,\ \text{for some } q>4$ and for some $p>2$:
\begin{eqnarray}
\E [ |Y_s - Y_r|^p |Y_s-Y_t|^p] &\leq &C |t-r|^2,\quad {\rm for }\ 0 \leq r < s < t \leq 1;\notag\\
\E [|Y_t-Y_s|^p] &\leq &C |t-s|,\quad {\rm for }\ 0 \leq  s \leq  t \leq 1; \label{hypRacRu}\\
\E [|Y_t - Y_s |^2]& \leq& C |t-s|,\quad {\rm for }\ 0 \leq  s \leq  t \leq 1. \notag
\end{eqnarray}
Then, introducing the notations $\nu_s:=[Y_s]$ and $\bar \nu^N_s := N^{-1} \sum_{i=1}^N \delta_{Y_s^i}$, there exists $C>0$ such that
\begin{equation}
\E \left[\sup_{0\leq s \leq T} W_2(\bar \nu_{s}^N, \nu_{s})^2 \right]\leq C\sqrt{\varepsilon_N}.
\end{equation}
\end{lem}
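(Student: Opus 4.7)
The plan is to combine Fournier and Guillin's concentration estimate \cite{Fournier2015} at fixed times with a trajectorial modulus-of-continuity bound, balanced via an optimized time mesh. Pick a uniform partition $0 = t_0 < t_1 < \cdots < t_K = T$ of step size $T/K$, with $K$ to be tuned later. For any $s \in [t_{k-1}, t_k]$, the triangle inequality for $W_2$ yields
\begin{equation*}
W_2^2(\bar\nu_s^N, \nu_s) \leq 3\,W_2^2(\bar\nu_s^N, \bar\nu_{t_k}^N) + 3\,W_2^2(\bar\nu_{t_k}^N, \nu_{t_k}) + 3\,W_2^2(\nu_{t_k}, \nu_s),
\end{equation*}
so the task reduces to bounding each of these three pieces after taking the supremum in $s$ and the expectation.

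For the middle ``mesh'' term, a union bound over the $K+1$ time marginals followed by Theorem 1 of \cite{Fournier2015} applied at each $t_k$ (whose hypotheses are ensured by the moment condition $\sup_t \E|Y_t|^q < \infty$ with $q > 4$) gives $\E[\max_{0 \leq k \leq K} W_2^2(\bar\nu_{t_k}^N, \nu_{t_k})] \leq CK\varepsilon_N$. For the ``deterministic'' term, the canonical coupling $(Y_s, Y_{t_k})$ together with the third estimate in \eqref{hypRacRu} yields $\max_k \sup_{s \in [t_{k-1}, t_k]} W_2^2(\nu_s, \nu_{t_k}) \leq CT/K$. For the ``empirical oscillation'' term, the synchronous coupling $(Y_s^i, Y_{t_k}^i)_{1 \leq i \leq N}$ combined with the i.i.d. assumption give
\begin{equation*}
\E\Big[\max_k \sup_{s \in [t_{k-1}, t_k]} W_2^2(\bar\nu_s^N, \bar\nu_{t_k}^N)\Big] \leq \E\Big[\max_k \sup_{s \in [t_{k-1}, t_k]} |Y_s - Y_{t_k}|^2\Big].
\end{equation*}

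The crux of the proof, and the main technical obstacle, is to control this last quantity by $C/K$. This is exactly what the Bickel--Wichura-type condition $\E[|Y_s - Y_r|^p |Y_s - Y_t|^p] \leq C|t-r|^2$ of \eqref{hypRacRu} is tailored for: it yields a maximal inequality on each subinterval (see, e.g., Billingsley's Theorem 10.4 or Ethier and Kurtz's Theorem 3.8.6 for càdlàg processes), which, combined with the Kolmogorov-type increment bound $\E|Y_t - Y_s|^p \leq C|t-s|$ and a chaining argument over the $K$ subintervals (the conditions $p > 2$ and $q > 4$ allowing, via Jensen's inequality, to absorb the required moments), produces the desired $C/K$ bound.

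Finally, assembling the three estimates we get $\E[\sup_{s \in [0,T]} W_2^2(\bar\nu_s^N, \nu_s)] \leq C(K\varepsilon_N + 1/K)$, and choosing $K \asymp \varepsilon_N^{-1/2}$ produces the announced rate $C\sqrt{\varepsilon_N}$.
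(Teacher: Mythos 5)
A preliminary remark: the paper does not actually prove this lemma, it is explicitly ``borrowed from \cite{BCCdRH:19}'', so there is no internal proof to compare against. Your overall architecture — grid decomposition, Fournier--Guillin at the $K+1$ grid points, control of the oscillation on each block, and the optimization $K\asymp\varepsilon_N^{-1/2}$ yielding $K\varepsilon_N+1/K\asymp\sqrt{\varepsilon_N}$ — is indeed the strategy of that reference, and your treatment of the grid term and of the deterministic term $W_2^2(\nu_s,\nu_{t_k})$ is correct.

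The gap is in the empirical oscillation term. After bounding $W_2^2(\bar\nu^N_s,\bar\nu^N_{t_k})\le N^{-1}\sum_i|Y^i_s-Y^i_{t_k}|^2$, you pull the $\max_k\sup_s$ inside the average and use exchangeability to reduce everything to the single-path quantity $\E[\max_k\sup_{s\in[t_{k-1},t_k]}|Y_s-Y_{t_k}|^2]$, which you then claim is $O(1/K)$. That claim is false under the stated hypotheses. Take $Y_t=\I_{\{t\ge U\}}$ with $U$ uniform on $[0,1]$: then $\E|Y_t-Y_s|^p=\E|Y_t-Y_s|^2=|t-s|$, the product-moment condition in \eqref{hypRacRu} holds trivially because the two increments are indicators of disjoint events, and $\sup_t\E|Y_t|^q\le 1$; yet almost surely $\max_k\sup_{s\in[t_{k-1},t_k]}|Y_s-Y_{t_k}|=1$ for every $K$. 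Even for Brownian motion the quantity is of order $(T/K)\log K$ rather than $T/K$, and with only $p$-th moments the generic loss is polynomial: a Bickel--Wichura bound on one block gives $\E[(\sup_{s\in I_k}|Y_s-Y_{t_k}|)^{p}]\le CT/K$, and taking the maximum over the $K$ blocks then yields only $(\sum_k CT/K)^{2/p}=O(1)$. The step that breaks the proof is precisely the reduction to one trajectory: it discards the averaging over the $N$ independent particles, which is what makes the oscillation small. The repair — and the reason the hypotheses are phrased as product moments — is to apply the Billingsley/Bickel--Wichura maximal inequality directly to the empirical increments $\psi(s,t):=\big(N^{-1}\sum_i|Y^i_t-Y^i_s|^2\big)^{1/2}$, which satisfy the triangle inequality in $(s,t)$ and, by Jensen and independence (hypothesis \eqref{hypRacRu} for the diagonal terms $i=j$, the Kolmogorov-type bound for $i\neq j$), the same estimate $\E[\psi(r,s)^p\psi(s,t)^p]\le C|t-r|^2$ uniformly in $N$; the concentration of these averaged increments is what delivers the required $O(1/K)$ control of the block oscillations.
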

%

We thus derive
$$
\max_{1\leq i \leq N}\E\left[ \sup_{0\leq t \leq T} |X_t^i- \bar X^i_t|^2\right]  \leq K^{+} \sqrt{\varepsilon_{N}}.
$$
\noindent for some positive constant $K^{+}:=K(T, \HRp, \HE, [\sigma]_L, q, M_q(\mu))$. Taking first the supremum in time and then expectation in \eqref{triangular:inequality}, one then concludes that a similar estimate holds for the quantity
$$
\E[\sup_{0\leq t \leq T} W_2(\mu_t,\mu_t^N)^2]. 
$$

\appendix
\section{proof of Lemma \ref{additional:regularity:lemma}} \label{proof:lemma:additional:regularity}

We here freely use the notations and the results established in \cite{chaudruraynal:frikha}. Since the arguments and the computations are quite similar to those employed in \cite{chaudruraynal:frikha}, we will deliberately be short on some technical details. We start by recalling some important estimates established in \cite{chaudruraynal:frikha}. Let us emphasize that these estimates are established for the corresponding approximation sequences, namely $(p_m(\mu, s, t, x, z))_{m\geq1}$, $(\widehat{p}^{y}_m(\mu, s, r, t, x, z))_{m\geq1}$, $(\mH_m(\mu, s, r, t, x, z))_{m\geq1}$, ..., constructed in Section \ref{full:c2:regularity:section} but are still valid for the corresponding limiting object by copying verbatim the corresponding proof except that one directly uses the estimates provided by Theorem 3.6 therein.

\begin{lem}\label{technical:lem:from:chaudru:frikha}
 Let $n\in \left\{0,1\right\}$. For any $\beta \in [0,1]$ if $n=0$ or any $\beta \in [0,\eta)$ if $n=1$, there exist some positive constants $K:=K(T, \HR, \HE)$, $K_\beta^{+}:=K(T, \HRp, \HE,\beta)$ and $c:=c(\lambda)$ such that for any $\mu, \mu' \in \pp$ and any $x, x', v, v', y \in \mathbb{R}^d$, it holds
\begin{align}
\Big| \partial^{n}_v [\partial_\mu & \widehat{p}^{y}(\mu, s, t, x, z)](v)  - \partial^{n}_v [\partial_\mu \widehat{p}^{y}(\mu', s, t, x', z)](v') \Big| \notag \\
& \leq K^+_\beta \frac{[W_2(\mu, \mu')^\beta + |x-x'|^\beta + |v-v'|^\beta] }{(t-s)^{\frac{1+n+\beta-\eta}{2}}} \, \left\{ g(c(t-s), z-x) + g(c(t-s), z-x')\right\}, \label{diff:space:mes:v:partial:mes:phat}
\end{align}

\begin{align}
 \Big| \partial^{n}_v &[\partial_\mu [a_{i, j}(t, x, [X^{s,\xi}_t])]](v) -   \partial^{n}_v [\partial_\mu [a_{i, j}(t, x, [X^{s, \xi}_t])]_{\mu=\mu'}](v') \Big|  \notag \\
& \leq K_\beta^{+} \frac{[W_2(\mu, \mu')^{\beta}+ |v-v'|^\beta]}{(t-s)^{\frac{1+n+\beta-\eta}{2}}}, \label{diff:mes:v:partial:deriv:diff:coeff}
\end{align}

\begin{align}
\Big| \partial^{n}_v &  [\partial_\mu \mH(\mu, s, r, t, x, z)](v) - \partial^{n}_v [\partial_\mu \mH(\mu, s, r, t, x, z)]_{\mu= \mu'}(v') \Big| \notag \\
& \leq K_\beta^{+} [W_2(\mu, \mu')^{\beta} + |v-v'|^\beta ] \left\{ \frac{1}{(t-r)(r-s)^{\frac{1+n+\beta-\eta}{2}}} \wedge \frac{1}{(t-r)^{1- \frac{\eta}{2}}(r-s)^{\frac{1+n+\beta}{2}}}  \right\} \label{diff:v:and:mes:partial:mes:parametrix:kernel} \\
& \quad \quad \times \, g(c(t-r), z-x).\notag
\end{align}

\noindent Similarly, for any $\beta \in [0,1]$ if $n\in \left\{0,1\right\}$ or any $\beta \in [0,\eta)$ if $n=2$
\begin{align}
 |\partial^{n}_x p(\mu, s, t, x, z) & - \partial^{n}_x p(\mu', s, t, x, z)](v)| \leq K_\beta \frac{W_2(\mu, \mu')^{\beta}}{(t-s)^{ \frac{n+\beta}{2}}} \, g(c(t-s), z-x), \label{regularity:measure:estimate:v1:v2:v3:mckean:decoupling} 
\end{align}

\begin{align}
 | \partial^{n}_x \widehat{p}^{y}(\mu, s, t, x, z) - \partial^{n}_x \widehat{p}^{y}(\mu', s, t, x, z)  | \leq K \frac{W_2(\mu, \mu')^{\beta}}{(t-s)^{\frac{n+\beta}{2}}} \, g(c(t-s), z-x), \label{gaussian:bound:diff:deriv:hat:pm:same:time}
\end{align}

\noindent and for any $\beta \in [0,1]$
\begin{align}
 |  \mH(\mu, s, r ,t , x ,z) - \mH(\mu', s, r ,t , x ,z) |    \leq K \frac{W_2(\mu, \mu')^{\beta}}{(t-r)^{1-\frac{\eta}{2}}(r-s)^{\frac{\beta}{2}}}  \, g(c(t-r), z - x),\label{bound:diff:H:mu}
\end{align}
\begin{align}
|a_{i, j}(t, x, [X^{s, \xi}_t])  - & a_{i, j}(t, x, [X^{s, \xi'}_t])|  \leq K \frac{W_2(\mu, \mu')^{\beta}}{(t-s)^{\frac{\beta}{2}}}, \label{diff:mes:drift:diff:coefficients:true:mes}
\end{align}
\begin{align}
 |  \Phi(\mu, s, r ,t , x ,z) - \Phi(\mu', s, r ,t , x ,z) |    \leq K \frac{W_2(\mu, \mu')^{\beta}}{(t-r)^{1-\frac{\eta}{2}}(r-s)^{\frac{\beta}{2}}}  \, g(c(t-r), z - x). \label{bound:diff:Phi:mu}
\end{align}

\end{lem}

We now move to the proof of Lemma \ref{additional:regularity:lemma}. \\

\noindent \emph{Step 1: smoothness of the maps $x\mapsto \partial_\mu p(\mu, s, t, x, z)(v)$, $\mu \mapsto \partial_x p(\mu, s, t, x, z)$ and proof of the estimate \eqref{deriv:cross:space:mes:pm}.} \\

First, combining Proposition 2.2 (applied to the maps $m\mapsto b_i(t, x, m), \, a_{i, j}(t, x, m)$) with Theorem 3.6 in \cite{chaudruraynal:frikha}, and following the lines of proof of (A.9) and (A.15) in Corollaries A.1 and A.2 therein, we deduce that for any $\beta \in [0,1]$ there exists some positive constants $K_\beta:=K(T, \HR, \HE, \beta)$, $K:=K(T, \HR, \HE)$ and $c:=c(\lambda)$ such that 
\begin{align}
| \partial_\mu [b_{i}(t, x, [X^{s,\xi}_t])](v) + \partial_\mu [a_{i, j}(t, x, [X^{s,\xi}_t])](v) | \leq \frac{K}{(t-s)^{\frac{1-\eta}{2}}}, \label{deriv:mes:a}
\end{align}
\begin{align}
|\partial^{n}_v [\partial_\mu \mH(\mu, s, r ,t ,x ,z)](v)| \leq \frac{K_\beta}{(t-r)^{1-\frac{\beta \eta}{2}}(r-s)^{\frac{1+n-(1-\beta)\eta}{2}}} \, g(c(t-r), z-x), \label{bound:deriv:mes:kernel}
\end{align}

\noindent and
\begin{align}
| \partial^{n}_v  [\partial_\mu \widehat{p}^{y}(\mu, s, r, t, x, z)](v)|  \leq \frac{K}{t-r}  \int_r^t \frac{1}{(r'-s)^{\frac{1+n - \eta}{2}}} \,dr'  g(c(t-r), z-x). \label{deriv:v:deriv:mes:p:hat}
\end{align}

Similarly to (A.23) of Proposition A.1 in \cite{chaudruraynal:frikha}, the following representation formulae holds:
\begin{align}
\partial_\mu p(\mu, s, t, x, z)(v) & = \sum_{k\geq0} \Big(\big[\partial_\mu \widehat{p} + p \otimes \partial_\mu \mathcal{H} \big] \otimes \mathcal{H}^{(k)}\Big)(\mu, s, t, x, z)(v) \notag \\
&  = \partial_\mu \widehat{p}(\mu, s, t, x, z)(v) + (p\otimes \partial_\mu \mathcal{H})(\mu, s, t, x, z)(v) + ((\partial_\mu \widehat{p} + (p \otimes \partial_\mu \mathcal{H})) \otimes \Phi)(\mu, s, t, x, z)(v) \label{infinite:series:deriv:mes:density}
\end{align}
\noindent and 
\begin{align}
\partial_x  p(\mu, s, t, x, z) = \partial_x \widehat{p}(\mu, s, t, x, z) +  (\partial_x \widehat{p} \otimes \Phi)(\mu, s, t, x, z) \label{representation:formula:first:derivative:p}
\end{align}
\noindent where for $n\in \left\{0,1\right\}$
\begin{align}
& \partial^n_v[\partial_\mu \widehat{p}^{y}(\mu, s, r, t, x, z)](v)  \notag\\
& = -\frac12\left\{ \tr\left(\left(\int_r^t a(r', y, [X^{s, \xi}_{r'}])\, dr'\right)^{-1} \int_r^t \partial^n_v[\partial_\mu [a(r', y, [X^{s, \xi}_{r'}])]](v) \, dr' \right) \right. \notag\\
& \quad \left. - (z-x)^{t} \left(\int_r^t a(r', y, [X^{s, \xi}_{r'}])\, dr'\right)^{-1} \int_r^t \partial^n_v[\partial_\mu [a(r', y, [X^{s, \xi}_{r'}])]](v) \, dr'  \right. \label{representation:formula:deriv:mes:p:hat:after:limit}\\ 
& \quad \left. \quad \times \left(\int_r^t a(r', y, [X^{s, \xi}_{r'}])\, dr'\right)^{-1} (z-x) \right\} \widehat{p}^{y}(\mu, s, r, t, x, z) \notag
\end{align}
\noindent and
\begin{equation}
\label{expression:first:space:derivative:phat}
\partial_x \widehat{p}^{y}(\mu, s, r, t, x, z) = H_1\Big(\int_r^t a(r', y, [X^{s, \xi}_{r'}])\, dr', z-x\Big) \widehat{p}^{y}(\mu, s, r, t, x, z).
\end{equation}



Now, it is readily seen from the identity \eqref{representation:formula:deriv:mes:p:hat:after:limit} that $x\mapsto \partial^n_v[\partial_\mu \widehat{p}^{y}(\mu, s, r, t, x, z)](v) $ is continuously differentiable with a derivative being continuous in $x, \mu, v$. Moreover, using the space time inequality \eqref{space:time:inequality} and \eqref{deriv:mes:a}, we obtain
\begin{equation}
\label{estimate:partialx:partialmes:phat}
\Big| \partial_x [\partial_\mu \widehat{p}^{y}(\mu, s,  t, x, z)](v) \Big| \leq \frac{K}{(t-s)^{1-\frac{\eta}{2}}} \, g(c(t-s), z-x).
\end{equation}

Using the Gaussian estimate \eqref{bound:density:parametrix} with $n=1$, splitting the time integral over $[s, t]$ of the space time convolution operator $\otimes$ into the two disjoint intervals $[s, \frac{t+s}{2}]$ and $(\frac{t+s}{2}, t]$ and using \eqref{bound:deriv:mes:kernel} (with $\beta=0$ on $[s, \frac{t+s}{2}]$ and $\beta=1$ otherwise) to balance the time singularity in the integral, by the dominated convergence theorem, we deduce that $x\mapsto (p\otimes \partial_\mu \mathcal{H}(.)(v))(\mu, s, t, x, z)$ is continuously differentiable with a derivative being continuous in $x, \mu, v$ and satisfying
\begin{equation}
\label{partialx:p:convol:partialmes:parametrix:kernel}
| (\partial_x p\otimes \partial_\mu \mathcal{H}(.)(v))(\mu, s, t, x, z)| \leq \frac{K}{(t-s)^{1-\frac{\eta}{2}}} \, g(c(t-s), z-x).
\end{equation}

Again, using the two previous estimates, \eqref{Gaussian:estimate:Phi} and the dominated convergence theorem, we deduce that the map $x \mapsto  ((\partial_\mu \widehat{p}(.)(v) + (p \otimes \partial_\mu \mathcal{H}(.)(v))) \otimes \Phi)(\mu, s, t, x, z)$ is continuously differentiable with a derivative being continuous in $x, \mu, v$, satisfying $\partial_x((\partial_\mu \widehat{p}(.)(v) + (p \otimes \partial_\mu \mathcal{H}(.)(v))) \otimes \Phi)(\mu, s, t, x, z) =((\partial_x \partial_\mu \widehat{p}(.)(v) + (\partial_x p \otimes \partial_\mu \mathcal{H}(.)(v))) \otimes \Phi)(\mu, s, t, x, z)$ and
\begin{equation}
\label{partialx:last:term:deriv:mes:p}
|\partial_x((\partial_\mu \widehat{p}(.)(v) + (p \otimes \partial_\mu \mathcal{H}(.)(v))) \otimes \Phi)(\mu, s, t, x, z)| \leq \frac{K}{(t-s)^{1- \eta}} \, g(c(t-s), z-x).
\end{equation}

We thus conclude from the preceding discussion and the identity \eqref{infinite:series:deriv:mes:density} that $x\mapsto \partial_\mu p(\mu, s, t, x, z)(v)$ is continuously differentiable with a derivative being continuous in $x, \mu, v$ and satisfying 
\begin{align*}
\partial_x & \partial_\mu p(\mu, s, t, x, z)(v) \\
&  = \partial_x\partial_\mu \widehat{p}(\mu, s, t, x, z)(v) + (\partial_x p\otimes \partial_\mu \mathcal{H}(.)(v))(\mu, s, t, x, z) + ((\partial_x \partial_\mu \widehat{p}(.)(v) + (\partial_x p \otimes \partial_\mu \mathcal{H}(.)(v))) \otimes \Phi)(\mu, s, t, x, z). 
\end{align*}
Moreover, from the estimates \eqref{estimate:partialx:partialmes:phat}, \eqref{partialx:p:convol:partialmes:parametrix:kernel} and \eqref{partialx:last:term:deriv:mes:p}, the following pointwise Gaussian estimate is satisfied
$$
|\partial_x  \partial_\mu p(\mu, s, t, x, z)(v)| \leq \frac{K}{(t-s)^{1-\frac{\eta}{2}}} \, g(c(t-s), z-x).
$$

We now investigate the smoothness of the map $\pp \ni \mu \mapsto \partial_x  p(\mu, s, t, x, z)$ using similar arguments. Starting from the identity \eqref{representation:formula:first:derivative:p}, our aim is to prove that one is allowed to differentiate each term with respect to $\mu$ and that the derivatives of each term is a continuous function with respect to $x, \mu, v$. From \eqref{representation:formula:deriv:mes:p:hat:after:limit}, \eqref{deriv:mes:a} and the dominated convergence theorem, the map $\mu\mapsto \partial_x \widehat{p}^{y}(\mu, s, r, t, x, z) $ given by \eqref{expression:first:space:derivative:phat} is continuously L-differentiable with a derivative being continuous in $x, \mu, v$ and satisfying
\begin{align}
\partial_\mu [\partial_x \widehat{p}^{y}(\mu, s, r, t, x, z)](v) & = \partial_\mu \Big[H_1\Big(\int_r^t a(r', y, [X^{s, \xi}_{r'}])\, dr', z-x\Big)\Big](v) \, \widehat{p}^{y}(\mu, s, r, t, x, z) \notag\\
& \quad +  H_1\Big(\int_r^t a(r', y, [X^{s, \xi}_{r'}])\, dr', z-x\Big) \partial_\mu \widehat{p}^{y}(\mu, s, r, t, x, z)(v). \label{decomposition:partial:mes:partial:space:phat}
\end{align}

\noindent Moreover, from the above expression, \eqref{representation:formula:deriv:mes:p:hat:after:limit}, \eqref{deriv:mes:a}, the space time inequality \eqref{space:time:inequality} and standard computations, we obtain
\begin{align}
|\partial_\mu [\partial_x \widehat{p}^{y}(\mu, s, t, x, z)](v)| & \leq \frac{K}{(t-s)^{1-\frac{\eta}{2}}} \, g(c(t-s), z-x). \label{estimate:partial:mes:partial:space:phat}
\end{align}

Following similar lines of reasonings as those used in the proof of Corollary A.3 in \cite{chaudruraynal:frikha}, namely using the relation
\begin{equation}
\label{iterated:parametrix:kernel:next:step}
\mH^{(k+1)}(\mu, s, r, t, x, z) = \int_r^t \int_{ \mathbb{R}^d }  \mH(\mu, s, r , r' , x , y)  \mH^{(k)}(\mu, s, r' ,t , y ,z) \, dy \, dr'
\end{equation}
\noindent as well as the estimates \eqref{iter:parametrix:kernel:true:kernel} and \eqref{bound:deriv:mes:kernel}, by induction on $k$, we derive that for any positive integer $k$ the map $\mu \mapsto \mH^{(k)}(\mu, s, r, t, x, z)$ is continuously L-differentiable with a derivative $\partial_\mu [\mH^{(k)}(\mu, s, r, t, x, z)](v)$ being continuous in $\mu, v$ and continuously differentiable with respect to the variable $v$ with a continuous derivative in $\mu, v$ and satisfying for any $\beta \in (0,1]$
\begin{align}
| \partial^{n}_v [& \partial_\mu[\mH^{(k)}(\mu, s, r, t, x, z)]](v) | \notag \\
& \leq \frac{k K_\beta^k}{(r-s)^{\frac{1+n-(1-\beta)\eta}{2}}(t-r)^{1-\beta \frac{\eta}{2} - (k-1)\frac{\eta}{2}}} \prod_{\ell=1}^{k-1}B(\frac{\eta}{2}, \frac{\eta}{2}\beta+ (\ell-1) \frac{\eta}{2}) \, g(c(t-r), z-x) \label{estimate:partial:deriv:mes:parametrix:kernel:iterated}
\end{align}  
\noindent for some positive constants $K_\beta:=K(T, \HR, \HE, \beta)$, $c:=c(\lambda)$. It follows from the previous estimate, the asymptotics of the Beta function, the identity \eqref{infinite:series:Phi:step} and the dominated convergence theorem that $\mu \mapsto \Phi(\mu, s, r, t, x, z)$ is continuously L-differentiable with a derivative $\partial_\mu \Phi(\mu, s, r, t, x, z)(v)$ being continuous in $\mu, v$ and continuously differentiable with respect to the variable $v$ with a continuous derivative in $\mu, v$ and satisfying for any $\beta \in (0,1]$
\begin{align}
| \partial^n_v [\partial_\mu [\Phi(\mu, s, r, t, x, z)]](v)| \leq \frac{K_\beta}{(r-s)^{\frac{1+n-(1-\beta)\eta}{2}}(t-r)^{1-\beta \frac{\eta}{2}}} \, g(c(t-r), z-x). \label{estimate:deriv:mes:Phi}
\end{align}

Now, it follows from \eqref{estimate:partial:mes:partial:space:phat}, \eqref{estimate:deriv:mes:Phi} and the dominated convergence theorem that $\mu \mapsto  (\partial_x \widehat{p} \otimes \Phi)(\mu, s, t, x, z) $ is continuously L-differentiable with a derivative given by 
$$
\partial_\mu (\partial_x \widehat{p} \otimes \Phi)(\mu, s, t, x, z) = (\partial_\mu [\partial_x \widehat{p}](.)(v) \otimes \Phi)(\mu, s, t, x, z) + (\partial_x \widehat{p} \otimes \partial_\mu \Phi(.)(v))(\mu, s, t, x, z), 
$$
\noindent being continuous in $\mu, x, v$ and satisfying
$$
|\partial_\mu [(\partial_x \widehat{p} \otimes \Phi)(\mu, s, t, x, z)](v)| \leq \frac{K}{(t-s)^{1-\frac{\eta}{2}}} \, g(c(t-s), z-x).
$$

Hence, coming back to the identity \eqref{representation:formula:first:derivative:p}, we conclude that $\mu \mapsto  \partial_x  p(\mu, s, t, x, z)$ is continuously L-differentiable with a derivative being continuous in $\mu, x, v$. Moreover, it follows from the previous estimate and \eqref{estimate:partial:mes:partial:space:phat} that
$$
|  \partial_\mu [\partial_x p(\mu, s, t, x, z)](v)| \leq \frac{K}{(t-s)^{1-\frac{\eta}{2}}} \, g(c(t-s), z-x).
$$

Let us finally mention that it follows from \eqref{bound:density:parametrix}, \eqref{iter:parametrix:kernel:true:kernel} with $k=1$, \eqref{estimate:partial:deriv:mes:parametrix:kernel:iterated} and the previous estimate as well as the relation $p = \widehat{p} + p \otimes \mH$ which directly stems from \eqref{parametrix:series:expansion} that 
$$
\partial_\mu \partial_x p(\mu, s, t, x, z)(v) = \partial_\mu [\partial_x \widehat{p}(\mu, s, t, x, z)](v) + (\partial_x p \otimes \partial_\mu \mH(.)(v))(\mu, s, t, x, z) + (\partial_\mu [\partial_x p](.)(v) \otimes \mH)(\mu, s, t, x, z). 
$$

Moreover, the estimates \eqref{estimate:partial:mes:partial:space:phat} and \eqref{partialx:p:convol:partialmes:parametrix:kernel} show that the kernel $ \partial_\mu [\partial_x \widehat{p}(\mu, s, t, x, z)] + (\partial_x p \otimes \partial_\mu \mH(.)(v))(\mu, s, t, x, z)$ is non singular so that one may iterate the previous relation. Hence, it holds
\begin{align}
\partial_\mu \partial_x p(\mu, s, t, x, z)(v) &  =  \partial_\mu [\partial_x \widehat{p}(\mu, s, t, x, z)](v) + (\partial_x p \otimes \partial_\mu \mH(.)(v))(\mu, s, t, x, z)  \notag \\
& + ((\partial_\mu [\partial_x \widehat{p}](.)(v) + \partial_x p \otimes \partial_\mu \mH(.)(v))\otimes \Phi)(\mu, s, t, x, z). \label{identity:partial:mes:partial:space:p}
\end{align}

\noindent \emph{Step 2: proof of the estimate \eqref{sensitivity:mes:deriv:cross:space:mes:p}.}\\

 Starting from the decomposition \eqref{identity:partial:mes:partial:space:p}, we see that it suffices to investigate the H\"older regularity of each term with respect to the variables $\mu, x, v$. We first derive an estimate for the difference $\partial_\mu \partial_x \widehat{p}(\mu, s, t, x, z)(v) -  \partial_\mu \partial_x \widehat{p}(\mu', s, t, x', z)(v')$. 

We split the computations into the two disjoint cases $W_2(\mu, \mu') + |v-v'| + |x-x'| \leq (t-s)^{1/2}$ and $W_2(\mu, \mu') + |v-v'| + |x-x'| > (t-s)^{1/2}$. In the first case, from \eqref{diff:mes:v:partial:deriv:diff:coeff}, \eqref{diff:mes:drift:diff:coefficients:true:mes}, \eqref{deriv:mes:a}, for any $\beta \in [0,1]$, we get
\begin{align*}
\Big | \partial_\mu & \Big[H_1\Big(\int_s^t a(r, y, [X^{s, \xi}_{r}])\, dr, z-x\Big)\Big](v) - \partial_\mu \Big[H_1\Big(\int_s^t a(r, y, [X^{s, \xi}_{r}])\, dr, z-x'\Big)\Big]_{|\mu=\mu'}(v') \Big| \\
& \leq K^+_\beta \left\{ \frac{|z-x|}{(t-s)^{1+ \frac{1+\beta-\eta}{2}}} [W_2(\mu, \mu')^{\beta} + |v-v'|^\beta] + \frac{|x-x'|^\beta}{(t-s)^{1+\frac{\beta-\eta}{2}}} \right\}
\end{align*}
\noindent and
\begin{align*}
\Big |  & H_1\Big(\int_s^t a(r, y, [X^{s, \xi}_{r}])\, dr, z-x\Big) -  H_1\Big(\int_s^t a(r, y, [X^{s, \xi'}_{r}])\, dr, z-x'\Big) \Big| \\
& \leq K \left\{ \frac{|z-x|}{(t-s)^{1+ \frac{\beta}{2}}} W_2(\mu, \mu')^{\beta}  + \frac{|x-x'|^\beta}{(t-s)^{\frac{1+\beta}{2}}} \right\}
\end{align*}

\noindent so that using the space time inequality \eqref{space:time:inequality}
\begin{align*}
\Big | \partial_\mu & \Big[H_1\Big(\int_s^t a(r, y, [X^{s, \xi}_{r}])\, dr, z-x\Big)\Big](v) - \partial_\mu \Big[H_1\Big(\int_s^t a(r, y, [X^{s, \xi}_{r}])\, dr, z-x'\Big)\Big]_{\mu=\mu'}(v') \Big|  \widehat{p}^{y}(\mu, s, t, x, z)\\
& \leq K^{+}_{\beta} \frac{[W_2(\mu, \mu')^{\beta} + |v-v'|^\beta + |x-x'|^\beta]}{(t-s)^{1+\frac{\beta-\eta}{2}}} \, g(c(t-s), z-x).
\end{align*}
\noindent and
\begin{align*}
\Big |  \Big[ & H_1\Big(\int_s^t a(r, y, [X^{s, \xi}_{r}])\, dr, z-x\Big) -  H_1\Big(\int_s^t a(r, y, [X^{s, \xi'}_{r}])\, dr, z-x'\Big)\Big]  \partial_\mu \widehat{p}^{y}(\mu, s, t, x, z)(v) \Big|  \\
& \leq K \left\{ \frac{|z-x|}{(t-s)^{1+ \frac{\beta}{2}}} W_2(\mu, \mu')^{\beta}  + \frac{|x-x'|^\beta}{(t-s)^{\frac{1+\beta}{2}}} \right\} \frac{1}{(t-s)^{\frac{1-\eta}{2}}} \, g(c(t-s), z-x) \\
& \leq K \frac{[W_2(\mu, \mu')^{\beta} + |x-x'|^\beta]}{(t-s)^{1+\frac{\beta-\eta}{2}}} \, g(c(t-s), z-x)
\end{align*}
\noindent where we used \eqref{deriv:v:deriv:mes:p:hat} (with $n=0$ and $r=s$) and the space time inequality \eqref{space:time:inequality} for the last but one inequality.

Also, it directly follows from \eqref{deriv:mes:a}, \eqref{holder:reg:deriv:p:hat}, \eqref{gaussian:bound:diff:deriv:hat:pm:same:time} (both with $n=0$), the inequality $|z-x| \leq |z-x'| + |x-x'|\leq |z-x'| + (t-s)^{1/2}$ and the space time inequality \eqref{space:time:inequality} that
\begin{align*}
\Big|  \partial_\mu & \Big[H_1\Big(\int_s^t a(r, y, [X^{s, \xi}_{r}])\, dr, z-x'\Big)\Big]_{\mu=\mu'}(v') [\widehat{p}^{y}(\mu, s,  t, x, z) - \widehat{p}^{y}(\mu', s,  t, x', z)] \Big| \\
& \leq K_\beta |z-x|  \frac{[W_2(\mu, \mu')^{\beta} + |x-x'|^\beta]}{(t-s)^{1+\frac{1+\beta-\eta}{2}}} \, \left\{ g(c(t-s), z-x) + g(c(t-s), z-x') \right\} \\
& \leq K_\beta \frac{[W_2(\mu, \mu')^{\beta} + |x-x'|^\beta]}{(t-s)^{1+\frac{\beta-\eta}{2}}} \, \left\{ g(c(t-s), z-x) + g(c(t-s), z-x') \right\}
\end{align*}

\noindent and, from the space time inequality \eqref{space:time:inequality}, \eqref{diff:space:mes:v:partial:mes:phat} and the inequality $|z-x'| \leq |z-x| + |x-x'|\leq |z-x|+ (t-s)^{1/2}$, one also obtains
\begin{align*}
\Big| & H_1\Big(\int_s^t a(r, y, [X^{s, \xi'}_{r}])\, dr, z-x' \Big)  [\partial_\mu \widehat{p}^{y}(\mu, s, t, x, z)(v) - \partial_\mu \widehat{p}^{y}(\mu', s, t, x', z)(v')] \Big| \\
& \leq K_{\beta}^{+}\frac{|z-x'|}{(t-s)}\frac{[W_2(\mu, \mu')^\beta + |x-x'|^\beta + |v-v'|^\beta] }{(t-s)^{\frac{1+\beta-\eta}{2}}} \, \left\{ g(c(t-s), z-x) + g(c(t-s), z-x')\right\} \\
& \leq K_{\beta}^{+}\frac{[W_2(\mu, \mu')^\beta + |x-x'|^\beta + |v-v'|^\beta] }{(t-s)^{1+\frac{\beta-\eta}{2}}} \, \left\{ g(c(t-s), z-x) + g(c(t-s), z-x')\right\}.
\end{align*}

Coming back to the decomposition \eqref{decomposition:partial:mes:partial:space:phat} and using the previous estimates yield
\begin{align*}
\Big| \partial_\mu & [\partial_x \widehat{p}^{y}(\mu, s, r, t, x, z)](v) - \partial_\mu [\partial_x \widehat{p}^{y}(\mu, s, r, t, x', z)]_{\mu = \mu'}(v') \Big| \\
& \leq K_\beta^{+} \frac{[W_2(\mu, \mu')^\beta + |x-x'|^\beta + |v-v'|^\beta] }{(t-s)^{1+\frac{\beta-\eta}{2}}} \, \left\{ g(c(t-s), z-x) + g(c(t-s), z-x')\right\}
\end{align*}

\noindent for any $\beta \in [0,\eta)$ in the diagonal regime $W_2(\mu, \mu') + |v-v'| + |x-x'| \leq (t-s)^{1/2}$. 

In the off-diagonal regime $W_2(\mu, \mu') + |v-v'| + |x-x'| > (t-s)^{1/2}$, we directly use the estimate \eqref{estimate:partial:mes:partial:space:phat} 
\begin{align*}
\Big| \partial_\mu & [\partial_x \widehat{p}^{y}(\mu, s, r, t, x, z)](v) - \partial_\mu [\partial_x \widehat{p}^{y}(\mu, s, r, t, x', z)]_{\mu = \mu'}(v') \Big| \\
& \leq\Big| \partial_\mu  [\partial_x \widehat{p}^{y}(\mu, s, r, t, x, z)](v) \Big| + \Big| \partial_\mu [\partial_x \widehat{p}^{y}(\mu, s, r, t, x', z)]_{\mu = \mu'}(v') \Big| \\
& \leq K \frac{[W_2(\mu, \mu')^\beta + |x-x'|^\beta + |v-v'|^\beta] }{(t-s)^{1+\frac{\beta-\eta}{2}}} \, \left\{ g(c(t-s), z-x) + g(c(t-s), z-x')\right\}.
\end{align*}
\noindent for any $\beta \in [0,1]$.

Hence, for any $\beta \in [0,\eta)$, there exist some positive constants $K_\beta^{+}:=K(T, \HRp, \HE)$ and $c:=c(\lambda)$ such that for any $(\mu, \mu', s , x, x', v, v', y)\in (\pp)^2 \times [0,t) \times (\mathbb{R}^d)^5$
\begin{align}
\Big| \partial_\mu & [\partial_x \widehat{p}^{y}(\mu, s, r, t, x, z)](v) - \partial_\mu [\partial_x \widehat{p}^{y}(\mu, s, r, t, x', z)]_{\mu = \mu'}(v') \Big| \notag \\
& \leq K_\beta^{+} \frac{[W_2(\mu, \mu')^\beta + |x-x'|^\beta + |v-v'|^\beta] }{(t-s)^{1+\frac{\beta-\eta}{2}}} \, \left\{ g(c(t-s), z-x) + g(c(t-s), z-x')\right\}. \label{partial:mu:partial:mes:p:hat:reg:space:mes:v}
\end{align}

From \eqref{second:derivatives:holder:estimates:parametrix:series}, \eqref{estimate:partial:deriv:mes:parametrix:kernel:iterated}, separating the time integral of the space time convolution into the two disjoint intervals $[s, (t+s)/2)$ and $[(t+s)/2, t]$ in order to balance the time singularity induced by the two estimates, after some standard computations that we omit, we deduce that for any $\beta\in [0,\eta)$
\begin{align}
|(\partial_x p \otimes \partial_\mu \mH (.) (v))(\mu, s, t, x, z) & - (\partial_x p \otimes \partial_\mu \mH(.)(v))(\mu, s, t, x', z)| \notag\\
&  \leq K_\beta \frac{|x-x'|^\beta}{(t-s)^{1+\frac{\beta-\eta}{2}}} \, \left\{ g(c(t-s), z-x) + g(c(t-s), z-x') \right\}. \label{diff:space:partial:x:density:convol:partial:mes:parametrix:kernel}
\end{align}

Similarly, it follows from \eqref{bound:density:parametrix}, \eqref{diff:v:and:mes:partial:mes:parametrix:kernel} and similar computations that for any $\beta \in [0,\eta)$
\begin{align}
|(\partial_x p \otimes \partial_\mu \mH(.)(v))(\mu, s, t, x, z) & - (\partial_x p \otimes \partial_\mu \mH(.)(v'))(\mu, s, t, x, z)| \notag\\
&  \leq K_\beta^+ \frac{|v-v'|^\beta}{(t-s)^{1+\frac{\beta-\eta}{2}}} \, g(c(t-s), z-x). \label{diff:v:partial:x:density:convol:partial:mes:parametrix:kernel}
\end{align}
 
 Finally, from \eqref{regularity:measure:estimate:v1:v2:v3:mckean:decoupling}, \eqref{diff:v:and:mes:partial:mes:parametrix:kernel}, \eqref{bound:density:parametrix} and \eqref{estimate:partial:deriv:mes:parametrix:kernel:iterated} (with $k=1$ and $\beta$ sufficiently small), separating the time integral of the space time convolution into the two disjoint intervals $[s, (t+s)/2)$ and $[(t+s)/2, t]$ in order to balance the time singularity induced by \eqref{diff:v:and:mes:partial:mes:parametrix:kernel}, after some standard computations that we omit, we deduce that for any $\beta \in [0,\eta)$
 \begin{align}
|(\partial_x p \otimes \partial_\mu \mH(.)(v))(\mu, s, t, x, z) & - (\partial_x p \otimes \partial_\mu \mH(.)(v))(\mu', s, t, x, z)| \notag\\
&  \leq K_\beta^+ \frac{W_2(\mu, \mu')^\beta}{(t-s)^{1+\frac{\beta-\eta}{2}}} \, g(c(t-s), z-x). \label{diff:mes:partial:x:density:convol:partial:mes:parametrix:kernel}
\end{align}

Combining \eqref{diff:space:partial:x:density:convol:partial:mes:parametrix:kernel}, \eqref{diff:v:partial:x:density:convol:partial:mes:parametrix:kernel} and \eqref{diff:mes:partial:x:density:convol:partial:mes:parametrix:kernel}, we thus obtain
 \begin{align}
|(\partial_x p \otimes \partial_\mu & \mH(.)(v))(\mu, s, t, x, z)  - (\partial_x p \otimes \partial_\mu \mH(.)(v'))(\mu', s, t, x', z)| \notag\\
&  \leq K_\beta^+ \frac{[W_2(\mu, \mu')^\beta + |v-v'|^\beta + |x-x'|^\beta]}{(t-s)^{1+\frac{\beta-\eta}{2}}} \, \left\{ g(c(t-s), z-x) + g(c(t-s), z-x') \right\} \label{diff:mes:space:v:partial:x:density:convol:partial:mes:parametrix:kernel}
\end{align}
\noindent which combined with \eqref{partial:mu:partial:mes:p:hat:reg:space:mes:v}, \eqref{Gaussian:estimate:Phi} and then \eqref{estimate:partial:mes:partial:space:phat}, \eqref{partialx:p:convol:partialmes:parametrix:kernel} and \eqref{bound:diff:Phi:mu} eventually yield
\begin{align}
\Big| ((\partial_\mu [\partial_x \widehat{p}](.)(v) & + \partial_x p \otimes \partial_\mu \mH(.)(v))\otimes \Phi)(\mu, s, t, x, z) - ((\partial_\mu [\partial_x \widehat{p}](.)(v') + \partial_x p \otimes \partial_\mu \mH(.)(v'))\otimes \Phi)(\mu', s, t, x', z) \Big| \notag \\
& \leq K_\beta^+ \frac{[W_2(\mu, \mu')^\beta + |v-v'|^\beta + |x-x'|^\beta]}{(t-s)^{1+\frac{\beta}{2}-\eta}} \, \left\{ g(c(t-s), z-x) + g(c(t-s), z-x') \right\} \label{partial:phat:convol:partial:parametrix:kernel:convol:Phi}
\end{align}

\noindent for any $\beta \in [0,\eta)$.
The identity \eqref{identity:partial:mes:partial:space:p} together with \eqref{partial:mu:partial:mes:p:hat:reg:space:mes:v}, \eqref{diff:mes:space:v:partial:x:density:convol:partial:mes:parametrix:kernel} and \eqref{partial:phat:convol:partial:parametrix:kernel:convol:Phi} allow to conclude the proof of \eqref{sensitivity:mes:deriv:cross:space:mes:p}, recalling that $\partial_\mu [\partial_x p(\mu, s, t, x, z)](v) = [\partial_x [\partial_\mu p(\mu, s, t, x, z)(v)]]^{*}$.

\section{Proof of Proposition \ref{proposition:reg:density:recursive:scheme:mckean}}\label{proof:main:prop} 

This section is dedicated to the proof of Proposition \ref{proposition:reg:density:recursive:scheme:mckean}. The strategy of proof is the same as the one developed for Proposition 5.1 in \cite{chaudruraynal:frikha}. In order to foster the understanding on the main steps of the proof, we will collect intermediate technical results into several auxiliary lemmas and associated corollaries which are based on standard but cumbersome Gaussian like computations and postpone their proof to \ref{sec:proof:technical:estimates}. The reader could skip some of these derivations in a first reading. \\

This section is organized as follows: in \ref{subsubsection:base:case}, we deal with the base case in a completely analogous manner to the base case $m=1$ of Proposition 5.1 in \cite{chaudruraynal:frikha}. We provide it for sake of completeness. The regularity of the maps $x\mapsto \partial_\mu p_m(\mu, s, t, x, z)$ and $\mu \mapsto \partial_x p_m(\mu, s, t, x, z)$ and the related estimates \eqref{cross:deriv:mes:space:induction:decoupling:mckean}, \eqref{sensitivity:mes:deriv:cross:space:mes:pm} and \eqref{sensitivity:time:deriv:cross:space:mes:pm} are obtained as a consequence of the results established in our previous work \cite{chaudruraynal:frikha} and are thus tackled in \ref{subsubsection:regularity:derivatives:heat:kernel:approx}. In particular, in Lemma \ref{lem:collect:technical:estimates}, we will recall some important technical estimates established in \cite{chaudruraynal:frikha} that will be used in our analysis. In \ref{sec:technical:results:first:part}, we provide some technical results which are necessary to address the proof of the first part of the induction step. Then, the first part of the induction step, namely, the $\mathcal{C}^{1, 2, 2}_f([0,t) \times \mathbb{R}^d \times \pp)$ regularity of the map $(s, x, \mu) \mapsto p_{m+1}(\mu, s, t, x, z)$ and the proof of the estimates \eqref{second:deriv:mes:induction:decoupling:mckean}, \eqref{second:deriv:mes:reg:space:deriv:arg:estimate:induction:decoupling:mckean} and \eqref{second:deriv:mes:reg:space:deriv:arg:estimate:induction:decoupling:mckean2} at step $m+1$, is treated in \ref{sec:first:part:induction:step}. We eventually address the second part of the induction step, namely the estimates \eqref{second:deriv:mes:reg:mes:estimate:induction:decoupling:mckean} and \eqref{regularity:time:estimate:secon:deriv:decoupling:mckean} at step $m+1$ in \ref{sec:proof:second:part:induction:step}. \\

Apart from \ref{subsubsection:base:case} and \ref{subsubsection:regularity:derivatives:heat:kernel:approx}, we will work under the following assumption. For a fixed positive time horizon $T>0$ and positive integer $m$, we assume that for any fixed $(t, z) \in (0,T] \times \mathbb{R}^d$, the map $(s, x, \mu) \mapsto p_m(\mu, s, t, x, z)$ defined by \eqref{series:approx:mckean} belongs to $\mathcal{C}^{1,2, 2}_f([0,t) \times \mathbb{R}^d \times \pp)$ and denote by $[X^{s, \xi, (m)}_t]$ the probability measure on $\mathbb{R}^d$ with density function $z\mapsto (p_m(\mu, t, T, ., z) \sharp\mu)$.\\

\noindent\textbf{Notations.} We recall some notations that will be used in this section. As already mentioned before, we denote by $K := K(T, \HR, \HE)$ a positive constant depending on $T$, $a$, $b$, $\delta a/\delta m$, $\delta b/\delta m$ and $\lambda$. We also denote by $K^+ := K(T, \HRp, \HE)$ and $K^{++} := K(T, \HRpp, \HE)$ two positive constants depending only upon $T$ and the parameters appearing in \A{HR}$_{+}$, \A{HE} and \A{HR}$_{++}$ and \A{HE} respectively. If $\beta$ is any (other) parameter, we denote by $K_\beta$,  $K_{\beta}^+$ or $K^{++}_\beta$ a positive constant depending on $\beta$ and the corresponding aforementioned parameters. We also denote by $c:=c(\lambda)>0$ a constant depending only on the parameter $\lambda$ in \A{HE}. As usual, all these constants may vary from line to line.\\

\subsection{Base case $m=1$.\\ }\label{subsubsection:base:case}

As far as the base case $m=1$ is concerned, as already underlined in \cite{chaudruraynal:frikha}, since $P^{(0)}(t) = \nu$ for any $t\geq s$, it is readily seen from \eqref{series:approx:mckean}, \eqref{eq:def:de:phat:m}, \eqref{eq:def:de:mH} for $m=1$ that the law argument in the coefficients depends neither on the initial measure $\mu$ nor on the initial time $s$ but only on $\nu$.  It thus follows from \cite{friedman:64} that the map $[0,t) \times \mathbb{R}^d \ni (s, x) \mapsto p_1(\mu, s, t, x, z) =  \sum_{k \geq 0} (\widehat{p}_{1} \otimes \mH^{(k)}_1)(\mu, s, t, x, z)$ belongs to $\mathcal{C}^{1, 2}([0,t) \times \mathbb{R}^d)$ with derivatives that do not depend on $\mu$. Obviously, the map $\pp \ni \mu \mapsto p_1(\mu, s, t, x, z)$ is two times continuously L-differentiable and satisfies $\partial_\mu p_1(\mu, s, t, x, z)(v) = \partial_v [\partial_\mu p_1(\mu, s, t, x, z)](v) = \partial^2_\mu p_1(\mu, s, t, x, z)(v,v') = 0 $ for any $(s, x ,\mu, v, v') \in [0,t) \times \mathbb{R}^d \times \pp \times (\mathbb{R}^d)^2$. We thus conclude that the map $[0,t) \times \mathbb{R}^d \times \pp \ni (s, x, \mu) \mapsto p_1(\mu, s, t, x, z)$ is in $\mathcal{C}_f^{1, 2, 2}([0,t)\times \mathbb{R}^d \times \pp)$. The estimates \eqref{cross:deriv:mes:space:induction:decoupling:mckean} up to \eqref{regularity:time:estimate:secon:deriv:decoupling:mckean} are straightforward since $\partial_x [\partial_\mu p_1(\mu, s, t, x, z)(v)] = \partial^2_\mu p_1(\mu, s, t, x, z)(v,v') = 0$. \\

\subsection{On the regularity of the maps $x\mapsto \partial_\mu p_m(\mu, s, t, x, z)$ and $\mu \mapsto \partial_x p_m(\mu, s, t, x, z)$ and the related estimates \eqref{cross:deriv:mes:space:induction:decoupling:mckean}, \eqref{sensitivity:mes:deriv:cross:space:mes:pm} and \eqref{sensitivity:time:deriv:cross:space:mes:pm}.\\} \label{subsubsection:regularity:derivatives:heat:kernel:approx}
The proof of the continuous differentiability of the maps $x\mapsto \partial_\mu p_m(\mu, s, t, x, z)$ and $\mu \mapsto \partial_x p_m(\mu, s, t, x, z)$ and the continuity of their respective derivatives with respect to the variables $x, \mu, v$ as well as the estimates \eqref{cross:deriv:mes:space:induction:decoupling:mckean} and \eqref{sensitivity:mes:deriv:cross:space:mes:pm} is similar to the proof of Lemma \ref{additional:regularity:lemma}. One just copies verbatim the proof except that one has to replace $p$, $\widehat{p}$, $\mH$, $\Phi$, etc... by their respective approximation sequences $(p_m)_{m\geq1}$, $(\widehat{p}_m)_{m\geq1}$, $(\mH_m)_{m\geq1}$, $(\Phi_m)_{m\geq1}$, etc ... and use the corresponding estimates with constants being uniform in $m$. In particular, the identity \eqref{identity:partial:mes:partial:space:p} here writes 
\begin{align}
\partial_\mu \partial_x p_m(\mu, s, t, x, z)(v) &  =  \partial_\mu [ \partial_x \widehat{p}_m(\mu, s, t, x, z)](v) + (\partial_x p_m \otimes \partial_\mu \mH_m(.)(v))(\mu, s, t, x, z)  \notag \\
& + ((\partial_\mu [\partial_x \widehat{p}_m](.)(v) + \partial_x p_m \otimes \partial_\mu \mH_m(.)(v))\otimes \Phi_m)(\mu, s, t, x, z) \label{identity:partial:mes:partial:space:pm}
\end{align}

\noindent and the estimates \eqref{estimate:partial:mes:partial:space:phat}, \eqref{deriv:v:deriv:mes:p:hat}, \eqref{estimate:partial:deriv:mes:parametrix:kernel:iterated}, \eqref{estimate:deriv:mes:Phi} and \eqref{deriv:mes:a} become 
\begin{align}
|\partial_\mu [\partial_x \widehat{p}^{y}_m(\mu, s, t, x, z)](v)| & \leq \frac{K}{(t-s)^{1-\frac{\eta}{2}}} \, g(c(t-s), z-x), \label{estimate:partial:mes:partial:space:pmhat}
\end{align}
\begin{align}
| \partial^{n}_v  [\partial_\mu \widehat{p}^{y}_m(\mu, s, r, t, x, z)](v)|  \leq \frac{K}{t-r}  \int_r^t \frac{1}{(r'-s)^{\frac{1+n - \eta}{2}}} \,dr'  g(c(t-r), z-x), \label{deriv:v:deriv:mes:pm:hat}
\end{align}
\noindent for any $\beta \in (0,1]$ and any positive integer $k$
\begin{align}
| \partial^{n}_v [& \partial_\mu[\mH^{(k)}_m(\mu, s, r, t, x, z)]](v) | \notag \\
& \leq \frac{k K^k_\beta}{(r-s)^{\frac{1+n-(1-\beta)\eta}{2}}(t-r)^{1-\beta \frac{\eta}{2} - (k-1)\frac{\eta}{2}}} \prod_{\ell=1}^{k-1}B(\frac{\eta}{4}, \frac{\eta}{2}\beta+ (\ell-1) \frac{\eta}{2}) \, g(c(t-r), z-x), \label{estimate:partial:deriv:mes:parametrix:kernel:iterated:m}
\end{align}  
\begin{align}
| \partial^n_v [\partial_\mu [\Phi_m(\mu, s, r, t, x, z)]](v)| \leq \frac{K_\beta}{(r-s)^{\frac{1+n-(1-\beta)\eta}{2}}(t-r)^{1-\beta \frac{\eta}{2}}} \, g(c(t-r), z-x), \label{estimate:deriv:mes:Phim}
\end{align}
\noindent and
\begin{align}
| \partial_\mu [b_{i}(t, x, [X^{s,\xi, (m)}_t])](v)| + | \partial_\mu [a_{i, j}(t, x, [X^{s,\xi, (m)}_t])](v)| \leq \frac{K}{(t-s)^{\frac{1-\eta}{2}}}. \label{deriv:mes:a:m}
\end{align}

In particular, it follows from \eqref{bound:derivative:heat:kernel} (with $n=1$) and \eqref{estimate:partial:deriv:mes:parametrix:kernel:iterated:m} (with $n=0$, $k=1$ and $\beta \in (0,1)$) that
\begin{equation}
\label{}
|(\partial_x p_m \otimes \partial_\mu \mH_m(.)(v))(\mu, s, t, x, z)| \leq  \frac{K}{(t-s)^{1-\frac{\eta}{2}}} \, g(c(t-s), z-x). \label{convolution:partial:space:pm:partial:mes:parametrix:kernel:m}
\end{equation}

The above estimate will be used in the sequel.

We thus only prove the estimate \eqref{sensitivity:time:deriv:cross:space:mes:pm}. Before proceeding with the proof, we recall some important technical estimates established in \cite{chaudruraynal:frikha} that will be used in the sequel.

\begin{lem}\label{lem:collect:technical:estimates} For any $\beta \in [0,(1+\eta)/2)$ and any positive integer $m$, there exist positive constants $K$, $K_\beta^{+}$, $c$ such that for any $(t, z) \in (0,T] \times \mathbb{R}^d$, any $\mu, \mu' \in \pp$ (denoting by $\xi$ and $\xi'$ any random variables with respective law $\mu$ and $\mu'$), any $s, s_1, s_2 \in [0,t)$, any $r'\in (s_1 \vee s_2, t)$, any $x, y, v, v_1, v_2 \in \mathbb{R}^d$ and any $(i, j) \in \left\{1, \cdots, d\right\}^2$
\begin{align}
\Big| \partial_\mu [\partial_x&  \widehat{p}^{y}_m(\mu, s_1, t, x, z)](v)  -  \partial_\mu [\partial_x \widehat{p}^{y}_m(\mu, s_2, t, x, z)](v) \Big| \notag \\
& \leq K_\beta^{+} \left\{ \frac{|s_1-s_2|^{\beta}}{(t-s_1)^{1-\frac{\eta}{2}+\beta}} g(c(t-s_1), z-x) +  \frac{|s_1-s_2|^{\beta}}{(t-s_2)^{1-\frac{\eta}{2}+\beta}} g(c(t-s_2), z-x) \right\}, \label{diff:time:deriv:mes:deriv:space:p:hat:m}
\end{align}

\begin{align}
 | & \partial_x p_{m}(\mu, s_1, t, x, z) - \partial_x p_{m}(\mu, s_2, t, x, z)  | \nonumber \\
 & \leq K_\beta \left\{ \frac{|s_1-s_2|^{\beta}}{(t-s_1)^{\frac{1}{2} + \beta }} \, g(c(t-s_1), z-x) + \frac{|s_1-s_2|^{\beta}}{(t- s_2)^{ \frac{1}{2} +\beta }} \, g(c(t-s_2), z-x) \right\}, \label{regularity:time:estimate:v1:v2:v3:decoupling:mckean:prop:statement} 
\end{align}

 \begin{align}
  \Big|&  \partial_\mu \widehat{p}^y_{m}(\mu, s_1, t, x, z)(v) -  \partial_\mu \widehat{p}^y_{m}(\mu, s_2, t, x, z)(v) \Big| \notag \\
  & \leq K^{+}_\beta \left\{ \frac{|s_1-s_2|^{\beta}}{(t-s_1)^{\frac{1-\eta}{2}+\beta}} g(c(t-s_1), z-x) +  \frac{|s_1-s_2|^{\beta}}{(t-s_2)^{\frac{1-\eta}{2}+\beta}} g(c(t-s_2), z-x) \right\}, \label{diff:time:L:deriv:p:hat:same:time} 
 \end{align}
 
  \begin{align}
  \Big|&  \partial_\mu \widehat{p}^y_{m}(\mu, s_1, r', t, x, z)(v) -  \partial_\mu \widehat{p}^y_{m}(\mu, s_2, r',  t, x, z)(v) \Big| \notag \\
  & \leq K^{+}_\beta \frac{|s_1-s_2|^{\beta}}{(r'-s_1\vee s_2)^{\frac{1-\eta}{2}+\beta}} g(c(t-r'), z-x), \label{diff:time:L:deriv:p:hat:different:time} 
 \end{align}
 
\begin{align}
 & \Big|  \partial_\mu \mH_{m}(\mu, s_1, r', t, x, z)(v) - \partial_\mu \mH_{m}(\mu, s_2, r', t, x, z)(v) \Big| \nonumber \\
 & \leq K^{+}_\beta  |s_1-s_2|^\beta \left\{ \frac{1}{(t-r')(r'-s_1\vee s_2)^{\frac{1-\eta}{2}+\beta}} \wedge \frac{1}{(t-r')^{1- \frac{\eta}{2}}(r'-s_1 \vee s_2)^{\frac{1}{2} + \beta}} \right\} \label{diff:time:L:deriv:parametrix:kernel:pmp1} \\
 & \quad \quad  \times g(c(t-r'), z-x), \nonumber
 \end{align}
 
\noindent for any $\beta \in [0,1]$

\begin{align}
|a_{i, j}(t, x, [X^{s_1 , \xi, (m)}_t])  - & a_{i, j}(t, x, [X^{s_2, \xi, (m)}_t])| + |b_{i}(t, x, [X^{s_1, \xi, (m)}_t])  - b_{i}(t, x, [X^{s_2, \xi, (m)}_t])| \notag \\
& \leq K  \left\{ \frac{|s_1-s_2|^\beta}{(t-s_1)^{\beta-\frac{\eta}{2}}} +\frac{|s_1-s_2|^\beta}{(t-s_2)^{\beta-\frac{\eta}{2}}}  \right\}, \label{diff:time:drift:diff:coefficients}
\end{align}

\begin{align}
 \Big| \partial^{n}_x \widehat{p}_{m}(\mu, s_1, r', t, x, z) & - \partial^{n}_x \widehat{p}_{m}(\mu, s_2, r', t, x, z)\Big| \notag \\
  & \leq K \frac{|s_1-s_2|^\beta}{(t-r')^{\frac{n}{2}}(r'-s_1\vee s_2)^{\beta}} \, g(c(t-r'), z-x), \quad n\in \left\{0,1, 2, 3\right\}, \label{gaussian:bound:diff:time:hat:pm:different:time}
\end{align}

\begin{align}
 |  \Phi_{m}(\mu, s_1, r' ,t , x ,z) -  \Phi_{m}(\mu, s_2, r' ,t , x ,z) |  \leq K  \frac{|s_1-s_2|^{\beta}}{(t-r')^{1-\frac{\eta}{2}}(r'-s_1\vee s_2)^{\beta}}  \, g(c(t-r'), z-x), \label{gaussian:bound:diff:time:Phim}
\end{align}

\begin{align}
 \Big| &  \partial^{n}_x \widehat{p}^y_{m}(\mu, s_1, t, x, z) - \partial^{n}_x \widehat{p}^y_{m}(\mu, s_2, t, x, z) \Big|  \notag \\
 & \leq K \left\{ \frac{|s_1-s_2|^\beta}{(t-s_1)^{\frac{n}{2}+\beta}} \, g(c(t-s_1), z-x) + \frac{|s_1-s_2|^\beta}{(t-s_2)^{\frac{n}{2}+\beta}} \, g(c(t-s_2), z-x) \right\}, \quad n\in \left\{0,1, 2\right\}, \label{gaussian:bound:diff:time:hat:pm:same:time}
\end{align}

\begin{align}
|  & \partial^{n}_v [\partial_\mu [a_{i, j}(t, x, [X^{s, \xi, (m)}_{t}]) - a_{i, j}(t, z, [X^{s, \xi, (m)}_t])]](v) |  \leq  K_{\beta} \frac{|z-x|^{\beta \eta}}{(t-s)^{\frac{1+n-(1-\beta)\eta}{2}}}, \quad n\in \left\{0,1\right\},  \label{recursive:bound:deriv:mes:holder:reg:a}
\end{align}

\noindent for any $\beta \in [\eta,1]$
\begin{align}
|a_{i, j}(t, x, [X^{s , \xi, (m)}_t])  - & a_{i, j}(t, x, [X^{s, \xi', (m)}_t])| + |b_{i}(t, x, [X^{s, \xi, (m)}_t])  - b_{i}(t, x, [X^{s, \xi', (m)}_t])| \notag \\
& \leq K \frac{W_2(\mu, \mu')^\beta}{(t-s)^{\frac{\beta-\eta}{2}}}, \label{diff:mes:drift:diff:coefficients}
\end{align}

\noindent for any $\beta \in [0,1]$ and any $r\in [s, t)$

\begin{align}
  |& \partial_\mu \widehat{p}^{y}_{m}(\mu, s, r, t, x, z)(v) - \partial_\mu \widehat{p}^{y}_{m}(\mu', s, r, t, x, z)(v)  | \notag \\
  & \leq  K^{+}_\beta   W_2(\mu, \mu')^{\beta}\left( \frac{1}{(t-s)^{ \frac{1+\beta-\eta}{2}}} \I_\seq{r=s} + \frac{1}{(r-s)^{ \frac{1+\beta-\eta}{2}}} \I_\seq{r>s} \right)\, g(c(t-r), z-x),  \label{diff:L:deriv:pm:mu:mup}
\end{align}

\noindent for any $\alpha \in [0,\eta]$ and any $\beta \in [\alpha,1]$
\begin{align}
|a_{i, j}& (t, x, [X^{s, \xi, (m)}_t])  -  a_{i, j}(t, x, [X^{s, \xi', (m)}_t]) - (a_{i, j}(t, z, [X^{s, \xi, (m)}_t])  - a_{i, j}(t, z, [X^{s, \xi', (m)}_t]))| \notag \\
 & + |b_{i}(t, x, [X^{s, \xi, (m)}_t])  - b_{i}(t, x, [X^{s, \xi', (m)}_t]) - (b_{i}(t, z, [X^{s, \xi, (m)}_t])  - b_{i}(t, z, [X^{s, \xi', (m)}_t]))| \notag \\
& \quad \quad \leq  K ( |x-z|^{\eta-\alpha} \wedge 1)\frac{W_2(\mu, \mu')^{\beta}}{(t-s)^{\frac{\beta-\alpha}{2}}},  \label{diff:mes:with:holder:reg:space:drift:diff:coefficients}
\end{align}

\noindent for any $\alpha \in [0,\eta]$ and any $\beta \in [0,1]$
\begin{align}
|a_{i, j}& (t, x, [X^{s_1 , \xi, (m)}_t])  -  a_{i, j}(t, x, [X^{ s_2, \xi, (m)}_t]) - (a_{i, j}(t, z, [X^{s_1 , \xi, (m)}_t])  - a_{i, j}(t, z, [X^{ s_2, \xi, (m)}_t]))| \notag \\
\quad + |b_{i}& (t, x, [X^{s_1 , \xi, (m)}_t])  -  b_{i}(t, x, [X^{ s_2, \xi, (m)}_t]) - (b_{i}(t, z, [X^{s_1 , \xi, (m)}_t])  - b_{i}(t, z, [X^{ s_2, \xi, (m)}_t]))| \notag \\
& \quad \quad \leq   K_{\alpha} (|z-x|^\alpha \wedge 1) |s_1-s_2|^\beta \left\{\frac{1}{(t-s_1)^{\beta+\frac{\alpha-\eta}{2}}} + \frac{1}{(t-s_2)^{\beta+\frac{\alpha-\eta}{2}}}\right\},  \label{diff:time:with:holder:reg:space:drift:diff:coefficients}
\end{align} 

\noindent for any $\beta \in [0,1]$ and any $r\in (s, t)$
\begin{align}
|& \partial_\mu [a_{i, j} (t, x, [X^{s, \xi, (m)}_t])  -  a_{i, j}(t, z, [X^{s, \xi, (m)}_t])](v) \notag\\
& \quad - \partial_\mu [a_{i, j} (t, x, [X^{s, \xi, (m)}_t])  -  a_{i, j}(t, z, [X^{s, \xi, (m)}_t])]_{|\mu=\mu'}(v) | \label{diff:mes:L:deriv:diff:diff:coeff:holder:reg} \\
& \quad \leq  K^{+}_\beta W_2(\mu, \mu')^{\beta} \left\{ \frac{(|z-x|^\eta\wedge 1)}{(t-s)^{\frac{1+\beta}{2}}} \wedge \frac{1}{(t-s)^{\frac{1+\beta-\eta}{2}}} \right\}, \notag
\end{align}

\begin{align}
 \Big| &  \partial^{n}_x \widehat{p}^y_{m}(\mu, s, r,  t, x, z) - \partial^{n}_x \widehat{p}^y_{m}(\mu', s, r, t, x, z) \Big|  \notag \\
 & \leq K \frac{W_2(\mu, \mu')^\beta}{(t-r)^{\frac{n}{2}}(r-s)^{\frac{\beta}{2}}} \, g(c(t-r), z-x) , \quad n\in \left\{0,1, 2\right\}, \label{gaussian:bound:diff:mes:hat:pm:same:time}
\end{align}

\begin{align}
 & | \partial_\mu \mH_{m+1}(\mu, s, r, t, x, z)(v) - \partial_\mu \mH_{m+1}(\mu', s, r, t, x, z)(v)| \nonumber \\
 & \leq K^{+}_\beta  W_2(\mu, \mu')^{\beta} \left\{ \frac{1}{(t-r)(r-s)^{\frac{1+\beta-\eta}{2}}} \wedge \frac{1}{(t-r)^{1- \frac{\eta}{2}}(r-s)^{\frac{1+\beta}{2}}} \right\}   \label{diff:mes:first:L:deriv:parametrix:kernel:pmp1} \\
 & \quad \quad  \times g(c(t-r), z-x),\nonumber
 \end{align}
 
 \begin{align}
 |  \Phi_{m}(\mu, s, r ,t , x ,z) -  \Phi_{m}(\mu', s,  r ,t , x ,z) |  \leq K  \frac{W_2(\mu, \mu')^{\beta}}{(t-r)^{1-\frac{\eta}{2}}(r-s)^{\frac{\beta}{2}}}  \, g(c(t-r), z-x), \label{gaussian:bound:diff:mes:Phim}
\end{align}

\noindent for any $\beta \in [0,(1+\eta)/2)$
\begin{align}
|& \partial_\mu [a_{i, j} (t, x, [X^{s_1, \xi, (m)}_t])  -  a_{i, j}(t, z, [X^{s_1, \xi, (m)}_t])](v) - \partial_\mu [a_{i, j} (t, x, [X^{s_2, \xi, (m)}_t])  -  a_{i, j}(t, z, [X^{s_2, \xi, (m)}_t])](v) | \notag \\
& \leq  K^{+}_\beta |s_1-s_2|^\beta \left\{ \frac{(|z-x|^\eta\wedge 1)}{(t-s_1 \vee s_2)^{\frac{1}{2}+\beta}} \wedge \frac{1}{(t-s_1 \vee s_2)^{\frac{1-\eta}{2} + \beta }} \right\}, \label{diff:time:first:L:deriv:diff:diff:coeff:holder:reg}
\end{align}

\noindent for any $\beta \in [0,1]$ if $n=0$ or any $\beta \in [0,\eta)$ if $n=1$ and for any $r\in [s, t)$
\begin{align}
& | \partial^{n}_v [\partial_\mu [a_{i, j}(t, x, [X^{s ,\xi, (m)}_t])]](v_1) -   \partial^{n}_v [\partial_\mu [a_{i, j}(t, x, [X^{s, \xi, (m)}_t])]]_{\mu=\mu'}(v_2)| \notag \\
& \quad \quad + | \partial^{n}_v [\partial_\mu [b_{i}(t, x, [X^{s, \xi , (m)}_t])]](v_1) -   \partial^{n}_v [\partial_\mu [b_{i}(t, x, [X^{s, \xi, (m)}_t])]]_{\mu=\mu'}(v_2)|  \label{diff:v:cross:deriv:diff:and:deriv:coeff} \\
& \quad \quad \quad \leq K^+_\beta \left\{|v_1-v_2|^\beta + W_2(\mu, \mu')^\beta\right\} \frac{1}{(t-s)^{\frac{1+n+\beta-\eta}{2}}}, \notag
\end{align}

\begin{align}
 \Big| &  \partial^{n}_v[\partial_\mu \widehat{p}^y_{m}(\mu, s, r,  t, x, z)](v_1) - \partial^{n}_v[ \partial_\mu \widehat{p}^{y}_{m}(\mu, s, r,  t, x, z)(v_2)] \Big|  \notag \\
 & \leq K \frac{|v_1-v_2|^\beta}{t-r} \int_r^t \frac{1}{(r'-s)^{\frac{1+n+\beta-\eta}{2}}} \,dr' \, g(c(t-r), z-x) ,  \label{gaussian:bound:diff:v:deriv:mes:hat:pm}
\end{align}

\noindent and for any $\beta \in [0,(1+\eta)/2)$ if $n=0$ or any $\beta \in [0,\eta/2)$ if $n=1$
\begin{align}
& | \partial^{n}_v [\partial_\mu [a_{i, j}(t, x, [X^{s_1 ,\xi, (m)}_t])]](v) -   \partial^{n}_v [\partial_\mu [a_{i, j}(t, x, [X^{s_2, \xi, (m)}_t])]](v)| \notag \\
& \quad \quad + | \partial^{n}_v [\partial_\mu [b_{i}(t, x, [X^{s_1, \xi , (m)}_t])]](v) -   \partial^{n}_v [\partial_\mu [b_{i}(t, x, [X^{s_2, \xi, (m)}_t])]](v)|  \label{diff:time:cross:deriv:diff:and:deriv:coeff} \\
& \quad \quad \quad \leq K^{+}_\beta  \frac{|s_1-s_2|^\beta}{(t-s_1\vee s_2)^{\frac{1+n-\eta}{2}+\beta}}.\notag
\end{align}

\end{lem}

\begin{proof} The estimates \eqref{regularity:time:estimate:v1:v2:v3:decoupling:mckean:prop:statement} up to \eqref{diff:time:cross:deriv:diff:and:deriv:coeff} together with their proof are provided in \cite{chaudruraynal:frikha} so we only prove \eqref{diff:time:deriv:mes:deriv:space:p:hat:m}. Let us first observe that if $|s_1-s_2| \geq t-s_1 \vee s_2$ then \eqref{diff:time:deriv:mes:deriv:space:p:hat:m} directly follows from \eqref{estimate:partial:mes:partial:space:pmhat}. We thus assume that $|s_1-s_2| \leq t-s_1 \vee s_2$ for the rest of the proof. We make use of the decomposition
\begin{align}
\partial_\mu [\partial_x \widehat{p}^{y}_m(\mu, s, t, x, z)](v) & = \partial_\mu \Big[H_1\Big(\int_s^t a(r, y, [X^{s, \xi, m}_{r}])\, dr, z-x\Big)\Big](v) \, \widehat{p}^{y}_m(\mu, s, t, x, z) \label{decomposition:partial:mes:partial:space:pm:hat}\\
& \quad +  H_1\Big(\int_s^t a(r, y, [X^{s, \xi, m}_{r}])\, dr, z-x\Big) \, \partial_\mu \widehat{p}^{y}_m(\mu, s, t, x, z)(v)  \notag
\end{align}
\noindent which directly stems from \eqref{eq:def:de:phat:m}. From \eqref{diff:time:drift:diff:coefficients} and the uniform boundedness of $a$, for any $\beta \in [0, 1]$, we obtain 
\begin{align}
\Big| & \left(\int_{s_1}^t  a(r, y, [X^{s_1 , \xi, (m)}_{r}])\, dr\right)^{-1} - \left(\int_{s_2}^t a(r, y, [X^{s_2, \xi, (m)}_{r}])\, dr\right)^{-1} \Big| \notag \\
& \quad \leq \frac{K}{(t-s_1 \vee s_2)^2} \Big[ |s_1-s_2| + \int_{s_1 \vee s_2}^t \max_{i, j} |a_{i, j}(r, y, [X^{s_1, \xi, (m)}_{r}]) - a_{i, j}(r, y, [X^{ s_2, \xi, (m)}_{r}])| \, dr \Big]. \notag \\
& \quad \leq  \frac{K}{(t-s_1 \vee s_2)^2} \Big[ |s_1-s_2|^\beta (t-s_1\vee s_2)^{1-\beta} + \int_{s_1 \vee s_2}^t \frac{|s_1-s_2|^\beta}{(r-s_1 \vee s_2)^{(\beta-\frac{\eta}{2})_+}} \, dr \Big] \notag \\
& \quad \leq K \frac{|s_1-s_2|^\beta}{(t-s_1\vee s_2)^{1+\beta}}\label{diff:time:inverse:a}
\end{align}

\noindent so that
\begin{align}
\Big| H_1\Big(\int_{s_1}^t a(r, y, [X^{s_1, \xi, m}_{r}])\, dr, z-x\Big)  &- H_1\Big(\int_{s_2}^t a(r, y, [X^{s_2, \xi, m}_{r}])\, dr, z-x\Big) \Big| \label{diff:time:first:order:hermite:polyn:m}\\
& \quad \leq K \frac{|s_1-s_2|^\beta}{(t-s_1\vee s_2)^{1+\beta}} |z-x|. \notag
\end{align}

Similarly, from \eqref{deriv:mes:a:m} and \eqref{diff:time:cross:deriv:diff:and:deriv:coeff}, one has
\begin{align}
\Big| \partial_\mu \Big[H_1\Big(\int_s^t a(r, y, [X^{s, \xi, m}_{r}])\, dr, z-x\Big)\Big](v) \Big| \leq K\frac{|z-x|}{(t-s)^{1+\frac{1-\eta}{2}}} \label{bound:L:deriv:H1}
\end{align}

\noindent and for any $\beta \in [0, (1+\eta)/2)$
\begin{align*}
\Big| &\int_{s_1}^{t} \partial_\mu [a_{i, j}(r, y, [X^{s_1 , \xi, (m)}_{r}])](v)\, dr - \int_{s_2}^t \partial_\mu[a_{i, j}(r, y, [X^{s_2 , \xi, (m)}_{r}])](v) \, dr \Big| \\
& \quad \leq K^{+}_{\beta} \Big[ \int_{s_1 \wedge s_2}^{s_1 \vee s_2} \frac{1}{(r-s_1\wedge s_2)^{\frac{1-\eta}{2}}} \, dr + \int_{s_1\vee s_2}^{t} \frac{|s_1-s_2|^\beta}{(r-s_1\vee s_2)^{\frac{1-\eta}{2}+\beta}}\,dr\Big]\\
& \quad \leq K^{+}_\beta |s_1-s_2|^\beta (t-s_1\vee s_2)^{\frac{1+\eta}{2}-\beta}.
\end{align*}

Combining \eqref{diff:time:inverse:a} with the previous estimates and using again \eqref{deriv:mes:a:m}, after some standard computations, we obtain
\begin{align}
 \Big|  &  \partial_\mu \Big[H_1\left(\int_{s_1}^t a(r, y, [X^{s_1, \xi , (m)}_{r}]) \, dr, z-x\right)\Big](v) -  \partial_\mu \Big[H_1\left(\int_{s_2}^t a(r, y, [X^{s_2, \xi , (m)}_{r}]) \, dr, z-x\right)\Big](v)\Big| \label{diff:time:partial:mes:first:order:hermite:polyn} \\
  &  \leq K_\beta^{+}  |s_1-s_2|^\beta \frac{|z-x|}{(t-s_1\vee s_2)^{\frac{3-\eta}{2}+\beta}}. \notag 
\end{align}

We now come back to \eqref{decomposition:partial:mes:partial:space:pm:hat}. We combine \eqref{bound:L:deriv:H1} with \eqref{gaussian:bound:diff:time:hat:pm:same:time} (with $n=0$), \eqref{diff:time:partial:mes:first:order:hermite:polyn} with the Gaussian upper-bound on $\widehat{p}^y_m$,  \eqref{diff:time:L:deriv:p:hat:same:time} with \eqref{standard}, \eqref{diff:time:first:order:hermite:polyn:m} with \eqref{deriv:v:deriv:mes:pm:hat} (with $r=s$ and $n=0$) and finally use the space time inequality \eqref{space:time:inequality} and the inequality $|s_1-s_2| \leq t-s_1 \vee s_2$. We thus deduce that for any $\beta \in [0,(1+\eta)/2)$
\begin{align*}
\Big|\partial_\mu &  [\partial_x \widehat{p}^{y}_m(\mu, s_1, t, x, z)](v) - \partial_\mu [\partial_x \widehat{p}^{y}_m(\mu, s_2, t, x, z)](v) \Big| \\
& \leq K_\beta^{+} \left\{ \frac{|s_1-s_2|^{\beta}}{(t-s_1)^{1-\frac{\eta}{2}+\beta}} g(c(t-s_1), z-x) +  \frac{|s_1-s_2|^{\beta}}{(t-s_2)^{1-\frac{\eta}{2}+\beta}} g(c(t-s_2), z-x) \right\}
\end{align*}
\noindent which concludes the proof of \eqref{diff:time:deriv:mes:deriv:space:p:hat:m}. 

\end{proof}

Having the above technical estimates at hand, we now turn to the proof of \eqref{sensitivity:time:deriv:cross:space:mes:pm}. The strategy is clear inasmuch one has to quantify the H\"older regularity with respect to the variable $s$ of each term appearing in the identity \eqref{identity:partial:mes:partial:space:pm}. In particular, the estimate \eqref{diff:time:deriv:mes:deriv:space:p:hat:m} of the previous lemma allows to deal with the first term therein. In order to deal with the second term, we make use of the decomposition
$$
(\partial_x p_m \otimes \partial_\mu \mH_m(.)(v))(\mu, s_1 \vee s_2, t, x, z) - (\partial_x p_m \otimes \partial_\mu \mH_m(.)(v))(\mu, s_1\wedge s_2, t, x, z) = {\rm I} + {\rm II} + {\rm III}
$$  
\noindent with
\begin{align*}
{\rm I}& := \int_{s_1 \vee s_2}^{t} \int_{\mathbb{R}^d} [\partial_x p_m(\mu, s_1\vee s_2, r, x, y) - \partial_x p_m(\mu, s_1\wedge s_2, r, x, y)] \, \partial_\mu \mH_m(\mu, s_1 \vee s_2, r, t, y, z)(v) \, dy \, dr, \\
{\rm II}& := \int_{s_1 \vee s_2}^{t} \int_{\mathbb{R}^d}  \partial_x p_m(\mu, s_1\wedge s_2, r, x, y)  \, [\partial_\mu \mH_m(\mu, s_1 \vee s_2, r, t, y, z)(v) - \partial_\mu \mH_m(\mu, s_1 \wedge s_2, r, t, y, z)(v)]   \, dy \, dr, \\
{\rm III} & := -\int_{s_1 \wedge s_2}^{s_1 \vee s_2} \int_{\mathbb{R}^d} \partial_x p_m(\mu, s_1\wedge s_2, r, x, y) \, \partial_\mu \mH_m(\mu, s_1 \wedge s_2, r, t, y, z)(v) \, dy\, dr
\end{align*}
\noindent which directly stems from the very definition of the space time convolution operator $\otimes$. We now establish an appropriate estimate for each term. In order to deal with ${\rm I}$, we use \eqref{regularity:time:estimate:v1:v2:v3:decoupling:mckean:prop:statement} with $\beta \in [0,\eta/2)$ and \eqref{estimate:partial:deriv:mes:parametrix:kernel:iterated:m} with $k=1$, $n=0$ and $\beta= \beta' \in (0,1)$ small enough so that $\beta-(1-\beta')\eta/2< 0$ in order to ensure the integrability of the time singularity. We thus get
\begin{align*}
|{\rm I}| & \leq K_\beta  \int_{s_1\vee s_2}^{t} \frac{|s_1-s_2|^\beta}{(t-r)^{1-\frac{\beta'\eta}{2}}(r-s_1\vee s_2)^{1+\beta-(1-\beta')\frac{\eta}{2}}} \, dr \, \left\{ g(c(t-s_1), z-x) + g(c(t-s_2), z-x)\right\} \\
& \leq K_\beta \frac{|s_1-s_2|^\beta}{(t-s_1\vee s_2)^{1+\beta-\frac{\eta}{2}}} \, \left\{ g(c(t-s_1), z-x) + g(c(t-s_2), z-x)\right\}\\
& \leq K_\beta \left\{ \frac{|s_1-s_2|^\beta}{(t-s_1)^{1-\frac{\eta}{2}+\beta}} g(c(t-s_1), z-x) + \frac{|s_1-s_2|^\beta}{(t-s_2)^{1-\frac{\eta}{2}+\beta}} g(c(t-s_2), z-x) \right\} 
\end{align*}
\noindent where we used the inequality $(t-s_1\vee s_2)^{-1} \leq 2 (t-s_1 \wedge s_2)^{-1}$, recalling that $|s_1-s_2| \leq t-s_1\vee s_2$, for the last inequality.

In order to deal with ${\rm II}$, we use \eqref{bound:derivative:heat:kernel} and \eqref{diff:time:L:deriv:parametrix:kernel:pmp1}. In particular, we split the time interval $[s_1\vee s_2, t]$ into the disjoint two intervals $[s_1\vee s_2, (t+s_1\vee s_2)/2]$ and $((t+s_1\vee s_2)/2, t]$. On $[s_1\vee s_2, (t+s_1\vee s_2)/2]$, we bound $| [\partial_\mu \mH_m(\mu, s_1 \vee s_2, r, t, y, z)(v) - \partial_\mu \mH_m(\mu, s_1 \wedge s_2, r, t, y, z)(v)] | $ by $K_\beta^{+} |s_1-s_2|^\beta (t-r)^{-1} (r-s_1\vee s_2)^{-(1-\eta)/2 -\beta} g(c(t-r), z-y)$ while on $((t+s_1\vee s_2)/2, t]$ we bound it by $K_\beta^{+} |s_1-s_2|^\beta (t-r)^{-1+\eta/2} (r-s_1\vee s_2)^{-1/2 -\beta} g(c(t-r), z-y)$. After some standard computations, for any $\beta \in [0,\eta/2)$, we obtain
\begin{align*}
|{\rm II}| \leq K^{+}_\beta  \frac{|s_1-s_2|^\beta}{(t-s_1\vee s_2)^{1+\beta-\frac{\eta}{2}}} \, g(c(t-s_1\wedge s_2), z-x) \leq K^{+}_\beta  \frac{|s_1-s_2|^\beta}{(t-s_1\wedge s_2)^{1+\beta-\frac{\eta}{2}}} \, g(c(t-s_1\wedge s_2), z-x)
\end{align*}

\noindent where we again used the inequality $(t-s_1\vee s_2)^{-1} \leq 2 (t-s_1 \wedge s_2)^{-1}$ for the last inequality. 

We handle ${\rm III}$ by using \eqref{bound:derivative:heat:kernel} and \eqref{estimate:partial:deriv:mes:parametrix:kernel:iterated:m} with $k=1$, $n=0$ and any $\beta \in (0,1)$. We obtain
\begin{align*}
| {\rm III} | & \leq K_\beta \int_{s_1 \wedge s_2}^{s_1\vee s_2} \frac{1}{(r-s_1\wedge s_2)^{1- (1-\beta)\frac{\eta}{2}}(t-r)^{1-\beta\frac{\eta}{2}}} \, dr \, g(c(t-s_1\wedge s_2), z-x) \\
& \leq K_\beta \frac{|s_1-s_2|^{(1-\beta)\frac{\eta}{2}}}{(t-s_1\vee s_2)^{1-\beta\frac{\eta}{2}}} \, g(c(t-s_1\wedge s_2), z-x) \\
& \leq K_\beta \frac{|s_1-s_2|^{(1-\beta)\frac{\eta}{2}}}{(t-s_1\wedge s_2)^{1-\beta\frac{\eta}{2}}} \, g(c(t-s_1\wedge s_2), z-x)
\end{align*} 
\noindent for any $\beta \in (0,1)$. Note that the above estimate remains valid for $\beta=1$ since $|s_1-s_2| \leq t-s_1\vee s_2$. Gathering the above estimates on ${\rm I}$, ${\rm II}$ and ${\rm III}$, we thus deduce
\begin{align}
\Big| (\partial_x p_m & \otimes \partial_\mu \mH_m(.)(v))(\mu, s_1 \vee s_2, t, x, z) - (\partial_x p_m \otimes \partial_\mu \mH_m(.)(v))(\mu, s_1\wedge s_2, t, x, z) \Big| \notag \\
& \leq K^{+}_\beta \left\{ \frac{|s_1-s_2|^\beta}{(t-s_1)^{1-\frac{\eta}{2}+\beta}} g(c(t-s_1), z-x) + \frac{|s_1-s_2|^\beta}{(t-s_2)^{1-\frac{\eta}{2}+\beta}} g(c(t-s_2), z-x) \right\} \label{diff:time:partial:space:pm:partial:mes:parametrix:kernel}
\end{align}
\noindent for any $\beta \in [0,\eta/2)$.

We now turn our attention to the last term appearing in the right hand side of the identity \eqref{identity:partial:mes:partial:space:pm}. We employ a similar decomposition as for the previous term. Namely, we write
\begin{align*}
((\partial_\mu [\partial_x \widehat{p}_m](.)(v) & + \partial_x p_m \otimes \partial_\mu \mH_m(.)(v))\otimes \Phi_m)(\mu, s_1, t, x, z)  \\
& \quad - ((\partial_\mu [\partial_x \widehat{p}_m](.)(v) + \partial_x p_m \otimes \partial_\mu \mH_m(.)(v))\otimes \Phi_m)(\mu, s_2, t, x, z) \\
& = {\rm I} + {\rm II} + {\rm III}
\end{align*}
\noindent with 
\begin{align*}
{\rm I}& := \int_{s_1 \vee s_2}^{t} \int_{\mathbb{R}^d} [ (\partial_\mu [\partial_x \widehat{p}_m](.)(v)  + \partial_x p_m \otimes \partial_\mu \mH_m(.)(v))(\mu, s_1\vee s_2, r, x, y) \\
& \quad \quad - (\partial_\mu [\partial_x \widehat{p}_m](.)(v)  + \partial_x p_m \otimes \partial_\mu \mH_m(.)(v))(\mu, s_1\wedge s_2, r, x, y) ] \, \Phi_m(\mu, s_1 \vee s_2, r, t, y, z) \, dy \, dr, \\
{\rm II}& := \int_{s_1 \vee s_2}^{t} \int_{\mathbb{R}^d} (\partial_\mu [\partial_x \widehat{p}_m](.)(v)  + \partial_x p_m \otimes \partial_\mu \mH_m(.)(v))(\mu, s_1\wedge s_2, r, x, y) \\
& \quad \quad \times [\Phi_m(\mu, s_1 \vee s_2, r, t, y, z) - \Phi_m(\mu, s_1 \wedge s_2, r, t, y, z)]   \, dy \, dr, \\
{\rm III} & := -\int_{s_1 \wedge s_2}^{s_1 \vee s_2} \int_{\mathbb{R}^d} (\partial_\mu [\partial_x \widehat{p}_m](.)(v)  + \partial_x p_m \otimes \partial_\mu \mH_m(.)(v))(\mu, s_1\wedge s_2, r, x, y) \, \Phi_m(\mu, s_1 \wedge s_2, r, t, y, z) \, dy\, dr.
\end{align*}

We deal with ${\rm I}$ by using \eqref{diff:time:deriv:mes:deriv:space:p:hat:m}, \eqref{diff:time:partial:space:pm:partial:mes:parametrix:kernel} and \eqref{Gaussian:estimate:Phim}. After some standard computations, we obtain
\begin{align*}
| {\rm I} | \leq K^{+}_\beta \left\{ \frac{|s_1-s_2|^\beta}{(t-s_1)^{1-\eta+\beta}} g(c(t-s_1), z-x) + \frac{|s_1-s_2|^\beta}{(t-s_2)^{1-\eta+\beta}} g(c(t-s_2), z-x) \right\}.
\end{align*}

We handle ${\rm II}$ using \eqref{estimate:partial:mes:partial:space:pmhat}, \eqref{convolution:partial:space:pm:partial:mes:parametrix:kernel:m} and \eqref{gaussian:bound:diff:time:Phim}. For any $\beta \in [0,\eta/2)$, we get
\begin{align*}
| {\rm II} | \leq K \frac{|s_1-s_2|^\beta}{(t-s_1\vee s_2)^{1-\eta+\beta}} \, g(c(t-s_1\wedge s_2), z-x) \leq K \frac{|s_1-s_2|^\beta}{(t-s_1\wedge s_2)^{1-\eta+\beta}} \, g(c(t-s_1\wedge s_2), z-x)
\end{align*}
\noindent using the fact that $(t-s_1\vee s_2)^{-1} \leq 2 (t-s_1\wedge s_2)^{-1}$ for the last inequality.

We deal with ${\rm III}$ by using \eqref{estimate:partial:mes:partial:space:pmhat}, \eqref{convolution:partial:space:pm:partial:mes:parametrix:kernel:m} and \eqref{Gaussian:estimate:Phim} so that
\begin{align*}
|{\rm III}| & \leq \frac{K}{(t-s_1\vee s_2)^{1-\frac{\eta}{2}}} \int_{s_1 \wedge s_2}^{s_1 \vee s_2} \frac{1}{(r-s_1\vee s_2)^{1-\frac{\eta}{2}}} \, dr \, g(c(t-s_1\wedge s_2), z-x) \\
& \leq K \frac{|s_1-s_2|^\beta}{(t-s_1\vee s_2)^{1-\eta+\beta}} \, g(c(t-s_1\wedge s_2), z-x) \\
& \leq K   \frac{|s_1-s_2|^\beta}{(t-s_1\wedge s_2)^{1-\eta+\beta}} \, g(c(t-s_1\wedge s_2), z-x)
\end{align*}
\noindent for any $\beta \in [0,\eta/2]$.

We now collect the above estimates on ${\rm I}$, ${\rm II}$ and ${\rm III}$. We thus obtain
\begin{align*}
\Big| (\partial_\mu [\partial_x \widehat{p}_m](.)(v) & + \partial_x p_m \otimes \partial_\mu \mH_m(.)(v))\otimes \Phi_m)(\mu, s_1, t, x, z) \\
&\quad  - ((\partial_\mu [\partial_x \widehat{p}_m](.)(v) + \partial_x p_m \otimes \partial_\mu \mH_m(.)(v))\otimes \Phi_m)(\mu, s_2, t, x, z) \Big| \\
& \leq K^{+}_\beta \left\{ \frac{|s_1-s_2|^\beta}{(t-s_1)^{1-\eta+\beta}} g(c(t-s_1), z-x) + \frac{|s_1-s_2|^\beta}{(t-s_2)^{1-\eta+\beta}} g(c(t-s_2), z-x) \right\}
\end{align*}
\noindent for any $\beta \in [0,\eta/2)$. This last estimate concludes the proof of \eqref{sensitivity:time:deriv:cross:space:mes:pm}.
 
\subsection{Some preparatory technical results} \label{sec:technical:results:first:part}

To proceed with our induction procedure, we have to prove that the statements obtained in the base case $m=1$ indeed propagate at step $m+1$ provided they are satisfied at step $m$. Starting with the process $(X^{s,\xi,(m+1)}_t, t\in [s, T])$ with dynamics given by \eqref{iter:mckean} and coefficients frozen in their measure argument at the law of the Picard iteration scheme at step $m$, we importantly observe that the density function $z \mapsto p_{m+1}(\mu, s, t, z)$ of the random variable $X^{s, \xi, (m+1)}_t$ satisfies the relation \eqref{conv:relation:step:m} where $z\mapsto p_{m+1}(\mu, s, t, x, z)$ denotes the transition density of the decoupling SDE $(X^{s, x, \mu, (m+1)}_t , t\in [s, T])$. 

As already emphasized in \cite{chaudruraynal:frikha}, the key point is that this transition density satisfies a representation in infinite series given by \eqref{series:approx:mckean} which involves space-time iterated convolutions of the so-called parametrix kernel $\mathcal H_{m+1}$ given by \eqref{eq:def:de:mH} against the Gaussian type kernel $\widehat p_{m+1}$ given by \eqref{eq:def:de:phat:m}. These quantities in turn depend on the density $p_{m}$ built at the previous step of the Picard iteration scheme, so that, when investigating the $\mathcal{C}^{1, 2, 2}_f([0,t)\times \mathbb{R}^d \times \pp)$ smoothness of $p_{m+1}$ and its related estimates, we will naturally be lead to investigate the smoothness of these terms. In particular, as a preparatory step of our induction argument, we need to investigate the regularity properties and to establish some adequate estimates for the coefficients $b_i(t, x, [X^{s, \xi, (m)}_t]),\, a_{i, j}(t, x, [X^{s, \xi, (m)}_t])$, the Gaussian type kernel $\widehat p_{m+1}$, the parametrix kernel $\mH_{m+1}$ and its iterated space time convolution $\mH^{(k)}_{m+1},\, k\geq 1$, defined just after \eqref{eq:def:de:mH} in order to prove that the estimates in Proposition \ref{proposition:reg:density:recursive:scheme:mckean} indeed propagates from one step to another. 

This is the purpose of this section and the associated technical results are respectively given by Lemma \ref{lem:diff:and:control:deriv:coeff} and Corollaries \ref{cor:deriv:time:and:mes:phat}, \ref{cor:deriv:time:and:mes:parametrix:kernel}. As previously mentioned, though their proofs are rather intuitive, they are rather long and rely on technical Gaussian type computations. The reader may want to skip these derivations in a first reading. We thus decided to postpone them to some dedicated sections, see \ref{proof:lem:diff:and:control:deriv:coeff}, \ref{proof:cor:deriv:time:and:mes:phat} and \ref{proof:cor:deriv:time:and:mes:parametrix:kernel}.

\begin{lem}\label{lem:diff:and:control:deriv:coeff}
 For any fixed $(t, x) \in (0, T] \times \mathbb{R}^d$ and any $(i, j) \in \left\{1, \cdots, d\right\}^2$, the maps $(s, \mu) \mapsto b_i(t, x, [X^{s, \xi, (m)}_t]), \, a_{i, j}(t, x, [X^{s, \xi, (m)}_t])$ belong to $\mathcal{C}^{1, 2}_f([0,t)\times \pp)$ and satisfy the following estimates: for any $\beta \in [0,\eta)$, any $\beta' \in [0,1]$, any $(t, x) \in (0,T] \times \mathbb{R}^d$, any $(s, \mu, \mu' )\in [0,t) \times (\pp)^2$, any $\gv=(v, v')$, $\gv_1 = (v_1,v_1')$, $\gv_2 = (v_2,v_2')$ in $\mathbb{R}^{d} \times \mathbb{R}^d$ and any $(i, j) \in \left\{1, \cdots, d\right\}^2$
\begin{align}
\Big|\partial_\mu^2 & [b_i(t, x, [X^{s,\xi, (m)}_t])](\gv)\Big|  + \Big|\partial_\mu^2 [a_{i, j}(t, x, [X^{s,\xi, (m)}_t])](\gv)\Big| \nonumber \\
&  \leq K^+ \left\{ \frac{1}{(t-s)^{1-\frac{ \eta}{2}}} + \int_{(\mathbb{R}^d)^2} (|y-x'|^{\eta}\wedge 1)\, | \partial_\mu^2 p_{m}(\mu, s, t, x', y)(\gv)| \, \mu(dx') \, dy   \right\}, \label{recursive:bound:deriv:a:or:b}
\end{align}
\begin{align}
\Big|  & \partial^2_\mu \Big[a_{i, j}(t, x, [X^{s, \xi, (m)}_{t}]) - a_{i, j}(t, z, [X^{s, \xi, (m)}_t])\Big](\gv) \Big| \nonumber \\
 &\quad \quad \leq  K^+_{\beta'} |z-x|^{\beta' \eta} \left\{ \frac{1}{(t-s)^{1-\frac{(1-\beta')\eta}{2}}} \right. \label{recursive:bound:second:order:deriv:mes:holder:reg:space:a} \\
 & \left. \quad \quad \quad +  \int_{(\mathbb{R}^d)^2} (|y-x'|^{(1-\beta')\eta} \wedge 1) \, |\partial^2_\mu p_{m}(\mu, s, t, x', y)(\gv)| \, \mu(dx')\, dy\right\}, \nonumber
\end{align}
\begin{align}
\Big| & \partial^2_\mu [b_{i}(t, x, [X^{s, \xi, (m)}_{t}])](\gv_1)  - \partial^2_\mu [b_{i}(t, x, [X^{s, \xi, (m)}_{t}])](\gv_2) \Big| \nonumber \\
& \quad \quad + \Big|  \partial^2_\mu [a_{i, j}(t, x, [X^{s, \xi, (m)}_{t}])](\gv_1)  - \partial^2_\mu [ a_{i, j}(t, x, [X^{s, \xi, (m)}_{t}])](\gv_2)| \nonumber \\
& \leq K^+_\beta \left\{ \frac{|\gv_1-\gv_2|^\beta}{(t-s)^{1+\frac{\beta-\eta}{2}}} \right.  \label{recursive:bound:second:order:deriv:mes:reg:holder:v:a:or:b} \\
& \quad\quad \left. + \int_{(\mathbb{R}^d)^2} (|y-x'|^\eta \wedge 1) \, | \partial^2_\mu p_{m}(\mu, s, t, x', y)(\gv_1) - \partial^2_\mu p_{m}(\mu, s, t, x', y)(\gv_2)| \, \mu(dx') \, dy \right\}, \notag
\end{align}

\begin{align}
\Big| \partial^2_\mu & [a_{i, j}(t, x, [X^{s, \xi, (m)}_{t}])  - a_{i, j}(t, z, [X^{s, \xi, (m)}_t])](\gv_1) - \partial^2_\mu [a_{i, j}(t, x, [X^{s, \xi, (m)}_{t}]) - a_{i, j}(t, z, [X^{s, \xi, (m)}_t])](\gv_2) \Big| \nonumber \\
 &\quad  \leq  K^+_\beta \left\{\frac{(|z-x|^\eta \wedge 1)}{(t-s)^{1+\frac{\beta}{2}}} \wedge \frac{1}{(t-s)^{1+\frac{\beta-\eta}{2}}} \right\} \left\{ \Big. {|\gv_1-\gv_2|^\beta} \right.  \label{recursive:bound:deriv:mes:double:reg:holder:a} \\
 & \quad \quad \left. + (t-s)^{1+\frac{\beta-\eta}{2}} \int_{(\mathbb{R}^d)^2} (|y'-x'|^\eta \wedge 1) \Big| \partial^2_{\mu} p_{m}(\mu, s , t, x', y)(\gv_1) - \partial^2_{\mu}p_{m}(\mu, s , t, x', y)(\gv_2)| \, \mu(dx') \, dy \right. \nonumber \\
 & \quad \quad \left.  + (t-s)^{1+\frac{\beta}{2}}\int_{(\mathbb{R}^d)^2} \Big|\partial^2_{\mu}p_{m}(\mu, s , t, x', y)(\gv_1) -\partial^2_{\mu} p_{m}(\mu, s , t, x', y)(\gv_2)\Big| \, \mu(dx') \, dy \right\}. \nonumber
\end{align}

\end{lem}

\begin{remark} Note that if estimate \eqref{second:deriv:mes:induction:decoupling:mckean} holds at step $m$, then, from \eqref{recursive:bound:deriv:a:or:b} and the space time inequality \eqref{space:time:inequality}, it holds 
\begin{align}
|\partial_\mu^2 & [b_i(t, x, [X^{s,\xi, (m)}_t])](\gv) |  + |\partial_\mu^2 [a_{i, j}(t, x, [X^{s,\xi, (m)}_t])](\gv)| \nonumber \\
&  \leq K^+ \left\{ (t-s)^{-1+\frac{ \eta}{2}} + (t-s)^{-1+ \eta}\mathscr{C}_m(C^+, t-s) \right\} \label{recursive:bound:deriv:a:or:b:stepm}.
\end{align}
\end{remark}

\begin{cor}\label{cor:deriv:time:and:mes:phat}
Assume that the estimate \eqref{second:deriv:mes:induction:decoupling:mckean} is satisfied at step $m$ for some positive constant $C^{+}$. For any $t$ in $(0,T]$, any $(r, x, y, z) \in (0,t)\times (\mathbb{R}^{d})^3$, the maps $(s,\mu) \mapsto \widehat{p}^{y}_{m+1}(\mu, s, r, t, x, z)$, $\widehat{p}^{y}_{m+1}(\mu, s, t, x, z)$ belong to $C_f^{1, 2, 2}([0,r)\times \mathbb{R}^d \times \mathcal P_2(\mathbb R^d))$ and $C_f^{1,2}([0,t)\times \mathbb{R}^d \times \mathcal P_2(\mathbb R^d))$ respectively with continuous derivatives with respect to its entries. \\
Moreover, the second order L-derivative satisfy the following pointwise Gaussian estimates: there exist positive constants $K^+$ and $c$ such that for any $( \mu, x, y, z )\in \pp \times (\mathbb{R}^d)^3 $, any $\gv \in (\mathbb{R}^d)^2$ and any $0\leq s \leq r < t \leq T$
\begin{eqnarray}
&&|\partial^2_\mu  \widehat{p}^{y}_{m+1}(\mu, s, r, t, x, z)(\gv) |\label{bound:second:deriv:mes:hat:p} \le \frac{K^+}{t-r}\Big\{ \int_{r}^t \frac{1}{(r'-s)^{1-\frac{\eta}{2}}}\, dr'\\
&& +   \int_r^t \int_{(\mathbb{R}^d)^2} (|y-x'|^{\eta}\wedge 1) | \partial_\mu^2 p_{m}(\mu, s, r', x', y)(\gv)| \, \mu(dx') \, dy \, dr'  \Big\}   g(c(t-r), z-x).\notag
\end{eqnarray}

For any $\beta \in [0,1]$ and any $\beta' \in [0,\eta)$, there exist positive constants $K^+$, $K^+_{\beta'}$ and $c$ such that for any $(\mu, x, z, y)\in \pp \times (\mathbb{R}^d)^3$, any $\gv, \gv_1, \gv_2 \in (\mathbb{R}^d)^2$, any $0\leq s \leq r < t \leq T$ and any $x_1, x_2 \in \mathbb{R}^d$ 
\begin{align}
| &  \partial^2_\mu \widehat{p}^{y}_{m+1}(\mu, s, t, x_1, z)(\gv) -   \partial^2_\mu \widehat{p}^{y}_{m+1}(\mu, s, t, x_2, z)(\gv)| \notag \\
&   \leq K^+ \frac{|x_1-x_2|^\beta}{(t-s)^{\frac{\beta}{2}}}\left\{ \frac{1}{(t-s)^{1-\frac{\eta}{2}}} + \frac{1}{t-s} \int_s^t \int_{(\mathbb{R}^d)^2} (|y'-x'|^{\eta} \wedge 1)   |\partial^2_\mu p_{m}(\mu, s, r', x', y')(\gv)| \,  \mu(dx') \, dy' \, dr' \right\} \label{cross:mes:deriv:holder:p:hat:s:t}\\
& \quad \quad \times \left\{ g(c(t-s) , z-x_1) + g(c(t-s) , z-x_2)\right\}, \notag
\end{align}

\begin{align}
 | & \partial^2_\mu \widehat{p}^y_{m+1}(\mu, s, r, t, x, z)(\gv_1)   -  \partial^2_\mu \widehat{p}^y_{m+1}(\mu, s, r, t, x, z)(\gv_2) | \notag \\
& \leq \frac{K^+_{\beta'}}{t - r}  \left\{ \int_{r}^{t}  \frac{|\gv_1-\gv_2|^{\beta'}}{(r'-s)^{1+ \frac{\beta'-\eta}{2}}} \, dr'\right. \notag\\
&  \quad  \quad \left.+ \int_r^t \int_{(\mathbb{R}^d)^2} (|y'-x'|^{\eta}\wedge 1) |\partial^2_\mu p_{m}(\mu, s, r', x', y')(\gv_1) -  \partial^2_\mu p_{m}(\mu, s, r', x', y')(\gv_2) | \, \mu(dx')\, dy' \,  dr' \right\}   \label{cross:mes:deriv:reg:holder:terminal point:p:hat:s:r:t} \\
& \quad \times g(c(t-r), z- x). \nonumber
\end{align}
\end{cor}

\begin{remark} Note that if estimate \eqref{second:deriv:mes:induction:decoupling:mckean} holds at step $m$ for some positive constant $C^{+}$, then it follows from \eqref{bound:second:deriv:mes:hat:p}, the space time inequality \eqref{space:time:inequality} and the fact that $t\mapsto \mathscr{C}^{n, 0}_{m}(C^+, t)$ is non-decreasing that 
\begin{align}
|\partial_\mu^2 \widehat{p}^{y}_{m+1}(\mu, s, r, t, x, z)(\gv)|   \leq K^+ \left\{ (r-s)^{-1+\frac{ \eta}{2}} + (r-s)^{-1+ \eta} \mathscr{C}^{1, 0}_m(C^+, t-s) \right\} \label{recursive:bound:deriv:hat:p:stepm}.
\end{align}
\end{remark} 

\begin{cor}\label{cor:deriv:time:and:mes:parametrix:kernel}
Assume that the estimate \eqref{second:deriv:mes:induction:decoupling:mckean} is satisfied at step $m$ for some positive constant $C^{+}$. For any $(t, x, z)$ in $(0,T] \in (\mathbb{R}^d)^2$ and any $r \in (0,t)$, the maps $[0, r) \times \pp \ni (s,\mu) \mapsto \mH_{m+1}(\mu, s, r, t, x, z)$ is in $C_f^{1,2}([0,r)\times \mathcal P_2(\mathbb R^d))$ with derivatives $\partial_s \mH_{m+1}(\mu, s, r, t, x, z)$, $\partial_v^{n}[\partial_\mu \mH_{m+1}(\mu, s, r, t, x, z)](v)$, $n=0, 1$, $\partial^2_\mu \mH_{m+1}(\mu, s, r, t, x, z)(\gv)$ being continuous with respect to the variables $s$, $x$, $\mu$, $v$ and $\gv$. \\

 Moreover, the second order L-derivative satisfies the following Gaussian estimates: for any $\beta \in [0,1]$ and any $\beta' \in [0,\eta)$, there exist positive constants $K^+_{\beta}$, $K^+_{\beta'}$ and $c$ such that for any $(t, \mu, x, z)\in (0,T] \times \pp \times (\mathbb{R}^d)^2$, any $s \in [0, t)$, any $r\in (s, t)$ and any $\gv, \gv_1, \gv_2 \in (\mathbb{R}^d)^2$
\begin{eqnarray}\label{sec:mes:deriv:parametrix:kernel:s:r:t:with:beta}
&& \Big|  \partial_\mu^2 \mH_{m+1}(\mu, s, r, t, x, z)(\gv) \Big|  \\
& \leq & \frac{K^+_\beta}{(t-r)^{1-\beta\frac{\eta}{2}}(r-s)^{1-(1-\beta)\frac{\eta}{2}}}  \notag\\
&& \quad \times \Bigg\{(1 +  (r-s)^{1-(1-\beta)\frac{\eta}{2}} \int_{(\mathbb{R}^d)^2} (|y'-x'|^{(1-\beta)\eta}\wedge 1) |\partial_\mu^2 p_{m}(\mu, s, r, x', y')(\gv)| \,  \mu(dx') \, dy'  \notag \\
&& \quad \quad  + \frac{(r-s)^{1-(1-\beta)\frac{\eta}{2}}}{(t-r)^{1-(1-\beta)\frac{\eta}{2}}} \int_r^t \int_{(\mathbb{R}^d)^2} (|y'-x'|^{\eta}\wedge 1) |\partial_\mu^2 p_{m}(\mu, s, r', x', y')(\gv)| \, \mu(dx') \, dy'  \, dr'   \Bigg\}\notag \\
&& \quad \quad  \times g(c(t-r), z-x), \notag
\end{eqnarray}
\begin{align}
& \Big|  \partial^2_\mu \mH_{m+1}(\mu, s, r ,t ,x ,z)(\gv_1) - \partial^2_\mu \mH_{m+1}(\mu, s, r ,t ,x ,z)(\gv_2) \Big| \notag \\
& \leq K^+_{\beta'}  \left\{ \frac{1}{(t-r)^{1-\frac{\eta}{2}} (r-s)^{1+\frac{\beta'}{2}}} \wedge  \frac{1}{(t-r)(r-s)^{1+\frac{\beta'-\eta}{2}}} \right\}  \label{second:order:mes:deriv:parametrix:kernel:s:r:t:reg:holder:v:argument} \\
& \quad \times \left\{|\gv_1-\gv_2|^{\beta'} + (r-s)^{1+\frac{\beta'}{2}} \int_{(\mathbb{R}^d)^2} |\partial^2_{\mu}p_{m}(\mu, s , r, x', y')(\gv_1) - \partial^2_{\mu}p_{m}(\mu, s , r, x', y')(\gv_2)| \, \mu(dx') \, dy' \right.  \notag\\
& \quad \left. +(r-s)^{1+\frac{\beta'-\eta}{2}} \int_{(\mathbb{R}^d)^2} (|y'-x'|^\eta \wedge 1) |\partial^2_{\mu}p_{m}(\mu, s , r, x', y')(\gv_1) - \partial^2_{\mu}p_{m}(\mu, s , r, x', y')(\gv_2)| \, \mu(dx')\, dy'  \right. \notag \\
& \quad \left. +  \frac{(r-s)^{1 + \frac{\beta'-\eta}{2}}}{t-r}   \int_r^t \int_{(\mathbb{R}^d)^2} (|y'-x'|^\eta \wedge 1)  | \partial^2_\mu p_{m}(\mu, s, r', x', y')(\gv_1) - \partial^2_\mu p_{m}(\mu, s, r', x', y')(\gv_2)| \, \mu(dx') \, dy' \, dr' \right\} \notag \\
& \quad \quad \times g(c(t-r), z-x). \notag
\end{align}
\end{cor}

Let us importantly observe again that if the estimate \eqref{second:deriv:mes:induction:decoupling:mckean} is satisfied at step $m$ for some positive constant $C^{+}$ then from \eqref{sec:mes:deriv:parametrix:kernel:s:r:t:with:beta} with $\beta=0$ and $\beta=1$, we deduce 
\begin{eqnarray}\label{cor:sec:mes:deriv:parametrix:kernel:s:r:t:with:beta}
&& \Big|  \partial_\mu^2 \mH_{m+1}(\mu, s, r, t, x, z)(\gv) \Big|  \\
& \leq & K^{+} \left\{ \frac{1}{(t-r)^{1-\frac{\eta}{2}}(r-s)} \wedge  \frac{1}{(t-r)(r-s)^{1-\frac{\eta}{2}}}  \right\}  g(c(t-r), z-x)   \notag\\
  && \quad \times \Bigg\{(1 +  (r-s) \int_{(\mathbb{R}^d)^2} | \partial_\mu^2 p_{m}(\mu, s, r, x', y')(\gv)| \, \mu(dx')\, dy'   \notag \\
  & & \quad \quad +  (r-s)^{1-\frac{\eta}{2}} \int_{(\mathbb{R}^d)^2} (|y'-x'|^{\eta} \wedge 1) |\partial_\mu^2 p_{m}(\mu, s, r, x', y')(\gv)| \, \mu(dx') \, dy'  \notag\\
&& \quad \quad  + \frac{(r-s)^{1-\frac{\eta}{2}}}{t-r} \int_r^t \int_{(\mathbb{R}^d)^2} (|y'-x'|^\eta \wedge 1) |\partial_\mu^2 p_{m}(\mu, s, r', x', y')(\gv)| \, \mu(dx') \, dy' \, dr'   \Bigg\}, \notag 
\end{eqnarray}
\noindent where $K^{+}$ does not depend on $C^{+}$. Note also that taking $\beta=1/2$ in \eqref{sec:mes:deriv:parametrix:kernel:s:r:t:with:beta} and using the fact that $t\mapsto \mathscr{C}^{1, 0}_m(C^+,t)$ is non-decreasing and the space time inequality \eqref{space:time:inequality}
\begin{align}
\Big|  \partial_\mu^2 & \mH_{m+1}(\mu, s, r, t, x, z)(\gv) \Big| \label{cor:bis:sec:mes:deriv:parametrix:kernel:s:r:t:with:beta} \\
& \leq \frac{K^{+}}{(t-r)^{1-\frac{\eta}{4}}(r-s)^{1-\frac{\eta}{4}}} (1+ \mathscr{C}^{1,0}_m(C^+,t-s)) \, g(c(t-r), z-x). \notag
\end{align}

\subsection{First part of the induction step.}\label{sec:first:part:induction:step}

Our aim here is to prove the first part of the induction step of Proposition \ref{proposition:reg:density:recursive:scheme:mckean}. Namely, we prove that if the map $(s, x, \mu) \mapsto p_{m}(\mu, s, t, x, z)$ belongs to $\mathcal{C}_f^{1, 2,  2}([0,t)\times \mathbb{R}^d \times \pp)$ and if the pointwise Gaussian estimate \eqref{second:deriv:mes:induction:decoupling:mckean} is satisfied for some positive constants $C^+$ (the constant $C^{+}$ being the one appearing in the definition of the m-th partial sums $\mathscr{C}^{1,0}_m(C^+, t-s)$ therein) then $(s, x, \mu) \mapsto p_{m+1}(\mu, s, t, x, z) \in \mathcal{C}^{1, 2,  2}_f([0,t)\times \mathbb{R}^d \times \pp)$. Additionally, we prove that if the estimates \eqref{second:deriv:mes:induction:decoupling:mckean}, \eqref{second:deriv:mes:reg:space:deriv:arg:estimate:induction:decoupling:mckean} and \eqref{second:deriv:mes:reg:space:deriv:arg:estimate:induction:decoupling:mckean2} are satisfied at step $m$ for some adequate specification of the constants $C^+$ and $C^+_\beta$ (again the constants $C^{+}$ and $C^{+}_\beta$ being the one appearing in the definition of the m-th partial sums $\mathscr{C}^{1,0}_m(C^+, t-s)$ and $\mathscr{C}^{1,\beta}_m(C^+_\beta, t-s)$ therein), then they remain valid at step $m+1$.

\begin{prop}\label{prop:second:order:deriv:estimate:reg:holder:space:and:v}Assume that \eqref{second:deriv:mes:induction:decoupling:mckean} holds at step $m$ for some positive constant $C^+$.  For any $(t, x, z) \in (0,T] \times (\mathbb{R}^d)^2$, the map $[0,t) \times \mathbb{R}^d \times \pp \ni (s, x, \mu) \mapsto p_{m+1}(\mu, s, t, x, z)$ belongs to $\mathcal{C}^{1, 2,  2}_f([0,t)\times \mathbb{R}^d \times \pp)$ and for any $(s, x, \mu) \in [0,t)\times \mathbb{R}^d \times \pp$ and any $\gv=(v, v') \in \mathbb{R}^d \times \mathbb{R}^d$, it holds
\begin{eqnarray}
&&\partial^2_\mu p_{m+1}(\mu, s , t, x, z)(\gv) \label{representation:formula:lions:second:deriv:mes:dens}\\
&=& \sum_{k\geq0} \Big(\partial^2_\mu \widehat{p}_{m+1}(\cdot)(\gv) + p_{m+1} \otimes \partial^2_\mu \mH_{m+1}(\cdot)(\gv)+ (\partial_\mu p_{m+1}(\cdot)(v) \otimes \partial_\mu \mH_{m+1}(\cdot)(v'))\notag\\
&& \qquad \qquad+(\partial_\mu p_{m+1}(\cdot)(v') \otimes \partial_\mu \mH_{m+1}(.)(v))\Big) \otimes \mH^{(k)}_{m+1}(\mu, s, t, x, z) \notag
\end{eqnarray}
\noindent where we write $(\partial_\mu p_{m+1}(\cdot)(v) \otimes \partial_\mu \mH_{m+1}(\cdot)(v'))(\mu, s, t, x, z) =( ([\partial_\mu p_{m+1}(\cdot)]_i(v) \otimes [\partial_\mu \mH_{m+1}(\cdot)]_j(v'))(\mu, s, t, x, z))_{1\leq i , j \leq d}$ and $(\partial_\mu p_{m+1}(\cdot)(v') \otimes \partial_\mu \mH_{m+1}(.)(v)) = ([\partial_\mu p_{m+1}(\cdot)]_j(v')] \otimes [\partial_\mu \mH_{m+1}(.)]_i(v)])_{1\leq i, j \leq d}$.

Moreover, the following Gaussian estimates hold: for any $\beta \in [0,\eta)$, there exist positive constants $K^+_{\beta}$, $K^+$ and $c$ such that for any $(t, x, x', z)\in (0,T] \times (\mathbb{R}^d)^3$, any $(s, \mu, \gv) \in [0,t) \times \pp \times (\mathbb{R}^d)^2$, any $\gv_1, \gv_2 \in (\mathbb{R}^d)^2$ and any value of the positive constants $C^+$ and $C^+_{\beta}$ appearing in the definition of the m-th partial sums $\mathscr{C}^{1, 0}_m(C^+, t-s)$ and $\mathscr{C}^{1, \beta}_m(C^+_\beta, t-s)$ of \eqref{second:deriv:mes:induction:decoupling:mckean}, \eqref{second:deriv:mes:reg:space:deriv:arg:estimate:induction:decoupling:mckean} and \eqref{second:deriv:mes:reg:space:deriv:arg:estimate:induction:decoupling:mckean2}
\begin{align}
|\partial_\mu^2 p_{m+1}(\mu, s, t, x, z)(\gv) |  & \leq    \frac{K^+}{(t-s)^{1-\frac{\eta}{2}}} \left\{ 1+ \sum_{k=1}^{m} (C^+)^{k} (t-s)^{k \frac{\eta}{2}} \prod_{i=1}^{k} B\left(\frac{\eta}{2}, \frac{\eta}{2} +  (i-1)\frac{\eta}{2} \right)  \right\} \label{estimate:deriv:sec:mes:dens:stepmp1:cor}  \\
& \quad \times  g(c(t-s), z-x),\nonumber
\end{align}
\begin{align}
| & \partial^2_\mu p_{m+1}(\mu, s, t, x, z)(\gv) - \partial^2_\mu p_{m+1}(\mu, s, t, x', z)(\gv) | \nonumber \\
& \leq   K^+_\beta \frac{|x-x'|^{\beta}}{(t-s)^{1+\frac{ \beta-\eta}{2}}} \left\{ 1 + \sum_{k=1}^{m} (C^+)^{k} (t-s)^{k \frac{\eta}{2}}  \prod_{i=1}^{k} B\left(\frac{\eta}{2}, \frac{\eta-\beta}{2} +  (i-1)\frac{\eta}{2} \right)  \right\} \label{estimate:deriv:sec:mes:holder:reg:space:dens:stepmp1:cor} \\
& \quad \times  \Big\{ g(c(t-s), z-x) + g(c(t-s), z-x') \Big\}, \nonumber
\end{align}
\noindent and
\begin{align}
| & \partial^2_\mu p_{m+1}(\mu, s, t, x, z)(\gv_1) - \partial^2_\mu p_{m+1}(\mu, s, t, x, z)(\gv_2) | \nonumber \\
& \leq   K^+_\beta \frac{|\gv_1-\gv_2|^{\beta}}{(t-s)^{1+\frac{ \beta-\eta}{2}}} \left\{ 1 + \sum_{k=1}^{m} (C^{+}_\beta)^{k} (t-s)^{k \frac{\eta}{2}}  \prod_{i=1}^{k} B\left(\frac{\eta}{2}, \frac{\eta-\beta}{2} +  (i-1)\frac{\eta}{2} \right)  \right\} \label{estimate:deriv:sec:mes:holder:reg:v:dens:stepmp1:cor} \\
& \quad \times  g(c(t-s), z-x). \nonumber
\end{align}

\end{prop}

\noindent \emph{Conclusion of the first part of the induction step:}\\
In view of the above result, it suffices to set the constants $C^{+}$ and $C^{+}_\beta$ involved in the $m$-th partial sums $\mathscr{C}^{1, 0}_m(C^+, t-s)$ and $\mathscr{C}^{1, \beta}_m(C^+_\beta, t-s)$ of \eqref{second:deriv:mes:induction:decoupling:mckean}, \eqref{second:deriv:mes:reg:space:deriv:arg:estimate:induction:decoupling:mckean} and \eqref{second:deriv:mes:reg:space:deriv:arg:estimate:induction:decoupling:mckean2} to be equal to the constants $K^{+}$ and $K^{+}_\beta$ appearing in \eqref{estimate:deriv:sec:mes:dens:stepmp1:cor}, \eqref{estimate:deriv:sec:mes:holder:reg:space:dens:stepmp1:cor} and \eqref{estimate:deriv:sec:mes:holder:reg:v:dens:stepmp1:cor} respectively. Indeed, doing so, by the very definition of $\mathscr{C}^{1, 0}_{m+1}(K^+, t-s)$ and $\mathscr{C}^{1, \beta}_{m+1}(K^+_\beta, t-s)$ and Proposition \ref{prop:second:order:deriv:estimate:reg:holder:space:and:v}, we deduce that the map $[0,t) \times \mathbb{R}^d \times \pp \ni (s, x, \mu) \mapsto p_{m+1}(\mu, s, t, x, z)$ belongs to $\mathcal{C}^{1, 2,  2}_f([0,t)\times \mathbb{R}^d \times \pp)$ and the estimates \eqref{estimate:deriv:sec:mes:dens:stepmp1:cor}, \eqref{estimate:deriv:sec:mes:holder:reg:space:dens:stepmp1:cor} and \eqref{estimate:deriv:sec:mes:holder:reg:v:dens:stepmp1:cor} directly yield the estimates \eqref{second:deriv:mes:induction:decoupling:mckean}, \eqref{second:deriv:mes:reg:space:deriv:arg:estimate:induction:decoupling:mckean} and \eqref{second:deriv:mes:reg:space:deriv:arg:estimate:induction:decoupling:mckean2} at step $m+1$. The first part of the induction step is thus satisfied. \\

From the above argument, we thus conclude that for any positive integer $m$ the map $ (s, x, \mu) \mapsto p_{m}(\mu, s, t, x, z) \in \mathcal{C}^{1, 2,  2}_f([0,t)\times \mathbb{R}^d \times \pp)$ and that the estimates \eqref{second:deriv:mes:induction:decoupling:mckean}, \eqref{second:deriv:mes:reg:space:deriv:arg:estimate:induction:decoupling:mckean} and \eqref{second:deriv:mes:reg:space:deriv:arg:estimate:induction:decoupling:mckean2} are satisfied.

\begin{proof}[Proof of Proposition \ref{prop:second:order:deriv:estimate:reg:holder:space:and:v}] \quad\\

\noindent \emph{Step 1: $(s, x, \mu) \mapsto p(\mu, s ,t, x, z)$ belongs to $\mathcal{C}^{1, 2,  2}_f([0,t)\times \mathbb{R}^d \times \pp)$.}\\ 

We recall that according to Proposition 5.1 in \cite{chaudruraynal:frikha}, for any positive integer $m$, the map $[0,t) \times \mathbb{R}^d \times \pp \ni (s, x, \mu) \mapsto p_{m}(\mu, s, t, x, z)$ belongs to $\mathcal{C}^{1, 2,  2}([0,t)\times \mathbb{R}^d \times \pp)$ so that it is sufficient to investigate the existence of the L-derivative of second order and its continuity with respect to $s$, $x$, $\mu$ and $\gv$. Again, according to Proposition A.1 in \cite{chaudruraynal:frikha}, the map $\mu \mapsto p_{m+1}(\mu, s, t, x, z)$ given by \eqref{series:approx:mckean} is continuously differentiable with a first order derivative satisfying (recalling \eqref{infinite:series:Phi:step:m}):
\begin{align}
\partial_\mu p_{m+1}(\mu, s, t, x, z)(v) & =\sum_{k\geq0} (\partial_\mu \widehat{p}_{m+1}(.)(v) + p_{m+1} \otimes \partial_\mu \mH_{m+1}(.)(v)) \otimes \mH^{(k)}_{m+1}(\mu, s, t, x, z) \notag \\
& = \partial_\mu \widehat{p}_{m+1}(\mu, s, t, x, z)(v) + (p_{m+1} \otimes \partial_\mu \mH_{m+1}(.)(v))(\mu, s, t, x, z) \label{representation:partial:mes:pmp1}\\
& \quad +  (\partial_\mu \widehat{p}_{m+1}(.)(v) + p_{m+1} \otimes \partial_\mu \mH_{m+1}(.)(v)) \otimes \Phi_{m+1}(\mu, s, t, x, z) \notag
\end{align}
\noindent and being continuous with respect to the variables $s$, $x$, $\mu$ and $v$. The previous identity allows to investigate the L-differentiability of the map $\mu \mapsto \partial_\mu p_{m+1}(\mu, s, t, x, z)(v)$. First, let us note that according to Corollary \ref{cor:deriv:time:and:mes:phat}, the map $\mu \mapsto \partial_\mu \widehat{p}_{m+1}(\mu, s, t, x, z)(v) $ is continuously L-differentiable, with a derivative being continuous in $s$, $x$, $\mu$ and $\gv$, and combining \eqref{bound:second:deriv:mes:hat:p} with \eqref{second:deriv:mes:induction:decoupling:mckean}, the space time inequality \eqref{space:time:inequality} and the fact that $t \mapsto \mathscr{C}^{1,0}_m(C^+, t)$ is non-decreasing, we deduce that 
\begin{align*}
 | \partial^2_\mu \widehat{p}_{m+1}(\mu, s, t, x, z)(\gv) | \leq K^+ \left\{\frac{1}{(t-s)^{1-\frac{\eta}{2}}} + \frac{\mathscr{C}^{1, 0}_{m}(C^+, t-s)}{(t-s)^{1-\eta}}\right\} \, g(c(t-s), z-x).
\end{align*} 

Similarly, it follows from the continuous L-differentiability of the two maps $\mu\mapsto p_{m+1}(\mu, s, t, x, z)$ and $\mu\mapsto  \partial_\mu \mH_{m+1}(\mu, s, r, t, x, z)(v)$ stemming from Corollary \ref{cor:deriv:time:and:mes:parametrix:kernel}, the estimates \eqref{estimate:partial:deriv:mes:parametrix:kernel:iterated:m} with $n=0$, $k=1$ and $\beta \in (0,1]$, \eqref{first:second:estimate:induction:decoupling:mckean}, \eqref{bound:derivative:heat:kernel} with $n=0$, \eqref{cor:bis:sec:mes:deriv:parametrix:kernel:s:r:t:with:beta} and the dominated convergence theorem that $\mu \mapsto  (p_{m+1} \otimes \partial_\mu \mH_{m+1}(.)(v))(\mu, s, t, x, z)$ is continuously differentiable with a derivative satisfying $\partial_\mu[(p_{m+1} \otimes \partial_\mu \mH_{m+1}(.)(v))(\mu, s, t, x, z)](v') = (\partial_\mu p_{m+1}(.)(v') \otimes \partial_\mu \mH_{m+1}(.)(v))(\mu, s, t, x, z) + (p_{m+1} \otimes \partial^2_\mu \mH_{m+1}(.)(\gv))(\mu, s, t, x, z)$, being continuous with respect to the variables $s$, $x$, $\mu$ and $\gv$ and such that 
\begin{align*}
|\partial_\mu[(p_{m+1} \otimes \partial_\mu \mH_{m+1}(.)(v))(\mu, s, t, x, z)](v') | \leq \frac{K^{+}}{(t-s)^{1-\frac{\eta}{2}}} (1+\mathscr{C}^{1,0}_m(C^+, t-s))\, g(c(t-s), z-x).
\end{align*} 

This in turn together with the continuous differentiability of $\mu \mapsto \Phi_{m+1}(\mu, s, r, t, x, z)$, the estimate \eqref{estimate:deriv:mes:Phim} (with $n=0$, $\beta \in (0,1]$), \eqref{deriv:v:deriv:mes:pm:hat},  \eqref{bound:derivative:heat:kernel} with $n=0$, \eqref{estimate:partial:deriv:mes:parametrix:kernel:iterated:m} (with $k=1$, $n=0$, $\beta \in (0,1]$) and the dominated convergence theorem imply that $\mu \mapsto (\partial_\mu \widehat{p}_{m+1}(.)(v) + p_{m+1} \otimes \partial_\mu \mH_{m+1}(.)(v)) \otimes \Phi_{m+1}(\mu, s, t, x, z)$ is continuously differentiable, with a derivative being continuous with respect to the variables $s$, $x$, $\mu$ and $\gv$ and satisfying
\begin{align*}
| \partial_\mu[(\partial_\mu \widehat{p}_{m+1}(.)(v) + p_{m+1} \otimes \partial_\mu \mH_{m+1}(.)(v)) \otimes \Phi_{m+1}(\mu, s, t, x, z)](v')| \leq  \frac{K^{+}}{(t-s)^{1-\eta}} (1+\mathscr{C}^{1,0}_m(C^+, t-s)) \, g(c(t-s), z-x).
\end{align*}

We now come back to the identity \eqref{representation:partial:mes:pmp1}. From the above discussion, we conclude that $\mu \mapsto \partial_\mu p_{m+1}(\mu, s, t, x, z)(v) $ is continuously L-differentiable, with a derivative being continuous in $s$, $x$, $\mu$ and $\gv$ and satisfying
$$
| \partial^2_\mu p_{m+1}(\mu, s, t, x, z)(\gv) | \leq  \frac{K^{+}}{(t-s)^{1-\frac{\eta}{2}}}(1+\mathscr{C}^{1,0}_m(C^+, t-s)) \, g(c(t-s), z-x).
$$

\noindent \emph{Step 2: proof of the representation formula \eqref{representation:formula:lions:second:deriv:mes:dens}.}\\

The above estimate together with \eqref{estimate:partial:deriv:mes:parametrix:kernel:iterated:m}, \eqref{first:second:estimate:induction:decoupling:mckean}, \eqref{cor:bis:sec:mes:deriv:parametrix:kernel:s:r:t:with:beta} and the dominated convergence theorem allow to differentiate twice with respect to the measure argument $\mu$ the relation 
$$
p_{m+1}(\mu, s, t, x, z) = \widehat{p}_{m+1}(\mu, s, t, x, z) + (p_{m+1}\otimes \mH_{m+1})(\mu, s , t, x, z)
$$
which yields
\begin{align*}
\partial^2_\mu p_{m+1}(\mu, s , t, x, z)(\gv) & = \partial^2_\mu \widehat{p}_{m+1}(\mu, s, t, x, z)(\gv) + (p_{m+1} \otimes \partial^2_\mu \mH_{m+1}(.)(\gv))(\mu, s ,t, x, z) \\
& \quad  +  (\partial_\mu p_{m+1}(\cdot)(v) \otimes \partial_\mu \mH_{m+1}(\cdot)(v') )(\mu, s ,t, x, z) \\
&\quad  + (\partial_\mu p_{m+1}(.)(v') \otimes \partial_\mu \mH_{m+1}(.)(v))(\mu, s ,t, x, z) \\
& \quad + (\partial^2_\mu p_{m+1}(.)(\gv) \otimes \mH_{m+1})(\mu, s, t, x, z).
\end{align*}

Now, using the estimates on the first fourth terms of the right-hand side of the above identity derived in the first step, we conclude that one may iterate this relation. This yields the representation in infinite series \eqref{representation:formula:lions:second:deriv:mes:dens} which is absolutely convergent. \\

\noindent \emph{Step 3: proof of the estimate \eqref{estimate:deriv:sec:mes:dens:stepmp1:cor}.} \\

In order to establish \eqref{estimate:deriv:sec:mes:dens:stepmp1:cor}, we start from the representation formula \eqref{representation:formula:lions:second:deriv:mes:dens} and estimate each term of the series. First, from \eqref{bound:second:deriv:mes:hat:p} of Corollary \ref{cor:deriv:time:and:mes:phat}, \eqref{second:deriv:mes:induction:decoupling:mckean} and the space-time inequality \eqref{space:time:inequality}, we get
\begin{eqnarray}
&&|  \partial^2_\mu \widehat{p}_{m+1}(\mu, s, t, x, z)(\gv)|  \label{deriv:p:hat:mu:bound} \\
& \leq& K^+ \left\{ \frac{1}{(t-s)^{1-\frac{\eta}{2}}} + \frac{1}{t-s} \int_s^t \frac{\mathscr C^{1,0}_{m}(C^+, r-s)}{(r-s)^{1-\eta}} \, dr \right\} \, g(c(t-s) , z-x) \nonumber \\
& \leq& K^+ \left\{ \frac{1}{(t-s)^{1-\frac{ \eta}{2}}} +  \int_s^t \frac{\mathscr C^{1,0}_{m}(C^+, r-s)}{(t-r)^{1-\frac{\eta}{2}} (r-s)^{1-\frac{\eta}{2}}} \, dr \right\} \, g(c(t-s) , z-x) \nonumber \\
&  \leq&  \frac{K^+}{(t-s)^{1-\frac{\eta}{2}}}  \left\{1 + \sum_{k=1}^{m} (C^+)^{k} (t-s)^{k \frac{\eta}{2}}  \prod_{i=1}^{k} B\left(\frac{\eta}{2}, \frac{\eta}{2} +  (i-1)\frac{\eta}{2} \right)  \right\}g(c(t-s) , z-x) \notag
\end{eqnarray}
\noindent recalling as well the definition \eqref{definition:series:Cmnbeta} of $\mathscr{C}^{1,0}_m(C, t)$ for the last inequality.
%

We now establish an upper-bound for the quantity $p_{m+1} \otimes \partial^2_\mu \mH_{m+1}(\mu, r ,t , x, z)(\gv)$. We first observe that from \eqref{cor:sec:mes:deriv:parametrix:kernel:s:r:t:with:beta}, \eqref{second:deriv:mes:induction:decoupling:mckean} and the space-time inequality \eqref{space:time:inequality}, the following estimate holds
\begin{eqnarray}\label{sec:mes:deriv:parametrix:kernel:s:r:t:with:beta:proof:2} 
&&|  \partial_\mu^2 \mH_{m+1}(\mu, s, r, t, x, z)(\gv)|  \\
& \leq & K^{+} \left( \frac{1}{(t-r)^{1-\frac{\eta}{2}}(r-s)} \wedge  \frac{1}{(t-r)(r-s)^{1-\frac{\eta}{2}}}  \right)  \notag\\
  && \quad \times \Bigg\{1 +  \mathscr{C}^{1,0}_m(C^+, r-s) (r-s)^{\frac{\eta}{2}}  + \frac{(r-s)^{ \frac{\eta}{2}}}{t-r} \int_r^t \mathscr{C}^{1, 0}_m(C^+, r'-s) dr'   \Bigg\} g(c(t-r), z-x)  .\notag 
\end{eqnarray}

Assuming now that $r \in [s, (t+s)/2]$ so that $ (t-s)/2 \leq t-r \leq t-s$, we obtain, using \eqref{bound:derivative:heat:kernel} and \eqref{sec:mes:deriv:parametrix:kernel:s:r:t:with:beta:proof:2}
\begin{eqnarray*}
&&\int_{\mathbb{R}^d} |p_{m+1}(\mu, s, r, x, y)|\, |   \partial^2_\mu \mH_{m+1}(\mu, s, r ,t , y, z)(\gv)| \, dy \\
& \leq &\frac{K^{+}}{(t-s)(r-s)^{1-\frac{\eta}{2}}}\left(1+ \mathscr{C}^{1, 0}_{m}(C^+, r-s) (r-s)^{\frac{\eta}{2}} + \frac{(r-s)^{\frac{\eta}{2}}}{t-s} \int_r^t \mathscr{C}^{1, 0}_{m}(C^{+}, r'-s)  \, dr'  \right)\, g(c(t-s),z-x)
\end{eqnarray*}

\noindent so that, by Fubini's theorem
\begin{eqnarray*}
&&\int_s^{\frac{t+s}{2}} \int_{\mathbb{R}^d}  |p_{m+1}(\mu, s, r, x, y)|\,  | \partial^2_\mu \mH_{m+1}(\mu, s, r ,t , y, z)(\gv)| \, dy \, dr \\
& \leq& K^+ \left\{ \frac{1}{(t-s)^{1-\frac{\eta}{2}}}  + \frac{1}{t-s} \int_s^t \frac{ \mathscr{C}^{1, 0}_{m}(C^+, r-s)}{(r-s)^{1-\eta}} \, dr \right\} \, g(c(t-s) , z-x) \\
& \leq& K^{+} \left\{ \frac{1}{(t-s)^{1-\frac{\eta}{2}}}  +  \int_s^t \frac{\mathscr{C}^{1,0}_{m}(C^+, r-s)}{(t-r)^{1-\frac{\eta}{2}}(r-s)^{1-\frac{\eta}{2}}} \, dr \right\}  g(c(t-s) , z-x) \\
& \leq&  \frac{K^+}{(t-s)^{1-\frac{\eta}{2}}}  \left\{ 1 + \sum_{k=1}^{m} (C^+)^{k} (t-s)^{k \frac{\eta}{2}}  \prod_{i=1}^{k} B\left(\frac{\eta}{2}, \frac{\eta}{2} +  (i-1)\frac{\eta}{2} \right) \right\} \,  g(c(t-s) , z-x).
\end{eqnarray*}

Then, assuming that $r$ belongs to $[(t+s)/2, t]$ so that $(t-s)/2 \leq r-s \leq t-s$, we similarly get
\begin{eqnarray*}
&&\int_{\mathbb{R}^d} |p_{m+1}(\mu, s, r, x, y)| \, | \partial^2_\mu \mH_{m+1}(\mu, s, r ,t , y, z)(\gv)| dy \\
&  \leq& \frac{K^+}{(t-s) (t-r)^{1-\frac{\eta}{2}}} \left(1+ \mathscr{C}^{1,0}_{m}(C^{+}, r-s) (r-s)^{\frac{\eta}{2}} + \frac{1}{t-r}\int_r^t \mathscr{C}^{1,0}_{m}(C^+, r'-s) (r'-s)^{\frac{\eta}{2}} \, dr'\right) \,g(c(t-s),z-x)
\end{eqnarray*}  

\noindent which in turn, by Fubini's theorem, yields
\begin{eqnarray*}
&&\int_{\frac{t+s}{2}}^{t} \int_{\mathbb{R}^d}  |p_{m+1}(\mu, s, r, x, y)| \,  | \partial^2_\mu \mH_{m+1}(\mu, s, r ,t , y, z)(\gv)| \, dy \, dr \\
&  \leq& \frac{K^{+}}{t-s} \int_{\frac{t+s}{2}}^{t} \frac{1}{(t-r)^{1-\frac{\eta}{2}}} \Big[1+ \mathscr{C}^{1,0}_{m}(C^{+}, r-s) (r-s)^{\frac{\eta}{2}} + \frac{1}{t-r}\int_r^t \mathscr{C}^{1,0}_{m}(C^{+}, r'-s) (r'-s)^{\frac{\eta}{2}} \, dr'  \Big] \, dr \\
& & \quad \times g(c(t-s),z-x) \\
& \leq & K^{+} \left\{ \frac{1}{(t-s)^{1-\frac{\eta}{2}}}  +  \int_s^t \frac{\mathscr{C}^{1, 0}_{m}(C^{+}, r-s)}{(t-r)^{1-\frac{\eta}{2}} (r-s)^{1- \frac{\eta}{2}}} \, dr \right\} \, g(c(t-s) , z-x) \\
& \leq & \frac{K^{+}}{(t-s)^{1-\frac{\eta}{2}}}  \left\{ 1 + \sum_{k=1}^{m} (C^{+})^{k} (t-s)^{k \frac{\eta}{2}} \prod_{i=1}^{k} B\left(\frac{\eta}{2}, \frac{\eta}{2} +  (i-1)\frac{\eta}{2} \right)  \right\} \,  g(c(t-s) , z-x).
\end{eqnarray*}

Gathering the two previous cases, we clearly obtain
\begin{eqnarray}\label{estimate:convol:pmp1:cross:deriv:mes:Hmp1}
&&| p_{m+1} \otimes \partial^2_\mu \mH_{m+1}(.)(\gv)(\mu, s ,t , x, z) |   \\
&\leq &\frac{K^{+} }{(t-s)^{1-\frac{\eta}{2}}} \left\{ 1 + \sum_{k=1}^{m} (C^{+})^{k} (t-s)^{k \frac{\eta}{2}}  \prod_{i=1}^{k} B\left( \frac{\eta}{2}, \frac{\eta}{2} + (i-1) \frac{\eta}{2}\right) \right\} \, g(c(t-s),z-x).\nonumber
\end{eqnarray}

In order to handle the two last terms appearing in the right-hand side of \eqref{representation:formula:lions:second:deriv:mes:dens}, we employ similar computations to those used above. Namely, using \eqref{first:second:estimate:induction:decoupling:mckean} and \eqref{estimate:partial:deriv:mes:parametrix:kernel:iterated:m} with $n=0$, $k=1$ and $\beta \in (0,1]$, we obtain
\begin{align*}
&\int_{s}^{t} \int_{\mathbb{R}^d}  |\partial_\mu p_{m+1}(\mu, s, r, x, y)(v)| \,  | \partial_\mu \mH_{m+1}(\mu, s, r ,t , y, z)(v')| \, dy \, dr \\
& \quad + \int_{s}^{t} \int_{\mathbb{R}^d}  |\partial_\mu p_{m+1}(\mu, s, r, x, y)(v')| \,  | \partial_\mu \mH_{m+1}(\mu, s, r ,t , y, z)(v)| \, dy \, dr\\
& \quad \quad \leq \frac{K}{(t-s)^{1-\eta}}  g(c(t-s) , z-x).
\end{align*}

Hence, collecting the previous estimates, we clearly obtain
\begin{eqnarray}\label{estimate:convol:pmp1:cross:deriv:mes:Hmp1}
&&\Big|(\partial^2_\mu \widehat{p}_{m+1}(\cdot)(\gv) + p_{m+1} \otimes \partial^2_\mu \mH_{m+1}(\cdot)(\gv)+ \partial_\mu p_{m+1}(\cdot)(v) \otimes \partial_\mu \mH_{m+1}(\cdot)(v')\notag\\
&& \qquad \qquad+\partial_\mu p_{m+1}(\cdot)(v') \otimes \partial_\mu \mH_{m+1}(.)(v))(\mu, s, t, x, z)\Big|   \\
&\leq & \frac{K^+}{(t-s)^{1-\frac{\eta}{2}}} \left\{ 1 + \sum_{k=1}^{m} (C^{+})^{k} (t-s)^{k \frac{\eta}{2}} \prod_{i=1}^{k} B\left( \frac{\eta}{2}, \frac{\eta}{2} + (i-1) \frac{\eta}{2}\right)  \right\} \, g(c(t-s),z-x)\nonumber
\end{eqnarray}

\noindent which in turn yields
\begin{eqnarray*}
&&\sum_{k\geq0} \Big(\partial^2_\mu \widehat{p}_{m+1}(\cdot)(\gv) + p_{m+1} \otimes \partial^2_\mu \mH_{m+1}(\cdot)(\gv)+ \partial_\mu p_{m+1}(\cdot)(v) \otimes \partial_\mu \mH_{m+1}(\cdot)(v')\notag\\
&& \qquad \qquad + \partial_\mu p_{m+1}(\cdot)(v') \otimes \partial_\mu \mH_{m+1}(\cdot)(v)\Big) \otimes \mH^{(k)}_{m+1}(\mu, s, t, x, z)\\
& \leq &  \frac{K^+}{(t-s)^{1-\frac{\eta}{2}}} \left\{ 1 + \sum_{k=1}^{m} (C^{+})^{k} (t-s)^{k \frac{\eta}{2}} \prod_{i=1}^{k} B\left( \frac{\eta}{2}, \frac{\eta}{2} + (i-1) \frac{\eta}{2}\right)  \right\} \, g(c(t-s),z-x).
\end{eqnarray*}
This last estimate concludes the proof of \eqref{estimate:deriv:sec:mes:dens:stepmp1:cor}.\\

\noindent \emph{Step 4: proof of the estimate \eqref{estimate:deriv:sec:mes:holder:reg:space:dens:stepmp1:cor}.}\\

 It follows from \eqref{cross:mes:deriv:holder:p:hat:s:t}, \eqref{second:deriv:mes:induction:decoupling:mckean} and the space time inequality \eqref{space:time:inequality} that for any $\beta\in [0,\eta)$
\begin{eqnarray}
&&|  \partial^2_\mu \widehat{p}_{m+1}(\mu, s, t, x, z)(\gv) - \partial^2_\mu \widehat{p}_{m+1}(\mu, s, t, x', z)(\gv)|  \label{deriv:p:hat:mu:bound} \\
& \leq& K^+ |x-x'|^\beta\left\{ \frac{1}{(t-s)^{1+\frac{\beta-\eta}{2}}} + \frac{1}{(t-s)^{1+\frac{\beta}{2}}} \int_s^t \frac{\mathscr C^{1,0}_{m}(C^+, r-s)}{(r-s)^{1-\eta}} \, dr \right\} \nonumber \\
& & \quad \times \left\{ g(c(t-s) , z-x) + g(c(t-s), z-x') \right\} \nonumber \\
& \leq& K^+ |x-x'|^\beta \left\{ \frac{1}{(t-s)^{1 + \frac{\beta- \eta}{2}}} +  \int_s^t \frac{\mathscr C^{1,0}_{m}(C^+, r-s)}{(t-r)^{1-\frac{\eta}{2}} (r-s)^{1+\frac{\beta-\eta}{2}}} \, dr \right\}  \nonumber \\
& & \quad \times \left\{ g(c(t-s) , z-x) + g(c(t-s), z-x') \right\} \nonumber \\
&  \leq& K^{+} \frac{|x-x'|^\beta}{(t-s)^{1+\frac{\beta-\eta}{2}}}  \left\{1 + \sum_{k=1}^{m} (C^+)^{k} (t-s)^{k \frac{\eta}{2}}  \prod_{i=1}^{k} B\left(\frac{\eta}{2}, \frac{\eta-\beta}{2} +  (i-1)\frac{\eta}{2} \right)  \right\} \nonumber \\
&& \quad \times \left\{ g(c(t-s) , z-x) + g(c(t-s), z-x') \right\}. \nonumber 
\end{eqnarray}

In order to handle the difference $(p_{m+1} \otimes \partial^2_\mu  \mH_{m+1}(.)(\gv))(\mu, s, t, x, z) - (p_{m+1} \otimes \partial^2_\mu \mH_{m+1}(.)(\gv))(\mu, s, t, x', z)$, we use \eqref{reg:heat:kernel:deriv} with $n=0$ and  \eqref{sec:mes:deriv:parametrix:kernel:s:r:t:with:beta:proof:2}. We also split the time integral of the space-time convolution operator into the two intervals $[s, (t+s)/2)$ and $[(t+s)/2, t]$ as we did in the previous step and perform similar computations. Skipping some technical details, we deduce that for any $\beta \in [0,\eta)$
\begin{align*}
| (p_{m+1}&  \otimes \partial^2_\mu  \mH_{m+1}(.)(\gv))(\mu, s, t, x, z) - (p_{m+1} \otimes \partial^2_\mu \mH_{m+1}(.)(\gv))(\mu, s, t, x', z) | \\
& \leq K^{+}_\beta \frac{|x-x'|^\beta}{(t-s)^{1+\frac{\beta-\eta}{2}}}\left\{1 + \sum_{k=1}^{m} (C^+)^{k} (t-s)^{k \frac{\eta}{2}}  \prod_{i=1}^{k} B\left(\frac{\eta}{2}, \frac{\eta-\beta}{2} +  (i-1)\frac{\eta}{2} \right)  \right\} \nonumber \\
& \quad \times \left\{ g(c(t-s) , z-x) + g(c(t-s), z-x') \right\}.
\end{align*}

In order to investigate the H\"older regularity of the two maps $x\mapsto (\partial_\mu p_{m+1}(\cdot)(v) \otimes \partial_\mu \mH_{m+1}(\cdot)(v'))(\mu, s, t, x, z)$, $x\mapsto (\partial_\mu p_{m+1}(\cdot)(v') \otimes \partial_\mu \mH_{m+1}(.)(v))(\mu, s, t, x, z)$, we first claim that there exist positive constants $K$ and $c$ such that for any $\beta \in [0,1]$
\begin{align}
|\partial_\mu p_{m+1}(\mu, s, t, x, z) & - \partial_\mu p_{m+1}(\mu, s, t, x', z)| \label{holder:reg:partial:deriv:mes:pmp1} \\
\quad & \leq K \frac{|x-x'|^\beta}{(t-s)^{\frac{1+\beta-\eta}{2}}} \left\{ g(c(t-s), z-x) + g(c(t-s), z- x')\right\}. \notag 
\end{align}

The above estimate directly follows from \eqref{first:second:estimate:induction:decoupling:mckean} with $n=0$ if $|x-x'| \geq (t-s)^{1/2}$ while it is a consequence of the mean-value theorem, \eqref{cross:deriv:mes:space:induction:decoupling:mckean} and the fact that for any $\lambda \in [0,1]$ and any $0 < c' < c$
\begin{equation}
\label{diagonal:regime:heat:kernel}
 \exp\left(-c \frac{|\lambda x + (1-\lambda) x' - z|^2}{t-s}\right) \leq K \exp\left(-c' \frac{|z-x|^2}{t-s}\right)
\end{equation}
\noindent in the diagonal case $|x-x'|\leq (t-s)^{1/2}$.

Then, using \eqref{holder:reg:partial:deriv:mes:pmp1} with $\beta \in [0,\eta)$ and \eqref{estimate:partial:deriv:mes:parametrix:kernel:iterated:m} with $n=0$, $k=1$ and $\beta\in (0,1]$, after some standard computations, we obtain
\begin{align}
|(\partial_\mu p_{m+1}(\cdot)(v) & \otimes \partial_\mu \mH_{m+1}(\cdot)(v'))(\mu, s, t, x, z)  - (\partial_\mu p_{m+1}(\cdot)(v) \otimes \partial_\mu \mH_{m+1}(\cdot)(v'))(\mu, s, t, x', z)| \label{holder:reg:space:partial:mes:pmp1:convol:partial:mes:parametrix:kernel}\\
&  + |(\partial_\mu p_{m+1}(\cdot)(v')  \otimes \partial_\mu \mH_{m+1}(.)(v))(\mu, s, t, x, z)  - (\partial_\mu p_{m+1}(\cdot)(v') \otimes \partial_\mu \mH_{m+1}(.)(v))(\mu, s, t, x', z)| \nonumber \\
& \leq K \frac{|x-x'|^\beta}{(t-s)^{1+\frac{\beta}{2}-\eta}} \left\{ g(c(t-s), z-x) + g(c(t-s), z- x')\right\}. \nonumber
\end{align}

Gathering the estimates \eqref{deriv:p:hat:mu:bound}, \eqref{holder:reg:partial:deriv:mes:pmp1} and \eqref{holder:reg:space:partial:mes:pmp1:convol:partial:mes:parametrix:kernel} yields
\begin{eqnarray*}
& &\Big|(\partial^2_\mu \widehat{p}_{m+1}(\cdot)(\gv) + p_{m+1} \otimes \partial^2_\mu \mH_{m+1}(\cdot)(\gv)+ (\partial_\mu p_{m+1}(\cdot)(v) \otimes \partial_\mu \mH_{m+1})(\cdot)(v')\notag\\
&& \qquad \qquad+(\partial_\mu p_{m+1}(\cdot)(v') \otimes \partial_\mu \mH_{m+1})(v))(\mu, s, t, x, z) \\
& & - \Big((\partial^2_\mu \widehat{p}_{m+1}(\cdot)(\gv) + p_{m+1} \otimes \partial^2_\mu \mH_{m+1}(\cdot)(\gv)+ (\partial_\mu p_{m+1}(\cdot)(v) \otimes \partial_\mu \mH_{m+1})(\cdot)(v')\notag\\
&& \qquad \qquad+(\partial_\mu p_{m+1}(\cdot)(v') \otimes \partial_\mu \mH_{m+1})(v))(\mu, s, t, x', z)\Big) \Big|\\
& & \leq K^{+}_\beta \frac{|x-x'|^\beta}{(t-s)^{1+\frac{\beta-\eta}{2}}}\left\{1 + \sum_{k=1}^{m} (C^+)^{k} (t-s)^{k \frac{\eta}{2}}  \prod_{i=1}^{k} B\left(\frac{\eta}{2}, \frac{\eta-\beta}{2} +  (i-1)\frac{\eta}{2} \right)  \right\} \nonumber \\
& & \quad \times \left\{ g(c(t-s) , z-x) + g(c(t-s), z-x') \right\}
\end{eqnarray*}
\noindent which in turn by \eqref{iter:parametrix:kernel} implies 
\begin{eqnarray*}
& & \sum_{k\geq 0} \Big|(\partial^2_\mu \widehat{p}_{m+1}(\cdot)(\gv) + p_{m+1} \otimes \partial^2_\mu \mH_{m+1}(\cdot)(\gv)+ (\partial_\mu p_{m+1}(\cdot)(v) \otimes \partial_\mu \mH_{m+1})(\cdot)(v')\notag\\
&& \qquad \qquad+(\partial_\mu p_{m+1}(\cdot)(v') \otimes \partial_\mu \mH_{m+1})(v))\otimes \mH^{(k)}_{m+1}(\mu, s, t, x, z) \\
& & - \Big((\partial^2_\mu \widehat{p}_{m+1}(\cdot)(\gv) + p_{m+1} \otimes \partial^2_\mu \mH_{m+1}(\cdot)(\gv)+ (\partial_\mu p_{m+1}(\cdot)(v) \otimes \partial_\mu \mH_{m+1})(\cdot)(v')\notag\\
&& \qquad \qquad+(\partial_\mu p_{m+1}(\cdot)(v') \otimes \partial_\mu \mH_{m+1})(v))\otimes \mH^{(k)}_{m+1}(\mu, s, t, x', z)\Big) \Big|\notag \\
& & \leq K^{+}_\beta \frac{|x-x'|^\beta}{(t-s)^{1+\frac{\beta-\eta}{2}}}\left\{1 + \sum_{k=1}^{m} (C^+)^{k} (t-s)^{k \frac{\eta}{2}}  \prod_{i=1}^{k} B\left(\frac{\eta}{2}, \frac{\eta-\beta}{2} +  (i-1)\frac{\eta}{2} \right)  \right\} \nonumber \\
& & \quad \times \left\{ g(c(t-s) , z-x) + g(c(t-s), z-x') \right\}.
\end{eqnarray*}

\noindent Coming back to the identity \eqref{representation:formula:lions:second:deriv:mes:dens} and using the previous estimate allow to conclude the proof of \eqref{estimate:deriv:sec:mes:holder:reg:space:dens:stepmp1:cor}.\\

\noindent \emph{Step 5: proof of the estimate \eqref{estimate:deriv:sec:mes:holder:reg:v:dens:stepmp1:cor}. \\}

In order to control the difference $\partial^2_\mu p_{m+1}(\mu, s, t, x, z)(\gv_1) - \partial^2_\mu p_{m+1}(\mu, s, t, x, z)(\gv_2)$, we start again from the representation formula \eqref{representation:formula:lions:second:deriv:mes:dens} and investigate the H\"older regularity of each term of the series with respect to the variable $\gv$. 

First, from the estimate \eqref{cross:mes:deriv:reg:holder:terminal point:p:hat:s:r:t} in Corollary \ref{cor:deriv:time:and:mes:phat}, \eqref{second:deriv:mes:reg:space:deriv:arg:estimate:induction:decoupling:mckean} and the space time inequality \eqref{space:time:inequality}, we get that for any $\beta \in [0,\eta)$
\begin{eqnarray}
&&|  \partial^2_\mu \widehat{p}_{m+1}(\mu, s, t, x, z)(\gv_1) -   \partial^2_\mu \widehat{p}_{m+1}(\mu, s, t, x, z)(\gv_2)|  \label{holder:reg:v:argument:sec:order:partial:deriv:mes:p:hat:mu} \\
& \leq& K^+_\beta |\gv_1 -\gv_2|^\beta \left\{ \frac{1}{(t-s)^{1+\frac{\beta-\eta}{2}}} + \frac{1}{t-s} \int_s^t \frac{ \mathscr{C}^{1,0}_{m}(C^+_\beta, r-s)}{(r-s)^{1+\frac{\beta}{2}-\eta}} \, dr \right\} \, g(c(t-s) , z-x) \nonumber \\
& \leq& K^+_\beta |\gv_1- \gv_2|^\beta \left\{ \frac{1}{(t-s)^{1+\frac{\beta- \eta}{2}}} +  \int_s^t \frac{\mathscr C^{1,0}_{m}(C^+_\beta, r-s)}{(t-r)^{1-\frac{\eta}{2}} (r-s)^{1+\frac{\beta-\eta}{2}}} \, dr \right\} \, g(c(t-s) , z-x) \nonumber \\
&  \leq& K^+_\beta \frac{|\gv_1-\gv_2|^\beta}{(t-s)^{1+\frac{\beta-\eta}{2}}}  \left\{1 + \sum_{k=1}^{m} (C^+_\beta)^{k} (t-s)^{k \frac{\eta}{2}}  \prod_{i=1}^{k} B\left(\frac{\eta}{2}, \frac{\eta-\beta}{2} +  (i-1)\frac{\eta}{2} \right)  \right\}g(c(t-s) , z-x). \notag
\end{eqnarray}
 
In order to handle the difference $(p_{m+1} \otimes \partial^2_\mu  \mH_{m+1}(.)(\gv_1))(\mu, s, t, x, z) - (p_{m+1} \otimes \partial^2_\mu \mH_{m+1}(.)(\gv_2))(\mu, s, t, x, z)$, we first remark that  \eqref{second:order:mes:deriv:parametrix:kernel:s:r:t:reg:holder:v:argument} together with \eqref{second:deriv:mes:reg:space:deriv:arg:estimate:induction:decoupling:mckean} and the space time inequality \eqref{space:time:inequality} implies that for any $\beta \in [0,\eta)$
\begin{align*}
& \Big|  \partial^2_\mu \mH_{m+1}(\mu, s, r ,t ,x ,z)(\gv_1) - \partial^2_\mu \mH_{m+1}(\mu, s, r ,t ,x ,z)(\gv_2) \Big| \notag \\
& \leq K^+_{\beta} |\gv_1-\gv_2|^\beta  \left\{ \frac{1}{(t-r)^{1-\frac{\eta}{2}} (r-s)^{1+\frac{\beta}{2}}} \wedge  \frac{1}{(t-r)(r-s)^{1+\frac{\beta-\eta}{2}}} \right\}  \notag \\
& \quad \times \left\{ 1 + \mathscr{C}^{1,\beta}_{m}(C^+_\beta, r-s)  (r-s)^{\frac{\eta}{2}} +  \frac{(r-s)^{\frac{\eta}{2}}}{t-r}   \int_r^t \mathscr{C}^{1,\beta}_{m}(C^{+}_\beta, r'-s) \, dr' \right\}\, g(c(t-r), z-x).
\end{align*}

We then use the pointwise Gaussian estimate \eqref{bound:derivative:heat:kernel} with $n=0$ and the previous upper-bound. We also split the time integral of the space-time convolution operator into the two disjoint intervals $[s, (t+s)/2]$ and $((t+s)/2, t]$ as we already did in the previous steps and perform similar computations. Skipping some technical details, we deduce that for any $\beta \in [0,\eta)$
\begin{align}
& | (p_{m+1}  \otimes \partial^2_\mu  \mH_{m+1}(.)(\gv_1))(\mu, s, t, x, z) - (p_{m+1} \otimes \partial^2_\mu \mH_{m+1}(.)(\gv_2))(\mu, s, t, x, z) | \label{holder:reg:v:pmp1:sec:partial:mes:parametrix:kernel} \\
& \leq K^{+}_\beta \frac{|\gv_1-\gv_2|^\beta}{(t-s)^{1+\frac{\beta-\eta}{2}}}\left\{1 + \sum_{k=1}^{m} (C^+_\beta)^{k} (t-s)^{k \frac{\eta}{2}}  \prod_{i=1}^{k} B\left(\frac{\eta}{2}, \frac{\eta-\beta}{2} +  (i-1)\frac{\eta}{2} \right)  \right\} \, g(c(t-s) , z-x). \nonumber
\end{align}

For the last two terms of the series \eqref{representation:formula:lions:second:deriv:mes:dens}, we first claim that for any $\beta \in [0,1]$
$$
| \partial_\mu p_{m+1}(\mu, s, t, x, z)(v_1) - \partial_\mu p_{m+1}(\mu, s, t, x, z)(v_2) | \leq K \frac{|v_1-v_2|^\beta}{(t-s)^{\frac{1+\beta-\eta}{2}}} \, g(c(t-s), z-x).
$$
The previous estimate is a direct consequence of \eqref{first:second:estimate:induction:decoupling:mckean} with $n=0$ if $|v_1-v_2| \geq (t-s)^{1/2}$ or the mean-value theorem combined with \eqref{first:second:estimate:induction:decoupling:mckean} with $n=1$ if $|v_1-v_2| < (t-s)^{1/2}$. Similarly, separating the computations into the two cases $|v_1-v_2|\geq (r-s)^{1/2}$ and $|v_1-v_2|< (r-s)^{1/2}$ and using \eqref{estimate:partial:deriv:mes:parametrix:kernel:iterated:m} with $k=1$ and $n=0$ or $n=1$, we obtain 
$$
| \partial_\mu \mH_{m+1}(\mu, s, r, t, x, z)(v_1) - \partial_\mu \mH_{m+1}(\mu, s,  r, t, x, z)(v_2) | \leq K_{\beta'} \frac{|v_1-v_2|^\beta}{(r-s)^{\frac{1+\beta}{2}-(1-\beta')\frac{\eta}{2}}(t-r)^{1 - \beta' \frac{\eta}{2}}} \, g(c(t-s), z-x)
$$
\noindent for any $\beta \in [0,1]$ and any $\beta' \in (0,1]$.

Now, it follows from the two previous estimates as well as \eqref{first:second:estimate:induction:decoupling:mckean} with $n=0$, \eqref{estimate:partial:deriv:mes:parametrix:kernel:iterated:m} with $k=1$, $n=0$, $\beta=1$ and some standard computations that for any $\beta \in [0,\eta)$
\begin{align}
|(\partial_\mu p_{m+1}& (\cdot)(v_1)  \otimes \partial_\mu \mH_{m+1}(\cdot)(v_1'))(\mu, s, t, x, z)  - (\partial_\mu p_{m+1}(\cdot)(v_2) \otimes \partial_\mu \mH_{m+1}(\cdot)(v_2'))(\mu, s, t, x, z)| \label{holder:reg:partial:mes:pmp1:convol:partial:mes:parametrix:kernel}\\
& + |(\partial_\mu p_{m+1}(\cdot)(v_1')  \otimes \partial_\mu \mH_{m+1}(.)(v_1))(\mu, s, t, x, z)  - (\partial_\mu p_{m+1}(\cdot)(v_2') \otimes \partial_\mu \mH_{m+1}(.)(v_2))(\mu, s, t, x, z)| \nonumber \\
&  \qquad\leq K \frac{|\gv_1-\gv_2|^\beta}{(t-s)^{1+\frac{\beta}{2}-\eta}} g(c(t-s), z-x). \nonumber
\end{align}

Gathering the estimates \eqref{holder:reg:v:argument:sec:order:partial:deriv:mes:p:hat:mu}, \eqref{holder:reg:v:pmp1:sec:partial:mes:parametrix:kernel} and \eqref{holder:reg:partial:mes:pmp1:convol:partial:mes:parametrix:kernel} yields
\begin{eqnarray*}
& &\Big|(\partial^2_\mu \widehat{p}_{m+1}(\cdot)(\gv_1) + p_{m+1} \otimes \partial^2_\mu \mH_{m+1}(\cdot)(\gv_1)+ \partial_\mu p_{m+1}(\cdot)(v_1) \otimes \partial_\mu \mH_{m+1}(\cdot)(v_1')\notag\\
&& \qquad \qquad+ \partial_\mu p_{m+1}(\cdot)(v_1') \otimes \partial_\mu \mH_{m+1}(.)(v_1))(\mu, s, t, x, z) \\
& & - \Big((\partial^2_\mu \widehat{p}_{m+1}(\cdot)(\gv_2) + p_{m+1} \otimes \partial^2_\mu \mH_{m+1}(\cdot)(\gv_2)+ \partial_\mu p_{m+1}(\cdot)(v_2) \otimes \partial_\mu \mH_{m+1}(\cdot)(v_2')\notag\\
&& \qquad \qquad + \partial_\mu p_{m+1}(\cdot)(v_2') \otimes \partial_\mu \mH_{m+1}(.)(v_2))(\mu, s, t, x, z)\Big) \Big|\\
& & \leq K^{+}_\beta \frac{|\gv_1-\gv_2|^\beta}{(t-s)^{1+\frac{\beta-\eta}{2}}}\left\{1 + \sum_{k=1}^{m} (C^+_\beta)^{k} (t-s)^{k \frac{\eta}{2}}  \prod_{i=1}^{k} B\left(\frac{\eta}{2}, \frac{\eta-\beta}{2} +  (i-1)\frac{\eta}{2} \right)  \right\} \nonumber \\
& & \quad \times g(c(t-s) , z-x) 
\end{eqnarray*}
\noindent which in turn by \eqref{iter:parametrix:kernel} implies
\begin{eqnarray*}
& & \sum_{k\geq 0} \Big( (\partial^2_\mu \widehat{p}_{m+1}(\cdot)(\gv_1) + p_{m+1} \otimes \partial^2_\mu \mH_{m+1}(\cdot)(\gv_1)+  \partial_\mu p_{m+1}(\cdot)(v_1) \otimes \partial_\mu \mH_{m+1}(\cdot)(v_1')\notag\\
&& \qquad \qquad+ \partial_\mu p_{m+1}(\cdot)(v_1') \otimes \partial_\mu \mH_{m+1}(.)(v_1)) \\
& & \qquad - (\partial^2_\mu \widehat{p}_{m+1}(\cdot)(\gv_2) + p_{m+1} \otimes \partial^2_\mu \mH_{m+1}(\cdot)(\gv_2)+ \partial_\mu p_{m+1}(\cdot)(v_2) \otimes \partial_\mu \mH_{m+1}(\cdot)(v_2')\notag\\
&& \qquad \qquad+ \partial_\mu p_{m+1}(\cdot)(v_2') \otimes \partial_\mu \mH_{m+1}(.)(v_2) \Big)\otimes \mH^{(k)}_{m+1}(\mu, s, t, x, z) \\
& & \leq K^{+}_\beta \frac{|\gv_1-\gv_2|^\beta}{(t-s)^{1+\frac{\beta-\eta}{2}}}\left\{1 + \sum_{k=1}^{m} (C^+_\beta)^{k} (t-s)^{k \frac{\eta}{2}}  \prod_{i=1}^{k} B\left(\frac{\eta}{2}, \frac{\eta-\beta}{2} +  (i-1)\frac{\eta}{2} \right)  \right\} \, g(c(t-s) , z-x).
\end{eqnarray*}

\noindent Coming back to the identity \eqref{representation:formula:lions:second:deriv:mes:dens} and using the previous estimate allow to conclude the proof of \eqref{estimate:deriv:sec:mes:holder:reg:v:dens:stepmp1:cor}.\\

\end{proof}

\subsection{Proof of the second part of the induction step.}\label{sec:proof:second:part:induction:step}
We here prove the second part of the induction step, that is, the estimates \eqref{second:deriv:mes:reg:mes:estimate:induction:decoupling:mckean} and \eqref{regularity:time:estimate:secon:deriv:decoupling:mckean} at step $m+1$ under the additional assumption \HRpp. We importantly emphasize that in what follows we will use the results established in the first step. In particular, we will use the estimates \eqref{cross:deriv:mes:space:induction:decoupling:mckean} to \eqref{second:deriv:mes:reg:space:deriv:arg:estimate:induction:decoupling:mckean2} which now hold for any positive integer $m$. Moreover, the estimate \eqref{estimate:convol:pmp1:cross:deriv:mes:Hmp1} (with the choice $C^{+}= K^{+}$ as discussed just after the statement of Proposition \ref{prop:second:order:deriv:estimate:reg:holder:space:and:v}) established in the third step of the proof of Proposition \ref{prop:second:order:deriv:estimate:reg:holder:space:and:v} now writes
\begin{align}
\label{estimate:convol:pmp1:cross:deriv:mes:Hmp1:with:first:induction:step:done}
&\Big|(\partial^2_\mu \widehat{p}_{m+1}(\cdot)(\gv) + p_{m+1} \otimes \partial^2_\mu \mH_{m+1}(\cdot)(\gv)+ (\partial_\mu p_{m+1}(\cdot)(v) \otimes \partial_\mu \mH_{m+1})(\cdot)(v')\\
& \qquad \qquad+(\partial_\mu p_{m+1}(\cdot)(v') \otimes \partial_\mu \mH_{m+1})(v))(\mu, s, t, x, z)\Big| \nonumber   \\
&\qquad \leq \frac{\mathscr{C}^{1,0}_{m+1}(C^+, t-s)}{(t-s)^{1-\frac{\eta}{2}}} \, g(c(t-s),z-x)\nonumber
\end{align}

\noindent and recall that $\mathscr{C}^{1,0}_{m+1}(C^+, t-s) \leq K^{+}:= \lim_{m\rightarrow \infty}\mathscr{C}^{1,0}_{m}(C^+, t-s)< \infty$.
As in the previous step, we first need to establish some technical auxiliary estimates regarding the regularity of the coefficients, the Gaussian kernel $\widehat{p}_{m+1}$ and the parametrix kernel $\mH_{m+1}$ with respect to the initial measure $\mu$ and starting time $s$. We thus start with the following lemmas whose proofs are postponed to subsections \ref{proof:lem:reg:mes:deriv:sec:mes:coeff:parametrix:density:kernel} and \ref{proof:lem:reg:time:deriv:sec:mes:coeff:parametrix:density:kernel}. The reader may want to skip their proofs in a first reading and jump to the more natural and intuitive result stated in Proposition \ref{prop:second:part:induction:step} which actually corresponds to the heart of the proof of the second part of the induction step.

\begin{lem}\label{lem:reg:mes:deriv:sec:mes:coeff:parametrix:density:kernel}
For any $\beta \in [0,\eta)$, there exists a positive constant $K^{++}_\beta$ such that for any $(s, x, \gv) \in [0,t) \times \mathbb{R}^d \times (\mathbb{R}^d)^2$, any $r \in [s, t)$, any $\mu, \mu' \in \pp$ and any $(i , j) \in \left\{1, \cdots, d \right\}^2$
\begin{align}
&|\partial^2_\mu [a_{i, j}(t, x,  [X^{s, \xi, (m)}_t])](\gv) - \partial^2_\mu [a_{i, j}(t, x,  [X^{s, \xi, (m)}_t])]_{|\mu=\mu'}(\gv)| \notag\\
&\quad + |\partial^2_\mu [b_i(t, x,  [X^{s, \xi, (m)}_t])](\gv) - \partial^2_\mu [b_i(t, x,  [X^{s, \xi, (m)}_t])]_{|\mu =\mu'}(\gv)|  \label{diff:mes:second:deriv:mes:a:b}\\
&  \quad \leq K^{++}_\beta \left\{ \frac{W_2(\mu, \mu')^{\beta}}{(t-s)^{1+\frac{\beta-\eta}{2}}}  + \int_{(\mathbb{R}^d)^2} (|y'-x'|^\eta \wedge 1) |\partial^2_\mu p_m(\mu, s, t, x', y')(\gv) - \partial^2_\mu p_m(\mu', s, t, x', y')(\gv)|  \,  \mu(dx') \, dy'\right\},\nonumber
\end{align}

\begin{align}
& | \partial^2_\mu \widehat{p}_{m+1}(\mu, s, r, t, x, z)(\gv) - \partial^2_\mu \widehat{p}_{m+1}(\mu', s, r, t, x, z)(\gv)  | \notag\\
& \quad \leq  K^{++}_\beta  \Bigg\{ W_2(\mu, \mu')^{\beta} \Bigg(\frac{1}{(t-s)^{ 1+\frac{\beta-\eta}{2}}} \I_\seq{r=s} + \frac{1}{(r-s)^{ 1+\frac{\beta-\eta}{2}}} \mathbf \I_\seq{r>s} \Bigg ) \label{diff:lions:deriv:pm:mu:mup} \\
& \qquad \qquad  + \frac{1}{t-r}   \int_r^t \int_{(\mathbb{R}^d)^2} (|y'-x'|^\eta \wedge 1)|\partial^2_\mu p_m(\mu, s, r', x', y')(\gv) - \partial^2_\mu p_m(\mu', s, r', x', y')(\gv)|   \,  \mu(dx') \, dy' dr' \Bigg\}  \notag\\
& \qquad \qquad \qquad \times  g(c(t-r), z-x),\notag 
\end{align}

\begin{align}
& | \partial^2_\mu \big[a_{i, j}(t, x, [X^{s, \xi , (m)}_t]) - a_{i, j}(t, z, [X^{s, \xi , (m)}_t])\big](\gv) -  \partial^2_\mu \big[a_{i, j}(t, x, [X^{s, \xi , (m)}_t]) - a_{i, j}(t, z, [X^{s, \xi , (m)}_t])\big]_{| \mu= \mu'}(\gv) |\notag\\
&\leq  K^{++}_\beta \Bigg\{ W_2(\mu, \mu')^{\beta} \Bigg(\frac{|x-z|^{\eta} \wedge 1}{(t-s)^{1+\frac{\beta}{2}}} \wedge \frac{1}{(t-s)^{1+\frac{\beta-\eta}{2}}}\Bigg) \label{diff:lions:deriv:a:mu:mua}  \\ 
& \quad +   \Bigg((|x-z|^{\eta} \wedge 1)\int_{(\mathbb{R}^d)^2} | \partial_\mu^2 p_m(\mu, s, t, x', y')(\gv) -  \partial_\mu^2 p_m(\mu', s, t, x', y')(\gv) | \, \mu(dx') \, dy' \notag\\
&\quad\quad\quad\quad\quad\quad \wedge \int_{(\mathbb{R}^d)^2} (|y'-x'|^{\eta} \wedge 1)  | \partial_\mu^2 p_m(\mu, s, t, x', y')(\gv) -  \partial_\mu^2 p_m(\mu', s, t, x', y')(\gv) | \, \mu(dx') \, dy' \Bigg) \Bigg\},\notag
\end{align}

\begin{align}
 & |\partial_\mu^2 \mH_{m+1}(\mu, s, r, t, x, z)(\gv) - \partial_\mu^2 \mH_{m+1}(\mu', s, r, t, x, z)(\gv) | \nonumber \\
 & \quad \leq K^{++}_\beta \left( W_2(\mu, \mu')^{\beta} \left\{ \frac{1}{(t-r)(r-s)^{1+\frac{\beta-\eta}{2}}} \wedge \frac{1}{(t-r)^{1- \frac{\eta}{2}}(r-s)^{1+\frac{\beta}{2}}} \right\} \right. \nonumber \\
 & \left. \quad + \left\{ \frac{1}{(t-r)^{1-\frac{\eta}{2}}} \int_{(\mathbb{R}^d)^2}  | \partial_\mu^2 p_m(\mu, s, r, x', y')(\gv) -  \partial_\mu^2 p_m(\mu', s, r, x', y')(\gv) |  \, \mu(dx') \, dy'   \right. \right. \label{diff:mes:L:deriv:parametrix:kernel:pmp1} \\
& \quad \quad \left. \left. \wedge \, \frac{1}{t-r} \int_{(\mathbb{R}^d)^2} (|y'-x'|^\eta \wedge 1)  | \partial_\mu^2 p_m(\mu, s, r, x', y')(\gv) -  \partial_\mu^2 p_m(\mu', s, r, x', y')(\gv) |  \,  \mu(dx') \, dy'  \right\} \right. \nonumber  \\
 & \left. \quad +  \frac{1}{(t-r)^{2-\frac{\eta}{2}}}   \int_r^t \int_{(\mathbb{R}^d)^2} (|y'-x'|^{\eta} \wedge1) | \partial_\mu^2 p_m(\mu, s, r', x', y')(\gv) -  \partial_\mu^2 p_m(\mu', s, r', x', y')(\gv) | \, \mu(dx') \, dy' \, dr' \right) \nonumber \\
 & \quad \quad  \times g(c(t-r), z-x).\nonumber
\end{align}

\end{lem}

\begin{lem}\label{lem:reg:time:deriv:sec:mes:coeff:parametrix:density:kernel}
For any $\beta \in [0,\eta)$, there exist positive constants $K^{++}_\beta$ and $c$ such that for any $t\in (0,T]$, any $(\mu, s_1, s_2, x, y,  z) \in \pp \times [0,t)^2 \times (\mathbb{R}^d)^3$, any $\gv \in (\mathbb{R}^d)^2$, any $r\in (s_1\vee s_2, t)$ and any $(i, j) \in \left\{1, \cdots, d\right\}^2$
\begin{align}
&|\partial^2_\mu [a_{i, j}(t, x,  [X^{s_1, \xi, (m)}_t])](\gv) - \partial^2_\mu [a_{i, j}(t, x,  [X^{s_2, \xi, (m)}_t])](\gv)| \notag\\
&\quad + |\partial^2_\mu [b_i(t, x,  [X^{s_1, \xi, (m)}_t])](\gv) - \partial^2_\mu [b_i(t, x,  [X^{s_2, \xi, (m)}_t])](\gv)|  \label{diff:time:second:deriv:mes:a:b}\\
&  \quad \leq K^{++}_\beta \left\{ \frac{|s_1-s_2|^{\frac{\beta}{2}}}{(t-s_1\vee s_2)^{1+\frac{\beta-\eta}{2}}}  + \int_{(\mathbb{R}^d)^2} (|y'-x'|^\eta \wedge 1) |\partial^2_\mu p_m(\mu, s_1, t, x', y')(\gv) - \partial^2_\mu p_m(\mu, s_2, t, x', y')(\gv)|  \, \mu(dx') \,  dy'  \right\}, \notag
\end{align}

\begin{align}
& | \partial^2_\mu \widehat{p}^y_{m+1}(\mu, s_1, r, t, x, z)(\gv) - \partial^2_\mu \widehat{p}^y_{m+1}(\mu, s_2, r, t, x, z)(\gv)  | \notag\\
& \leq K^{++}_\beta \left\{ \frac{|s_1-s_2|^{\frac{\beta}{2}}}{(r-s_1\vee s_2)^{1+\frac{\beta-\eta}{2}}} \right. \label{diff:time:second:L:deriv:p:hat} \\
 &    \Big. \left.   + \frac{1}{t-r} \int_{r}^{t} \int_{(\mathbb{R}^d)^2} (|y'-x'|^{\eta}\wedge 1)  | \partial^2_\mu p_{m}(\mu, s_1, r', x', y')(\gv) - \partial^2_\mu p_{m}(\mu, s_2, r', x', y')(\gv) |   \,  \mu(dx') \, dy' \, dr' \right\} \notag \\
 & \quad \quad \times g(c(t-r), z-x), \notag 
\end{align}

 \begin{align}
  |& \partial^2_\mu \widehat{p}^y_{m+1}(\mu, s_1, t, x, z)(\gv) - \partial^2_\mu \widehat{p}^y_{m+1}(\mu, s_2, t, x, z)(\gv)  | \notag \\
  & \leq K^{++}_\beta \bigg\{ \frac{|s_1-s_2|^{\frac{\beta}{2}}}{(t-s_1)^{1+\frac{\beta-\eta}{2}}} g(c(t-s_1), z-x) +  \frac{|s_1-s_2|^{\frac{\beta}{2}}}{(t-s_2)^{1+\frac{\beta-\eta}{2}}} g(c(t-s_2), z-x)  \label{diff:time:second:L:deriv:p:hat:same:time} \\
 &       + \frac{1}{t-s_1 \vee s_2} \int_{s_1 \vee s_2}^{t} \int_{(\mathbb{R}^d)^2} (|y'-x'|^{\eta}\wedge 1)  | \partial^2_\mu p_{m}(\mu, s_1, r', x', y')(\gv) - \partial^2_\mu p_{m}(\mu, s_2, r', x', y')(\gv)  |   \, \mu(dx')\, dy' \, dr'  \notag \\
 &  \Big.  \quad \times g(c(t-s_1 \vee s_2), z-x) \Big. \bigg\} , \notag 
 \end{align}
 
\begin{align}
& \bigg| \partial^2_\mu \big[a_{i, j}(t, x, [X^{s_1, \xi , (m)}_t]) - a_{i, j}(t, z, [X^{s_1, \xi , (m)}_t])\big](\gv) -  \partial^2_\mu \big[a_{i, j}(t, x, [X^{s_2, \xi , (m)}_t]) - a_{i, j}(t, z, [X^{s_2, \xi , (m)}_t])\big](\gv) \bigg|\notag\\
&\leq  K^{++}_\beta \Bigg\{ |s_1-s_2|^{\frac{\beta}{2}} \Bigg(\frac{|x-z|^{\eta} \wedge 1}{(t-s_1\vee s_2)^{1+\frac{\beta}{2}}} \wedge \frac{1}{(t-s_1\vee s_2)^{1+\frac{\beta-\eta}{2}}}\Bigg) \label{diff:time:second:deriv:mes::diff:space:a:mu:mua}  \\ 
& \quad +   \Bigg((|x-z|^{\eta} \wedge 1)\int_{(\mathbb{R}^d)^2} | \partial_\mu^2 p_m(\mu, s_1, t, x', y')(\gv) -  \partial_\mu^2 p_m(\mu, s_2, t, x', y')(\gv) | \,  \mu(dx') \, dy' \notag\\
&\quad\quad\quad\quad\quad\quad \wedge \int_{(\mathbb{R}^d)^2} (|y'-x'|^{\eta} \wedge 1)  | \partial_\mu^2 p_m(\mu, s_1, t, x', y')(\gv) -  \partial_\mu^2 p_m(\mu, s_2, t, x', y')(\gv) | \,  \mu(dx')\, dy'\Bigg) \Bigg\},\notag
\end{align}

\begin{align}
 & |\partial_\mu^2 \mH_{m+1}(\mu, s_1, r, t, x, z)(\gv) - \partial_\mu^2 \mH_{m+1}(\mu, s_2, r, t, x, z)(\gv) | \nonumber \\
 & \quad \leq K^{++}_\beta \left( |s_1-s_2|^{\frac{\beta}{2}} \left\{ \frac{1}{(t-r)(r-s_1\vee s_2)^{1+\frac{\beta-\eta}{2}}} \wedge \frac{1}{(t-r)^{1- \frac{\eta}{2}}(r-s_1\vee s_2)^{1+\frac{\beta}{2}}} \right\} \right. \nonumber \\
 & \left. \quad + \left\{ \frac{1}{(t-r)^{1-\frac{\eta}{2}}} \int_{(\mathbb{R}^d)^2}  | \partial_\mu^2 p_m(\mu, s_1, r, x', y')(\gv) -  \partial_\mu^2 p_m(\mu, s_2, r, x', y')(\gv) |  \,  \mu(dx')\, dy'  \right. \right. \label{diff:time:second:L:deriv:parametrix:kernel:pmp1} \\
& \quad \quad \left. \left. \wedge \, \frac{1}{t-r} \int_{(\mathbb{R}^d)^2} (|y'-x'|^\eta \wedge 1)  | \partial_\mu^2 p_m(\mu, s_1 , r, x', y')(\gv) -  \partial_\mu^2 p_m(\mu', s_2 , r, x', y')(\gv) |  \,  \mu(dx') \, dy'  \right\} \right. \nonumber  \\
 & \left. \quad +  \frac{1}{(t-r)^{2-\frac{\eta}{2}}}   \int_r^t \int_{(\mathbb{R}^d)^2} (|y'-x'|^{\eta} \wedge1) | \partial_\mu^2 p_m(\mu, s_1, r', x', y')(\gv) -  \partial_\mu^2 p_m(\mu, s_2, r', x', y')(\gv) | \,  \mu(dx')\, dy'  \, dr' \right) \nonumber \\
 & \quad \quad  \times g(c(t-r), z-x).\nonumber
\end{align}

\end{lem}

\begin{prop}\label{prop:second:part:induction:step}For any $\beta  \in [0,\eta)$ there exist positive constants $K^{++}_\beta$ and $c$ such that for any $(t, x, z)\in (0,T] \times (\mathbb{R}^d)^2$, any $s, s_1, s_2 \in [0,t)$, any $\mu, \mu' \in \pp$, any $\gv \in (\mathbb{R}^d)^2$ and any value of the constant $C_\beta^{++}$ appearing in the right-hand side of the estimates \eqref{second:deriv:mes:reg:mes:estimate:induction:decoupling:mckean} and \eqref{regularity:time:estimate:secon:deriv:decoupling:mckean} 
\begin{align}
|& \partial_\mu^2 p_{m+1}(\mu, s, t, x, z)(\gv) - \partial_\mu^2 p_{m+1}(\mu', s, t, x, z)(\gv) | \nonumber \\
& \leq   K^{++}_\beta \frac{W_2(\mu, \mu')^{\beta}}{(t-s)^{1+\frac{\beta-\eta}{2}}} \left\{ 1 + \sum_{k=1}^{m} (C_\beta^{++})^{k} (t-s)^{k \frac{\eta}{2}}  \prod_{i=1}^{k} B\left(\frac{\eta}{2}, \frac{\eta-\beta}{2} +  (i-1)\frac{\eta}{2} \right)  \right\} \label{estimate:deriv:mes:holder:reg:mes:mes:dens:stepmp1:cor} \\
& \quad \times  g(c(t-s), z-x) \nonumber
\end{align}
\noindent and
\begin{align}
| & \partial^2_\mu p_{m+1}(\mu, s_1, t, x, z)(\gv) - \partial^2_\mu p_{m+1}(\mu, s_2, t, x, z)(\gv) | \nonumber \\
& \leq   K^{++}_{\beta}  \left\{ 1 + \sum_{k=1}^{m} (C_\beta^{++})^{k} (t-s_1 \vee s_2)^{k \frac{\eta}{2}}  \prod_{i=1}^{k} B\left(\frac{\eta}{2}, \frac{\eta-\beta}{2} +  (i-1)\frac{\eta}{2} \right)  \right\} \label{estimate:deriv:mes:holder:reg:time:dens:stepmp1:cor} \\
& \quad \times  \left\{ \frac{|s_1-s_2|^{\frac{\beta}{2}}}{(t-s_1)^{1+\frac{\beta-\eta}{2}}} g(c(t-s_1), z-x) + \frac{|s_1-s_2|^{\frac{\beta}{2}}}{(t-s_2)^{1+\frac{\beta-\eta}{2}}} g(c(t-s_2), z-x) \right\}. \nonumber
\end{align}

\end{prop}

\noindent \emph{Conclusion of the second part of the induction step:}\\
Similarly to the conclusion of the first part of the induction step, we set the constant $C_\beta^{++}$ in the mth partial sums $\mathscr{C}^{1, \beta}_m(C^{++}_{\beta}, t-s)$ appearing in the statement of the Gaussian estimates \eqref{second:deriv:mes:reg:mes:estimate:induction:decoupling:mckean} and \eqref{regularity:time:estimate:secon:deriv:decoupling:mckean} to be equal to the constant $K^{++}_\beta$ appearing in the right-hand side of the Gaussian estimates \eqref{estimate:deriv:mes:holder:reg:mes:mes:dens:stepmp1:cor} and \eqref{estimate:deriv:mes:holder:reg:time:dens:stepmp1:cor}. In doing so, from the above result and by the very definition of $\mathscr{C}^{1, \beta}_{m+1}(K^{++}_\beta, t-s)$, we conclude that the estimates \eqref{estimate:deriv:mes:holder:reg:mes:mes:dens:stepmp1:cor} and \eqref{estimate:deriv:mes:holder:reg:time:dens:stepmp1:cor} directly yield the desired estimates \eqref{second:deriv:mes:reg:mes:estimate:induction:decoupling:mckean} and \eqref{regularity:time:estimate:secon:deriv:decoupling:mckean} at step $m+1$. We thus conclude that the Gaussian estimates \eqref{second:deriv:mes:reg:mes:estimate:induction:decoupling:mckean} and \eqref{regularity:time:estimate:secon:deriv:decoupling:mckean} hold for any positive integer $m$. This completes the proof of the second of part of Proposition \ref{proposition:reg:density:recursive:scheme:mckean}.

\begin{proof}[Proof of Proposition \ref{prop:second:part:induction:step}.]

\noindent \emph{Step 1: proof of the estimate \eqref{estimate:deriv:mes:holder:reg:mes:mes:dens:stepmp1:cor}.\\}

From the identities \eqref{representation:formula:lions:second:deriv:mes:dens} and \eqref{infinite:series:Phi:step:m}, it holds
\begin{align}
\partial^2_\mu & p_{m+1}(\mu, s , t, x, z)(\gv) \notag \\
& = \partial^2_\mu \widehat{p}_{m+1}(\mu, s, t, x, z)(\gv) + (p_{m+1} \otimes \partial^2_\mu \mH_{m+1}(.)(\gv))(\mu, s, t, x, z) \notag \\
& \quad + (\partial_\mu p_{m+1}(\cdot)(v) \otimes \partial_\mu \mH_{m+1}(\cdot)(v'))(\mu, s, t, x, z)+ (\partial_\mu p_{m+1}(\cdot)(v') \otimes \partial_\mu \mH_{m+1}(.)(v))(\mu, s, t, x, z) \label{eq:decom:deltamumu':inter}\\
& + \Big(\Big(\partial^2_\mu \widehat{p}_{m+1}()(\gv) + p_{m+1} \otimes \partial^2_\mu \mH_{m+1}(\cdot)(\gv) \notag \\
& \quad + \partial_\mu p_{m+1}(\cdot)(v) \otimes \partial_\mu \mH_{m+1}(\cdot)(v') + \partial_\mu p_{m+1}(\cdot)(v') \otimes \partial_\mu \mH_{m+1}(.)(v) \Big) \otimes \Phi_{m+1}\Big)(\mu, s, t, x, z).
 \notag
\end{align}

We now investigate the H\"older regularity with respect to the variable $\mu $ of each term in the above decomposition. First, from \eqref{diff:lions:deriv:pm:mu:mup}, \eqref{second:deriv:mes:reg:mes:estimate:induction:decoupling:mckean} at step $m$ and the space time inequality \eqref{space:time:inequality}, we obtain
\begin{eqnarray}
&&| \partial^2_\mu \widehat{p}_{m+1}(\mu, s, t, x, z)(\gv) - \partial^2_\mu \widehat{p}_{m+1}(\mu', s, t, x, z)(\gv) |  \label{esti:1:param:deltamu}\\
&\leq& K_\beta^{++} \left\{ \frac{1}{(t-s)^{1+\frac{\beta-\eta}{2}}} + \frac{1}{t-s} \int_s^t  \frac{\mathscr C_{m}^{1, \beta}(C_\beta^{++},r-s)}{(r-s)^{1+\frac{\beta}{2}-\eta}} dr \right\} \, W_2(\mu, \mu')^{\beta} \,  g(c(t-s), z-x) \notag\\
& \leq& \frac{K_\beta^{++}}{(t-s)^{1+\frac{\eta-\beta}{2}}} \left\{ 1 +\sum_{k=1}^{m} (C^{++}_\beta)^{k} (t-s)^{k \frac{\eta}{2}}  \prod_{i=1}^{k} B\left(\frac{\eta}{2}, \frac{\eta-\beta}{2} +  (i-1)\frac{\eta}{2} \right) \right\}\notag\\
&& \quad \times W_2(\mu, \mu')^{\beta} \, g(c(t-s), z-x)\notag
\end{eqnarray}
for any $\beta$ in $[0,\eta)$. Next, separating the time integral of the space time convolution operator into the two disjoint intervals $[s, (t+s)/2]$ and $((t+s)/2, t]$, we obtain from \eqref{sec:mes:deriv:parametrix:kernel:s:r:t:with:beta} (with $\beta = 0$ on $[s, (t+s)/2]$ and $\beta=1$ on $((t+s)/2, t]$) combined with \eqref{second:deriv:mes:induction:decoupling:mckean}, the space time inequality \eqref{space:time:inequality} and \eqref{regularity:measure:estimate:v1:v2:v3:decoupling:mckean} with $n=0$ that
 \begin{align}
 \int_s^t \int_{\mathbb{R}^d} |p_{m+1}(\mu, s, r, x, y) & - p_{m+1}(\mu', s, r, x, y)|   |\partial^2_\mu \mH_{m+1}(\mu, s, r, t, y, z)(\gv)|\, dr \, dy \label{esti:2:param:deltamu} \\
 &  \leq \frac{K_\beta^{+}}{(t-s)^{1+ \frac{\beta-\eta}{2}}} \, W_2(\mu, \mu')^{\beta} \, g(c(t-s), z-x) \notag
\end{align}

\noindent for any $\beta$ in $[0,\eta)$. Then, by splitting again the time interval as previously done, we can obtain with \eqref{bound:derivative:heat:kernel}, \eqref{diff:mes:L:deriv:parametrix:kernel:pmp1} combined with \eqref{second:deriv:mes:reg:mes:estimate:induction:decoupling:mckean} at step $m$ and the space time inequality \eqref{space:time:inequality}, after some standard computations that we omit, that
 \begin{eqnarray}\label{esti:3:param:deltamu}
&&  \int_s^t \int_{\mathbb{R}^d}  p_{m+1}(\mu', s, r, x, y)  \,  | \partial^2_\mu \mH_{m+1}(\mu, s, r,  t, y, z)(\gv) -  \partial^2_\mu \mH_{m+1}(\mu', s, r, t, y, z)(\gv) |\,  dy \, dr \, \\
 & \leq& \frac{K_\beta^{++}}{(t-s)^{1+\frac{\beta-\eta}{2}}} \left( 1  + \sum_{k=1}^{m} (C_\beta^{++})^{k}  (t-s)^{k \frac{\eta}{2}} \prod_{i=1}^{k} B\left( \frac{\eta}{2}, \frac{\eta - \beta}{2} + (i-1) \frac{\eta}{2}\right) \  \right)\notag \\ 
 && \quad \times W_2(\mu, \mu')^{\beta} \, g(c(t-s), z-x).\notag
 \end{eqnarray}

By symmetry, the third and fourth term appearing on the right-hand side of \eqref{eq:decom:deltamumu':inter} are handled by similar arguments. In particular, from \eqref{diff:mes:first:L:deriv:parametrix:kernel:pmp1}, \eqref{regularity:measure:estimate:v1:v2:decoupling:mckean}, \eqref{first:second:estimate:induction:decoupling:mckean} and \eqref{estimate:partial:deriv:mes:parametrix:kernel:iterated:m} with $k=1$, $n=0$ and $\beta=1$, after some standard computations that we omit, for any $\beta \in [0,\eta)$, we get
\begin{align}
\Big| & (\partial_\mu p_{m+1}(\cdot)(v) \otimes \partial_\mu \mH_{m+1}(\cdot)(v'))(\mu, s, t, x, z) - (\partial_\mu p_{m+1}(\cdot)(v) \otimes \partial_\mu \mH_{m+1}(\cdot)(v'))(\mu', s, t, x, z) \Big| \label{esti:4:param:delta:mu} \\
& + \Big|  (\partial_\mu p_{m+1}(\cdot)(v') \otimes \partial_\mu \mH_{m+1}(v))(\mu, s, t, x, z) - (\partial_\mu p_{m+1}(\cdot)(v') \otimes \partial_\mu \mH_{m+1}(v))(\mu', s, t, x, z) \Big| \notag \\
& \quad \leq  \frac{K_\beta^{+}}{(t-s)^{1+ \frac{\beta}{2}-\eta}} \, W_2(\mu, \mu')^{\beta} \, g(c(t-s), z-x).\notag
\end{align}

Gathering the estimates \eqref{esti:1:param:deltamu}, \eqref{esti:2:param:deltamu}, \eqref{esti:3:param:deltamu} and \eqref{esti:4:param:delta:mu}, we deduce
 \begin{eqnarray*}
&&\Big| \Big( \partial^2_\mu \widehat{p}_{m+1}(\cdot)(\gv)+ p_{m+1} \otimes \partial^2_\mu \mH_{m+1}(\cdot)(\gv) \notag\\
&&\qquad + \partial_\mu p_{m+1}(\cdot)(v) \otimes \partial_\mu \mH_{m+1}(\cdot)(v') + \partial_\mu p_{m+1}(\cdot)(v') \otimes \partial_\mu \mH_{m+1}(\cdot)(v)\Big)(\mu, s, t, x, z)\\
&&  - \Big( \partial^2_\mu \widehat{p}_{m+1}(\cdot)(\gv)+ p_{m+1} \otimes \partial^2_\mu \mH_{m+1}(\cdot)(\gv) \notag\\
&&\qquad + \partial_\mu p_{m+1}(\cdot)(v) \otimes \partial_\mu \mH_{m+1}(\cdot)(v') + \partial_\mu p_{m+1}(\cdot)(v') \otimes \partial_\mu \mH_{m+1}(\cdot)(v) \Big)(\mu', s, t, x, z)\Big| \\
& \qquad \leq& \frac{K^{++}_\beta}{(t-s)^{1+\frac{\beta-\eta}{2}}} \left\{ 1+ \sum_{k=1}^{m} (C_\beta^{++})^{k} (t-s)^{k \frac{\eta}{2}} \prod_{i=1}^{k} B\left(\frac{\eta}{2}, \frac{\eta-\beta}{2} +  (i-1)\frac{\eta}{2} \right)  \right\}\\
&& \quad \times W_2(\mu, \mu')^{\beta} \, g(c(t-s), z-x)
 \end{eqnarray*}

\noindent which in turn combined with \eqref{Gaussian:estimate:Phim} and then using \eqref{estimate:convol:pmp1:cross:deriv:mes:Hmp1:with:first:induction:step:done} with \eqref{gaussian:bound:diff:mes:Phim} imply
  
\begin{eqnarray*}
&&\Big| \Big[\partial^2_\mu \widehat{p}_{m+1}(\cdot)(\gv)+ p_{m+1} \otimes \partial^2_\mu \mH_{m+1}(\cdot)(\gv)+ (\partial_\mu p_{m+1}(\cdot)(v) \otimes \partial_\mu \mH_{m+1})(\cdot)(v')\notag\\
&&\qquad \qquad+(\partial_\mu p_{m+1}(\cdot)(v') \otimes \partial_\mu \mH_{m+1})(v)\Big] \otimes \Phi_{m+1}(\mu, s, t, x, z) \\
& & - \Big[\partial^2_\mu \widehat{p}_{m+1}(\cdot)(\gv)+ p_{m+1} \otimes \partial^2_\mu \mH_{m+1}(\cdot)(\gv)+ (\partial_\mu p_{m+1}(\cdot)(v) \otimes \partial_\mu \mH_{m+1})(\cdot)(v')\notag\\
&&\qquad \qquad+(\partial_\mu p_{m+1}(\cdot)(v') \otimes \partial_\mu \mH_{m+1})(v)\Big] \otimes \Phi_{m+1}(\mu', s, t, x, z)\Big|\\
&\quad \leq &\frac{K^{++}_\beta}{(t-s)^{1+\frac{\beta}{2}-\eta}} \left\{ 1+ \sum_{k=1}^{m} (C_\beta^{++})^{k} (t-s)^{k \frac{\eta}{2}} \prod_{i=1}^{k} B\left(\frac{\eta}{2}, \frac{\eta-\beta}{2} +  (i-1)\frac{\eta}{2} \right)  \right\}\\
&& \quad \times W_2(\mu, \mu')^{\beta} \, g(c(t-s), z-x)
\end{eqnarray*}
 
\noindent for any $\beta \in [0,\eta)$. Coming back to the decomposition \eqref{eq:decom:deltamumu':inter} and gathering the two previous estimates conclude the proof of \eqref{estimate:deriv:mes:holder:reg:mes:mes:dens:stepmp1:cor}. \\

\noindent \emph{Step 2: proof of the estimate \eqref{estimate:deriv:mes:holder:reg:time:dens:stepmp1:cor}.\\}

Let us first observe that if $|s_1-s_2| \geq t-s_1\vee s_2$ then \eqref{estimate:deriv:mes:holder:reg:time:dens:stepmp1:cor} directly follows from \eqref{second:deriv:mes:induction:decoupling:mckean}. We thus assume that $|s_1-s_2| \leq t-s_1\vee s_2$ for the rest of the proof. We again start from the identity \eqref{eq:decom:deltamumu':inter} and now investigate the H\"older regularity of each term with respect to the time variable $s$. For the first term, we combine \eqref{diff:time:second:L:deriv:p:hat:same:time} with \eqref{regularity:time:estimate:secon:deriv:decoupling:mckean} at step $m$ and the space time inequality \eqref{space:time:inequality}. We thus obtain that for any $\beta \in [0,\eta)$
\begin{align}
  |& \partial^2_\mu \widehat{p}^y_{m+1}(\mu, s_1, t, x, z)(\gv) - \partial^2_\mu \widehat{p}^y_{m+1}(\mu, s_2, t, x, z)(\gv)  | \notag \\
  & \leq K^{++}_\beta \bigg\{ \frac{|s_1-s_2|^{\frac{\beta}{2}}}{(t-s_1)^{1+\frac{\beta-\eta}{2}}} g(c(t-s_1), z-x) +  \frac{|s_1-s_2|^{\frac{\beta}{2}}}{(t-s_2)^{1+\frac{\beta-\eta}{2}}} g(c(t-s_2), z-x)  \notag\\
 &       + \frac{|s_1-s_2|^{\frac{\beta}{2}}}{t-s_1 \vee s_2} \int_{s_1 \vee s_2}^{t} \frac{\mathscr{C}_m^{1,\beta}(C^{++}_\beta, r-s_1\vee s_2)}{(r-s_1\vee s_2)^{1+\frac{\beta}{2}-\eta}} dr \,  g(c(t-s_1 \vee s_2), z-x) \Big. \bigg\}  \notag \\
 & \leq K^{++}_\beta \bigg\{ \frac{|s_1-s_2|^{\frac{\beta}{2}}}{(t-s_1)^{1+\frac{\beta-\eta}{2}}} g(c(t-s_1), z-x) +  \frac{|s_1-s_2|^{\frac{\beta}{2}}}{(t-s_2)^{1+\frac{\beta-\eta}{2}}} g(c(t-s_2), z-x) \notag \\
 &       +  |s_1-s_2|^{\frac{\beta}{2}} \int_{s_1 \vee s_2}^{t} \frac{\mathscr{C}_m^{1,\beta}(C^{++}_\beta, r-s_1\vee s_2)}{(t-r)^{1-\frac{\eta}{2}}(r-s_1\vee s_2)^{1+\frac{\beta-\eta}{2}}} dr \,  g(c(t-s_1 \vee s_2), z-x) \Big. \bigg\}  \notag \\
 & \leq K^{++}_\beta  \left\{ 1 + \sum_{k=1}^{m} (C_\beta^{++})^{k} (t-s_1 \vee s_2)^{k \frac{\eta}{2}}  \prod_{i=1}^{k} B\left(\frac{\eta}{2}, \frac{\eta-\beta}{2} +  (i-1)\frac{\eta}{2} \right)  \right\} \notag\\
& \quad \times  \left\{ \frac{|s_1-s_2|^{\frac{\beta}{2}}}{(t-s_1)^{1+\frac{\beta-\eta}{2}}} g(c(t-s_1), z-x) + \frac{|s_1-s_2|^{\frac{\beta}{2}}}{(t-s_2)^{1+\frac{\beta-\eta}{2}}} g(c(t-s_2), z-x) \right\}. \label{diff:time:second:L:deriv:p:hat:same:time:with:pm:estimate} 
 \end{align} 
  
We now investigate the uniform H\"older regularity of the map $s\mapsto (p_{m+1} \otimes \partial^2_\mu \mH_{m+1}(.)(\gv))(\mu, s, t, x, z)$. We use the following decomposition 
$$
 (p_{m+1} \otimes \partial^2_\mu \mH_{m+1}(.)(\gv))(\mu, s_1\vee s_2, t, x, z) - (p_{m+1} \otimes \partial^2_\mu \mH_{m+1}(.)(\gv))(\mu, s_1 \wedge s_2, t, x, z)= {\rm I} + {\rm II} + {\rm III},
$$

\noindent with
\begin{align*}
{\rm I} & := \int_{s_1 \vee s_2}^t \int_{\mathbb{R}^d} [ p_{m+1}(\mu, s_1\vee s_2, r, x, y)  -  p_{m+1}(\mu, s_1\wedge s_2, r, x, y)] \,  \partial^2_\mu \mH_{m+1}(\mu, s_1\vee s_2, r, t, y, z)(\gv) \, dy \, dr, 
\end{align*}

\begin{align*}
{\rm II} & := \int_{s_1 \vee s_2}^t \int_{\mathbb{R}^d}  p_{m+1}(\mu, s_1\wedge s_2, r, x, y) \, [\partial^2_\mu \mH_{m+1}(\mu, s_1\vee s_2, r, t, y, z)(\gv) - \partial^2_\mu \mH_{m+1}(\mu, s_1\wedge s_2, r, t, y, z)(\gv)] \, dy \, dr,
\end{align*}
\noindent and
\begin{align*}
{\rm III} & := - \int_{s_1 \wedge s_2}^{s_1 \vee s_2} \int_{\mathbb{R}^d}   p_{m+1}(\mu, s_1 \wedge s_2, r, x, y)  \, \partial^2_\mu \mH_{m+1}(\mu, s_1\wedge s_2, r, t, y, z)(\gv) \, dy \, dr.
\end{align*}

In order to deal with ${\rm I}$, we first split the time integral into the two intervals $[s_1\vee s_2, (t+s_1\vee s_2)/2)$ and $ [(t+s_1\vee s_2)/2, t]$ to balance the time singularity, then use the estimates \eqref{regularity:time:estimate:v1:v2:v3:decoupling:mckean} with $n=0$ and \eqref{sec:mes:deriv:parametrix:kernel:s:r:t:with:beta} (with $\beta = 0$ on $[s_1\vee s_2, (t+s_1\vee s_2)/2]$ and $\beta=1$ on $((t+s_1\vee s_2)/2, t]$) combined with \eqref{second:deriv:mes:induction:decoupling:mckean} and the space time inequality \eqref{space:time:inequality} so that
$$
| {\rm I} | \leq K^{+}_\beta \left\{\frac{|s_1-s_2|^{\frac{\beta}{2}}}{(t-s_1)^{1+\frac{\beta-\eta}{2}}} g(c(t-s_1), z-x) + \frac{|s_1-s_2|^{\frac{\beta}{2}}}{(t-s_2)^{1+\frac{\beta-\eta}{2}}} g(c(t-s_2) , z-x) \right\}.
$$

To deal with ${\rm II}$, we use \eqref{diff:time:second:L:deriv:parametrix:kernel:pmp1}, \eqref{bound:derivative:heat:kernel} and \eqref{regularity:time:estimate:secon:deriv:decoupling:mckean} at step $m$. To be more specific, we again split the time integral into the two disjoint intervals $[s_1\vee s_2, (t+s_1\vee s_2)/2)$ and $ [(t+s_1\vee s_2)/2, t]$ as previously done. For the time integral on $[s_1\vee s_2, (t+s_1\vee s_2)/2)$, we bound the first term appearing on the right-hand side of \eqref{diff:time:second:L:deriv:parametrix:kernel:pmp1} which writes as a minimum by $K^{++}_\beta |s_1-s_2|^{\frac{\beta}{2}} (t-r)^{-1} (r-s_1\vee s_2)^{-1- \frac{(\beta-\eta)}{2}} g(c(t-r), z-x)$ while for the second term which also writes as a minimum, we bound it by $K^{++}_\beta (t-r)^{-1} \int_{(\mathbb{R}^d)^2} (|y'-x'|^\eta \wedge 1) | \partial^2_\mu p_{m}(\mu, s_1, r, x', y')(\gv) -  \partial^2_\mu p_{m}(\mu, s_2, r, x', y')(\gv)|  \, dy'  \mu(dx') \, g(c(t-r), z-x)$. For the time integral on $[(t+s_1\vee s_2)/2), t]$, we bound the first term appearing on the right-hand side of \eqref{diff:time:second:L:deriv:parametrix:kernel:pmp1} by $K^{++}_\beta |s_1-s_2|^{\frac{\beta}{2}} (t-r)^{-1+\frac{\eta}{2}} (r-s_1\vee s_2)^{-1-\frac{\beta}{2}} g(c(t-r), z-x)$ while for the second term, we bound it by $K^{++}_\beta (t-r)^{-1+\frac{\eta}{2}} \int_{(\mathbb{R}^d)^2} | \partial^2_\mu p_{m}(\mu, s_1, r, x', y')(\gv) - \partial^2_\mu p_{m}(\mu, s_2, r, x', y')(\gv) |  \, dy'  \mu(dx') \, g(c(t-r), z-x)$. For the third term, in both cases, we use Fubini's theorem. After some standard computations that we omit, we obtain
\begin{align*}
|{\rm II}| & \leq K^{++}_\beta \left\{\frac{1}{(t-s_1\vee s_2)^{1+\frac{\beta-\eta}{2}}} +  \int_{s_1\vee s_2}^{t} \frac{\mathscr{C}^{1,  \beta}_{m}(C^{++}_\beta , r-s_1\vee s_2)}{(t-r)^{1-\frac{\eta}{2}}(r-s_1\vee s_2)^{1+\frac{\beta-\eta}{2}}} \, dr \, \right\} \\
& \quad \times |s_1-s_2|^{\beta} g(c(t-s_1\wedge s_2), z-x)
\end{align*}

\noindent for any $\beta \in [0,\eta)$.

We eventually deal with ${\rm III}$ by using  \eqref{bound:derivative:heat:kernel} and \eqref{sec:mes:deriv:parametrix:kernel:s:r:t:with:beta} with $\beta = 0$ combined with \eqref{second:deriv:mes:induction:decoupling:mckean} and the space time inequality \eqref{space:time:inequality}. We get 
\begin{align*}
|{\rm III}| & \leq K^+ \int_{s_1 \wedge s_2}^{s_1 \vee s_2} \frac{1}{(t-r)(r-s_1 \wedge s_2)^{1-\frac{\eta}{2}}} \, dr \, g(c(t-s_1 \wedge s_2), z-x)\\
& \leq K^+ \frac{|s_1-s_2|^{\frac{\eta}{2}}}{t-s_1\vee s_2} \, g(c(t- s_1\wedge s_2), z-x) \\
& \leq K^+ \frac{|s_1-s_2|^{\frac{\beta}{2}}}{(t-s_1\wedge s_2)^{1+\frac{\beta-\eta}{2}}} \, g(c(t-s_1\wedge s_2) , z-x)
\end{align*}

\noindent for any $\beta \in [0, \eta]$ where we used the fact that $t-s_1 \wedge s_2 \leq 2 (t-s_1 \vee s_2)$ for the last inequality. Gathering the three previous estimates eventually yields
\begin{align}
\Big| (p_{m+1} & \otimes \partial^2_\mu \mH_{m+1}(.)(\gv))(\mu, s_1, t, x, z) - (p_{m+1} \otimes \partial^2_\mu \mH_{m+1}(.)(\gv))(\mu, s_2, t, x, z) \Big|\nonumber \\
&  \leq K^{++}_\beta \left\{\frac{|s_1-s_2|^{\frac{\beta}{2}}}{(t-s_1)^{1+\frac{\beta-\eta}{2}}} g(c(t-s_1), z-x) + \frac{|s_1-s_2|^{\frac{\beta}{2}}}{(t-s_2)^{1+\frac{\beta-\eta}{2}}} g(c(t-s_2) , z-x) \right\} \nonumber \\
& + K^{++}_\beta |s_1-s_2|^{\frac{\beta}{2}} \int_{s_1\vee s_2}^{t} \frac{\mathscr{C}^{1, \beta}_{m}(C^{++}_\beta, r-s_1 \vee s_2)}{(t-r)^{1-\frac{\eta}{2}}(r-s_1\vee s_2)^{1+\frac{\beta-\eta}{2}}} \, dr \, g(c(t-s_1\wedge s_2), z-x) \nonumber \\
& \leq  K^{++}_\beta  \left\{ 1 + \sum_{k=1}^{m} (C^{++}_\beta)^{k} (t-s_1 \vee s_2)^{k \frac{\eta}{2}}  \prod_{i=1}^{k} B\left(\frac{\eta}{2}, \frac{\eta-\beta}{2} +  (i-1)\frac{\eta}{2} \right)  \right\} \label{bound:delta:time:convol:pmp1:and:L:deriv:H} \\
& \quad \times  \left\{ \frac{|s_1-s_2|^{\frac{\beta}{2}}}{(t-s_1)^{1+\frac{\beta -\eta}{2}}} g(c(t-s_1), z-x) + \frac{|s_1-s_2|^{\frac{\beta}{2}}}{(t-s_2)^{1+\frac{\beta -\eta}{2}}} g(c(t-s_2), z-x) \right\}. \nonumber
\end{align}

As for the previous estimate, the third and fourth term appearing on the right-hand side of \eqref{eq:decom:deltamumu':inter} are handled by similar arguments. Namely, we first employ the decomposition
\begin{align*}
(\partial_\mu p_{m+1}(\cdot)(v) \otimes \partial_\mu \mH_{m+1}(\cdot)& (v'))(\mu, s_1 \vee s_2, t, x, z) - (\partial_\mu p_{m+1}(\cdot)(v) \otimes \partial_\mu \mH_{m+1}(\cdot)(v'))(\mu, s_1 \wedge s_2, t, x, z) \\
& = {\rm I} + {\rm II} + {\rm III}
\end{align*}
\noindent with
\begin{align*}
{\rm I} & := \int_{s_1 \vee s_2}^t \int_{\mathbb{R}^d} [ \partial_\mu p_{m+1}(\mu, s_1\vee s_2, r, x, y)(v)  -  \partial_\mu p_{m+1}(\mu, s_1\wedge s_2, r, x, y)(v)] \\
& \quad   \partial_\mu \mH_{m+1}(\mu, s_1\vee s_2, r, t, y, z)(v') \, dy \, dr, \\
{\rm II} & := \int_{s_1 \vee s_2}^t \int_{\mathbb{R}^d}  \partial_\mu p_{m+1}(\mu, s_1\wedge s_2, r, x, y)(v) \\
& \quad  [\partial_\mu \mH_{m+1}(\mu, s_1\vee s_2, r, t, y, z)(v') - \partial_\mu \mH_{m+1}(\mu, s_1\wedge s_2, r, t, y, z)(v')] \, dy \, dr,\\
{\rm III} & := - \int_{s_1 \wedge s_2}^{s_1 \vee s_2} \int_{\mathbb{R}^d}   \partial_\mu p_{m+1}(\mu, s_1 \wedge s_2, r, x, y)(v)  \, \partial_\mu \mH_{m+1}(\mu, s_1\wedge s_2, r, t, y, z)(v') \, dy \, dr.
\end{align*}

Then, it follows from \eqref{regularity:time:estimate:v1:v2:decoupling:mckean} with $n=0$, \eqref{first:second:estimate:induction:decoupling:mckean}, \eqref{diff:time:L:deriv:parametrix:kernel:pmp1}, \eqref{estimate:partial:deriv:mes:parametrix:kernel:iterated:m} with $k=1$, $n=0$, $\beta=1$, the inequality $t-s_1 \wedge s_2 \leq 2 (t-s_1 \vee s_2)$ and some standard computations that we omit that for any $\beta \in [0,\eta)$
$$
|{\rm I}| \leq K^+_{\beta}  \left\{ \frac{|s_1-s_2|^{\frac{\beta}{2}}}{(t-s_1)^{1+\frac{\beta}{2}-\eta}} g(c(t-s_1), z-x) + \frac{|s_1-s_2|^{\frac{\beta}{2}}}{(t-s_2)^{1+\frac{\beta}{2}-\eta}} g(c(t-s_2), z-x) \right\},
$$
$$
|{\rm II}| \leq K^+_\beta   \frac{|s_1-s_2|^{\frac{\beta}{2}}}{(t-s_1 \wedge s_2)^{1+\frac{\beta}{2}-\eta}} g(c(t-s_1 \wedge s_2), z-x)
$$
\noindent and
$$
|{\rm III}| \leq K  \frac{|s_1-s_2|^{\frac{\beta}{2}}}{(t-s_1 \wedge s_2)^{1+\frac{\beta}{2}-\eta}} g(c(t-s_1 \wedge s_2), z-x)
$$
\noindent so that
\begin{align}
\Big| (\partial_\mu p_{m+1}(\cdot)(v)&  \otimes \partial_\mu \mH_{m+1}(\cdot) (v'))(\mu, s_1 \vee s_2, t, x, z) - (\partial_\mu p_{m+1}(\cdot)(v) \otimes \partial_\mu \mH_{m+1}(\cdot)(v'))(\mu, s_1 \wedge s_2, t, x, z) \Big| \notag \\
& \leq K^+_{\beta}  \left\{ \frac{|s_1-s_2|^{\frac{\beta}{2}}}{(t-s_1)^{1+\frac{\beta }{2}-\eta}} g(c(t-s_1), z-x) + \frac{|s_1-s_2|^{\frac{\beta}{2}}}{(t-s_2)^{1+\frac{\beta}{2}-\eta}} g(c(t-s_2), z-x) \right\}.\label{bound:delta:time:convol:partial:mes:pmp1:and:L:deriv:H:part1}
\end{align}

Following similar lines of reasonings, it holds
\begin{align}
\Big| (\partial_\mu p_{m+1}(\cdot)(v')&  \otimes \partial_\mu \mH_{m+1}(\cdot)(v))(\mu, s_1 \vee s_2, t, x, z) - (\partial_\mu p_{m+1}(\cdot)(v') \otimes \partial_\mu \mH_{m+1}(\cdot)(v))(\mu, s_1 \wedge s_2, t, x, z) \Big|\notag \\
& \leq K^+_{\beta}  \left\{ \frac{|s_1-s_2|^{\frac{\beta}{2}}}{(t-s_1)^{1+\frac{\beta }{2}-\eta}} g(c(t-s_1), z-x) + \frac{|s_1-s_2|^{\frac{\beta}{2}}}{(t-s_2)^{1+\frac{\beta}{2}-\eta}} g(c(t-s_2), z-x) \right\}.\label{bound:delta:time:convol:partial:mes:pmp1:and:L:deriv:H:part2}
\end{align}

Gathering \eqref{diff:time:second:L:deriv:p:hat:same:time:with:pm:estimate}, \eqref{bound:delta:time:convol:pmp1:and:L:deriv:H}, \eqref{bound:delta:time:convol:partial:mes:pmp1:and:L:deriv:H:part1} and \eqref{bound:delta:time:convol:partial:mes:pmp1:and:L:deriv:H:part2} yields
\begin{align}
\Big| \Big(\partial^2_\mu & \widehat{p}_{m+1}(\cdot)(\gv) + p_{m+1} \otimes \partial^2_\mu \mH_{m+1}(\cdot)(\gv) \notag \\
& \quad + \partial_\mu p_{m+1}(\cdot)(v) \otimes \partial_\mu \mH_{m+1}(\cdot)(v') + \partial_\mu p_{m+1}(\cdot)(v') \otimes \partial_\mu \mH_{m+1}(.)(v)\Big)(\mu, s_1, t, x, z) \notag \\
& \quad - \Big( \partial^2_\mu  \widehat{p}_{m+1}(\cdot)(\gv) + p_{m+1} \otimes \partial^2_\mu \mH_{m+1}(\cdot)(\gv) \notag \\
& \quad + \partial_\mu p_{m+1}(\cdot)(v) \otimes \partial_\mu \mH_{m+1}(\cdot)(v') + \partial_\mu p_{m+1}(\cdot)(v') \otimes \partial_\mu \mH_{m+1}(.)(v)\Big)(\mu, s_2, t, x, z) \Big| \notag \\
& \leq K^{++}_\beta  \left\{ 1 + \sum_{k=1}^{m} (C^{++}_\beta)^{k} (t-s_1 \vee s_2)^{k \frac{\eta}{2}}  \prod_{i=1}^{k} B\left(\frac{\eta}{2}, \frac{\eta-\beta}{2} +  (i-1)\frac{\eta}{2} \right)  \right\} \label{bound:delta:time:first:term:second:L:derivative} \\
& \quad \times  \left\{ \frac{|s_1-s_2|^{\frac{\beta}{2}}}{(t-s_1)^{1+\frac{\beta -\eta}{2}}} g(c(t-s_1), z-x) + \frac{|s_1-s_2|^{\frac{\beta}{2}}}{(t-s_2)^{1+\frac{\beta -\eta}{2}}} g(c(t-s_2), z-x) \right\}. \notag
\end{align}

Finally, from a similar decomposition to the one employed previously, using the previous bound together with \eqref{Gaussian:estimate:Phim}, then \eqref{estimate:convol:pmp1:cross:deriv:mes:Hmp1:with:first:induction:step:done} with \eqref{gaussian:bound:diff:time:Phim} and the inequality $t-s_1 \wedge s_2 \leq 2 (t-s_1 \vee s_2)$, after some standard computations that we omit, for any $\beta \in [0,\eta)$, we obtain
\begin{align}
\Big| \Big(\partial^2_\mu & \widehat{p}_{m+1}(\cdot)(\gv) + p_{m+1} \otimes \partial^2_\mu \mH_{m+1}(\cdot)(\gv) \notag \\
& \quad + \partial_\mu p_{m+1}(\cdot)(v) \otimes \partial_\mu \mH_{m+1}(\cdot)(v') + \partial_\mu p_{m+1}(\cdot)(v') \otimes \partial_\mu \mH_{m+1}(.)(v)\Big)\otimes \Phi_{m+1}(\mu, s_1, t, x, z) \notag \\
& \quad - \Big( \partial^2_\mu  \widehat{p}_{m+1}(\cdot)(\gv) + p_{m+1} \otimes \partial^2_\mu \mH_{m+1})(\cdot)(\gv) \notag \\
& \quad + \partial_\mu p_{m+1}(\cdot)(v) \otimes \partial_\mu \mH_{m+1}(\cdot)(v')+ \partial_\mu p_{m+1}(\cdot)(v') \otimes \partial_\mu \mH_{m+1}(.)(v)\Big)\otimes \Phi_{m+1}(\mu, s_2, t, x, z) \Big| \notag \\
& \leq K^{++}_\beta  \left\{ 1 + \sum_{k=1}^{m} (C^{++}_\beta)^{k} (t-s_1 \vee s_2)^{k \frac{\eta}{2}}  \prod_{i=1}^{k} B\left(\frac{\eta}{2}, \frac{\eta-\beta}{2} +  (i-1)\frac{\eta}{2} \right)  \right\} \label{bound:delta:time:last:term:second:L:derivative} \\
& \quad \times  \left\{ \frac{|s_1-s_2|^{\frac{\beta}{2}}}{(t-s_1)^{1+\frac{\beta -\eta}{2}}} g(c(t-s_1), z-x) + \frac{|s_1-s_2|^{\frac{\beta}{2}}}{(t-s_2)^{1+\frac{\beta -\eta}{2}}} g(c(t-s_2), z-x) \right\}. \notag
\end{align}

Coming back to the identity \eqref{eq:decom:deltamumu':inter} and gathering the estimates \eqref{bound:delta:time:first:term:second:L:derivative} and \eqref{bound:delta:time:last:term:second:L:derivative} conclude the proof of \eqref{estimate:deriv:mes:holder:reg:time:dens:stepmp1:cor}.
\end{proof}

 \section{Proof of the technical estimates of \ref{proof:main:prop}.}\label{sec:proof:technical:estimates}

 \subsection{Proof of Lemma \ref{lem:diff:and:control:deriv:coeff}.\\}\label{proof:lem:diff:and:control:deriv:coeff}

\noindent \emph{Step 1: regularity of the maps $[0,t) \times \pp \ni (s, \mu) \mapsto  b_i (t, x, [X^{s,\xi, (m)}_t]),\,   a_{i, j} (t, x, [X^{s,\xi, (m)}_t])$. }\\

We apply Proposition \ref{structural:class} with the density function $(s, x, \mu) \mapsto p_m(\mu, s, t, x, z) \in \mathcal{C}^{1, 2, 2}_f([0,t)\times \rr^d \times \pp) $ and to both maps $h(.) = b_i(t, x, .)$ and $h(.) = a_{i, j}(t, x, .)$ respectively. Note that the regularity property established in \ref{subsubsection:regularity:derivatives:heat:kernel:approx} and the estimates \eqref{first:second:estimate:induction:decoupling:mckean}, \eqref{time:derivative:induction:decoupling:mckean}, \eqref{second:deriv:mes:induction:decoupling:mckean}, \eqref{cross:deriv:mes:space:induction:decoupling:mckean} and \eqref{bound:derivative:heat:kernel} ensure that the map $[0,t) \times \mathbb{R}^d \times \pp \ni (s, x, \mu) \mapsto p_m(\mu, s, t, x, z)$ satisfies the conditions of Proposition \ref{structural:class}. In particular the estimate \eqref{integrability:condition} is satisfied. We thus deduce that $(s, \mu) \mapsto  b_i (t, x, [X^{s,\xi, (m)}_t]), \, a_{i, j}(t, x, [X^{s,\xi, (m)}_t]) \in \mathcal{C}^{1,2}_f([0,t) \times \pp)$. \\ 

\noindent \emph{Step 2: proof of the estimate \eqref{recursive:bound:deriv:a:or:b}.}\\

We start from the identity \eqref{full:expression:second:deriv} in Proposition \ref{structural:class} applied to $h(.) = b_i(t, x, .)$ and $h(.) = a_{i, j}(t, x, .)$. We deduce from the estimates \eqref{first:second:estimate:induction:decoupling:mckean} (with $n=0$), \eqref{bound:derivative:heat:kernel} (with $n=1$) and \eqref{cross:deriv:mes:space:induction:decoupling:mckean}, the uniform boundedness and $\eta$-H\"older regularity of the maps $[\delta b_i/\delta m](t, x, m)(.)$ and  $[\delta a_{i, j}/\delta m](t, x, m)(.)$, $[\delta^2 b_i/\delta m^2](t, x, m)(v,.)$ and  $[\delta^2 a_{i, j}/\delta m^2](t, x, m)(v,.)$ and the space time inequality \eqref{space:time:inequality} that the estimate \eqref{recursive:bound:deriv:a:or:b} holds. \\

\noindent \emph{Step 3: proof of the estimate \eqref{recursive:bound:second:order:deriv:mes:holder:reg:space:a}.}\\

The identity \eqref{full:expression:second:deriv} applied to $h(.) = a_{i, j}(t, x, .)$ and $a_{i, j}(t, z, .)$ gives the decomposition 
{\small
$$
\partial^2_\mu \Big[a_{i, j}(t, x, [X^{s, \xi, (m)}_{t}]) - a_{i, j}(t, z, [X^{s, \xi, (m)}_t])\Big](\gv) = \sum_{\ell=1}^{7} \delta a^\ell_{i, j} (\gv, \mu), \quad \gv=(v,v'),
$$
\noindent with 
\begin{align}
\delta a^1_{i, j} (\gv, \mu) & :=  \int_{(\mathbb{R}^d)^2} \Big\{\frac{\delta^2 a_{i, j}}{\delta m^2}(t, x, [X^{s, \xi, (m)}_{t}])(z', z'') - \frac{\delta^2 a_{i, j}}{\delta m^2}(t, x, [X^{s, \xi, (m)}_{t}])(z',v') \nonumber\\
&  - \Big[\frac{\delta^2 a_{i, j}}{\delta m^2}(t, z, [X^{s, \xi, (m)}_{t}])(z', z'') - \frac{\delta^2 a_{i, j}}{\delta m^2}(t, z, [X^{s, \xi, (m)}_{t}])(z',v')\Big]\Big\}\, \nonumber\\
& \quad \quad \, \partial_{x} p_m(\mu, s, t, v, z') \otimes \partial_{x} p_m(\mu, s, t, v',z'') \, dz' dz'', \nonumber \\
\delta a^{2}_{i, j}(\gv, \mu) & := \int_{(\mathbb{R}^d)^3} \Big\{\frac{\delta^2 a_{i, j}}{\delta m^2}(t, x, [X^{s, \xi, (m)}_{t}])(z', z'') - \frac{\delta^2 a_{i, j}}{\delta m^2}(t, x, [X^{s, \xi, (m)}_{t}])(z', x') \nonumber\\
& - \Big[\frac{\delta^2 a_{i, j}}{\delta m^2}(t, z, [X^{s, \xi, (m)}_{t}])(z', z'') - \frac{\delta^2 a_{i, j}}{\delta m^2}(t, z, [X^{s, \xi, (m)}_{t}])(z', x') \Big] \Big\} \nonumber \\
& \quad \quad \,\partial_x  p_m(\mu, s, t, v, z')\otimes  \partial_\mu p_m(\mu, s, t, x', z'')(v')  \, dz'dz''\, \mu(dx'), \nonumber 
\end{align}
\begin{align}
\delta a^{3}_{i, j}(\gv, \mu) & :=\int_{\mathbb{R}^d}  \Big\{\frac{\delta a_{i, j}}{\delta m}(t, x, [X^{s, \xi, (m)}_{t}])(z')- \frac{\delta a_{i, j}}{\delta m}(t, x, [X^{s, \xi, (m)}_{t}])(v) \nonumber \\
& \quad - \Big[\frac{\delta a_{i, j}}{\delta m}(t, z, [X^{s, \xi, (m)}_{t}])(z')- \frac{\delta a_{i, j}}{\delta m}(t, z, [X^{s, \xi, (m)}_{t}])(v) \Big]\Big\} \,  \partial_\mu [\partial_x p_m(\mu, s, t, v ,z')](v')dz',\nonumber \\
\delta a^4_{i, j}(\gv, \mu) &:= \int_{\mathbb{R}^d}  \Big\{\frac{\delta a_{i, j}}{\delta m}(t, x, [X^{s, \xi, (m)}_{t}])(z') - \frac{\delta a_{i, j}}{\delta m} (t, x, [X^{s, \xi, (m)}_{t}])(v')\nonumber\\
& \quad \quad - \Big[\frac{\delta a_{i, j}}{\delta m}(t, z, [X^{s, \xi, (m)}_{t}])(z') - \frac{\delta a_{i, j}}{\delta m} (t, z, [X^{s, \xi, (m)}_{t}])(v')\Big]\Big\} \partial_{x}[\partial_\mu p_m(\mu, s, t, v', z')(v)] \, dz', \nonumber 
\end{align}
\begin{align}
\delta a^5_{i, j}(\gv, \mu) & := \int_{(\mathbb{R}^d)^3} \Big\{\frac{\delta^2 a_{i, j}}{\delta m^2}(t, x, [X^{s, \xi, (m)}_{t}])(z', z'') - \frac{\delta^2 a_{i, j}}{\delta m^2}(t, x, [X^{s, \xi, (m)}_{t}])(z', v') \nonumber \\
& \quad - \Big[\frac{\delta^2 a_{i, j}}{\delta m^2}(t, z, [X^{s, \xi, (m)}_{t}])(z', z'') - \frac{\delta^2 a_{i, j}}{\delta m^2}(t, z, [X^{s, \xi, (m)}_{t}])(z', v')\Big]\Big\} \nonumber \\
& \quad   \quad \quad \partial_\mu p_m(\mu, s, t, x', z')(v) \otimes \partial_x p_m(\mu, s, t, v',z'')\, dz'\, dz''\, \mu(dx'), \nonumber \\
\delta a^6_{i, j}(\gv, \mu) & :=  \int_{(\mathbb{R}^d)^4}\Big\{\frac{\delta^2 a_{i, j}}{\delta m^2}(t, x, [X^{s, \xi, (m)}_{t}])(z', z'')-\frac{\delta^2 a_{i, j}}{\delta m^2}(t, x, [X^{s, \xi, (m)}_{t}])(z', x'') \nonumber \\
& \quad- \Big[\frac{\delta^2 a_{i, j}}{\delta m^2}(t , z, [X^{s, \xi, (m)}_{t}])(z', z'')-\frac{\delta^2 a_{i, j}}{\delta m^2}(t, z, [X^{s, \xi, (m)}_{r}])(z', x'') \Big] \Big\} \nonumber \\
& \quad \quad  \partial_\mu p_m(\mu, s, r, x', z')(v)\otimes \partial_\mu p_m(\mu, s, r, x'', z'')(v')\,dz'\, dz''\, \mu(dx'')\, \mu(dx'), \nonumber \\
\delta a^{7}_{i, j}(\gv, \mu) & := \int_{(\mathbb{R}^d)^2} \Big\{\frac{\delta a_{i, j}}{\delta m}(t, x, [X^{s, \xi, (m)}_{t}])(z')- \frac{\delta a_{i, j}}{\delta m}(t, x, [X^{s, \xi, (m)}_{t}])(x') \nonumber \\
& \quad - \Big[\frac{\delta a_{i, j}}{\delta m}(t, z, [X^{s, \xi, (m)}_{t}])(z')- \frac{\delta a_{i, j}}{\delta m}(t, z, [X^{s, \xi, (m)}_{t}])(x')\Big]\Big\}\, \partial_\mu^2 p_m(\mu, s, t, x', z')(v, v')\,  dz'\, \mu(dx').\nonumber
\end{align}
}
Now, the uniform $\eta$-H\"older regularity of the maps $ [\delta a_{i, j}/\delta m](t, ., m)(.)$ and $ [\delta^2 a_{i, j}/\delta m^2](t, ., m)(v, .)$ yields 
\begin{align*}
\Big| \frac{\delta a_{i, j}}{\delta m}(t, x, [X^{s, \xi, (m)}_{t}])(z') &  - \frac{\delta a_{i, j}}{\delta m}(t, x, [X^{s, \xi, (m)}_{t}])(v) \\
& \quad - \Big[\frac{\delta a_{i, j}}{\delta m}(t, z, [X^{s, \xi, (m)}_{t}])(z')- \frac{\delta a_{i, j}}{\delta m}(t, z, [X^{s, \xi, (m)}_{t}])(v) \Big]\Big| \\
& \quad \quad\leq K^+ (|z-x|^\eta \wedge |z'-v|^\eta \wedge 1) \\
& \quad \quad \leq K^{+} |z-x|^{\beta' \eta} (|z'-v|^{(1-\beta')\eta} \wedge 1)
\end{align*}
\noindent and
\begin{align*}
\Big| \frac{\delta^2 a_{i, j}}{\delta m^2}(t, x, [X^{s, \xi, (m)}_{t}])(z', z'')  & - \frac{\delta^2 a_{i, j}}{\delta m^2}(t, x, [X^{s, \xi, (m)}_{t}])(z', x'') \\
& \quad- \Big[\frac{\delta^2 a_{i, j}}{\delta m^2}(t, z, [X^{s, \xi, (m)}_{t}])(z', z'')- \frac{\delta^2 a_{i, j}}{\delta m^2}(t, z, [X^{s, \xi, (m)}_{t}])(z',x'') \Big]\Big| \\
& \quad \quad \leq K^+ |z-x|^{\beta' \eta} (|z''-x''|^{(1-\beta')\eta} \wedge 1)
\end{align*}
\noindent for any $\beta' \in [0,1]$. It follows from the two previous inequalities as well as the estimates \eqref{first:second:estimate:induction:decoupling:mckean} (with $n=0$), \eqref{bound:derivative:heat:kernel} (with $n=1$), \eqref{cross:deriv:mes:space:induction:decoupling:mckean}, the space-time inequality \eqref{space:time:inequality} and standard computations based on Gaussian kernels that
\begin{equation}
\label{bound:sum:1:to:6:delta:a:ell}
\sum_{\ell=1}^{6} |\delta a^\ell_{i, j} (\gv, \mu) | \leq K^{+}_{\beta'}  \frac{|z-x|^{\beta' \eta}}{(t-s)^{1-\frac{(1-\beta')\eta}{2}}}
\end{equation}
\noindent and
$$
|\delta a^{7}_{i, j}(\gv, \mu)| \leq K^{+} |z-x|^{\beta' \eta} \int_{(\mathbb{R}^d)^2} (|z'-x'|^{(1-\beta')\eta} \wedge 1)  |\partial_\mu^2 p_m(\mu, s, t, x', z')(\gv)| \,  \mu(dx') \, dz'.
$$

Gathering the above estimates concludes the proof of \eqref{recursive:bound:second:order:deriv:mes:holder:reg:space:a}. \\

\noindent \emph{Step 4: proof of the estimate \eqref{recursive:bound:second:order:deriv:mes:reg:holder:v:a:or:b}.}\\

We first apply the identity \eqref{full:expression:second:deriv} to the map $m \mapsto b_i(t, x, m)$ as already done in the two previous steps in order to decompose the difference $\partial^2_\mu [b_{i}(t, x, [X^{s, \xi, (m)}_{t}])](\gv_1)  - \partial^2_\mu [b_{i}(t, x, [X^{s, \xi, (m)}_{t}])](\gv_2)$ as the sum of the terms $\delta b^{\ell}_i(\gv_1, \gv_2)$, $\ell=1, \cdots, 7 $, defined by
{\small
\begin{align*}
\delta b^{1}_i(\gv_1, \gv_2) & = \int_{(\mathbb{R}^d)^2} \Big\{\frac{\delta^2 b_i}{\delta m^2}(t, x, [X^{s, \xi, (m)}_{t}])(z, z') - \frac{\delta^2 b_i}{\delta m^2}(t, x, [X^{s, \xi, (m)}_t])(z, v_1')\Big\} \\
& \quad \Big[\partial_x p_m(\mu, s, t, v_1, z)\otimes \partial_{x} p_m(\mu, s, t, v_1',z')  - \partial_x p_m(\mu, s, t, v_2, z)\otimes \partial_{x} p_m(\mu, s, t, v_2',z')  \Big] \, dz\, dz', 
\end{align*}
\begin{align*}
\delta b^{2}_i(\gv_1, \gv_2) & = \int_{(\mathbb{R}^d)^3}  \frac{\delta^2 b_i}{\delta m^2}(t, x, [X^{s, \xi , (m)}_t])(z, z') \\
& \quad  \Big[\partial_x  p_m(\mu, s, t, v_1, z) \otimes \partial_\mu p_m(\mu, s, t, x', z')(v_1') - \partial_x  p_m(\mu, s, t, v_2, z) \otimes \partial_\mu p_m(\mu, s, t, x', z')(v_2')\Big] \, dz\, dz'\, \mu(dx'), 
\end{align*}
\begin{align*}
\delta b^{3}_i(\gv_1, \gv_2) & = \int_{\mathbb{R}^d}  \frac{\delta b_i}{\delta m}(t, x, [X^{s, \xi, (m)}_t])(z)\, \Big[ \partial_\mu\partial_x p_m(\mu, s, t, v_1 ,z)(v_1') -  \partial_\mu\partial_x p_m(\mu, s, t, v_2 ,z)(v_2')  \Big]\, dz, 
\end{align*}
\begin{align*}
\delta b^{4}_i(\gv_1, \gv_2) & =  \int_{\mathbb{R}^d}  \frac{\delta b_i}{\delta m} (t, x, [X_t^{s, \xi , (m)}])(z) \Big[ \partial_{x}\partial_\mu p_m(\mu, s, t, v_1', z)(v_1) - \partial_{x}\partial_\mu p_m(\mu, s, t, v_2', z)(v_2) \Big]\, dz,
\end{align*}
\begin{align*}
\delta b^{5}_i(\gv_1, \gv_2) & =  \int_{(\mathbb{R}^d)^3}  \frac{\delta^2 b_i}{\delta m^2} (t, x, [X_t^{s, \xi , (m)}])(z, z')\\
& \quad \Big[ \partial_\mu p_m(\mu, s, t, x', z)(v_1)\otimes  \partial_{x}p_m(\mu, s, t, v_1',z') -  \partial_\mu p_m(\mu, s, t, x', z)(v_2)\otimes  \partial_{x}p_m(\mu, s, t, v_2',z')\Big]\, dz \, dz' \, \mu(dx'),
\end{align*}
\begin{align*}
\delta b^{6}_i(\gv_1, \gv_2) & =  \int_{(\mathbb{R}^d)^4}  \frac{\delta^2 b_i}{\delta m^2} (t, x, [X_t^{s, \xi , (m)}])(z, z')   \Big[ \partial_\mu p_m(\mu, s, t, x', z)(v_1)\otimes  \partial_{\mu}p_m(\mu, s, t, x'',z')(v_1') \\
& \quad -   \partial_\mu p_m(\mu, s, t, x', z)(v_2)\otimes  \partial_{\mu}p_m(\mu, s, t, x'',z')(v_2')\Big]\, dz \, dz' \, \mu(dx') \mu(dx''),
\end{align*}
\begin{align*}
\delta b^{7}_i(\gv_1, \gv_2) & =  \int_{(\mathbb{R}^d)^2}  \Big\{ \frac{\delta b_i}{\delta m} (t, x, [X_t^{s, \xi , (m)}])(z) -  \frac{\delta b_i}{\delta m} (t, x, [X_t^{s, \xi , (m)}])(x') \Big\} \\
& \quad \quad \Big[  \partial_\mu^2 p_m(\mu, s, t, x', z)(\gv_1) - \partial_\mu^2 p_m(\mu, s, t, x', z)(\gv_2) \Big]\, \,  dz\, \mu(dx').
\end{align*}
}
We then remark that the boundedness and uniform $\eta$-H\"older regularity of $[\delta b_i/\delta m](t, x, m)(.)$ directly yields
\begin{align}
|\delta b^{7}_i(\gv_1, \gv_2)| \leq K  \int_{(\mathbb{R}^d)^2} (|z-x'|^\eta \wedge 1) | \partial_\mu^2 p_m(\mu, s, t, x', z)(\gv_1) - \partial_\mu^2 p_m(\mu, s, t, x', z)(\gv_2)| \,  \mu(dx') \, dz. \label{delta:b7i:v}
\end{align}

Observe now that if $|\gv_1 -\gv_2|\geq (t-s)^{1/2}$ then we use the estimates \eqref{bound:derivative:heat:kernel}, \eqref{first:second:estimate:induction:decoupling:mckean} with $n=0$, rewrite $\delta b_i^{1}(\gv_1, \gv_2)$ as follows
{\small
\begin{align*}
\delta b^{1}_i(\gv_1, \gv_2) & = \int_{(\mathbb{R}^d)^2} \Big\{\frac{\delta^2 b_i}{\delta m^2}(t, x, [X^{s, \xi, (m)}_{t}])(z, z') - \frac{\delta^2 b_i}{\delta m^2}(t, x, [X^{s, \xi, (m)}_t])(z, v_1')\Big\}  \partial_x p_m(\mu, s, t, v_1, z)\otimes \partial_{x} p_m(\mu, s, t, v_1',z')   \, dz\, dz' \\
&  - \int_{(\mathbb{R}^d)^2} \Big\{\frac{\delta^2 b_i}{\delta m^2}(t, x, [X^{s, \xi, (m)}_{t}])(z, z') - \frac{\delta^2 b_i}{\delta m^2}(t, x, [X^{s, \xi, (m)}_t])(z, v_2')\Big\}  \partial_x p_m(\mu, s, t, v_2, z)\otimes \partial_{x} p_m(\mu, s, t, v_2',z')  \, dz \, dz'
\end{align*}
}

\noindent and then use the uniform $\eta$-H\"older regularity of $[\delta^2 b_i/\delta m^2](t, x, m)(z, .)$ and the space-time inequality \eqref{space:time:inequality}. We thus obtain  
$$
\sum_{\ell=1}^{6} |\delta b^{\ell}_i(\gv_1, \gv_2)| \leq \frac{K^+}{(t-s)^{1-\frac{\eta}{2}}} \leq K^+ \frac{|\gv_1-\gv_2|^\beta}{(t-s)^{1+\frac{\beta-\eta}{2}}}
$$
\noindent for any $\beta \in [0,1]$.

We eventually conclude that \eqref{recursive:bound:second:order:deriv:mes:reg:holder:v:a:or:b} is satisfied in the case $|\gv_1 -\gv_2|\geq (t-s)^{1/2}$ by combining the previous estimate with \eqref{delta:b7i:v}. Let us now assume that $|\gv_1- \gv_2|\leq (t-s)^{1/2}$. 
It follows from the uniform boundedness and $\eta$-H\"older regularity of the maps $ [\delta b_i/\delta m](t, x, m)(.), [\delta^2 b_i/\delta m^2](t, x, m)(z, .)$, the estimates \eqref{reg:heat:kernel:deriv} with $n=1$, \eqref{bound:derivative:heat:kernel}, \eqref{sensitivity:mes:deriv:cross:space:mes:pm}, \eqref{first:second:estimate:induction:decoupling:mckean} with $n=0$ and $n=1$ together with the fact that $|\gv_1 -\gv_2|\leq (t-s)^{1/2}$ which in particular implies that $|z'-v_1'|^\eta \leq |z'-v_2'|^\eta + |v_1'-v_2'|^\eta \leq |z'-v_2'|^\eta  + (t-s)^{\eta/2}$ and the space time inequality \eqref{space:time:inequality}, that
$$
\sum_{\ell=1}^{6} |\delta b^{\ell}_i(\gv_1, \gv_2) |\leq K_\beta \frac{|\gv_1-\gv_2|^\beta}{(t-s)^{1+\frac{\beta-\eta}{2}}}.
$$
The above estimate together with \eqref{delta:b7i:v} concludes the proof of \eqref{recursive:bound:second:order:deriv:mes:reg:holder:v:a:or:b} for $\partial^2_\mu [b_{i}(t, x, [X^{s, \xi, (m)}_{t}])](\gv_1)  - \partial^2_\mu [b_{i}(t, x, [X^{s, \xi, (m)}_{t}])]](\gv_2)$. The proof for $ \partial^2_\mu [a_{i, j}(t, x, [X^{s, \xi, (m)}_{t}])](\gv_1)  - \partial^2_\mu [ a_{i, j}(t, x, [X^{s, \xi, (m)}_{t}])](\gv_2) $ follows from completely analogous arguments and is thus omited.\\

\noindent \emph{Step 5: proof of the estimate \eqref{recursive:bound:deriv:mes:double:reg:holder:a}.}\\

We start from the decomposition of \emph{Step 3} and establish an appropriate estimate for $ \delta a^\ell_{i, j} (\gv_1, \mu) -  \delta a^\ell_{i, j} (\gv_2, \mu)$, $\ell=1,\cdots, 7$. We first apply \eqref{bound:sum:1:to:6:delta:a:ell} for $\beta'=0$ and $\beta'=1$ so that 
$$
\sum_{\ell=1}^{6} |\delta a^\ell_{i, j} (\gv_1, \mu) | + |\delta a^\ell_{i, j} (\gv_2, \mu) |  \leq K^{+} \left\{\frac{|z-x|^{\eta} \wedge 1}{t-s} \wedge \frac{1}{(t-s)^{1-\frac{\eta}{2}}} \right\}
$$
\noindent and remark that from the uniform boundedness and $\eta$-H\"older regularity of $\delta a_{i, j}(t, ., m)(.)$ 
\begin{align}
| & \delta a^{7}_{i, j}(\gv_1, \mu)  - \delta a^{7}_{i, j}(\gv_2, \mu) | \notag \\
& \leq K \int_{(\mathbb{R}^d)^2} (|z-x|^\eta \wedge |z'-x'|^\eta \wedge 1) \Big| \partial_\mu^2 p_m(\mu, s, t, x', z')(\gv_1) - \partial_\mu^2 p_m(\mu, s, t, x', z')(\gv_2) \Big| \,  \mu(dx') \, dz' . \label{diff:delta:a7:v:argument}
\end{align}

Combining the two previous estimates yields \eqref{recursive:bound:deriv:mes:double:reg:holder:a} in the case $|\gv_1-\gv_2|\geq (t-s)^{1/2}$. Assuming now that $|\gv_1-\gv_2|\leq (t-s)^{1/2}$, we write
\begin{align*}
\delta a^1_{i, j} (\gv_1, \mu)& -\delta a^1_{i, j}(\gv_2, \mu) \\
& :=  \int_{(\mathbb{R}^d)^2} \Big\{\frac{\delta^2 a_{i, j}}{\delta m^2}(t, x, [X^{s, \xi, (m)}_{t}])(z', z'') - \frac{\delta^2 a_{i, j}}{\delta m^2}(t, x, [X^{s, \xi, (m)}_{t}])(z',v_1') \\
&  \quad \quad  - \Big[\frac{\delta^2 a_{i, j}}{\delta m^2}(t, z, [X^{s, \xi, (m)}_{t}])(z', z'') - \frac{\delta^2 a_{i, j}}{\delta m^2}(t, z, [X^{s, \xi, (m)}_{t}])(z',v_1')\Big]\Big\}\,\\
& \quad \quad \, \Big[ \partial_{x} p_m(\mu, s, t, v_1, z') \otimes \partial_{x} p_m(\mu, s, t, v_1',z'') - \partial_{x} p_m(\mu, s, t, v_2, z') \otimes \partial_{x} p_m(\mu, s, t, v_2',z'') \Big] \, dz' dz'',
\end{align*}
\noindent then use the uniform boundedness and $\eta$-H\"older regularity of the map $[\delta^2 a_{i, j}/\delta m^2](t, ., m)(z',.)$ together with the estimates \eqref{bound:derivative:heat:kernel}, \eqref{reg:heat:kernel:deriv} with $n=1$, the inequality $|z''-v_1'|^\eta\leq |z"-v_2'|^\eta + (t-s)^{\eta/2}$ and the space time inequality \eqref{space:time:inequality} so that for any $\beta \in [0,1]$
$$
|\delta a^1_{i, j} (\gv_1, \mu) -\delta a^1_{i, j}(\gv_2, \mu)| \leq K^{+}_\beta  \left\{\frac{|z-x|^{\eta} \wedge 1}{(t-s)^{1+\frac{\beta}{2}}} \wedge \frac{1}{(t-s)^{1+\frac{\beta-\eta}{2}}} \right\} |\gv_1-\gv_2|^\beta.
$$

The upper-bounds on the remaining terms, namely $|\delta a^\ell_{i, j} (\gv_1, \mu) -\delta a^\ell_{i, j}(\gv_2, \mu)|$, $\ell=2, \cdots, 6$, can be derived following similar lines of reasonings. Namely, using the estimates \eqref{reg:heat:kernel:deriv}, \eqref{bound:derivative:heat:kernel}, \eqref{sensitivity:mes:deriv:cross:space:mes:pm}, \eqref{first:second:estimate:induction:decoupling:mckean} with $n=0$ and $n=1$, recalling that $\mathscr{C}^{1,0}_m(C, t-s) \leq \mathscr{C}^{1,0}_\infty(C, t-s) = \lim_{m\rightarrow \infty} \mathscr{C}^{1,0}_m(C, t-s)< \infty$, as well as the uniform $\eta$-H\"older regularity of the maps $[\delta a_{i, j}/\delta m](t, ., m)(z)$ and $[\delta^2 a_{i, j}/\delta m^2](t, ., m)(z, z')$, omitting some technical details, we get
$$
\sum_{\ell=2}^{6} |\delta a^\ell_{i, j} (\gv_1, \mu) -\delta a^\ell_{i, j}(\gv_2, \mu)| \leq K^+_\beta \frac{|z-x|^{\eta} \wedge 1}{(t-s)^{1+\frac{\beta-\eta}{2}}} |\gv_1-\gv_2|^\beta.
$$
Combining \eqref{diff:delta:a7:v:argument} with the two previous estimates allows to derive \eqref{recursive:bound:deriv:mes:double:reg:holder:a} in the case $|\gv_1-\gv_2|\leq (t-s)^{1/2}$. The proof is now complete.

\subsection{Proof of Corollary \ref{cor:deriv:time:and:mes:phat}.\\} \label{proof:cor:deriv:time:and:mes:phat}

\noindent \emph{Step 1: smoothness of the maps $(s, x, \mu) \mapsto \widehat{p}^{y}_{m+1}(\mu, s, r,  t , x, z)$, $\widehat{p}^{y}_{m+1}(\mu, s,  t , x, z)$.}\\

From Corollary A.1 in \cite{chaudruraynal:frikha}, the two maps $(s, x, \mu) \mapsto \widehat{p}^{y}_{m+1}(\mu, s, r, t, x, z)$ and $\widehat{p}^{y}_{m+1}(\mu, s, t, x, z)$ are in $C^{1,2,2}([0,r)\times \mathbb{R}^d \times \mathcal P_2(\mathbb R^d))$ and $C^{1,2, 2}([0,t)\times \mathbb{R}^d \times \mathcal P_2(\mathbb R^d))$ respectively. According to Definition \ref{def:space:c122}, it thus only remains to investigate the L-derivative of second order and its joint continuity with respect to the variables $s$, $x$, $\mu$ and $\gv$ to obtain that these maps belong to $C_f^{1,2, 2}([0,r)\times \mathcal P_2(\mathbb R^d))$ and $C_f^{1,2}([0,t)\times \mathbb{R}^d \times \mathcal P_2(\mathbb R^d))$ respectively. This can be deduced from Lemma \ref{lem:diff:and:control:deriv:coeff}, the estimates \eqref{deriv:mes:a:m}, \eqref{recursive:bound:deriv:a:or:b:stepm} together with the dominated convergence theorem.\\

\noindent \emph{Step 2: proof of the estimate \eqref{bound:second:deriv:mes:hat:p}.}\\

From \eqref{eq:def:de:phat:m}, \eqref{deriv:mes:a:m}, the dominated convergence theorem and Jacobi's formula, for any $r\in [s, t)$, it holds 
{\small
\begin{align}
& \partial_\mu \widehat{p}^{y}_{m+1}(\mu, s, r, t, x, z)(v)  \notag\\
& = -\frac12 \Bigg[  \tr\left(\left(\int_r^t a(r', y, [X^{s, \xi, (m)}_{r'}])\, dr'\right)^{-1} \int_r^t \partial_\mu [a(r', y, [X^{s, \xi, (m)}_{r'}])](v) \, dr' \right)  - (z-x)^{*} \left(\int_r^t a(r', y, [X^{s, \xi, (m)}_{r'}])\, dr'\right)^{-1}  \notag\\
&   \int_r^t \partial_\mu [a(r', y, [X^{s, \xi, (m)}_{r'}])](v) \, dr'  \left(\int_r^t a(r', y, [X^{s, \xi, (m)}_{r'}])\, dr'\right)^{-1} (z-x) \Bigg]  \widehat{p}^{y}_{m+1}(\mu, s, r, t, x, z)  \label{representation:formula:deriv:mes:p:hat} 
\end{align}

\noindent where

\begin{align*}
& \tr\left(\left(\int_r^t a(r', y, [X^{s, \xi, (m)}_{r'}])\, dr'\right)^{-1}  \int_r^t \partial_\mu [a(r', y, [X^{s, \xi, (m)}_{r'}])](v) \, dr' \right)  \\
& = \sum_{k, \ell =1}^{d} \Big[\left(\int_r^t a(r', y, [X^{s, \xi, (m)}_{r'}])\, dr'\right)^{-1}\Big]_{k, \ell}  \int_r^t \partial_\mu [a_{\ell, k}(r', y, [X^{s, \xi, (m)}_{r'}])](v) \, dr'
\end{align*}

\noindent and
\begin{align*}
& (z-x)^{*} \left(\int_r^t a(r', y, [X^{s, \xi, (m)}_{r'}])\, dr'\right)^{-1} \int_r^t  \partial_\mu [a(r', y, [X^{s, \xi, (m)}_{r'}])](v) \, dr'  \left(\int_r^t a(r', y, [X^{s, \xi, (m)}_{r'}])\, dr'\right)^{-1} (z-x) \\
& = \sum_{k, \ell=1}^d \Big[(z-x)^{*} \left(\int_r^t a(r', y, [X^{s, \xi, (m)}_{r'}])\, dr'\right)^{-1}\Big]_{k} \int_r^t  \partial_\mu [a_{k, \ell}(r', y, [X^{s, \xi, (m)}_{r'}])](v) \, dr' \\
& \quad \quad \times\Big[\left(\int_r^t a(r', y, [X^{s, \xi, (m)}_{r'}])\, dr'\right)^{-1} (z-x)\Big]_{\ell}.
\end{align*}
}
From the previous identities, \eqref{recursive:bound:deriv:a:or:b:stepm} and the dominated convergence theorem, we obtain
{\small
\begin{align}
& \partial^2_\mu \widehat{p}^{y}_{m+1}(\mu, s, r, t, x, z)(\gv)  \notag\\
& = -\frac12\left\{ \tr\Big( \Big[\int_r^t \partial_\mu [a(r', y, [X^{s, \xi, (m)}_{r'}])](v) \, dr' \Big] \otimes \partial_\mu\Big[\left(\int_r^t a(r', y, [X^{s, \xi, (m)}_{r'}])\, dr'\right)^{-1} \Big](v') \right. \notag \\
& \quad \left. + \left(\int_r^t a(r', y, [X^{s, \xi, (m)}_{r'}])\, dr'\right)^{-1} \int_r^t \partial^2_\mu [a(r', y, [X^{s, \xi, (m)}_{r'}])](\gv) \, dr' \Big) \right. \notag\\
& \quad \left. - (z-x)^{*} \int_r^t \partial_\mu [a(r', y, [X^{s, \xi, (m)}_{r'}])](v) \, dr'  \otimes \Big[ \partial_\mu\Big[\left(\int_r^t a(r', y, [X^{s, \xi, (m)}_{r'}])\, dr'\right)^{-1}\Big](v') \Big] \right. \label{representation:formula:second:order:deriv:mes:p:hat}\\ 
& \quad \quad \left. \left(\int_r^t a(r', y, [X^{s, \xi, (m)}_{r'}])\, dr'\right)^{-1} (z-x) \right. \notag \\
& \quad \left. - (z-x)^{*} \left(\int_r^t a(r', y, [X^{s, \xi, (m)}_{r'}])\, dr'\right)^{-1} \int_r^t \partial^2_\mu [a(r', y, [X^{s, \xi, (m)}_{r'}])](\gv) \, dr'  \right. \notag \\
& \quad \quad \left. \left(\int_r^t a(r', y, [X^{s, \xi, (m)}_{r'}])\, dr'\right)^{-1} (z-x) \right. \notag \\
& \quad \left. - (z-x)^{*} \left(\int_r^t a(r', y, [X^{s, \xi, (m)}_{r'}])\, dr'\right)^{-1} \int_r^t \partial_\mu [a(r', y, [X^{s, \xi, (m)}_{r'}])](v) \, dr'  \right. \notag \\
& \quad \left. \quad \otimes \partial_\mu \Big[\left(\int_r^t a(r', y, [X^{s, \xi, (m)}_{r'}])\, dr'\right)^{-1}\Big](v') (z-x) \right\} \widehat{p}^{y}_{m+1}(\mu, s, r, t, x, z) \notag\\
& \quad -\frac12\left( \tr\left(\left(\int_r^t a(r', y, [X^{s, \xi, (m)}_{r'}])\, dr'\right)^{-1} \int_r^t \partial_\mu [a(r', y, [X^{s, \xi, (m)}_{r'}])](v) \, dr' \right) \right. \notag\\
& \quad \left. - (z-x)^{*} \left(\int_r^t a(r', y, [X^{s, \xi, (m)}_{r'}])\, dr'\right)^{-1} \int_r^t \partial_\mu [a(r', y, [X^{s, \xi, (m)}_{r'}])](v) \, dr'  \right. \notag\\ 
& \quad \left. \quad \times \left(\int_r^t a(r', y, [X^{s, \xi, (m)}_{r'}])\, dr'\right)^{-1} (z-x) \right) \otimes \partial_\mu \widehat{p}^{y}_{m+1}(\mu, s, r, t, x, z)(v'). \notag
\end{align}
}
\noindent with the notations
{\small
\begin{align*}
& \tr\Big( \Big[\int_r^t \partial_\mu [a(r', y, [X^{s, \xi, (m)}_{r'}])](v) \, dr' \Big] \otimes \partial_\mu\Big[\left(\int_r^t a(r', y, [X^{s, \xi, (m)}_{r'}])\, dr'\right)^{-1} \Big](v')\\
& + \left(\int_r^t a(r', y, [X^{s, \xi, (m)}_{r'}])\, dr'\right)^{-1} \int_r^t \partial^2_\mu [a(r', y, [X^{s, \xi, (m)}_{r'}])](\gv) \, dr' \Big)\\
& =  \sum_{k, \ell =1}^{d}  \int_r^t \partial_\mu [a_{\ell, k}(r', y, [X^{s, \xi, (m)}_{r'}])](v) \, dr' \otimes \partial_\mu \Big[\left(\int_r^t a(r', y, [X^{s, \xi, (m)}_{r'}])\, dr'\right)^{-1}\Big]_{k, \ell} (v') \\
& \quad + \Big[\left(\int_r^t a(r', y, [X^{s, \xi, (m)}_{r'}])\, dr'\right)^{-1}\Big]_{k, \ell}  \int_r^t \partial^2_\mu [a_{\ell, k}(r', y, [X^{s, \xi, (m)}_{r'}])](\gv) \, dr',
\end{align*}
\noindent 
\begin{align*}
&(z-x)^{*} \int_r^t \partial_\mu [a(r', y, [X^{s, \xi, (m)}_{r'}])](v) \, dr'  \otimes \Big[ \partial_\mu\Big[\left(\int_r^t a(r', y, [X^{s, \xi, (m)}_{r'}])\, dr'\right)^{-1}\Big](v') \Big]  \\ 
&   \left(\int_r^t a(r', y, [X^{s, \xi, (m)}_{r'}])\, dr'\right)^{-1} (z-x) \\
&= \sum_{j, k, \ell=1}^d (z-x)_j  \int_r^t  \partial_\mu [a_{k, \ell}(r', y, [X^{s, \xi, (m)}_{r'}])](v) \, dr' \otimes \partial_\mu \Big[\left(\int_r^t a(r', y, [X^{s, \xi, (m)}_{r'}])\, dr'\right)^{-1} \Big]_{j, k}(v')\\
& \quad \quad \times\Big[\left(\int_r^t a(r', y, [X^{s, \xi, (m)}_{r'}])\, dr'\right)^{-1} (z-x)\Big]_{\ell},
\end{align*}
\begin{align*}
& (z-x)^{*} \left(\int_r^t a(r', y, [X^{s, \xi, (m)}_{r'}])\, dr'\right)^{-1} \int_r^t \partial_\mu [a(r', y, [X^{s, \xi, (m)}_{r'}])](v) \, dr'  \notag \\
& \quad  \quad \otimes \partial_\mu \Big[\left(\int_r^t a(r', y, [X^{s, \xi, (m)}_{r'}])\, dr'\right)^{-1}\Big](v') (z-x) \\
& =  \sum_{j, k, \ell =1}^d \Big[(z-x)^{*} \left(\int_r^t a(r', y, [X^{s, \xi, (m)}_{r'}])\, dr'\right)^{-1}\Big]_{k} \\
& \quad  \int_r^t  \partial_\mu [a_{k, \ell}(r', y, [X^{s, \xi, (m)}_{r'}])](v) \, dr' \otimes \partial_\mu \Big[\left(\int_r^t a(r', y, [X^{s, \xi, (m)}_{r'}])\, dr'\right)^{-1} \Big]_{\ell, j}(v') (z-x)_j
\end{align*}
\noindent and
\begin{align*}
&  (z-x)^{*} \left(\int_r^t a(r', y, [X^{s, \xi, (m)}_{r'}])\, dr'\right)^{-1} \int_r^t \partial^2_\mu [a(r', y, [X^{s, \xi, (m)}_{r'}])](\gv) \, dr'   \notag \\
& \quad \quad  \left(\int_r^t a(r', y, [X^{s, \xi, (m)}_{r'}])\, dr'\right)^{-1} (z-x) \\
& = \sum_{k, \ell=1}^d \Big[(z-x)^{*} \left(\int_r^t a(r', y, [X^{s, \xi, (m)}_{r'}])\, dr'\right)^{-1}\Big]_{k} \int_r^t  \partial^2_\mu [a_{k, \ell}(r', y, [X^{s, \xi, (m)}_{r'}])](\gv) \, dr' \\
& \quad \quad \times\Big[\left(\int_r^t a(r', y, [X^{s, \xi, (m)}_{r'}])\, dr'\right)^{-1} (z-x)\Big]_{\ell}.
\end{align*}
}
The estimate \eqref{bound:second:deriv:mes:hat:p} now follows by combining the previous identity with \eqref{recursive:bound:deriv:a:or:b}, \eqref{deriv:v:deriv:mes:pm:hat} with $n=0$, \eqref{deriv:mes:a:m} and the space-time inequality \eqref{space:time:inequality}. \\
 

\noindent \emph{Step 3: proof of the estimate \eqref{cross:mes:deriv:holder:p:hat:s:t}.\\}

Let us first observe that if $|x_1-x_2|\geq (t-s)^{1/2}$ then the result directly follows from \eqref{bound:second:deriv:mes:hat:p} with $r=s$. Assuming now that $|x_1-x_2| \leq (t-s)^{1/2}$, the estimate \eqref{cross:mes:deriv:holder:p:hat:s:t} follows from the identity \eqref{representation:formula:second:order:deriv:mes:p:hat} combined with \eqref{recursive:bound:deriv:a:or:b}, \eqref{estimate:partial:mes:partial:space:pmhat}, the estimate $|\partial_x \widehat{p}^{y}_{m+1}(\mu, s, r, t, x, z)|\leq K (t-r)^{-1/2} g(c(t-r), z-x)$ stemming for \eqref{standard}, \eqref{first:second:estimate:induction:decoupling:mckean} with $n=0$, \eqref{deriv:mes:a:m} and the space-time inequality \eqref{space:time:inequality}. The remaining technical details are omitted. \\

\noindent \emph{Step 4: proof of the estimate \eqref{cross:mes:deriv:reg:holder:terminal point:p:hat:s:r:t}.\\}

The result follows again from the identity \eqref{representation:formula:second:order:deriv:mes:p:hat} combined with the estimates \eqref{deriv:mes:a:m}, \eqref{diff:v:cross:deriv:diff:and:deriv:coeff} with $n=0$, \eqref{deriv:v:deriv:mes:pm:hat}, \eqref{gaussian:bound:diff:v:deriv:mes:hat:pm} with $n=0$, \eqref{recursive:bound:second:order:deriv:mes:reg:holder:v:a:or:b} and the space-time inequality \eqref{space:time:inequality}.

\subsection{Proof of Corollary \ref{cor:deriv:time:and:mes:parametrix:kernel}.\\} \label{proof:cor:deriv:time:and:mes:parametrix:kernel}

\noindent \emph{Step 1: $(s,\mu) \mapsto \mH_{m+1}(\mu, s, r, t, x, z) \in C_f^{1,2}([0,r)\times \mathcal P_2(\mathbb R^d))$.}\\

It follows from Corollary A.2 in \cite{chaudruraynal:frikha} that the map $(s,\mu) \mapsto  \mH_{m+1}(\mu, s, r, t, x, z)$ is in $C^{1,2}([0,r)\times \mathcal P_2(\mathbb R^d))$. It thus only remains to focus on the L-derivative of second order and its joint continuity with respect to the variables $s$, $x$, $\mu$ and $\gv$ to obtain that this map belongs to $C_f^{1,2}([0,r)\times \mathcal P_2(\mathbb R^d))$. It can be deduced from Corollary \ref{cor:deriv:time:and:mes:phat}, Lemma \ref{lem:diff:and:control:deriv:coeff}, the estimates \eqref{second:deriv:mes:induction:decoupling:mckean} and \eqref{recursive:bound:deriv:a:or:b:stepm} together with the dominated convergence theorem that each term appearing in the expression of $\mH_{m+1}(\mu, s, r, t, x, z)$ given by \eqref{eq:def:de:mH} belongs to $\mathcal{C}_f^{1, 2}([0,r)\times \pp)$ with a second order L-derivative being continuous with respect to the variables $s$, $x$, $\mu$ and $\gv$. We thus conclude that the map $(s, \mu) \mapsto \mH_{m+1}(\mu, s, r,  t , x, z) \in \mathcal{C}^{1, 2}_f([0,r) \times \pp)$ with continuous derivatives with respect to the variables $s$, $x$, $\mu$, $v$ and $\gv$.\\

\noindent \emph{Step 2: proof of the estimate \eqref{sec:mes:deriv:parametrix:kernel:s:r:t:with:beta}. \\}

Then, in order to compute the L-derivative of second order of the parametrix kernel $\mu \mapsto  \mH_{m+1}(\mu, s, r, t, x, z)$ at step $m+1$, we recall the following identity taken from the proof of Corollary A.2 in \cite{chaudruraynal:frikha}, namely, for any $v \in \mathbb{R}^d$, it holds
\begin{align}
\partial_\mu \mH_{m+1}(\mu, s, r ,t ,x ,z)(v) &  = : \big({\rm I}(\mu, s)(v)+ {\rm II}(\mu, s)(v)\big)\widehat{p}_{m+1}(\mu, s, r, t, x, z)  \label{deriv:mu:H:mp1} \\
& +{\rm III }(\mu, s)\, \partial_\mu\widehat{p}_{m+1}(\mu, s, r, t, x, z)(v), \nonumber
\end{align}
\noindent with
\begin{align*}
{\rm I }(\mu, s) (v) &: =  \left\{ - \sum_{i=1}^d \partial_\mu[b_i(r, x, [X^{s, \xi, (m)}_r])](v) H^{i}_1\left(\int_r^{t} a(r', z, [X^{s, \xi, (m)}_{r'}]) dr', z-x\right)   \right\}\\ \, 
& + \Bigg\{ - \sum_{i=1}^d b_i(r, x, [X^{s, \xi, (m)}_r])  \partial_\mu  \Big[H^{i}_1\left(\int_r^{t} a(r',z, [X^{s, \xi, (m)}_{r'}]) dr', z-x\right)\Big](v)   \Bigg\}, \\
{\rm II}(\mu, s)(v) & : =  \left\{ \frac12 \sum_{i, j=1}^d   \partial_\mu \Big[ a_{i, j}(r, x, [X^{s, \xi, (m)}_{r}]) - a_{i, j}(r, z, [X^{s, \xi, (m)}_r])\Big](v) \right.  \\
&\quad \left. \times H^{i, j}_2\left(\int_r^{t} a(r', z , [X^{s, \xi, (m)}_{r'}]) dr', z-x\right)  \right\}  \\
&+  \Bigg\{ \frac12 \sum_{i, j=1}^d  \left(a_{i, j}(r, x, [X^{s, \xi, (m)}_{r}]) - a_{i, j}(r, z, [X^{s, \xi, (m)}_r])\right)  \\ 
& \quad  \times  \partial_\mu \Big[ H^{i, j}_2\left(\int_r^{t} a(r', z, [X^{s, \xi, (m)}_{r'}]) dr', z-x\right)\Big](v) \Bigg\}, \\
{\rm III}(\mu, s) &:= \left\{- \sum_{i=1}^d b_i(r, x, [X^{s, \xi, (m)}_r]) H^{i}_1\left(\int_r^{t} a(r', z, [X^{s, \xi, (m)}_{r'}]) dr', z-x\right)\right.\\
& +\left.\frac12 \sum_{i, j=1}^d (a_{i, j}(r, x, [X^{s, \xi, (m)}_{r}]) - a_{i, j}(r, z, [X^{s, \xi, (m)}_r])) H^{i, j}_2\left(\int_r^{t} a(r', z, [X^{s, \xi, (m)}_{r'}]) dr', z-x\right) \right\} \\
\end{align*}
\noindent so that for any $\gv=(v, v') \in \mathbb{R}^d \times \mathbb{R}^d$
\begin{align}
\partial^2_\mu \mH_{m+1}(\mu, s, r ,t ,x ,z)(\gv) & = \big(\partial_\mu {\rm I}(\mu, s)(\gv)+ \partial_\mu {\rm II}(\mu, s)(\gv)\big)\widehat{p}_{m+1}(\mu, s, r, t, x, z)\label{derivsec:mu:parametrix:kernel:mp1}  \\
& \quad  +\big( {\rm I}(\mu, s)(v)+  {\rm II}(\mu, s)(v)\big) \otimes \partial_\mu \widehat{p}_{m+1}(\mu, s, r, t, x, z)(v')\notag\\
& \quad  +  \partial_\mu\widehat{p}_{m+1}(\mu, s, r, t, x, z)(v) \otimes \partial_\mu {\rm III }(\mu, s)(v') \notag \\
& \quad + {\rm III }(\mu, s)\, \partial_\mu^2\widehat{p}_{m+1}(\mu, s, r, t, x, z)(\gv). \notag
\end{align}
We now estimate each of these terms.\\

\noindent (i) \emph{ Estimate on $|\big( {\rm I}(\mu, s)(v)+  {\rm II}(\mu, s)(v)\big) \otimes \partial_\mu \widehat{p}_{m+1}(\mu, s, r, t, x, z)(v')| + | \partial_\mu\widehat{p}_{m+1}(\mu, s, r, t, x, z)(v) \otimes \partial_\mu {\rm III }(\mu, s)(v')|$ in \eqref{derivsec:mu:parametrix:kernel:mp1}}.\\
 Note carefully that the estimate on the term $\big( {\rm I}(\mu, s)(v)+  {\rm II}(\mu, s)(v)\big)\partial_\mu \widehat{p}_{m+1}(\mu, s, r, t, x, z)(v')$ only involves first order L-derivatives and can be derived easily by following similar lines of reasonings as those employed to deal with the two terms ${\rm I}^{n}(v)$ and ${\rm II}^{n}(v)$ appearing in the decomposition of $\partial^n_v[\partial_\mu \mH_{m+1}(\mu, s, r, t, x, z)](v)$ in the proof of Corollary A.2 in \cite{chaudruraynal:frikha}, the only difference lying into the fact that it is multiplied by $\partial_\mu \widehat{p}_{m+1}(\mu, s, r, t, x, z)(v)$ instead of $\widehat{p}_{m+1}(\mu, s, r, t, x, z)$. Hence, using the estimate \eqref{deriv:v:deriv:mes:pm:hat} instead of the standard Gaussian estimate on $\widehat{p}_{m+1}(\mu, s, r, t, x, z)$, the same estimate holds up to a modification of the constant (which does not depend on $m$) and a time singularity of order $(r-s)^{(1-\eta)/2}$ coming from the additional L-derivative acting on $\widehat p_{m+1}$. We thus conclude that for any $\beta \in [0,1]$
 \begin{align}
 \Big| ( {\rm I}(\mu, s)(v) & +  {\rm II}(\mu, s)(v)) \otimes \partial_\mu \widehat{p}_{m+1}(\mu, s, r, t, x, z)(v') \Big| \label{estimate:I:II:partial:mes:p:hat:mp1} \\
 & \leq ( |{\rm I}(\mu, s)(v)|  +  |{\rm II}(\mu, s)(v)|) |\partial_\mu \widehat{p}_{m+1}(\mu, s, r, t, x, z)(v')|  \notag \\
 & \leq \frac{K_\beta}{(t-r)^{1-\frac{\beta\eta}{2}}(r-s)^{\frac{1-(1-\beta)\eta}{2}}} \frac{1}{(r-s)^{\frac{1-\eta}{2}}}g(c(t-r),z-x). \notag
 \end{align}
 
 We then deal with the term $ \partial_\mu\widehat{p}_{m+1}(\mu, s, r, t, x, z)(v) \otimes \partial_\mu {\rm III }(\mu, s)(v')$ by first observing that from its very definition one has 
\begin{equation}
\label{identity:partial:mes:pmp1:convolution:partial:mes:parametrix:kernel}
 \partial_\mu\widehat{p}_{m+1}(\mu, s, r, t, x, z)(v) \otimes \partial_\mu {\rm III }(\mu, s)(v') = \partial_\mu\widehat{p}_{m+1}(\mu, s, r, t, x, z)(v) \otimes \big({\rm I}(\mu, s)(v')+ {\rm II}(\mu, s)(v')\big),
\end{equation}

\noindent and then by using the fact that since our estimate are here uniform in the variables $v$ and $v'$, these variables do not play any particular role here. By doing so, we readily get that for any $v, \, v' \in \mathbb{R}^d$ and any $\beta \in [0,1]$
\begin{eqnarray}\label{sec:mes:deriv:parametrix:kernel:s:r:t:with:beta:parti}
&& | \partial_\mu\widehat{p}_{m+1}(\mu, s, r, t, x, z)(v) \otimes \partial_\mu {\rm III }(\mu, s)(v')|\le \frac{K_\beta}{(t-r)^{1-\frac{\beta\eta}{2}}(r-s)^{\frac{1-(1-\beta)\eta}{2}}} \frac{1}{(r-s)^{\frac{1-\eta}{2}}}g(c(t-r),z-x).\notag
\end{eqnarray}

\noindent (ii) \emph{Estimate on $\partial_\mu {\rm I}(\mu, s)(\gv)\widehat{p}_{m+1}(\mu, s, r, t, x, z)$ in \eqref{derivsec:mu:parametrix:kernel:mp1}}. It holds

\begin{eqnarray*}
\partial_\mu {\rm I }(\mu, s) (\gv) &: = &  - \sum_{i=1}^d H^{i}_1\left(\int_r^{t} a(r', z, [X^{s, \xi, (m)}_{r'}]) dr', z-x\right)   \partial_\mu^2 [b_i(r, x, [X^{s, \xi, (m)}_r])](\gv)   \\
&&  - \sum_{i=1}^d  \partial_\mu \left[b_i(r, x, [X^{s, \xi, (m)}_r])\right](v) \otimes \partial_\mu \left[H^{i}_1\left(\int_r^{t} a(r', z, [X^{s, \xi, (m)}_{r'}]) dr', z-x\right)\right](v')  \\
&&  - \sum_{i=1}^d b_i(r, x, [X^{s, \xi, (m)}_r])  \partial_\mu^2 \left[H^{i}_1\left(\int_r^{t} a(r',z, [X^{s, \xi, (m)}_{r'}]) dr', z-x\right)\right](\gv) \\
&& - \sum_{i=1}^d   \partial_\mu \left[H^{i}_1\left(\int_r^{t} a(r',z, [X^{s, \xi, (m)}_{r'}]) dr', z-x\right)\right](v) \otimes \partial_\mu \left[ b_i(r, x, [X^{s, \xi, (m)}_r])\right](v').
\end{eqnarray*}
Note that for any map $\mathcal{P}_2(\mathbb{R}^d) \ni  \mu \mapsto \Sigma(\mu)$ taking values in the set of positive definite matrix and being two times continuously L-differentiable, one has 
\begin{align}
\partial_\mu (\Sigma^{-1}(\mu))_{i, j}(v)  & = - (\Sigma^{-1}(\mu) \partial_\mu \Sigma(\mu) \Sigma^{-1}(\mu))_{i, j}(v) \label{first:order:diff:mat:mes} \\
&  = - \sum_{k_1, k_2} (\Sigma^{-1}(\mu))_{i, k_1} \partial_\mu(\Sigma(\mu))_{{k_1,k_2}}(v) (\Sigma^{-1}(\mu))_{k_2, j}\notag
\end{align}
\noindent so that
\begin{align}
\partial^2_\mu&  (\Sigma^{-1}(\mu))_{i, j}(\gv) \label{second:diff-mat-mes} \\
& =  \sum_{k_1, k_2, k_3, k_4} (\Sigma^{-1}(\mu))_{i, k_3} \partial_\mu(\Sigma(\mu))_{{k_1,k_2}}(v) \otimes  \partial_\mu(\Sigma(\mu))_{{k_3,k_4}}(v')   (\Sigma^{-1}(\mu))_{k_4, k_1} (\Sigma^{-1}(\mu))_{k_2, j} \notag \\
& \quad - \sum_{k_1, k_2} (\Sigma^{-1}(\mu))_{i, k_1} \partial^2_\mu(\Sigma(\mu))_{{k_1,k_2}}(\gv) (\Sigma^{-1}(\mu))_{k_2, j}\notag \\
& \quad  + \sum_{k_1, k_2, k_3, k_4} (\Sigma^{-1}(\mu))_{i, k_1} \partial_\mu(\Sigma(\mu))_{{k_1,k_2}}(v) \otimes  \partial_\mu(\Sigma(\mu))_{{k_3,k_4}}(v') (\Sigma^{-1}(\mu))_{k_2, k_3}  (\Sigma^{-1}(\mu))_{k_4, j}.\notag
\end{align}

The above identity together with \eqref{deriv:mes:a:m} and \eqref{recursive:bound:deriv:a:or:b} yield
\begin{align}
\Big| \partial_\mu^2 & \left[H^{i}_1\left(\int_r^{t} a(r',z, [X^{s, \xi, (m)}_{r'}]) dr', z-x\right)\right](\gv) \Big| \label{estimate:second:partial:deriv:H1}\\
 & \leq K^{+}  \left\{ \frac{|z-x|}{(t-r)(r-s)^{1-\frac{\eta}{2}}} +\frac{|z-x|}{(t-r)^2} \int_r^t  \int_{(\mathbb{R}^d)^2} (|y'-x'|^{\eta}\wedge 1) | \partial_\mu^2 p_{m}(\mu, s, r', x', y')(\gv)| \, \mu(dx') \, dy' \, dr'   \right\}.
\end{align}

The previous estimate combined again with \eqref{deriv:mes:a:m}, \eqref{recursive:bound:deriv:a:or:b} and the space time inequality \eqref{space:time:inequality} eventually imply
\begin{eqnarray}\label{sec:mes:deriv:parametrix:kernel:s:r:t:with:beta:partii}
&&|\partial_\mu {\rm I}(\mu, s)(\gv)\widehat{p}_{m+1}(\mu, s, r, t, x, z)| \\
&\le &  \frac{K^+}{(t-r)^{\frac{1}{2}}}\left\{ \frac{1}{(r-s)^{1-\frac{\eta}{2}}} +  \int_{(\rr^d)^2} (|y'-x'|^{\eta}\wedge 1) | \partial_\mu^2 p_{m}(\mu, s, r, x', y')(\gv)| \, \mu(dx') \, dy' \, \right.\notag\\
&& \left.  +\frac{1}{t-r} \int_r^t  \int_{(\mathbb{R}^d)^2} (|y'-x'|^{\eta}\wedge 1) | \partial_\mu^2 p_{m}(\mu, s, r', x', y')(\gv)| \, \mu(dx') \, dy' \, dr'   \right\} g(c(t-r), z-x).\notag
\end{eqnarray}

\noindent (iii) \emph{Estimate on $|\partial_\mu {\rm II}(\mu, s)(\gv) \widehat{p}_{m+1}(\mu, s, r, t, x, z)|$ in \eqref{derivsec:mu:parametrix:kernel:mp1}.} We write

\begin{eqnarray*}
&&\partial_\mu {\rm II}(\mu, s)(\gv) \\
& : = & \frac12 \sum_{i, j=1}^d    H^{i, j}_2\left(\int_r^{t} a(r', z , [X^{s, \xi, (m)}_{r'}]) dr', z-x\right)  \partial^2_\mu \left(a_{i, j}(r, x, [X^{s, \xi, (m)}_{r}]) - a_{i, j}(r, z, [X^{s, \xi, (m)}_r])\right)(\gv)  \\
&&+ \frac12 \sum_{i, j=1}^d   \partial_\mu \left(a_{i, j}(r, x, [X^{s, \xi, (m)}_{r}]) - a_{i, j}(r, z, [X^{s, \xi, (m)}_r])\right)(v) \otimes \partial_\mu \Big[H^{i, j}_2\left(\int_r^{t} a(r', z , [X^{s, \xi, (m)}_{r'}]) dr', z-x\right)\Big](v')  \\
&&+   \frac12 \sum_{i, j=1}^d \partial_\mu \Big[H^{i, j}_2\left(\int_r^{t} a(r', z, [X^{s, \xi, (m)}_{r'}]) dr', z-x\right)\Big](v) \otimes \partial_\mu\left(a_{i, j}(r, x, [X^{s, \xi, (m)}_{r}]) - a_{i, j}(r, z, [X^{s, \xi, (m)}_r])\right)(v')   \\ 
&&+   \frac12 \sum_{i, j=1}^d  \left(a_{i, j}(r, x, [X^{s, \xi, (m)}_{r}]) - a_{i, j}(r, z, [X^{s, \xi, (m)}_r])\right)  \partial_\mu^2 \Big[H^{i, j}_2\left(\int_r^{t} a(r', z, [X^{s, \xi, (m)}_{r'}]) dr', z-x\right)\Big](\gv) \\
& =: & {\rm II}_1(\mu, s)(\gv) + {\rm II}_2(\mu, s)(\gv) +  {\rm II}_3(\mu, s)(\gv) +  {\rm II}_4(\mu, s)(\gv).
\end{eqnarray*}

We deal with ${\rm II}_1(\mu, s)(\gv)$ by using \eqref{recursive:bound:second:order:deriv:mes:holder:reg:space:a} and the space time inequality \eqref{space:time:inequality} so that
\begin{align}
& |{\rm II}_1(\mu, s)(\gv)|  \widehat{p}_{m+1}(\mu, s, r, t, x, z) \label{eq:esti:de:II:1:dans:H} \\
 &\quad  \leq K^{+}_{\beta} |z-x|^{\beta \eta} \left\{ \frac{1}{(r-s)^{1-(1-\beta)\frac{\eta}{2}}}  +  \int_{(\mathbb{R}^d)^2} (|y-x'|^{(1-\beta)\eta} \wedge 1) \, |\partial^2_\mu p_{m}(\mu, s, r, x', y)(\gv)| \, \mu(dx')\, dy\right\} \notag \\
 & \quad \quad \times\left\{ \frac{|z-x|^2}{(t-r)^2} + \frac{1}{t-r}\right\} g(c(t-r), z-x) \notag \\
 & \quad \leq   \frac{K^{+}_{\beta}}{(t-r)^{1-\beta \frac{\eta}{2}}(r-s)^{1-(1-\beta)\frac{\eta}{2}}} \notag\\
 & \quad \times \left\{ 1 + (r-s)^{1-(1-\beta)\frac{\eta}{2}}  \int_{(\mathbb{R}^d)^2} (|y-x'|^{(1-\beta)\eta} \wedge 1) \, |\partial^2_\mu p_{m}(\mu, s, r, x', y)(v, v')| \, \mu(dx')\, dy\right\} \, g(c(t-r), z-x).\notag
 \end{align}

Observe that the two terms ${\rm II}_2(\mu, s)(\gv)$ and ${\rm II}_3(\mu, s)(\gv)$ can be handled in a similar manner following the computations provided in the proof of the estimate (A.15) of Corollary A.2 in \cite{chaudruraynal:frikha}. In comparison with, we have to take into account the additional time singularity of order $(r-s)^{(1-\eta)/2}$ coming from the first order L-derivative of the second order Hermite polynomial $H^{i, j}_2$ which is estimated using \eqref{first:order:diff:mat:mes} and \eqref{deriv:mes:a:m}. We thus derive that for all $\beta \in [0,1]$ 
\begin{eqnarray*}
&&|{\rm II}_2(\mu, s)(\gv)| + |{\rm II}_3(\mu, s)(\gv)|\\
&\le & K^{+}_\beta \frac{|z-x|^{\beta \eta}}{(r-s)^{\frac{1-(1-\beta)\eta}{2}}} \left(\frac{|z-x|^2}{(t-r)^{2}} + \frac{1}{t-r} \right) \frac{1}{(r-s)^{\frac{1-\eta}{2}}}
\end{eqnarray*}
\noindent which in turn by the space-time inequality \eqref{space:time:inequality} yields
\begin{align}
(|{\rm II}_2(\mu, s)(\gv)| &+ |{\rm II}_3(\mu, s)(\gv)|)  \widehat{p}_{m+1}(\mu, s, r, t, x, z) \notag  \\
&  \leq K^{+}_\beta \frac{1}{(t-r)^{1-\beta \frac{\eta}{2}}(r-s)^{1-(2-\beta)\frac{\eta}{2}}} \, g(c(t-r), z-x).
\end{align}

We now deal with the last term ${\rm II}_4(\mu)(\gv)$. From the very definition of $H^{i, j}_2$ and \eqref{second:diff-mat-mes} we obtain
\begin{eqnarray} 
&&\Big| \partial^2_\mu  \left[ H^{i, j}_2\left(\int_r^{t} a(r', z, [X^{s, \xi, (m)}_v]) dr', z-x\right)\right](\gv) \Big|\notag\\ 
&  \leq&   K \Bigg\{ \left( \frac{|z-x|^2}{(t-r)^3} + \frac{1}{(t-r)^2} \right)   \int_r^{t} \max_{i, j} \Big|\partial^2_\mu [a_{i, j}(r', z, [X^{s, \xi, (m)}_v])](\gv)\Big| dr'   \notag\\
&& + \left( \frac{|z-x|^2}{(t-r)^4} + \frac{1}{(t-r)^3} \right)   \Big(\int_r^{t} \max_{i, j} \Big|\partial_\mu [a_{i, j}(r', z, [X^{s, \xi, (m)}_v])](v)\Big| dr'\Big)\notag\\ 
&& \quad \times \Big(\int_r^t \max_{i, j} \Big|\partial_\mu [a_{i, j}(r', z, [X^{s, \xi, (m)}_v])](v')\Big|  dr'\Big) \Bigg\}. \label{bound:second:deriv:mes:H2}
\end{eqnarray}
Hence, using the estimate \eqref{recursive:bound:deriv:a:or:b} of Lemma \ref{lem:diff:and:control:deriv:coeff} for the first term appearing in the above right-hand side and \eqref{deriv:mes:a:m} for the second term, the uniform $\eta$-H\"older regularity of $a(t, .,m)$ and the space time inequality \eqref{space:time:inequality} give
\begin{eqnarray*} 
&&\Big| {\rm II}_4(\mu, s)(\gv) \Big| \widehat{p}_{m+1}(\mu, s, r, t, x, z)  \notag\\ 
&\le &  K^+ \Big\{ \frac{1}{(t-r)^{1-\frac{\eta}{2}}(r-s)^{1-\frac{\eta}{2}}} + \frac{1}{(t-r)^{2-\frac{\eta}{2}}} \int_r^t \int_{(\mathbb{R}^d)^2} (|y'-x'|^{(1-\beta)\eta} \wedge 1) |\partial_\mu^2 p_{m}(\mu, s, r', x', y')(\gv)| \, \mu(dx') \, dy' \,  dr'\Big\} \\
&& \quad \times  g(c(t-r), z-x).
\end{eqnarray*}

Finally, gathering the various estimates on the terms ${\rm II}_\ell(\mu)(\gv)$, $\ell=1,2,3,4$, we obtain
\begin{eqnarray}\label{sec:mes:deriv:parametrix:kernel:s:r:t:with:beta:partiii}
&&|\partial_\mu {\rm II}(\mu, s)(\gv)| \widehat{p}_{m+1}(\mu, s, r, t, x, z) \leq \frac{K^+_\beta}{(t-r)^{1-\beta\frac{\eta}{2}}(r-s)^{1-(1-\beta)\frac{\eta}{2}}}g(c(t-r), z-x)\\
 && \quad \times \Bigg\{(1 +  (r-s)^{1-(1-\beta)\frac{\eta}{2}} \int_{(\mathbb{R}^d)^2} (|y'-x'|^{(1-\beta)\eta}\wedge 1) | \partial_\mu^2 p_{m}(\mu, s, r, x', y') (\gv)| \,  \mu(dx') \, dy'  \notag \\
&& \quad \quad  + \frac{(r-s)^{1-(1-\beta)\frac{\eta}{2}}}{(t-r)^{1+(\beta-1)\frac{\eta}{2}}} \int_r^t \int_{(\mathbb{R}^d)^2} (|y'-x'|^{\eta}\wedge 1) |\partial_\mu^2 p_{m}(\mu, s, r', x', y')(\gv)| \,  \mu(dx')\, dy' \, dr'   \Bigg\}.\notag
\end{eqnarray}

\noindent (iv) \emph{Estimate on $|{\rm III}(\mu, s)\, \partial_\mu^2 \widehat{p}_{m+1}(\mu, s, r, t, x, z)(\gv)|$ in \eqref{derivsec:mu:parametrix:kernel:mp1}.} It follows from the pointwise estimate \eqref{bound:second:deriv:mes:hat:p} of Corollary \ref{cor:deriv:time:and:mes:phat}, the boundedness of $b_i$ and the uniform $\eta$-H\"older regularity of $ a_{i, j}(t, ., m)$ as well as the space time inequality \eqref{space:time:inequality} that
\begin{eqnarray}\label{sec:mes:deriv:parametrix:kernel:s:r:t:with:beta:partiv}
&&|{\rm III}(\mu, s)\, \partial_\mu^2 \widehat{p}_{m+1}(\mu, s, r, t, x, z)(\gv)| \\
& & \leq  \frac{K^+}{(t-r)^{1-\frac{\eta}{2}}(r-s)^{1-\frac \eta2}} \notag\\
&&  \times  \Bigg\{1+ \frac{(r-s)^{1-\frac \eta2}}{t-r} \int_r^t \int_{(\mathbb{R}^d)^2} (|y'-x'|^{\eta}\wedge 1) |\partial_\mu^2 p_{m}(\mu, s, r', x', y')(\gv)| \, \mu(dx') \, dy'  \, dr' \Bigg\}  g(c(t-r),z-x).\notag
\end{eqnarray}

Gathering the above estimates \eqref{estimate:I:II:partial:mes:p:hat:mp1}, \eqref{sec:mes:deriv:parametrix:kernel:s:r:t:with:beta:parti}, \eqref{sec:mes:deriv:parametrix:kernel:s:r:t:with:beta:partii}, \eqref{sec:mes:deriv:parametrix:kernel:s:r:t:with:beta:partiii}, \eqref{sec:mes:deriv:parametrix:kernel:s:r:t:with:beta:partiv} and plugging them into \eqref{derivsec:mu:parametrix:kernel:mp1} eventually gives \eqref{sec:mes:deriv:parametrix:kernel:s:r:t:with:beta}. \\

\noindent \emph{Step 3: proof of the estimate \eqref{second:order:mes:deriv:parametrix:kernel:s:r:t:reg:holder:v:argument}. \\}

We come back to the identity \eqref{derivsec:mu:parametrix:kernel:mp1} and investigate the H\"older regularity of each term of the decomposition with respect to the variable $\gv$.\\

\noindent (i) \emph{H\"older regularity of the two maps $\gv\mapsto \big( {\rm I}(\mu, s)(v) +  {\rm II}(\mu, s)(v)\big) \otimes \partial_\mu \widehat{p}_{m+1}(\mu, s, r, t, x, z)(v')$ and $\gv\mapsto \partial_\mu\widehat{p}_{m+1}(\mu, s, r, t, x, z)(v) \otimes \partial_\mu {\rm III }(\mu, s)(v')$.}\\

 Note that it can deduced from the estimate on the two terms ${\rm I}^{n}(v)$ and ${\rm II}^{n}(v)$ appearing in the decomposition of $\partial^n_v[\partial_\mu \mH_{m+1}(\mu, s, r, t, x, z)](v)$ in the proof of Corollary A.2 in \cite{chaudruraynal:frikha} that $v\mapsto {\rm I}(\mu, s)(v) + {\rm II}(\mu, s)(v)$ is continuously differentiable with 
 \begin{align}
 \label{partial:deriv:v:first:term:second:order:partial:mes:parametrix:kernel}
| \partial_v[{\rm I}(\mu, s)(v) + {\rm II}(\mu, s)(v)] & \otimes \partial_\mu \widehat{p}_{m+1}(\mu, s, r, t, x, z)(v') |  \\
& \leq \frac{K_\beta}{(t-r)^{1-\beta\frac{\eta}{2}}(r-s)^{1-(1-\beta)\frac{\eta}{2}+\frac{1-\eta}{2}}}\, g(c(t-r), z-x).\notag
 \end{align}
 
 \noindent where we also used the estimate \eqref{deriv:v:deriv:mes:pm:hat} instead of the standard Gaussian estimate on $\widehat{p}_{m+1}(\mu, s, r, t, x, z)$ for the last but one inequality. Similarly, using \eqref{deriv:v:deriv:mes:pm:hat} with $n=1$ and the computations employed in Corollary A.2 in \cite{chaudruraynal:frikha} to estimate ${\rm I}(\mu, s)(v)$ and ${\rm II}(\mu, s)(v)$, it follows that $v'\mapsto \big( {\rm I}(\mu, s)(v) +  {\rm II}(\mu, s)(v)\big) \otimes \partial_\mu \widehat{p}_{m+1}(\mu, s, r, t, x, z)(v')$ is continuously differentiable with
 \begin{align}
  \label{partial:deriv:v:first:term:bis:second:order:partial:mes:parametrix:kernel}
 \Big| [{\rm I}(\mu, s)(v) + {\rm II}(\mu, s)(v)] & \otimes  \partial_v[\partial_\mu \widehat{p}_{m+1}(\mu, s, r, t, x, z)](v') \Big| \\
 &  \leq  \frac{K_\beta}{(t-r)^{1-\beta \frac{\eta}{2}} (r-s)^{\frac{1-(1-\beta)\eta}{2}+1-\frac{\eta}{2}}} \, g(c(t-r), z-x).\notag
 \end{align}
 
 We now distinguish the two cases $|\gv_1-\gv_2| \geq (r-s)^{1/2}$ and $|\gv_1-\gv_2|\leq (r-s)^{1/2}$. In the first case, it directly follows from \eqref{estimate:I:II:partial:mes:p:hat:mp1} that for any $\beta, \, \beta' \in [0,1]$ 
 \begin{align*}
 | \big( {\rm I}(\mu, s)(v_1) +  {\rm II}(\mu, s)(v_1)\big)&  \otimes \partial_\mu \widehat{p}_{m+1}(\mu, s, r, t, x, z)(v_1') -  \big( {\rm I}(\mu, s)(v_2) +  {\rm II}(\mu, s)(v_2)\big) \otimes \partial_\mu \widehat{p}_{m+1}(\mu, s, r, t, x, z)(v_2') | \\
 & \leq \frac{K_{\beta'}}{(t-r)^{1-\beta'\frac{\eta}{2}}(r-s)^{\frac12 - (1-\beta')\frac{\eta}{2}}} \frac{1}{(r-s)^{\frac{1-\eta}{2}}}g(c(t-r),z-x)\\
 & \leq K_{\beta'} \frac{|\gv_1-\gv_2|^{\beta}}{(t-r)^{1-\beta' \frac{\eta}{2}}(r-s)^{1+\frac{\beta-\eta}{2}-\frac{(1-\beta')\eta}{2}}} \, g(c(t-r),z-x).
 \end{align*}
 
 In the second case, using the mean-value theorem together with \eqref{partial:deriv:v:first:term:second:order:partial:mes:parametrix:kernel} and \eqref{partial:deriv:v:first:term:bis:second:order:partial:mes:parametrix:kernel} imply that for any $\beta, \, \beta' \in [0,1]$
  \begin{align*}
 | \big( {\rm I}(\mu, s)(v_1) +  {\rm II}(\mu, s)(v_1)\big)&  \otimes \partial_\mu \widehat{p}_{m+1}(\mu, s, r, t, x, z)(v_1') -  \big( {\rm I}(\mu, s)(v_2) +  {\rm II}(\mu, s)(v_2)\big) \otimes \partial_\mu \widehat{p}_{m+1}(\mu, s, r, t, x, z)(v_2') | \\
 & \leq K_{\beta'} \frac{|\gv_1-\gv_2|}{(t-r)^{1-\beta'\frac{\eta}{2}}(r-s)^{\frac{3-\eta}{2} - (1-\beta')\frac{\eta}{2}}} \, g(c(t-r),z-x)\\
 & \leq K_{\beta'} \frac{|\gv_1-\gv_2|^{\beta}}{(t-r)^{1-\beta' \frac{\eta}{2}}(r-s)^{1+\frac{\beta-\eta}{2}-\frac{(1-\beta')\eta}{2}}} \, g(c(t-r),z-x).
 \end{align*}
 
 Gathering the two previous estimates, we conclude that for any $\beta, \, \beta' \in [0,1]$
  \begin{align}
 | \big( {\rm I}(\mu, s)(v_1) +  {\rm II}(\mu, s)(v_1)\big)&  \otimes \partial_\mu \widehat{p}_{m+1}(\mu, s, r, t, x, z)(v_1')  \label{holder:reg:v:arg:sec:mes:deriv:parametrix:kernel:s:r:t:with:beta:part:i1}  \\
 & \quad -  \big( {\rm I}(\mu, s)(v_2) +  {\rm II}(\mu, s)(v_2)\big) \otimes \partial_\mu \widehat{p}_{m+1}(\mu, s, r, t, x, z)(v_2') | \notag  \\
 & \leq K_{\beta'} \frac{|\gv_1-\gv_2|}{(t-r)^{1-\beta'\frac{\eta}{2}}(r-s)^{\frac{3-\eta}{2} - (1-\beta')\frac{\eta}{2}}} \, g(c(t-r),z-x) \notag\\
 & \leq K_{\beta'} \frac{|\gv_1-\gv_2|^{\beta}}{(t-r)^{1-\beta' \frac{\eta}{2}}(r-s)^{1+\frac{\beta-\eta}{2}-\frac{(1-\beta')\eta}{2}}} \, g(c(t-r),z-x). \notag
 \end{align}

The H\"older regularity of the map $\gv \mapsto \partial_\mu\widehat{p}_{m+1}(\mu, s, r, t, x, z)(v) \otimes \partial_\mu {\rm III }(\mu, s)(v')$ is a consequence of the previous estimates recalling the relation \eqref{identity:partial:mes:pmp1:convolution:partial:mes:parametrix:kernel}. We thus obtain that for $\beta, \, \beta' \in [0,1]$
\begin{align}
 | \partial_\mu\widehat{p}_{m+1}(\mu, s, r, t, x, z)(v_1) \otimes & \partial_\mu {\rm III }(\mu, s)(v_1') - \partial_\mu\widehat{p}_{m+1}(\mu, s, r, t, x, z)(v_2) \otimes \partial_\mu {\rm III }(\mu, s)(v_2') | \label{holder:reg:v:arg:sec:mes:deriv:parametrix:kernel:s:r:t:with:beta:part:i2}  \\
 & \leq K_{\beta'} \frac{|\gv_1-\gv_2|^{\beta}}{(t-r)^{1-\beta' \frac{\eta}{2}}(r-s)^{1+\frac{\beta-\eta}{2}-\frac{(1-\beta')\eta}{2}}} \, g(c(t-r),z-x). \notag
\end{align}

\noindent (ii) \emph{H\"older regularity of the map $\gv \mapsto \partial_\mu {\rm I}(\mu, s)(\gv) \widehat{p}_{m+1}(\mu, s, r, t, x, z)$}.\\

From \eqref{recursive:bound:second:order:deriv:mes:reg:holder:v:a:or:b}, we directly obtain
\begin{align*}
\Big| & H^{i}_1\left(\int_r^{t} a(r', z, [X^{s, \xi, (m)}_{r'}]) dr', z-x\right)  \Big[ \partial_\mu^2 [b_i(r, x, [X^{s, \xi, (m)}_r])](\gv_1) - \partial_\mu^2 [b_i(r, x, [X^{s, \xi, (m)}_r])](\gv_2)\Big] \Big| \\
& \leq K^+_\beta \frac{|z-x|}{t-r} \left\{ \frac{|\gv_1-\gv_2|^\beta}{(r-s)^{1+\frac{\beta-\eta}{2}}} + \int_{(\mathbb{R}^d)^2} (|y-x'|^\eta \wedge 1) \, | \partial^2_\mu p_{m}(\mu, s, r, x', y)(\gv_1) - \partial^2_\mu p_{m}(\mu, s, r, x', y)(\gv_2)| \, \mu(dx') \, dy \right\},
\end{align*}
 From the identity \eqref{first:order:diff:mat:mes}, \eqref{diff:v:cross:deriv:diff:and:deriv:coeff} with $n=0$ and \eqref{deriv:mes:a:m} we get
 \begin{align*}
 \Big| \partial_\mu \left[b_i(r, x, [X^{s, \xi, (m)}_r])\right](v_1) & \otimes \partial_\mu \left[H^{i}_1\left(\int_r^{t} a(r', z, [X^{s, \xi, (m)}_{r'}]) dr', z-x\right)\right](v_1') \\
  & -  \partial_\mu \left[b_i(r, x, [X^{s, \xi, (m)}_r])\right](v_2) \otimes \partial_\mu \left[H^{i}_1\left(\int_r^{t} a(r', z, [X^{s, \xi, (m)}_{r'}]) dr', z-x\right)\right](v_2') \Big| \\
  & \leq K^+_\beta| \gv_1-\gv_2|^\beta \frac{|z-x|}{(t-r)(r-s)^{1+\frac{\beta}{2}-\eta}}
 \end{align*}
 \noindent and similarly
 \begin{align*}
\Big| \partial_\mu&  \left[H^{i}_1\left(\int_r^{t} a(r',z, [X^{s, \xi, (m)}_{r'}]) dr', z-x\right)\right](v_1) \otimes \partial_\mu \left[ b_i(r, x, [X^{s, \xi, (m)}_r])\right](v_1') \\
& - \partial_\mu \left[H^{i}_1\left(\int_r^{t} a(r',z, [X^{s, \xi, (m)}_{r'}]) dr', z-x\right)\right](v_2) \otimes \partial_\mu \left[ b_i(r, x, [X^{s, \xi, (m)}_r])\right](v_2') \Big| \\
& \leq   K^{+}_\beta| \gv_1-\gv_2|^\beta \frac{|z-x|}{(t-r)(r-s)^{1+\frac{\beta}{2}-\eta}}.
 \end{align*}

It follows from the relation \eqref{second:diff-mat-mes}, \eqref{diff:v:cross:deriv:diff:and:deriv:coeff} with $n=0$, \eqref{deriv:mes:a:m}, \eqref{recursive:bound:second:order:deriv:mes:reg:holder:v:a:or:b} and the uniform boundedness of $b_i$
\begin{align*}
\Big| & b_i(r, x, [X^{s, \xi, (m)}_r])   \Big[\partial_\mu^2 \Big[H^{i}_1\left(\int_r^{t} a(r',z, [X^{s, \xi, (m)}_{r'}]) dr', z-x\right)\Big](\gv_1) - \partial_\mu^2 \Big[H^{i}_1\left(\int_r^{t} a(r',z, [X^{s, \xi, (m)}_{r'}]) dr', z-x\right)\Big](\gv_2)\Big] \Big|\\
& \leq K^+_\beta \left\{ |\gv_1 - \gv_2|^\beta \frac{|z-x|}{(t-r)(r-s)^{1+\frac{\beta}{2}-\eta}} \right. \\
& \quad \left. + \frac{|z-x|}{(t-r)^2} \int_r^t | \partial_\mu^2[a_{i, j}(r', z, [X_{r'}^{s, \xi, (m)}])](\gv_1) - \partial_\mu^2[a_{i, j}(r', z, [X_{r'}^{s, \xi, (m)}])](\gv_2)| \, dr' \right\}\\
& \leq K^+_\beta \left\{ |\gv_1 - \gv_2|^\beta \frac{|z-x|}{(t-r)(r-s)^{1+\frac{\beta-\eta}{2}}}  \right. \\
& \quad \left. + \frac{|z-x|}{(t-r)^2} \int_r^t \int_{\mathbb{R}^d} (|y-x'|^\eta \wedge 1) \, | \partial^2_\mu p_{m}(\mu, s, r', x', y)(\gv_1) - \partial^2_\mu p_{m}(\mu, s, r', x', y)(\gv_2)| \, \mu(dx') \, dy  \, dr' \right\}.
\end{align*}

Gathering the above estimates and using the space time inequality \eqref{space:time:inequality}, we conclude
\begin{align}
\Big| \big[&  \partial_\mu {\rm I }(\mu, s) (\gv_1) - \partial_\mu {\rm I }(\mu, s) (\gv_2)  \big]\widehat{p}_{m+1}(\mu, s, r, t, x, z)\Big|  \label{holder:reg:v:arg:sec:mes:deriv:parametrix:kernel:s:r:t:with:beta:part:ii}  \\
& \leq  \frac{K^+_\beta}{(t-r)^{\frac12}(r-s)^{1+\frac{\beta-\eta}{2}}} \left\{ |\gv_1 - \gv_2|^\beta + \int_{(\mathbb{R}^d)^2} (|y-x'|^\eta \wedge 1) \, | \partial^2_\mu p_{m}(\mu, s, r, x', y)(\gv_1) - \partial^2_\mu p_{m}(\mu, s, r, x', y)(\gv_2)| \, \mu(dx') \, dy\right. \notag \\
&  \left. +  \frac{(r-s)^{1+\frac{\beta-\eta}{2}}}{t-r} \int_r^t \int_{\mathbb{R}^d} (|y-x'|^\eta \wedge 1) \, | \partial^2_\mu p_{m}(\mu, s, r', x', y)(\gv_1) - \partial^2_\mu p_{m}(\mu, s, r', x', y)(\gv_2)| \, \mu(dx') \, dy  \, dr' \right\} g(c(t-r), z-x).\notag
\end{align}

\noindent (iii) \emph{H\"older regularity of the map $\gv \mapsto \partial_\mu {\rm II}(\mu, s)(\gv) \widehat{p}_{m+1}(\mu, s, r, t, x, z)$}.\\

We investigate the H\"older regularity of each map ${\rm II}_{\ell}(\mu, s)(.)$, $\ell=1, \cdots, 4$, of the decomposition of $\partial_\mu {\rm II}(\mu, s)(\gv)$ appearing in \emph{Step 2}.

From \eqref{recursive:bound:deriv:mes:double:reg:holder:a}, for any $\beta \in [0,\eta)$, we obtain
\begin{align*}
\Big| &{\rm II}_{1}(\mu, s)(\gv_1) - {\rm II}_1(\mu, s)(\gv_2) \Big| \leq K^+_\beta \left\{ \frac{|z-x|^2}{(t-r)^2} + \frac{1}{t-r}\right\} \left\{\frac{(|z-x|^\eta \wedge 1)}{(r-s)^{1+\frac{\beta}{2}}} \wedge \frac{1}{(r-s)^{1+\frac{\beta-\eta}{2}}} \right\}\\
&\quad \times \left\{ \Big. {|\gv_1-\gv_2|^\beta} + (r-s)^{1+\frac{\beta}{2}}\int_{(\mathbb{R}^d)^2} \Big|\partial^2_{\mu}p_{m}(\mu, s , r, x', y)(\gv_1) -\partial^2_{\mu} p_{m}(\mu, s , r, x', y)(\gv_2)\Big| \, \mu(dx') \, dy   \right. \nonumber \\
 & \quad \quad \left.  +(r-s)^{1+\frac{\beta-\eta}{2}} \int_{(\mathbb{R}^d)^2} (|y'-x'|^\eta \wedge 1) \Big| \partial^2_{\mu} p_{m}(\mu, s , r, x', y)(\gv_1) - \partial^2_{\mu}p_{m}(\mu, s , r, x', y)(\gv_2) \Big| \, \mu(dx') \, dy \right\}. 
\end{align*}

From the relation \eqref{first:order:diff:mat:mes}, \eqref{diff:v:cross:deriv:diff:and:deriv:coeff} with $n=0$, \eqref{recursive:bound:deriv:mes:holder:reg:a} with $n=0$ and $\beta=1$, we obtain
\begin{align*}
\Big| &  \partial_\mu \Big[H^{i, j}_2\left(\int_r^{t} a(r', z , [X^{s, \xi, (m)}_{r'}]) dr', z-x\right)\Big](v_1) -  \partial_\mu \Big[H^{i, j}_2\left(\int_r^{t} a(r', z , [X^{s, \xi, (m)}_{r'}]) dr', z-x\right)\Big](v_2) \Big| \\
& \times  |\partial_\mu \left(a_{i, j}(r, x, [X^{s, \xi, (m)}_{r}]) - a_{i, j}(r, z, [X^{s, \xi, (m)}_r])\right)(v_2)|  \\
& \leq K^{+}_\beta \left\{\frac{|z-x|^{2+\eta}}{(t-r)^2}  +  \frac{|z-x|^\eta}{(t-r)}\right\} \frac{|v_1-v_2|^\beta}{(r-s)^{1+\frac{\beta-\eta}{2}}}.
\end{align*}

\noindent From \eqref{recursive:bound:deriv:mes:holder:reg:a} with $\beta=1$ and $n=0$ if $|v_1-v_2|\geq (r-s)^{1/2}$ or with $n=1$ together with the mean-value theorem if $|v_1-v_2|< (r-s)^{1/2}$, we deduce that for any $\beta \in [0,1]$
\begin{align*}
\Big| \partial_\mu & \left(a_{i, j}(r, x, [X^{s, \xi, (m)}_{r}])  - a_{i, j}(r, z, [X^{s, \xi, (m)}_r])\right)(v_1) - \partial_\mu \left(a_{i, j}(r, x, [X^{s, \xi, (m)}_{r}]) - a_{i, j}(r, z, [X^{s, \xi, (m)}_r])\right)(v_2) \Big|\\
& \quad \leq K_\beta \frac{|z-x|^\eta}{(r-s)^{\frac{1+\beta}{2}}} |v_1-v_2|^\beta
\end{align*}
\noindent which in turn, combined with \eqref{first:order:diff:mat:mes} and \eqref{deriv:mes:a:m}, yield
\begin{align*}
\partial_\mu & \left(a_{i, j}(r, x, [X^{s, \xi, (m)}_{r}])  - a_{i, j}(r, z, [X^{s, \xi, (m)}_r])\right)(v_1) - \partial_\mu \left(a_{i, j}(r, x, [X^{s, \xi, (m)}_{r}]) - a_{i, j}(r, z, [X^{s, \xi, (m)}_r])\right)(v_2) \\
& \quad \times \Big|   \partial_\mu \Big[H^{i, j}_2\left(\int_r^{t} a(r', z , [X^{s, \xi, (m)}_{r'}]) dr', z-x\right)\Big](v_1) \Big|\\
& \quad \leq K_\beta \, \left\{\frac{|z-x|^{2+\eta}}{(t-r)^2}  +  \frac{|z-x|^\eta}{(t-r)}\right\} \frac{|v_1-v_2|^\beta}{(r-s)^{1+\frac{\beta-\eta}{2}}}.
\end{align*}

Gathering the two previous estimates, we obtain
\begin{align*}
|{\rm II}_2(\mu, s)(\gv_1) - {\rm II}_2(\mu, s)(\gv_2)| & + |{\rm II}_3(\mu, s)(\gv_1) - {\rm II}_3(\mu, s)(\gv_2)| \\
& \leq K_\beta\,  \left\{\frac{|z-x|^{2+\eta}}{(t-r)^2}  +  \frac{|z-x|^\eta}{(t-r)}\right\} \frac{|\gv_1-\gv_2|^\beta}{(r-s)^{1+\frac{\beta-\eta}{2}}}.
\end{align*}

From the relation \eqref{second:diff-mat-mes}, \eqref{deriv:mes:a:m}, \eqref{diff:v:cross:deriv:diff:and:deriv:coeff} with $n=0$, \eqref{recursive:bound:second:order:deriv:mes:reg:holder:v:a:or:b} and then the uniform $\eta$-H\"older regularity of $a(t, ., m)$, we get
\begin{align*}
| & {\rm II}_4(\mu, s)(\gv_1) -  {\rm II}_4(\mu, s)(\gv_2) |\notag  \\
& \leq K^{+}_\beta \left\{\frac{|z-x|^{2+\eta}}{(t-r)^{2}} + \frac{|z-x|^\eta}{t-r}\right\}  \frac{ |\gv_1-\gv_2|^\beta}{(r-s)^{1+\frac{\beta}{2}-\eta}} \notag \\
&  + K \left\{ \frac{|z-x|^{2+\eta}}{(t-r)^3}+\frac{|z-x|^\eta}{(t-r)^2}\right\} \int_r^t \max_{i, j}| \partial^2_\mu [a_{i, j}(r', z, [X^{s, \xi, (m)}_{r'}])](\gv_1) - \partial^2_\mu [a_{i, j}(r', z, [X^{s, \xi, (m)}_{r'}])](\gv_2) | \, dr' \notag\\
& \leq K^+_\beta \left\{\frac{|z-x|^{2+\eta}}{(t-r)^{2}} + \frac{|z-x|^\eta}{t-r}\right\}  \frac{ |\gv_1-\gv_2|^\beta}{(r-s)^{1+\frac{\beta-\eta}{2}}} \notag \\
&  + K^+_\beta \left\{ \frac{|z-x|^{2+\eta}}{(t-r)^3}+\frac{|z-x|^\eta}{(t-r)^2}\right\} \int_r^t \int_{\mathbb{R}^d}  (|y-x'|^\eta \wedge 1) \, | \partial^2_\mu p_{m}(\mu, s, r', x', y)(\gv_1) - \partial^2_\mu p_{m}(\mu, s, r', x', y)(\gv_2)| \, \mu(dx') \, dy  \, dr'. \notag
\end{align*}

Collecting the estimates on $| {\rm II}_\ell(\mu, s)(\gv_1) - {\rm II}_\ell(\mu, s)(\gv_2) |$, $\ell=1, \cdots, 4$ and using the space time inequality \eqref{space:time:inequality}, we obtain
\begin{align}
\Big| \big[&  \partial_\mu {\rm II }(\mu, s) (\gv_1) - \partial_\mu {\rm II }(\mu, s) (\gv_2)  \big] \widehat{p}_{m+1}(\mu, s, r, t, x, z)\Big| \label{holder:reg:v:arg:sec:mes:deriv:parametrix:kernel:s:r:t:with:beta:part:iii} \\
& \leq K^+_\beta \left\{\frac{1}{(t-r)^{1-\frac{\eta}{2}}(r-s)^{1+\frac{\beta}{2}}} \wedge \frac{1}{(t-r)(r-s)^{1+\frac{\beta-\eta}{2}}} \right\} \notag \\
&\quad  \times \left\{ \Big. {|\gv_1-\gv_2|^\beta} + (r-s)^{1+\frac{\beta}{2}}\int_{(\mathbb{R}^d)^2} \Big|\partial^2_{\mu}p_{m}(\mu, s , r, x', y)(\gv_1) -\partial^2_{\mu} p_{m}(\mu, s , r, x', y)(\gv_2)\Big| \, \mu(dx') \, dy   \right. \notag \\
 & \quad \quad \left.  +(r-s)^{1+\frac{\beta-\eta}{2}} \int_{(\mathbb{R}^d)^2} (|y'-x'|^\eta \wedge 1) \Big| \partial^2_{\mu} p_{m}(\mu, s , r, x', y)(\gv_1) - \partial^2_{\mu}p_{m}(\mu, s , r, x', y)(\gv_2)| \, \mu(dx') \, dy \right. \notag\\
 & \quad \quad \left. + \frac{(r-s)^{1+ \frac{\beta-\eta}{2}}}{t-r} \int_r^t \int_{\mathbb{R}^d}  (|y-x'|^\eta \wedge 1) \, | \partial^2_\mu p_{m}(\mu, s, r', x', y)(\gv_1) - \partial^2_\mu p_{m}(\mu, s, r', x', y)(\gv_2)| \, \mu(dx') \, dy  \, dr'\right\} \notag\\
 & \quad \times g(c(t-r), z-x).\notag
\end{align}

\noindent (iv) \emph{H\"older regularity of the map $\gv \mapsto |{\rm III}(\mu, s)\, \partial_\mu^2 \widehat{p}_{m+1}(\mu, s, r, t, x, z)(\gv)|$.} It follows from the H\"older regularity estimate \eqref{cross:mes:deriv:reg:holder:terminal point:p:hat:s:r:t} of Corollary \ref{cor:deriv:time:and:mes:phat}, the boundedness of $b_i$ and the uniform $\eta$-H\"older regularity of $ a_{i, j}(t, ., m)$ as well as the space time inequality \eqref{space:time:inequality} that for any $\beta \in [0,\eta)$
\begin{align}\label{holder:reg:v:arg:sec:mes:deriv:parametrix:kernel:s:r:t:with:beta:part:iv}
&|{\rm III}(\mu, s)\, [ \partial_\mu^2 \widehat{p}_{m+1}(\mu, s, r, t, x, z)(\gv_1) -  \partial_\mu^2 \widehat{p}_{m+1}(\mu, s, r, t, x, z)(\gv_2) ] | \\
& \leq  \frac{K^+_\beta}{(t-r)^{1-\frac{\eta}{2}}(r-s)^{1+\frac{\beta- \eta}{2}}}  \notag\\
& \times \Bigg\{1+ \frac{(r-s)^{1+\frac{\beta-\eta}{2}}}{t-r} \int_r^t \int_{(\mathbb{R}^d)^2} (|y'-x'|^{\eta}\wedge 1) |\partial_\mu^2 p_{m}(\mu, s, r', x', y')(\gv_1) - \partial_\mu^2 p_{m}(\mu, s, r', x', y')(\gv_2)| \, \mu(dx')\, dy'  \, dr' \Bigg\} \notag \\
& \quad \times g(c(t-r),z-x). \notag
\end{align}

Gathering the above estimates \eqref{holder:reg:v:arg:sec:mes:deriv:parametrix:kernel:s:r:t:with:beta:part:i1}, \eqref{holder:reg:v:arg:sec:mes:deriv:parametrix:kernel:s:r:t:with:beta:part:i2} (both with $\beta'=1$), \eqref{holder:reg:v:arg:sec:mes:deriv:parametrix:kernel:s:r:t:with:beta:part:ii}, \eqref{holder:reg:v:arg:sec:mes:deriv:parametrix:kernel:s:r:t:with:beta:part:iii}, \eqref{holder:reg:v:arg:sec:mes:deriv:parametrix:kernel:s:r:t:with:beta:part:iv} and plugging them into \eqref{deriv:mu:H:mp1} eventually gives \eqref{second:order:mes:deriv:parametrix:kernel:s:r:t:reg:holder:v:argument}. 

\subsection{Proof of Lemma \ref{lem:reg:mes:deriv:sec:mes:coeff:parametrix:density:kernel}.\\}\label{proof:lem:reg:mes:deriv:sec:mes:coeff:parametrix:density:kernel}

\noindent \emph{Step 1: proof of the estimate \eqref{diff:mes:second:deriv:mes:a:b}.}\\

We first observe that if $W_2(\mu, \mu') \geq (t-s)^{1/2}$ then the result directly follows from \eqref{recursive:bound:deriv:a:or:b} combined with \eqref{second:deriv:mes:induction:decoupling:mckean} and the space time inequality \eqref{space:time:inequality}. We thus assume that $W_2(\mu, \mu') \leq (t-s)^{1/2}$ for the rest of the proof. It now follows from the identity \eqref{full:expression:second:deriv} applied to the map $m \mapsto b_i(t, x, m)$ that the difference $\partial^2_\mu [b_{i}(t, x, [X^{s, \xi, (m)}_{t}])](\gv)  - \partial^2_\mu [b_{i}(t, x, [X^{s, \xi, (m)}_{t}])]_{|\mu= \mu'}(\gv)$ writes as the sum of the terms $\delta b^{\ell}_i(\mu, \mu')$, $\ell=1, \cdots, 7 $, defined by
{\small
\begin{align*}
\delta b^{1}_i(\mu, \mu') & = \int_{(\mathbb{R}^d)^2} \Big[\frac{\delta^2 b_i}{\delta m^2}(t, x, [X^{s, \xi, (m)}_{t}])(z, z') - \frac{\delta^2 b_i}{\delta m^2}(t, x, [X^{s, \xi', (m)}_t])(z, z')\Big] \\
& \quad  \quad \partial_x p_m(\mu, s, t, v, z)\otimes \partial_{x} p_m(\mu, s, t, v',z') \, dz\, dz' \\
& + \int_{(\mathbb{R}^d)^2} \Big[\frac{\delta^2 b_i}{\delta m^2}(t, x, [X^{s, \xi', (m)}_{t}])(z, z') - \frac{\delta^2 b_i}{\delta m^2}(t, x, [X^{s, \xi', (m)}_t])(z, v')\Big] \\
& \quad \Big[\partial_x p_m(\mu, s, t, v, z)\otimes \partial_{x} p_m(\mu, s, t, v',z')  - \partial_x p_m(\mu', s, t, v, z)\otimes \partial_{x} p_m(\mu', s, t, v',z')  \Big] \, dz\, dz',
\end{align*}
\begin{align*}
\delta b^{2}_i(\mu, \mu') & = \int_{(\mathbb{R}^d)^3} \Big[ \frac{\delta^2 b_i}{\delta m^2}(t, x, [X^{s, \xi , (m)}_t])(z, z') - \frac{\delta^2 b_i}{\delta m^2}(t, x, [X^{s, \xi' , (m)}_t])(z, z') \Big] \\
& \quad  \partial_x  p_m(\mu, s, t, v, z) \otimes \partial_\mu p_m(\mu, s, t, x', z')(v')  \, dz\, dz'\, \mu(dx')\\
& +  \int_{(\mathbb{R}^d)^3}  \frac{\delta^2 b_i}{\delta m^2}(t, x, [X^{s, \xi' , (m)}_t])(z, z') \\
& \quad  \Big[\partial_x  p_m(\mu, s, t, v, z) \otimes \partial_\mu p_m(\mu, s, t, x', z')(v') - \partial_x  p_m(\mu', s, t, v, z) \otimes \partial_\mu p_m(\mu', s, t, x', z')(v')\Big] \, dz\, dz'\, \mu(dx') \\
&  +  \int_{(\mathbb{R}^d)^3}  \frac{\delta^2 b_i}{\delta m^2}(t, x, [X^{s, \xi' , (m)}_t])(z, z') \\
& \quad  \partial_x  p_m(\mu', s, t, v, z) \otimes \partial_\mu p_m(\mu', s, t, x', z')(v') \, dz\, dz'\, [\mu-\mu'](dx'),
\end{align*}

\begin{align*}
\delta b^{3}_i(\mu, \mu') & = \int_{\mathbb{R}^d}  \Big[\frac{\delta b_i}{\delta m}(t, x, [X^{s, \xi, (m)}_t])(z) - \frac{\delta b_i}{\delta m}(t, x, [X^{s, \xi', (m)}_t])(z) \Big]\, \partial_\mu\partial_x p_m(\mu, s, t, v ,z)(v')  dz \\
& \quad +  \int_{\mathbb{R}^d}  \frac{\delta b_i}{\delta m}(t, x, [X^{s, \xi', (m)}_t])(z) \, \Big[ \partial_\mu\partial_x p_m(\mu, s, t, v ,z)(v') -  \partial_\mu\partial_x p_m(\mu', s, t, v ,z)(v')  \Big]\, dz,
\end{align*}
\begin{align*}
\delta b^{4}_i(\mu, \mu') & =  \int_{\mathbb{R}^d}  \Big[ \frac{\delta b_i}{\delta m} (t, x, [X_t^{s, \xi , (m)}])(z) -  \frac{\delta b_i}{\delta m} (t, x, [X_t^{s, \xi' , (m)}])(z)\Big]   \partial_{x}\partial_\mu p_m(\mu, s, t, v', z)(v) \, dz\\
& \quad + \int_{\mathbb{R}^d}   \frac{\delta b_i}{\delta m} (t, x, [X_t^{s, \xi' , (m)}])(z)  \Big[ \partial_{x}\partial_\mu p_m(\mu, s, t, v', z)(v) - \partial_{x}\partial_\mu p_m(\mu', s, t, v', z)(v) \Big]\, dz,
\end{align*}
\begin{align*}
\delta b^{5}_i(\mu, \mu') & =  \int_{(\mathbb{R}^d)^3} \Big[ \frac{\delta^2 b_i}{\delta m^2} (t, x, [X_t^{s, \xi , (m)}])(z, z') - \frac{\delta^2 b_i}{\delta m^2} (t, x, [X_t^{s, \xi' , (m)}])(z, z')  \Big] \\
& \qquad   \partial_\mu p_m(\mu, s, t, x', z)(v)\otimes  \partial_{x}p_m(\mu, s, t, v',z') \, dz \, dz' \, \mu(dx')\\
& \quad  + \int_{(\mathbb{R}^d)^3} \frac{\delta^2 b_i}{\delta m^2} (t, x, [X_t^{s, \xi' , (m)}])(z, z')  \\
& \quad \quad \Big[ \partial_\mu p_m(\mu, s, t, x', z)(v)\otimes  \partial_{x}p_m(\mu, s, t, v',z') - \partial_\mu p_m(\mu', s, t, x', z)(v)\otimes  \partial_{x}p_m(\mu', s, t, v',z') \Big]\, dz \, dz' \, \mu(dx') \\ 
& \quad + \int_{(\mathbb{R}^d)^3} \frac{\delta^2 b_i}{\delta m^2} (t, x, [X_t^{s, \xi' , (m)}])(z, z')  \partial_\mu p_m(\mu', s, t, x', z)(v)\otimes  \partial_{x}p_m(\mu', s, t, v',z') \, dz \, dz' \, [\mu-\mu'](dx'),
\end{align*}
\begin{align*}
\delta b^{6}_i(\mu, \mu') & =  \int_{(\mathbb{R}^d)^4} \Big[ \frac{\delta^2 b_i}{\delta m^2} (t, x, [X_t^{s, \xi , (m)}])(z, z') - \frac{\delta^2 b_i}{\delta m^2} (t, x, [X_t^{s, \xi' , (m)}])(z, z')\Big] \\
& \quad  \partial_\mu p_m(\mu, s, t, x', z)(v)\otimes  \partial_{\mu}p_m(\mu, s, t, x'',z')(v') \, dz \, dz' \, \mu(dx') \mu(dx'')\\
& + \int_{(\mathbb{R}^d)^4} \frac{\delta^2 b_i}{\delta m^2} (t, x, [X_t^{s, \xi' , (m)}])(z, z')  \Big[ \partial_\mu p_m(\mu, s, t, x', z)(v)\otimes  \partial_{\mu}p_m(\mu, s, t, x'',z')(v') \\
& \quad-   \partial_\mu p_m(\mu', s, t, x', z)(v)\otimes  \partial_{\mu}p_m(\mu', s, t, x'',z')(v')\Big]\, dz \, dz' \, \mu(dx') \, \mu(dx'')\\
&  + \int_{(\mathbb{R}^d)^4}  \frac{\delta^2 b_i}{\delta m^2} (t, x, [X_t^{s, \xi' , (m)}])(z, z') \\
& \quad  \partial_\mu p_m(\mu', s, t, x', z)(v)\otimes  \partial_{\mu}p_m(\mu', s, t, x'',z')(v') \, dz \, dz' \, [\mu(dx') \mu(dx'') - \mu'(dx') \mu'(dx'')],
\end{align*}
\begin{align*}
\delta b^{7}_i(\mu, \mu') & =  \int_{(\mathbb{R}^d)^2}  \Big[ \frac{\delta b_i}{\delta m} (t, x, [X_t^{s, \xi , (m)}])(z) -  \frac{\delta b_i}{\delta m} (t, x, [X_t^{s, \xi' , (m)}])(z) \Big] \, \partial_\mu^2 p_m(\mu, s, t, x', z)(\gv)  \,  dz\, \mu(dx') \\
& + \int_{(\mathbb{R}^d)^2}   \Big[ \frac{\delta b_i}{\delta m} (t, x, [X_t^{s, \xi' , (m)}])(z) -  \frac{\delta b_i}{\delta m} (t, x, [X_t^{s, \xi' , (m)}])(x') \Big] \\
& \quad \Big[  \partial_\mu^2 p_m(\mu, s, t, x', z)(\gv) - \partial_\mu^2 p_m(\mu', s, t, x', z)(\gv) \Big] \, \mu(dx') \,  dz \\
& + \int_{(\mathbb{R}^d)^2}  \frac{\delta b_i}{\delta m} (t, x, [X_t^{s, \xi' , (m)}])(z) \,  \partial_\mu^2 p_m(\mu', s, t, x', z)(\gv)  \,  dz\, [\mu-\mu'](dx').
\end{align*}
}

Let us recall from the proof of the estimate (A.45) of Lemma A.2 in \cite{chaudruraynal:frikha} that if $h: \mathcal{P}(\mathbb{R}^d) \mapsto \mathbb{R}$ has a bounded and continuous linear functional derivative such that $[\delta h/\delta m](m)(.)$ is uniformly $\eta$-H\"older continuous then for any $\beta \in [\eta, 1]$ one has 
$$
|h([X^{s, \xi, (m)}_t]) - h([X_t^{s, \xi', (m)}])| \leq K \frac{W_2(\mu, \mu')^{\beta}}{(t-s)^{\frac{\beta-\eta}{2}}}.
$$

The above estimate is established in \cite{chaudruraynal:frikha} for the map $m\mapsto a_{i, j}(t, x, m)$ but the argument works, \emph{mutatis mutandis}, in this general form. In particular, for any $\beta \in [\eta, 1]$, it holds
\begin{align}
\Big| & \frac{\delta b_i}{\delta m}(t, x, [X^{s, \xi, (m)}_{t}])(z) - \frac{\delta b_i}{\delta m}(t, x, [X^{s, \xi', (m)}_t])(z) \Big| \label{delta:mes:linear:functional:deriv:coeff}\\
& \quad + \Big|\frac{\delta^2 b_i}{\delta m^2}(t, x, [X^{s, \xi, (m)}_{t}])(z, z') - \frac{\delta^2 b_i}{\delta m^2}(t, x, [X^{s, \xi', (m)}_t])(z, z')\Big| \leq K^{++} \frac{W_2(\mu, \mu')^{\beta}}{(t-s)^{\frac{\beta-\eta}{2}}}. \notag
\end{align}
 
It should be noted, however, that since $W_2(\mu, \mu') \leq (t-s)^{1/2}$, the above estimate holds for any $\beta \in [0,1]$. Now it follows from \eqref{delta:mes:linear:functional:deriv:coeff} together with \eqref{regularity:measure:estimate:v1:v2:v3:decoupling:mckean}, \eqref{bound:derivative:heat:kernel}, the uniform $\eta$-H\"older regularity of $[\delta^2 b_i/\delta m^2](t, x, m)(z, .)$ and the space time inequality \eqref{space:time:inequality} that
 $$
 |\delta b^{1}_i(\mu, \mu')| \leq K^{++} \frac{W_2(\mu, \mu')^{\beta}}{(t-s)^{1+\frac{\beta-\eta}{2}}}.
 $$
 
 Similarly, from the boundedness of $[\delta^2 b_i/\delta m^2]$, \eqref{delta:mes:linear:functional:deriv:coeff}, \eqref{bound:derivative:heat:kernel}, \eqref{first:second:estimate:induction:decoupling:mckean}, \eqref{regularity:measure:estimate:v1:v2:v3:decoupling:mckean}, \eqref{regularity:measure:estimate:v1:v2:decoupling:mckean}, the inequality
 \begin{align*}
\Big| \int_{(\mathbb{R}^d)^2}  & \frac{\delta^2 b_i}{\delta m^2}(t, x, [X^{s, \xi' , (m)}_t])(z, z')  \partial_x  p_m(\mu', s, t, v, z) \otimes [\partial_\mu p_m(\mu', s, t, x', z')(v') - \partial_\mu p_m(\mu', s, t, y, z')(v')]  \, dz\, dz' \Big| \\
& \quad \leq K \frac{|x'-y|^\beta}{(t-s)^{1+\frac{\beta-\eta}{2}}}, \quad \beta \in [0,1],
 \end{align*}
 \noindent stemming from the estimates \eqref{equicontinuity:second:third:estimate:decoupling:mckean} and \eqref{bound:derivative:heat:kernel}, we deduce
 $$
 |\delta b^{2}_i(\mu, \mu')| \leq K^{++} \frac{W_2(\mu, \mu')^{\beta}}{(t-s)^{1+\frac{\beta-\eta}{2}}}.
 $$
 
 From \eqref{delta:mes:linear:functional:deriv:coeff}, \eqref{cross:deriv:mes:space:induction:decoupling:mckean} and \eqref{sensitivity:mes:deriv:cross:space:mes:pm}, we obtain
 $$
  |\delta b^{3}_i(\mu, \mu')| +  |\delta b^{4}_i(\mu, \mu')| \leq K^{++}_\beta \frac{W_2(\mu, \mu')^{\beta}}{(t-s)^{1+\frac{\beta-\eta}{2}}}.
 $$
 
 We deal with $\delta b^5_i(\mu, \mu')$ using \eqref{delta:mes:linear:functional:deriv:coeff}, \eqref{bound:derivative:heat:kernel}, \eqref{first:second:estimate:induction:decoupling:mckean}, \eqref{regularity:measure:estimate:v1:v2:v3:decoupling:mckean}, \eqref{regularity:measure:estimate:v1:v2:decoupling:mckean} and the inequality
 \begin{align*}
 \Big| \int_{(\mathbb{R}^d)^3} \frac{\delta^2 b_i}{\delta m^2} (t, x, [X_t^{s, \xi' , (m)}])(z, z')  & \Big[\partial_\mu p_m(\mu', s, t, x', z)(v) - \partial_\mu p_m(\mu', s, t, y, z)(v)\Big]\otimes  \partial_{x}p_m(\mu', s, t, v',z') \, dz \, dz' \Big|\\
 & \leq K^{+} \frac{|x'-y|^\beta}{(t-s)^{1+\frac{\beta -\eta}{2}}}, \quad \beta \in [0,1],
 \end{align*}
 \noindent stemming from \eqref{bound:derivative:heat:kernel} and \eqref{equicontinuity:second:third:estimate:decoupling:mckean}. We obtain
$$
   |\delta b^{5}_i(\mu, \mu')| \leq K^{++}_\beta \frac{W_2(\mu, \mu')^{\beta}}{(t-s)^{1+\frac{\beta-\eta}{2}}}.
$$
 
We handle $\delta b_i^{6}(\mu, \mu')$ in a similar manner. First, it follows from \eqref{first:second:estimate:induction:decoupling:mckean} and \eqref{equicontinuity:second:third:estimate:decoupling:mckean} that the last term appearing in the decomposition of $\delta b_i^{6}(\mu, \mu')$ satisfies 
\begin{align*}
\Big|&  \int_{(\mathbb{R}^d)^4}  \frac{\delta^2 b_i}{\delta m^2} (t, x, [X_t^{s, \xi' , (m)}])(z, z')  \\
& \quad \partial_\mu p_m(\mu', s, t, x', z)(v)\otimes  \partial_{\mu}p_m(\mu', s, t, x'',z')(v') \, dz \, dz' \, [\mu(dx') \mu(dx'') - \mu'(dx') \mu'(dx'')] \Big| \\
& \leq K^{+}_\beta \frac{W_2(\mu, \mu')^{\beta}}{(t-s)^{1+\frac{\beta}{2}-\eta}}.
\end{align*}

\noindent for any $\beta \in [0,1]$.
Then, using the above estimate together with \eqref{delta:mes:linear:functional:deriv:coeff}, \eqref{first:second:estimate:induction:decoupling:mckean} and \eqref{regularity:measure:estimate:v1:v2:decoupling:mckean}, we obtain
$$
|\delta b_i^{6}(\mu, \mu')| \leq K^{++}_\beta \frac{W_2(\mu, \mu')^{\beta}}{(t-s)^{1+\frac{\beta-\eta}{2}}}.
$$

Finally, from \eqref{second:deriv:mes:reg:space:deriv:arg:estimate:induction:decoupling:mckean2}, recalling that $\mathscr{C}^{1, \beta}_{m}(C^{+}, t-s) \leq K^+:=\lim_{m\uparrow \infty} \mathscr{C}^{1, \beta}_{m}(C^{+}, t-s) < \infty$, we deduce that the last term appearing in the decomposition of $\delta b_i^{7}(\mu, \mu')$ satisfies
$$
\Big|  \int_{(\mathbb{R}^d)^2}  \frac{\delta b_i}{\delta m} (t, x, [X_t^{s, \xi' , (m)}])(z) \,  \partial_\mu^2 p_m(\mu', s, t, x', z)(\gv)  \,  dz\, [\mu-\mu'](dx') \Big| \leq K^{+} \frac{W_2(\mu, \mu')^{\beta}}{(t-s)^{1+\frac{\beta-\eta}{2}}}
$$
\noindent for any $\beta \in [0,\eta)$, while the first term is handled using \eqref{delta:mes:linear:functional:deriv:coeff} and \eqref{second:deriv:mes:induction:decoupling:mckean}, recalling again that $\mathscr{C}^{1, 0}_{m}(C^{+}, t-s)\leq K^{+} < \infty$, so that
$$
\Big| \int_{(\mathbb{R}^d)^2}  \Big[ \frac{\delta b_i}{\delta m} (t, x, [X_t^{s, \xi , (m)}])(z) -  \frac{\delta b_i}{\delta m} (t, x, [X_t^{s, \xi' , (m)}])(z) \Big] \, \partial_\mu^2 p_m(\mu, s, t, x', z)(\gv)  \,  dz\, \mu(dx') \Big| \leq K^{++} \frac{W_2(\mu, \mu')^\beta}{(t-s)^{1+\frac{\beta}{2}-\eta}}.
$$

We finally remark that the boundedness and uniform $\eta$-H\"older regularity of $[\delta b_i/\delta m](t, x, m)(.)$ directly implies that the second term satisfies
\begin{align*}
|\int_{(\mathbb{R}^d)^2}  &  \Big[ \frac{\delta b_i}{\delta m} (t, x, [X_t^{s, \xi' , (m)}])(z) -  \frac{\delta b_i}{\delta m} (t, x, [X_t^{s, \xi' , (m)}])(x') \Big]   \Big[  \partial_\mu^2 p_m(\mu, s, t, x', z)(\gv) - \partial_\mu^2 p_m(\mu', s, t, x', z)(\gv) \Big]\, \mu(dx')  \,  dz| \\
& \quad  \leq  K  \int_{(\mathbb{R}^d)^2} (|z-x'|^\eta \wedge 1) | \partial_\mu^2 p_m(\mu, s, t, x', z)(\gv) - \partial_\mu^2 p_m(\mu', s, t, x', z)(\gv)| \, \mu(dx')  \, dz.
\end{align*}

 We conclude the proof of \eqref{diff:mes:second:deriv:mes:a:b} for $ \partial^2_\mu [b_{i}(t, x, [X^{s, \xi, (m)}_{t}])](\gv)  - \partial^2_\mu [ b_{i}(t, x, [X^{s, \xi, (m)}_{t}])]_{|\mu=\mu'}(\gv)$ by gathering the above estimates. The difference $\partial^2_\mu [a_{i, j}(t, x, [X^{s, \xi, (m)}_{t}])](\gv)  - \partial^2_\mu [ a_{i, j}(t, x, [X^{s, \xi, (m)}_{t}])]_{|\mu=\mu'}(\gv)$ can be handled in a completely analogous manner. The proof is thus omitted.\\

\noindent \emph{Step 2: proof of the estimate \eqref{diff:lions:deriv:pm:mu:mup}.}\\

The strategy is clear inasmuch one starts from the identity \eqref{representation:formula:second:order:deriv:mes:p:hat} and has to quantify the regularity with respect to the variable $\mu$ of each term. We first note that from the mean-value theorem one has for any $\beta \in [0,1]$
\begin{align}
\Big| \left(\int_r^t a(r', y, [X^{s, \xi, (m)}_{r'}])\, dr'\right)^{-1} &  - \left(\int_r^t a(r', y, [X^{s, \xi', (m)}_{r'}])\, dr'\right)^{-1} \Big| \notag \\
& \leq  \frac{K}{(t-r)^2} \int_r^t \max_{i, j} |a_{i, j}(r', y, [X^{s, \xi, (m)}_{r'}]) - a_{i, j}(r', y, [X^{s, \xi', (m)}_{r'}])| \, dr' \notag \\
& \leq K \frac{W_2(\mu, \mu')^\beta}{(t-r)^{2}} \int_r^t \frac{1}{(r' -s)^{\frac{\beta}{2}}}\, dr' \notag \\
& \leq  K W_2(\mu, \mu')^{\beta} \left(\frac{1}{(t-s)^{1+\frac{\beta}{2}}} \I_\seq{r=s} + \frac{1}{(t-r)(r-s)^{\frac{\beta}{2}}} \I_\seq{r>s} \right) \label{difference:a:mes:var:diff:time}
\end{align}
\noindent where we used the estimate \eqref{diff:mes:drift:diff:coefficients} inside the time integral if $W_2(\mu, \mu') \leq r'-s$ or the boundedness of $a_{i, j}$ otherwise. Similarly, from the identity \eqref{first:order:diff:mat:mes}, the above estimate, \eqref{deriv:mes:a:m} and \eqref{diff:v:cross:deriv:diff:and:deriv:coeff} with $n=0$, we get that for any $\beta \in [0,1]$
\begin{align*}
\Big| \partial_\mu\Big[\left(\int_r^t a(r', y, [X^{s, \xi, (m)}_{r'}])\, dr'\right)^{-1}& \Big](v)  - \partial_\mu\Big[\left(\int_r^t a(r', y, [X^{s, \xi, (m)}_{r'}])\, dr'\right)^{-1}\Big]_{|\mu=\mu'}(v) \Big| \\
& \leq K^+_{\beta} W_2(\mu, \mu')^\beta \left(\frac{1}{(t-s)^{\frac{3+\beta-\eta}{2}}} \I_\seq{r=s} + \frac{1}{(t-r)(r-s)^{\frac{1+\beta-\eta}{2}}}\I_\seq{r>s} \right).
\end{align*}

Then, we use the identity \eqref{representation:formula:second:order:deriv:mes:p:hat} together with the two above estimates, \eqref{diff:mes:second:deriv:mes:a:b}, \eqref{recursive:bound:deriv:a:or:b} (combined with \eqref{second:deriv:mes:induction:decoupling:mckean} and the space time inequality \eqref{space:time:inequality}), \eqref{deriv:mes:a:m}, \eqref{diff:L:deriv:pm:mu:mup} and \eqref{diff:v:cross:deriv:diff:and:deriv:coeff} with $n=0$. After some standard computations that we omit, we obtain
\begin{align*}
& | \partial^2_\mu \widehat{p}_{m+1}(\mu, s, r, t, x, z)(\gv) - \partial^2_\mu \widehat{p}_{m+1}(\mu', s, r, t, x, z)(\gv)  | \notag\\
& \quad \leq  K^+_\beta  \Bigg\{ W_2(\mu, \mu')^{\beta} \Bigg(\frac{1}{(t-s)^{ 1+\frac{\beta-\eta}{2}}} \I_\seq{r=s} + \frac{1}{(r-s)^{ 1+\frac{\beta-\eta}{2}}} \mathbf \I_\seq{r>s} \Bigg ) \notag \\
& \qquad \qquad  + \frac{1}{t-r}   \int_r^t \int_{(\mathbb{R}^d)^2} (|y'-x'|^\eta \wedge 1)|\partial^2_\mu p_m(\mu, s, r', x', y')(\gv) - \partial^2_\mu p_m(\mu', s, r', x', y')(\gv)|   \, \mu(dx') \, dy' \, dr' \Bigg\}  \notag\\
& \qquad \qquad \qquad \times  g(c(t-r), z-x).\notag 
\end{align*}

\noindent \emph{Step: 3: proof of the estimate \eqref{diff:lions:deriv:a:mu:mua}.}\\

We first remark that if $W_2(\mu, \mu') > (t-s)^{1/2}$ then the result directly follows from \eqref{recursive:bound:second:order:deriv:mes:holder:reg:space:a} with $\beta'=0$ and $\beta'=1$ combined with \eqref{second:deriv:mes:induction:decoupling:mckean} and the space time inequality \eqref{space:time:inequality}. We thus assume that $W_2(\mu, \mu') \leq (t-s)^{1/2}$ for the rest of the proof. Similarly to the proof of \eqref{diff:mes:second:deriv:mes:a:b}, we apply the identity \eqref{full:expression:second:deriv} to the map $m \mapsto a_{i, j}(t, x, m)$ and write the difference $\partial^2_\mu [a_{i, j}(t, x, [X^{s, \xi, (m)}_{t}])  - a_{i, j}(t, z, [X^{s, \xi, (m)}_{t}])](\gv)- \partial^2_\mu[ a_{i, j}(t, x, [X^{s, \xi, (m)}_{t}])  -  a_{i, j}(t, z, [X^{s, \xi, (m)}_{t}])]_{|\mu= \mu'}(\gv)$ as the sum of the following terms
 {\small
\begin{align*}
T^{1}_{i, j}   & :=  \int_{(\mathbb{R}^d)^2} \Big\{\frac{\delta^2 a_{i, j}}{\delta m^2}(t, x, [X^{s, \xi , (m)}_t])(z', z'') - \frac{\delta^2 a_{i, j}}{\delta m^2}(t, z, [X^{s, \xi , (m)}_t])(z', z'') \\
& \quad  - \Big(\frac{\delta^2 a_{i, j}}{\delta m^2}(t, x, [X^{s, \xi' , (m)}_t])(z', z'') - \frac{\delta^2 a_{i, j}}{\delta m^2}(t, z, [X^{s, \xi' , (m)}_t])(z', z'') \Big) \Big\}\\
& \quad \quad  \partial_{x} p_m(\mu, s, t, v, z')\otimes \partial_{x} p_m(\mu, s, t, v',z'')\, dz'\, dz''\\
& \quad +  \int_{(\mathbb{R}^d)^2} \Big\{\frac{\delta^2 a_{i, j}}{\delta m^2}(t, x, [X^{s, \xi' , (m)}_t])(z', z'') - \frac{\delta^2 a_{i, j}}{\delta m^2}(t, z, [X^{s, \xi' , (m)}_t])(z', z'')  \Big\}\\
& \quad \quad  \Big[ \partial_{x} p_m(\mu, s, t, v, z')\otimes \partial_{x} p_m(\mu, s, t, v',z'') - \partial_{x} p_m(\mu', s, t, v, z')\otimes \partial_{x} p_m(\mu', s, t, v',z'')  \Big]\, dz'\, dz''\\
T^{2}_{i, j}  & :=  \int_{(\mathbb{R}^d)^3}  \Big\{ \frac{\delta^2 a_{i, j}}{\delta m^2}(t, x, [X^{s, \xi , (m)}_t])(z', z'') - \frac{\delta^2 a_{i, j}}{\delta m^2}(t, z, [X^{s, \xi , (m)}_t])(z', z'') \\
& \quad - \Big( \frac{\delta^2 a_{i, j}}{\delta m^2}(t, x, [X^{s, \xi' , (m)}_t])(z', z'') - \frac{\delta^2 a_{i, j}}{\delta m^2}(t, z, [X^{s, \xi' , (m)}_t])(z', z'') \Big)\Big\}\\
& \quad \quad  \partial_{x}  p_m(\mu, s, t, v, z') \otimes \partial_\mu p_m(\mu, s, t, x', z'')(v') \, dz'\, dz''\, \mu(dx')\\
& \quad  +   \int_{(\mathbb{R}^d)^3} \Big\{ \frac{\delta^2 a_{i, j}}{\delta m^2}(t, x, [X^{s, \xi' , (m)}_t])(z', z'') - \frac{\delta^2 a_{i, j}}{\delta m^2}(t, z, [X^{s, \xi' , (m)}_t])(z', z'')  \Big\}\\
& \quad  \quad   \Big[\partial_{x}  p_m(\mu, s, t, v, z') \otimes \partial_\mu p_m(\mu, s, t, x', z'')(v')  - \partial_{x}  p_m(\mu', s, t, v, z') \otimes \partial_\mu p_m(\mu', s, t, x', z'')(v') \Big]  \, dz'\, dz''\, \mu(dx')\\
& \quad + \int_{(\mathbb{R}^d)^3} \Big\{ \frac{\delta^2 a_{i, j}}{\delta m^2}(t, x, [X^{s, \xi' , (m)}_t])(z', z'') - \frac{\delta^2 a_{i, j}}{\delta m^2}(t, z, [X^{s, \xi' , (m)}_t])(z', z'')  \Big\}\\
& \quad  \quad    \partial_{x}  p_m(\mu', s, t, v, z') \otimes \partial_\mu p_m(\mu', s, t, x', z'')(v')   \, dz'\, dz''\, [\mu-\mu'](dx'),
\end{align*}
\begin{align*}
T^{3}_{i, j} & := \int_{\mathbb{R}^d}  \Big\{\frac{\delta a_{i, j}}{\delta m}(t, x, [X^{s, \xi , (m)}_t])(z') - \frac{\delta a_{i, j}}{\delta m}(t, z, [X^{s, \xi , (m)}_t])(z') \\
& \quad - \Big(\frac{\delta a_{i, j}}{\delta m}(t, x, [X^{s, \xi' , (m)}_t])(z') - \frac{\delta a_{i, j}}{\delta m}(t, z, [X^{s, \xi' , (m)}_t])(z')\Big) \Big\} \partial_\mu[\partial_x p_m(\mu, s, t, v ,z')](v') \, dz' , \\
& \quad +  \int_{\mathbb{R}^d}  \left\{ \frac{\delta a_{i, j}}{\delta m}(t, x, [X^{s, \xi' , (m)}_t])(z') - \frac{\delta a_{i, j}}{\delta m}(t, z, [X^{s, \xi' , (m)}_t])(z') \right\} \\
& \quad \Big( \partial_\mu[\partial_x p_m(\mu, s, t, v ,z')](v') -\partial_\mu[\partial_x p_m(\mu', s, t, v ,z')](v')  \Big) \, dz' , \\
T^{4}_{i, j} & := \int_{\mathbb{R}^d}  \Big\{\frac{\delta a_{i, j}}{\delta m}(t, x, [X^{s, \xi , (m)}_t])(z') - \frac{\delta a_{i, j}}{\delta m}(t, z, [X^{s, \xi , (m)}_t])(z') \\
& \quad - \Big(\frac{\delta a_{i, j}}{\delta m}(t, x, [X^{s, \xi' , (m)}_t])(z') - \frac{\delta a_{i, j}}{\delta m}(t, z, [X^{s, \xi' , (m)}_t])(z')\Big) \Big\} \partial_x[\partial_\mu p_m(\mu, s, t, v' ,z')](v) \, dz' , \\
& \quad +  \int_{\mathbb{R}^d}  \left\{ \frac{\delta a_{i, j}}{\delta m}(t, x, [X^{s, \xi' , (m)}_t])(z') - \frac{\delta a_{i, j}}{\delta m}(t, z, [X^{s, \xi' , (m)}_t])(z') \right\} \\
& \quad \Big( \partial_x[\partial_\mu p_m(\mu, s, t, v' ,z')(v)] -\partial_x[\partial_\mu p_m(\mu', s, t, v' ,z')(v)]  \Big) \, dz' ,
\end{align*}
\begin{align*}
T^{5}_{i, j} & :=  \int_{(\mathbb{R}^d)^3} \Big\{ \frac{\delta^2 a_{i, j}}{\delta m^2}(t, x, [X^{s, \xi , (m)}_t])(z', z'') - \frac{\delta^2 a_{i, j}}{\delta m^2}(t, z, [X^{s, \xi , (m)}_t])(z', z'') \\
& \quad - \Big(\frac{\delta^2 a_{i, j}}{\delta m^2}(t, x, [X^{s, \xi' , (m)}_t])(z', z'') - \frac{\delta^2 a_{i, j}}{\delta m^2}(t, z, [X^{s, \xi' , (m)}_t])(z', z'')\Big) \Big\}\\
& \quad \quad  \partial_\mu p_m(\mu, s, t, x', z')(v) \otimes \partial_{x}p_m(\mu, s, t, v',z'')\, dz'\, dz''\, \mu(dx') \\
& \quad + \int_{(\mathbb{R}^d)^3} \Big\{ \frac{\delta^2 a_{i, j}}{\delta m^2}(t, x, [X^{s, \xi' , (m)}_t])(z', z'') - \frac{\delta^2 a_{i, j}}{\delta m^2}(t, z, [X^{s, \xi' , (m)}_t])(z', z'') \Big\} \\
& \quad  \quad  \Big( \partial_\mu p_m(\mu, s, t, x', z')(v) \otimes \partial_{x}p_m(\mu, s, t, v',z'') - \partial_\mu p_m(\mu', s, t, x', z')(v) \otimes \partial_{x}p_m(\mu', s, t, v',z'')\Big) \, dz'\, dz''\, \mu(dx') \\
& \quad +  \int_{(\mathbb{R}^d)^3} \Big\{ \frac{\delta^2 a_{i, j}}{\delta m^2}(t, x, [X^{s, \xi' , (m)}_t])(z', z'') - \frac{\delta^2 a_{i, j}}{\delta m^2}(t, z, [X^{s, \xi' , (m)}_t])(z', z'') \Big\} \\
& \quad  \quad \partial_\mu p_m(\mu', s, t, x', z')(v) \otimes \partial_{x}p_m(\mu', s, t, v',z'') \, dz'\, dz''\, [\mu-\mu'](dx'),\\
T^{6}_{i, j} & := \int_{(\mathbb{R}^d)^4}\Big\{\frac{\delta^2 a_{i, j}}{\delta m^2}(t, x, [X^{s, \xi , (m)}_t])(z', z'')-\frac{\delta^2 a_{i, j}}{\delta m^2}(t, z, [X^{s, \xi , (m)}_t])(z', z'') \\
& \quad - \Big(\frac{\delta^2 a_{i, j}}{\delta m^2}(t, x, [X^{s, \xi' , (m)}_t])(z', z'')-\frac{\delta^2 a_{i, j}}{\delta m^2}(t, z, [X^{s, \xi' , (m)}_t])(z', z'') \Big) \Big\} \\
& \quad \quad  \partial_\mu p_m(\mu, s, t, x', z)(v) \otimes  \partial_\mu p_m(\mu, s, t, x'', z')(v')\,dz\, dz'\, \mu(dx'')\, \mu(dx') \\
& \quad + \int_{(\mathbb{R}^d)^4}\Big\{\frac{\delta^2 a_{i, j}}{\delta m^2}(t, x, [X^{s, \xi' , (m)}_t])(z', z'')-\frac{\delta^2 a_{i, j}}{\delta m^2}(t, z, [X^{s, \xi' , (m)}_t])(z', z'') \Big\} \\
& \quad \quad  \Big( \partial_\mu p_m(\mu, s, t, x', z)(v) \otimes  \partial_\mu p_m(\mu, s, t, x'', z')(v') - \partial_\mu p_m(\mu', s, t, x', z)(v) \otimes  \partial_\mu p_m(\mu', s, t, x'', z')(v') \Big)\,dz\, dz'\, \mu(dx'')\, \mu(dx') \\
& \quad +  \int_{(\mathbb{R}^d)^4}\Big\{\frac{\delta^2 a_{i, j}}{\delta m^2}(t, x, [X^{s, \xi' , (m)}_t])(z', z'')-\frac{\delta^2 a_{i, j}}{\delta m^2}(t, z, [X^{s, \xi' , (m)}_t])(z', z'') \Big\} \\
& \quad  \partial_\mu p_m(\mu, s, t, x', z)(v) \otimes  \partial_\mu p_m(\mu, s, t, x'', z')(v') \,dz\, dz'\,  [\mu(dx'') \mu(dx') - \mu'(dx'')\mu'(dx')],
\end{align*}
\begin{align*}
T^{7}_{i, j}  & := \int_{(\mathbb{R}^d)^2}  \Big\{ \frac{\delta a_{i, j}}{\delta m}(t, x, [X^{s, \xi , (m)}_t])(z')- \frac{\delta a_{i, j}}{\delta m}(t, z, [X^{s, \xi , (m)}_t])(z') \\
& \quad \quad - \Big(  \frac{\delta a_{i, j}}{\delta m}(t, x, [X^{s, \xi' , (m)}_t])(z')- \frac{\delta a_{i, j}}{\delta m}(t, z, [X^{s, \xi' , (m)}_t])(z')\Big)\Big\}\, \partial_\mu^2 p_m(\mu, s, t, x', z')(\gv)\,  dz'\, \mu(dx')\\
& \quad + \int_{(\mathbb{R}^d)^2}  \Big\{ \frac{\delta a_{i, j}}{\delta m}(t, x, [X^{s, \xi' , (m)}_t])(z')- \frac{\delta a_{i, j}}{\delta m}(t, z, [X^{s, \xi' , (m)}_t])(z')  \\
& \quad \quad - \Big( \frac{\delta a_{i, j}}{\delta m}(t, x, [X^{s, \xi' , (m)}_t])(x')- \frac{\delta a_{i, j}}{\delta m}(t, z, [X^{s, \xi' , (m)}_t])(x') \Big)\Big\} \\
& \quad \quad \Big(\partial_\mu^2 p_m(\mu, s, t, x', z')(\gv) - \partial_\mu^2 p_m(\mu', s, t, x', z')(\gv)\Big)\,  dz'\, \mu(dx')\\
& \quad +\int_{(\mathbb{R}^d)^2}  \Big\{ \frac{\delta a_{i, j}}{\delta m}(t, x, [X^{s, \xi' , (m)}_t])(z')- \frac{\delta a_{i, j}}{\delta m}(t, z, [X^{s, \xi' , (m)}_t])(z') \Big\}  \partial_\mu^2 p_m(\mu', s, t, x', z')(\gv)\,  dz'\, [\mu(dx') - \mu'(dx')].
\end{align*} 
 }

 As previously done, we quantify the contribution of each term in the above decomposition. We let $\Theta^{(m)}_{\lambda, t}:= (1-\lambda)[X^{s, \xi, (m)}_t] + \lambda [X^{s, \xi', (m)}_t]$ and $h \in \left\{ \delta a_{i, j}/\delta m,  \delta^2 a_{i, j}/\delta m^2 \right\}$. We write
 \begin{align*}
h(t, x, [X^{s, \xi, (m)}_t])(.) & - h(t, x, [X^{s, \xi', (m)}_t])(.) \\
& = \int_0^1\int_{\mathbb{R}^d} \frac{\delta h}{\delta m}(t, x, \Theta^{(m)}_{t, \lambda})(.)(z') (p_m(\mu, s, t, z') - p_{m}(\mu', s, t, z')) \, dz' \, d\lambda
\end{align*}

\noindent so that, from similar arguments as those used to derive (A.49) of Lemma A.2 in \cite{chaudruraynal:frikha}, we get that for any $\alpha \in [0,\eta]$ and any $\beta \in [\alpha, 1]$
\begin{equation}
\label{holder:property:h:delta:mes}
|h(x) - h(z)| \leq K^{++} (|z-x|^{\eta-\alpha} \wedge 1) \frac{ W_2(\mu, \mu')^{\beta}}{(t-s)^{\frac{\beta-\alpha}{2}}}.
\end{equation}

Since $W_2(\mu, \mu') \leq (t-s)^{1/2}$, the above estimate remains valid for any $\beta \in [0,1]$. We now consider the first term $T^{1}_{i, j}$ which can be decomposed as the sum of $T^{1,1}_{i, j}$ and $T^{1,2}_{i, j}$ as written above. From \eqref{bound:derivative:heat:kernel} and \eqref{holder:property:h:delta:mes} with $\alpha=0$ and $\alpha=\eta$, we deduce that for any $\beta \in [0,1]$
$$
| T^{1,1}_{i, j} | \leq K^{++} W_2(\mu, \mu')^{\beta}\left\{ \frac{|z-x|^\eta\wedge 1}{(t-s)^{1+\frac{\beta}{2}}} \wedge \frac{1}{(t-s)^{1+\frac{\beta-\eta}{2}}} \right\}.
$$

In order to deal with $T^{1, 2}_{i, j}$, we use a centering argument. Namely, we write
\begin{align*}
 \Big| T^{1,2}_{i, j} \Big| & = \Big| \int_{(\mathbb{R}^d)^2} \Big\{\frac{\delta^2 a_{i, j}}{\delta m^2}(t, x, [X^{s, \xi' , (m)}_t])(z', z'') - \frac{\delta^2 a_{i, j}}{\delta m^2}(t, z, [X^{s, \xi' , (m)}_t])(z', z'') \\
 & \quad - \Big(\frac{\delta^2 a_{i, j}}{\delta m^2}(t, x, [X^{s, \xi' , (m)}_t])(z', v') - \frac{\delta^2 a_{i, j}}{\delta m^2}(t, z, [X^{s, \xi' , (m)}_t])(z', v')\Big) \Big\}\\
& \quad \quad  \Big[ \partial_{x} p_m(\mu, s, t, v, z')\otimes \partial_{x} p_m(\mu, s, t, v',z'') - \partial_{x} p_m(\mu', s, t, v, z')\otimes \partial_{x} p_m(\mu', s, t, v',z'')  \Big]\, dz'\, dz''  \Big|\\
& \leq K^{+} \frac{W_2(\mu, \mu')^\beta}{(t-s)^{1+\frac{\beta}{2}}} \int_{(\mathbb{R}^d)^2} (|z-x|^\eta \wedge |z''-v'|^\eta\wedge 1) g(c(t-s), z'-v) \, g(c(t-s), z''-v') \, dz' \, dz''\\
& \leq K^{+} W_2(\mu, \mu')^\beta\left\{ \frac{|z-x|^\eta\wedge 1}{(t-s)^{1+\frac{\beta}{2}}} \wedge \frac{1}{(t-s)^{1+\frac{\beta-\eta}{2}}} \right\}
\end{align*}
\noindent for any $\beta \in [0,1]$, where we used  the uniform $\eta$-H\"older regularity of $[\delta^2 a_{i, j}/\delta m^2](t, ., m)(z',.)$, \eqref{regularity:measure:estimate:v1:v2:v3:decoupling:mckean}, \eqref{bound:derivative:heat:kernel} and eventually the space time inequality \eqref{space:time:inequality}. Gathering the above estimates, we thus conclude that $T^{1}_{i, j}$ is bounded by the first term appearing on the right-hand side of \eqref{diff:lions:deriv:a:mu:mua}.

The other terms $T^{\ell}_{i, j}$, $\ell=1, \cdots, 6$, can be dealt in a similar manner so we will be short and omit some technical details. We use \eqref{holder:property:h:delta:mes} with $\alpha=0$, \eqref{bound:derivative:heat:kernel}, \eqref{first:second:estimate:induction:decoupling:mckean}, the boundedness and uniform $\eta$-H\"older regularity of $[\delta^2 a_{i, j}/\delta m^2](t,.,m)(z', z'')$, \eqref{regularity:measure:estimate:v1:v2:v3:decoupling:mckean}, \eqref{regularity:measure:estimate:v1:v2:decoupling:mckean} and \eqref{equicontinuity:second:third:estimate:decoupling:mckean}. Hence, for any $\beta \in [0,\eta)$, we obtain
$$
 \Big| T^{2}_{i, j} \Big| \leq K^{++}_\beta W_2(\mu, \mu')^{\beta} \frac{|z-x|^\eta\wedge 1}{(t-s)^{1+\frac{\beta-\eta}{2}}}
$$
\noindent for any $\beta \in [0,1]$. By symmetry, the two terms $T^{3}_{i, j}$ and $T^{4}_{i, j}$ can be handled by similar arguments. Namely, we use \eqref{holder:property:h:delta:mes} with $\alpha=0$, \eqref{cross:deriv:mes:space:induction:decoupling:mckean}, \eqref{sensitivity:mes:deriv:cross:space:mes:pm} and the uniform $\eta$-H\"older regularity of $[\delta a_{i, j}/\delta m](t,.,m)(z')$ so that for any $\beta \in [0,\eta)$
$$
\Big| T^{3}_{i, j} \Big| + \Big| T^{4}_{i, j}\Big| \leq K^{++}_\beta W_2(\mu, \mu')^{\beta} \frac{|z-x|^\eta\wedge 1}{(t-s)^{1+\frac{\beta-\eta}{2}}}.
$$

Similarly, using \eqref{holder:property:h:delta:mes} with $\alpha=0$, \eqref{bound:derivative:heat:kernel}, \eqref{first:second:estimate:induction:decoupling:mckean}, \eqref{regularity:measure:estimate:v1:v2:v3:decoupling:mckean}, \eqref{regularity:measure:estimate:v1:v2:decoupling:mckean}, \eqref{equicontinuity:second:third:estimate:decoupling:mckean} and the uniform $\eta$-H\"older regularity of $[\delta^2 a_{i, j}/\delta m^2](t,.,m)(z', z'')$. For any $\beta \in [0,1]$, we get
$$
\Big| T^{5}_{i, j} \Big| + \Big| T^{6}_{i, j} \Big|   \leq  K^{++}_\beta W_2(\mu, \mu')^{\beta} \frac{|z-x|^\eta\wedge 1}{(t-s)^{1+\frac{\beta-\eta}{2}}}.
$$

 We now deal with the last term, namely, $T_{i, j}^{7}$. We decompose this term as the sum of the three terms $T_{i, j}^{7, 1}$, $T_{i, j}^{7, 2}$ and $T_{i, j}^{7, 3}$ as written above. The first and the third terms are handled using  \eqref{holder:property:h:delta:mes} with $\alpha=0$, \eqref{second:deriv:mes:induction:decoupling:mckean}, \eqref{second:deriv:mes:reg:space:deriv:arg:estimate:induction:decoupling:mckean2} and the uniform $\eta$-H\"older regularity of $[\delta a_{i, j}/\delta m](t, ., m)(z')$ so that for any $\beta \in [0,\eta)$
 $$
 |T^{7, 1}_{i, j}| + |T^{7, 3}_{i, j}| \leq K^{++}_\beta  W_2(\mu, \mu')^{\beta} \frac{|z-x|^\eta\wedge 1}{(t-s)^{1+\frac{\beta-\eta}{2}}}.
 $$
 
For the second one, it follows from the uniform $\eta$-H\"older regularity of $[\delta a_{i, j}/\delta m](t, ., m)(.)$ that
\begin{align*}
| T^{7, 2}_{i, j} |&  \leq K^{+} \int_{(\mathbb{R}^d)^2} (|z-x|^\eta \wedge |z'- x'|^\eta \wedge 1) |\partial_\mu^2 p_m(\mu, s, t, x', z')(\gv) - \partial_\mu^2 p_m(\mu', s, t, x', z')(\gv)| \  \mu(dx')\, dz' \\
& \leq K^{+} (|z-x|^{\eta} \wedge 1)\int_{(\mathbb{R}^d)^2} | \partial_\mu^2 p_m(\mu, s, t, x', z')(\gv) -  \partial_\mu^2 p_m(\mu', s, t, x', z')(\gv) | \, \mu(dx') \, dz' \notag\\
&\quad\quad\quad\quad\quad \wedge \int_{(\mathbb{R}^d)^2} (|z'-x'|^{\eta} \wedge 1)  | \partial_\mu^2 p_m(\mu, s, t, x', z')(\gv) -  \partial_\mu^2 p_m(\mu', s, t, x', z')(\gv) | \,  \mu(dx')\, dz'.
\end{align*}

Collecting the above estimates allows to conclude the proof of \eqref{diff:lions:deriv:a:mu:mua}.\\

\noindent \emph{Step 4: proof of the estimate \eqref{diff:mes:L:deriv:parametrix:kernel:pmp1}.}\\

We start from the decomposition of $\partial^2_\mu \mH_{m+1}(\mu, s, r ,t ,x ,z)(\gv)$ given by \eqref{derivsec:mu:parametrix:kernel:mp1} and investigate the regularity of each term with respect to the variable $\mu$.\\
 
 \noindent (i) \emph{Regularity of the map $\mu \mapsto \big(\partial_\mu {\rm I}(\mu, s)(\gv)+ \partial_\mu {\rm II}(\mu, s)(\gv)\big)\widehat{p}_{m+1}(\mu, s, r, t, x, z)$.}\\
 
 First, it follows from the identity \eqref{second:diff-mat-mes}, \eqref{difference:a:mes:var:diff:time} (with $r>s$), \eqref{diff:v:cross:deriv:diff:and:deriv:coeff},  \eqref{diff:mes:second:deriv:mes:a:b}, \eqref{deriv:mes:a:m} and \eqref{recursive:bound:deriv:a:or:b} combined with \eqref{second:deriv:mes:induction:decoupling:mckean} (recalling that $\mathscr{C}^{1, 0}_{m}(C^+, t-s) \leq K^{+} := \lim_{m\rightarrow \infty} \mathscr{C}^{1, 0}_{m}(C^+, t-s) < \infty$) that for any $\beta \in [0,\eta)$
 \begin{align*}
 \Big| \partial^2_\mu & \Big[H^{i}_1\left(\int_r^{t} a(r',z, [X^{s, \xi, (m)}_{r'}]) dr', z-x\right)\Big](\gv) -  \partial^2_\mu  \Big[H^{i}_1\left(\int_r^{t} a(r',z, [X^{s, \xi, (m)}_{r'}]) dr', z-x\right)\Big]_{\mu=\mu'}(\gv) \Big| \\
 & \leq K^{++}_\beta \frac{|z-x|}{t-r}\left\{  \frac{W_2(\mu, \mu')^\beta}{(r-s)^{1+\frac{\beta-\eta}{2}}}  \right. \\
 & \quad \left. +  \frac{1}{t-r} \int_r^t \int_{(\mathbb{R}^d)^2} (|y'-x'|^\eta \wedge 1) |\partial^2_\mu p_m(\mu, s, r', x', y')(\gv) - \partial^2_\mu p_m(\mu', s, r', x', y')(\gv)|  \, \mu(dx') \, dy' \, dr' \right\}
 \end{align*}
 \noindent and
  \begin{align*}
 \Big| \partial^2_\mu & \Big[H^{i, j}_2\left(\int_r^{t} a(r',z, [X^{s, \xi, (m)}_{r'}]) dr', z-x\right)\Big](\gv) -  \partial^2_\mu  \Big[H^{i, j}_2\left(\int_r^{t} a(r',z, [X^{s, \xi, (m)}_{r'}]) dr', z-x\right)\Big]_{\mu=\mu'}(\gv) \Big| \\
 & \leq K^{++}_\beta \left\{\frac{|z-x|^2}{(t-r)^2} + \frac{1}{t-r}\right\} \left\{  \frac{W_2(\mu, \mu')^\beta}{(r-s)^{1+\frac{\beta-\eta}{2}}}  \right. \\
 & \quad \left. +  \frac{1}{t-r} \int_r^t \int_{(\mathbb{R}^d)^2} (|y'-x'|^\eta \wedge 1) |\partial^2_\mu p_m(\mu, s, r', x', y')(\gv) - \partial^2_\mu p_m(\mu', s, r', x', y')(\gv)|  \, \mu(dx')\, dy' \, dr'\right\}.
 \end{align*}
 
 Then, using the aforementioned estimates together with the two above estimates, \eqref{diff:mes:L:deriv:diff:diff:coeff:holder:reg}, \eqref{diff:lions:deriv:a:mu:mua}, the uniform boundedness of $b_i$, the uniform $\eta$-H\"older regularity of $a_{i, j}(t,.,m)$ and the space time inequality \eqref{space:time:inequality}, we obtain that for any $\beta \in [0,\eta)$
 \begin{align*}
 \Big| [\partial_\mu & {\rm I}(\mu, s)(\gv) - \partial_\mu {\rm I}(\mu', s)(\gv)] \, \widehat{p}_{m+1}(\mu, s, r, t, x, z)  \Big| \\
 & \leq K^{++}_\beta \left\{  \frac{W_2(\mu, \mu')^\beta}{(t-r)^{\frac12}(r-s)^{1+\frac{\beta-\eta}{2}}} \right.   \\
 & \quad \left. + \frac{1}{(t-r)^{\frac12}} \int_{(\mathbb{R}^d)^2} (|y'-x'|^\eta \wedge 1) |\partial^2_\mu p_m(\mu, s, r, x', y')(\gv) - \partial^2_\mu p_m(\mu', s, r, x', y')(\gv)|  \,  \mu(dx') \, dy' \right. \\
 & \quad \left. +  \frac{1}{(t-r)^{\frac32}} \int_r^t \int_{(\mathbb{R}^d)^2} (|y'-x'|^\eta \wedge 1) |\partial^2_\mu p_m(\mu, s, r', x', y')(\gv) - \partial^2_\mu p_m(\mu', s, r', x', y')(\gv)|  \, \mu(dx')\, dy'  \, dr'\right\}\\
 & \quad \quad g(c(t-r), z-x),
 \end{align*}
  \begin{align*}
 \Big| [\partial_\mu & {\rm II}(\mu, s)(\gv) -  \partial_\mu {\rm II}(\mu', s)(\gv)] \widehat{p}_{m+1}(\mu, s, r, t, x, z) \Big| \\
 & \leq K^{++}_\beta \left\{ W_2(\mu,\mu')^\beta \Bigg(\frac{1}{(t-r)^{1-\frac{\eta}{2}}(r-s)^{1+\frac{\beta}{2}}} \wedge \frac{1}{(t-r)(r-s)^{1+\frac{\beta-\eta}{2}}}\Bigg) \right. \\
 &\quad \left. + \frac{1}{(t-r)^{1-\frac{\eta}{2}}} \int_{(\mathbb{R}^d)^2}  | \partial_\mu^2 p_m(\mu, s, r, x', y')(\gv) -  \partial_\mu^2 p_m(\mu', s, r, x', y')(\gv) |  \,  \mu(dx') \, dy'  \right.   \\
& \quad \quad \left.  \wedge \, \frac{1}{t-r} \int_{(\mathbb{R}^d)^2} (|y'-x'|^\eta \wedge 1)  | \partial_\mu^2 p_m(\mu, s, r, x', y')(\gv) -  \partial_\mu^2 p_m(\mu', s, r, x', y')(\gv) |  \,  \mu(dx') \, dy'  \right. \\
 & \quad \left. +  \frac{1}{(t-r)^{2-\frac{\eta}{2}}} \int_r^t \int_{(\mathbb{R}^d)^2} (|y'-x'|^\eta \wedge 1) |\partial^2_\mu p_m(\mu, s, r', x', y')(\gv) - \partial^2_\mu p_m(\mu', s, r', x', y')(\gv)|  \, \mu(dx') \, dy' \, dr' \right\}\\
 & \quad \quad g(c(t-r), z-x)
 \end{align*}
  \noindent and, using \eqref{sec:mes:deriv:parametrix:kernel:s:r:t:with:beta:partii} and \eqref{sec:mes:deriv:parametrix:kernel:s:r:t:with:beta:partiii} (combined with \eqref{second:deriv:mes:induction:decoupling:mckean} and the space time inequality \eqref{space:time:inequality}) the latter being used both with $\beta=0$ and $\beta=1$, replacing therein the standard Gaussian estimate on $\widehat{p}_{m+1}$ by the estimate \eqref{gaussian:bound:diff:mes:hat:pm:same:time} on $\widehat{p}_{m+1}(\mu, s, r, t, x, z)-\widehat{p}_{m+1}(\mu', s, r, t, x, z)$, we get 
 \begin{align*}
 \Big|  (\partial_\mu & {\rm I}(\mu', s)(\gv)+  \partial_\mu  {\rm II}(\mu', s)(\gv)) (\widehat{p}_{m+1}(\mu, s, r, t, x, z)-\widehat{p}_{m+1}(\mu', s, r, t, x, z)) \Big|\\
 & \leq K^+ W_2(\mu,\mu')^\beta \Bigg(\frac{1}{(t-r)^{1-\frac{\eta}{2}}(r-s)^{1+\frac{\beta}{2}}} \wedge \frac{1}{(t-r)(r-s)^{1+\frac{\beta-\eta}{2}}}\Bigg) \, g(c(t-r), z-x).
 \end{align*}
  
Hence the difference $\big(\partial_\mu {\rm I}(\mu, s)(\gv)+ \partial_\mu {\rm II}(\mu, s)(\gv)\big)\widehat{p}_{m+1}(\mu, s, r, t, x, z) - \big(\partial_\mu {\rm I}(\mu', s)(\gv)+ \partial_\mu {\rm II}(\mu', s)(\gv)\big)\widehat{p}_{m+1}(\mu', s, r, t, x, z)$ is bounded by the right-hand side of \eqref{diff:mes:L:deriv:parametrix:kernel:pmp1} by gathering the three previous estimates.\\
 
 \noindent (ii) \emph{Regularity of the maps $\mu \mapsto \big({\rm I}(\mu, s)(v)+  {\rm II}(\mu, s)(v)\big) \otimes \partial_\mu \widehat{p}_{m+1}(\mu, s, r, t, x, z)(v'), \,  \partial_\mu\widehat{p}_{m+1}(\mu, s, r, t, x, z)(v) \otimes \partial_\mu {\rm III }(\mu, s)(v')$.}\\
Note that these two terms only involve first order L-derivative so that they can be handled using the regularity results established in \cite{chaudruraynal:frikha}. To be more specific, if $W_2(\mu, \mu') > (r-s)^{1/2}$, one simply uses \eqref{estimate:I:II:partial:mes:p:hat:mp1} with $\beta=1$ to conclude that
\begin{align*}
\Big| \big({\rm I}(\mu, s)(v)+  {\rm II}(\mu, s)(v)\big) & \otimes \partial_\mu \widehat{p}_{m+1}(\mu, s, r, t, x, z)(v') \\
& - \big({\rm I}(\mu', s)(v)+  {\rm II}(\mu', s)(v)\big) \otimes \partial_\mu \widehat{p}_{m+1}(\mu', s, r, t, x, z)(v')\Big|\\
& \leq K \frac{W_2(\mu, \mu')^\beta}{(t-r)^{1-\frac{\eta}{2}}(r-s)^{1+\frac{\beta-\eta}{2}}}.
\end{align*}

If $W_2(\mu, \mu') \leq (r-s)^{1/2}$, one uses the Lipschitz regularity of the maps $\mu \mapsto {\rm I}(\mu, s), \, {\rm II}(\mu, s)$ provided by the estimates \eqref{sec:mes:deriv:parametrix:kernel:s:r:t:with:beta:partii}, \eqref{sec:mes:deriv:parametrix:kernel:s:r:t:with:beta:partiii} with $\beta=1$ combined with \eqref{second:deriv:mes:induction:decoupling:mckean}, in which we replace the Gaussian estimate on $\widehat{p}_{m+1}$ by the estimate \eqref{deriv:v:deriv:mes:pm:hat} with $n=0$. Hence, we get
\begin{align*}
\Big| [{\rm I}(\mu, s)(v) - {\rm I}(\mu', s)(v) & + {\rm II}(\mu, s)(v) - {\rm II}(\mu', s)(v)] \otimes \partial_\mu \widehat{p}_{m+1}(\mu, s, r, t, x, z)(v') \Big| \\
& \leq K^{+}  \frac{W_2(\mu, \mu')}{(t-r)^{1-\frac{\eta}{2}}(r-s)^{\frac{3-\eta}{2}}} \, g(c(t-r), z-x) \\
& \leq K^{+}  \frac{W_2(\mu, \mu')^\beta}{(t-r)^{1-\frac{\eta}{2}}(r-s)^{1+\frac{\beta-\eta}{2}}} \, g(c(t-r), z-x).
\end{align*}

Then, one uses \eqref{estimate:I:II:partial:mes:p:hat:mp1} with $\beta=1$ replacing therein the estimate on $\partial_\mu \widehat{p}_{m+1}$ by \eqref{diff:L:deriv:pm:mu:mup} with $r>s$. We thus obtain
\begin{align*}
 \Big| [{\rm I}(\mu', s)(v) + {\rm II}(\mu', s)(v)] & \otimes [ \partial_\mu \widehat{p}_{m+1}(\mu, s, r, t, x, z)(v')  -\partial_\mu \widehat{p}_{m+1}(\mu', s, r, t, x, z)(v')] \Big| \\
  & \leq K^{+}_\beta  \frac{W_2(\mu, \mu')^\beta}{(t-r)^{1-\frac{\eta}{2}}(r-s)^{1+\frac{\beta-\eta}{2}}} \, g(c(t-r), z-x). 
\end{align*}

Gathering the above estimates, we get that $\big|\big[ {\rm I}(\mu, s)(v) + {\rm II}(\mu, s)(v)\big]  \otimes \partial_\mu \widehat{p}_{m+1}(\mu, s, r, t, x, z)(v')  -  \big[ {\rm I}(\mu', s)(v) + {\rm II}(\mu', s)(v)\big]  \otimes \partial_\mu \widehat{p}_{m+1}(\mu', s, r, t, x, z)(v') \big|$ is bounded by the first term appearing on the right-hand side of \eqref{diff:mes:L:deriv:parametrix:kernel:pmp1}.

From the symmetry identity \eqref{identity:partial:mes:pmp1:convolution:partial:mes:parametrix:kernel}, the same conclusion holds for $\big|\partial_\mu \widehat{p}_{m+1}(\mu, s, r, t, x, z)(v)  \otimes \partial_\mu {\rm III }(\mu, s)(v') - \partial_\mu\widehat{p}_{m+1}(\mu', s, r, t, x, z)(v) \otimes \partial_\mu {\rm III }(\mu', s)(v')\big|$.\\
%
 
\noindent (iii) \emph{Regularity of the maps $\mu \mapsto {\rm III }(\mu, s)\, \partial_\mu^2\widehat{p}_{m+1}(\mu, s, r, t, x, z)(\gv)$.}\\

We remark that $\partial_\mu {\rm III}(\mu, s)(v) = {\rm I}(\mu, s)(v) + {\rm II}(\mu, s)(v)$ so that using the uniform estimate \eqref{estimate:I:II:partial:mes:p:hat:mp1} with $\beta=1$ in which we replace the Gaussian estimate on $\partial_\mu \widehat{p}_{m+1}(\mu, s, r, t, x, z)(v')$ by \eqref{second:deriv:mes:induction:decoupling:mckean} recalling that $\mathscr{C}^{1,0}_m(C^+, t-s)\leq K^+:=\lim_{m\rightarrow \infty} \mathscr{C}^{1,0}_m(C^+, t-s)$, we obtain
\begin{align*}
\Big| \Big({\rm III }(\mu, s) -{\rm III}(\mu', s) \Big)  \partial_\mu^2\widehat{p}_{m+1}(\mu, s, r, t, x, z)(\gv)\Big| & \leq K^+ \frac{W_2(\mu, \mu')}{(t-r)^{1-\frac{\eta}{2}}(r-s)^{\frac{3-\eta}{2}}} \, g(c(t-r), z-x) \\
& \leq K^+  \frac{W_2(\mu, \mu')^\beta}{(t-r)^{1-\frac{\eta}{2}}(r-s)^{1+\frac{\beta-\eta}{2}}} \, g(c(t-r), z-x)
\end{align*}
\noindent if $W_2(\mu, \mu') \leq (r-s)^{1/2}$. Assuming now that $W_2(\mu, \mu') \geq (r-s)^{1/2}$, we directly use \eqref{sec:mes:deriv:parametrix:kernel:s:r:t:with:beta:partiv} combined with \eqref{second:deriv:mes:induction:decoupling:mckean} so that we get the previous estimate.

%
Now, from the uniform boundedness of $b_i$ and the uniform $\eta$-H\"older regularity of $a_{i, j}(t, ., m)$, \eqref{diff:lions:deriv:pm:mu:mup} and the space time inequality \eqref{space:time:inequality}
\begin{align*}
\Big| {\rm III}(\mu', s) & \Big(\partial_\mu^2\widehat{p}_{m+1}(\mu, s, r, t, x, z)(\gv) - \partial_\mu^2\widehat{p}_{m+1}(\mu', s, r, t, x, z)(\gv) \Big) \Big| \\
& \leq K^{++}_\beta  \left\{ \frac{W_2(\mu, \mu')^\beta}{(t-r)^{1-\frac{\eta}{2}}(r-s)^{1+\frac{\beta-\eta}{2}}} \right. \\ 
& \quad \left. +\frac{1}{(t-r)^{2-\frac{\eta}{2}}}   \int_r^t \int_{(\mathbb{R}^d)^2} (|y'-x'|^{\eta} \wedge1) | \partial_\mu^2 p_m(\mu, s, r', x', y')(\gv) -  \partial_\mu^2 p_m(\mu', s, r', x', y')(\gv) | \,  \mu(dx') \, dy'  \, dr' \right\} \\
& \quad \quad \times g(c(t-r), z-x).
\end{align*}

Collecting the above estimates concludes the proof of \eqref{diff:mes:L:deriv:parametrix:kernel:pmp1}.

\subsection{Proof of Lemma \ref{lem:reg:time:deriv:sec:mes:coeff:parametrix:density:kernel}.\\}\label{proof:lem:reg:time:deriv:sec:mes:coeff:parametrix:density:kernel}

\emph{Step 1: proof of the estimate \eqref{diff:time:second:deriv:mes:a:b}.}\\

We first observe that if $|s_1-s_2| \geq t-s_1\vee s_2$ then the result directly follows from \eqref{recursive:bound:deriv:a:or:b} combined with \eqref{second:deriv:mes:induction:decoupling:mckean} and the space time inequality \eqref{space:time:inequality}, recalling that $\mathscr{C}^{1,0}_m(C^+, t-s) \leq K^+:= \lim_{m\rightarrow \infty} \mathscr{C}^{1,0}_m(C^+, t-s)< \infty$. We thus assume that $|s_1-s_2| \leq t-s_1\vee s_2$ for the rest of the proof. As in the proof of \eqref{diff:mes:second:deriv:mes:a:b}, we make use of the identity \eqref{full:expression:second:deriv} applied to the map $m \mapsto b_i(t, x, m)$ so that the difference $\partial^2_\mu [b_{i}(t, x, [X^{s_1, \xi, (m)}_{t}])](\gv)  - \partial^2_\mu [b_{i}(t, x, [X^{s_2, \xi, (m)}_{t}])](\gv)$ writes as the sum of the terms $\delta b^{\ell}_i(s_1, s_2)$, $\ell=1, \cdots, 7 $, defined by
{\small
\begin{align*}
\delta b^{1}_i(s_1, s_2) & = \int_{(\mathbb{R}^d)^2} \Big[\frac{\delta^2 b_i}{\delta m^2}(t, x, [X^{s_1, \xi, (m)}_{t}])(z, z') - \frac{\delta^2 b_i}{\delta m^2}(t, x, [X^{s_2, \xi, (m)}_t])(z, z')\Big] \\
& \quad  \quad \partial_x p_m(\mu, s_1, t, v, z)\otimes \partial_{x} p_m(\mu, s_1, t, v',z') \, dz\, dz' \\
& + \int_{(\mathbb{R}^d)^2} \Big[\frac{\delta^2 b_i}{\delta m^2}(t, x, [X^{s_2, \xi, (m)}_{t}])(z, z') - \frac{\delta^2 b_i}{\delta m^2}(t, x, [X^{s_2, \xi, (m)}_t])(z, v')\Big] \\
& \quad \Big[\partial_x p_m(\mu, s_1, t, v, z)\otimes \partial_{x} p_m(\mu, s_1, t, v',z')  - \partial_x p_m(\mu, s_2, t, v, z)\otimes \partial_{x} p_m(\mu, s_2, t, v',z')  \Big] \, dz\, dz',
\end{align*}
\begin{align*}
\delta b^{2}_i(s_1, s_2) & = \int_{(\mathbb{R}^d)^3} \Big[ \frac{\delta^2 b_i}{\delta m^2}(t, x, [X^{s_1, \xi , (m)}_t])(z, z') - \frac{\delta^2 b_i}{\delta m^2}(t, x, [X^{s_2, \xi , (m)}_t])(z, z') \Big] \\
& \quad  \partial_x  p_m(\mu, s_1, t, v, z) \otimes \partial_\mu p_m(\mu, s_1, t, x', z')(v')  \, dz\, dz'\, \mu(dx')\\
& +  \int_{(\mathbb{R}^d)^3}  \frac{\delta^2 b_i}{\delta m^2}(t, x, [X^{s_2, \xi' , (m)}_t])(z, z') \\
& \quad  \Big[\partial_x  p_m(\mu, s_1, t, v, z) \otimes \partial_\mu p_m(\mu, s_1, t, x', z')(v') - \partial_x  p_m(\mu, s_2, t, v, z) \otimes \partial_\mu p_m(\mu, s_2, t, x', z')(v')\Big] \, dz\, dz'\, \mu(dx'),
\end{align*}

\begin{align*}
\delta b^{3}_i(s_1, s_2) & = \int_{\mathbb{R}^d}  \Big[\frac{\delta b_i}{\delta m}(t, x, [X^{s_1, \xi, (m)}_t])(z) - \frac{\delta b_i}{\delta m}(t, x, [X^{s_2, \xi, (m)}_t])(z) \Big]\, \partial_\mu\partial_x p_m(\mu, s_2, t, v ,z)(v') \,  dz \\
& \quad +  \int_{\mathbb{R}^d}  \frac{\delta b_i}{\delta m}(t, x, [X^{s_2, \xi, (m)}_t])(z) \, \Big[ \partial_\mu\partial_x p_m(\mu, s_1, t, v ,z)(v') -  \partial_\mu\partial_x p_m(\mu, s_2, t, v ,z)(v')  \Big] \, dz,
\end{align*}
\begin{align*}
\delta b^{4}_i(s_1, s_2) & =  \int_{\mathbb{R}^d}  \Big[ \frac{\delta b_i}{\delta m} (t, x, [X_t^{s_1, \xi , (m)}])(z) -  \frac{\delta b_i}{\delta m} (t, x, [X_t^{s_2, \xi , (m)}])(z)\Big]   \partial_{x}\partial_\mu p_m(\mu, s_1, t, v', z)(v) \, dz\\
& \quad + \int_{\mathbb{R}^d}   \frac{\delta b_i}{\delta m} (t, x, [X_t^{s_2, \xi , (m)}])(z)  \Big[ \partial_{x}\partial_\mu p_m(\mu, s_1, t, v', z)(v) - \partial_{x}\partial_\mu p_m(\mu, s_2, t, v', z)(v) \Big]\, dz,
\end{align*}
\begin{align*}
\delta b^{5}_i(s_1, s_2) & =  \int_{(\mathbb{R}^d)^3} \Big[ \frac{\delta^2 b_i}{\delta m^2} (t, x, [X_t^{s_1, \xi , (m)}])(z, z') - \frac{\delta^2 b_i}{\delta m^2} (t, x, [X_t^{s_2, \xi , (m)}])(z, z')  \Big] \\
& \quad \quad  \partial_\mu p_m(\mu, s_1, t, x', z)(v)\otimes  \partial_{x}p_m(\mu, s_1, t, v',z') \, dz \, dz' \, \mu(dx')\\
& \quad  + \int_{(\mathbb{R}^d)^3} \frac{\delta^2 b_i}{\delta m^2} (t, x, [X_t^{s_2, \xi , (m)}])(z, z')  \\
& \quad \quad \Big[ \partial_\mu p_m(\mu, s_1, t, x', z)(v)\otimes  \partial_{x}p_m(\mu, s_1, t, v',z') - \partial_\mu p_m(\mu, s_2, t, x', z)(v)\otimes  \partial_{x}p_m(\mu, s_2, t, v',z') \Big]\, dz \, dz' \, \mu(dx'), 
\end{align*}
\begin{align*}
\delta b^{6}_i(s_1, s_2) & =  \int_{(\mathbb{R}^d)^4} \Big[ \frac{\delta^2 b_i}{\delta m^2} (t, x, [X_t^{s_1, \xi , (m)}])(z, z') - \frac{\delta^2 b_i}{\delta m^2} (t, x, [X_t^{s_2, \xi , (m)}])(z, z')\Big] \\
& \quad  \partial_\mu p_m(\mu, s_1, t, x', z)(v)\otimes  \partial_{\mu}p_m(\mu, s_1, t, x'',z')(v') \, dz \, dz' \, \mu(dx') \mu(dx'')\\
& + \int_{(\mathbb{R}^d)^4} \frac{\delta^2 b_i}{\delta m^2} (t, x, [X_t^{s_2, \xi , (m)}])(z, z')  \Big[ \partial_\mu p_m(\mu, s_1, t, x', z)(v)\otimes  \partial_{\mu}p_m(\mu, s_1, t, x'',z')(v') \\
& \quad -   \partial_\mu p_m(\mu, s_2, t, x', z)(v)\otimes  \partial_{\mu}p_m(\mu, s_2, t, x'',z')(v')\Big]\, dz \, dz' \, \mu(dx') \, \mu(dx''),
\end{align*}
\begin{align*}
\delta b^{7}_i(s_1, s_2) & =  \int_{(\mathbb{R}^d)^2}  \Big[ \frac{\delta b_i}{\delta m} (t, x, [X_t^{s_1, \xi , (m)}])(z) -  \frac{\delta b_i}{\delta m} (t, x, [X_t^{s_2, \xi , (m)}])(z) \Big] \, \partial_\mu^2 p_m(\mu, s_1, t, x', z)(\gv)  \,  dz\, \mu(dx') \\
& + \int_{(\mathbb{R}^d)^2}   \Big[ \frac{\delta b_i}{\delta m} (t, x, [X_t^{s_2, \xi , (m)}])(z) -  \frac{\delta b_i}{\delta m} (t, x, [X_t^{s_2, \xi , (m)}])(x') \Big] \\
& \quad \Big[  \partial_\mu^2 p_m(\mu, s_1, t, x', z)(\gv) - \partial_\mu^2 p_m(\mu, s_2, t, x', z)(\gv) \Big]\,  \mu(dx') \,  dz.
\end{align*}
}

We recall from the proof of the estimate (A.62) of Lemma A.3 in \cite{chaudruraynal:frikha} that if $h: \mathcal{P}(\mathbb{R}^d) \mapsto \mathbb{R}$ has a bounded and continuous linear functional derivative such that $[\delta h/\delta m](m)(.)$ is uniformly $\eta$-H\"older continuous then for any $\beta \in [0, 1]$ one has 
$$
|h([X^{s_1, \xi, (m)}_t]) - h([X_t^{s_2, \xi, (m)}])| \leq K |s_1-s_2|^\beta \left\{ \frac{1}{(t-s_1)^{\beta-\frac{\eta}{2}}} + \frac{1}{(t-s_2)^{\beta-\frac{\eta}{2}}} \right\}.
$$

The above estimate is established in \cite{chaudruraynal:frikha} for the map $m\mapsto a_{i, j}(t, x, m)$ but the argument works, \emph{mutatis mutandis}, in this general form. In particular, under our current assumptions, for any $\beta \in [0, 1]$, it holds
\begin{align}
\Big| & \frac{\delta b_i}{\delta m}(t, x, [X^{s_1, \xi, (m)}_{t}])(z) - \frac{\delta b_i}{\delta m}(t, x, [X^{s_2, \xi, (m)}_t])(z) \Big| \label{delta:time:linear:functional:deriv:coeff}\\
& \quad + \Big|\frac{\delta^2 b_i}{\delta m^2}(t, x, [X^{s_1, \xi, (m)}_{t}])(z, z') - \frac{\delta^2 b_i}{\delta m^2}(t, x, [X^{s_2, \xi, (m)}_t])(z, z') \Big| \notag \\
& \quad \quad \leq K^{++} |s_1-s_2|^\beta \left\{ \frac{1}{(t-s_1)^{\beta-\frac{\eta}{2}}} + \frac{1}{(t-s_2)^{\beta-\frac{\eta}{2}}} \right\}. \notag
\end{align}
 
The above estimate together with \eqref{bound:derivative:heat:kernel}, \eqref{regularity:time:estimate:v1:v2:v3:decoupling:mckean}, the uniform $\eta$-H\"older regularity of $[\delta^2 b_i/\delta m^2](t, x, m)(z, .)$, the space time inequality \eqref{space:time:inequality} and the fact that $|s_1-s_2|\leq t-s_1\vee s_2$ give that for any $\beta \in [0,\eta)$
 $$
 |\delta b^{1}_i(s_1, s_2)| \leq K^{++}_\beta \frac{|s_1-s_2|^{\frac{\beta}{2}}}{(t-s_1\vee s_2)^{1+\frac{\beta-\eta}{2}}}.
 $$
 
 Similarly, from \eqref{delta:time:linear:functional:deriv:coeff}, \eqref{bound:derivative:heat:kernel}, \eqref{first:second:estimate:induction:decoupling:mckean} with $n=0$, \eqref{regularity:time:estimate:v1:v2:v3:decoupling:mckean} with $n=1$, \eqref{regularity:time:estimate:v1:v2:decoupling:mckean} with $n=0$, we deduce
 $$
 |\delta b^{2}_i(s_1, s_2)| \leq K^{++}_\beta \frac{|s_1-s_2|^{\frac{\beta}{2}}}{(t-s_1\vee s_2)^{1+\frac{\beta-\eta}{2}}}.
 $$
 
 The two next terms, namely, $\delta b^{3}_i(s_1, s_2)$ and $\delta b^{4}_i(s_1, s_2)$ can be handled in a similar manner. From \eqref{delta:time:linear:functional:deriv:coeff}, \eqref{cross:deriv:mes:space:induction:decoupling:mckean} and \eqref{sensitivity:time:deriv:cross:space:mes:pm}, we obtain
 $$
  |\delta b^{3}_i(s_1, s_2)| +  |\delta b^{4}_i(s_1, s_2)| \leq K^{++}_\beta \frac{|s_1-s_2|^{\frac{\beta}{2}}}{(t-s_1\vee s_2)^{1+\frac{\beta-\eta}{2}}}.
 $$
 
 We deal with $\delta b^5_i(s_1, s_2)$ using \eqref{delta:time:linear:functional:deriv:coeff}, \eqref{bound:derivative:heat:kernel}, \eqref{first:second:estimate:induction:decoupling:mckean}, \eqref{regularity:time:estimate:v1:v2:v3:decoupling:mckean} and \eqref{regularity:time:estimate:v1:v2:decoupling:mckean}. We obtain
$$
   |\delta b^{5}_i(s_1, s_2)| \leq K^{++} \frac{|s_1-s_2|^{\frac{\beta}{2}}}{(t-s_1\vee s_2)^{1+\frac{\beta-\eta}{2}}}.
$$
 
We handle $\delta b_i^{6}(s_1, s_2)$ in a similar manner. It follows from \eqref{delta:time:linear:functional:deriv:coeff}, \eqref{first:second:estimate:induction:decoupling:mckean}, \eqref{regularity:time:estimate:v1:v2:decoupling:mckean} with $n=0$ that
$$
|\delta b_i^{6}(s_1, s_2)| \leq K^{++} \frac{|s_1-s_2|^{\frac{\beta}{2}}}{(t-s_1\vee s_2)^{1+\frac{\beta}{2}- \eta}}.
$$

Finally, from \eqref{delta:time:linear:functional:deriv:coeff}, \eqref{second:deriv:mes:induction:decoupling:mckean}, the boundedness and uniform $\eta$-H\"older regularity of $[\delta b_i/\delta m](t, x, m)(.)$, we deduce that the last term satisfies
\begin{align*}
|\delta b_i^{7}(s_1, s_2)| & \leq K^{++} \left\{ \frac{|s_1-s_2|^{\frac{\beta}{2}}}{(t-s_1\vee s_2)^{1+\frac{\beta}{2}-\eta}}  \right. \\
& \left. \quad +   \int_{(\mathbb{R}^d)^2} (|z-x'|^\eta \wedge 1) | \partial_\mu^2 p_m(\mu, s_1, t, x', z)(\gv) - \partial_\mu^2 p_m(\mu, s_2, t, x', z)(\gv)| \,  \mu(dx') \,dz \right\}.
\end{align*}

 We conclude the proof of \eqref{diff:time:second:deriv:mes:a:b} for $\Big|  \partial^2_\mu [b_{i}(t, x, [X^{s_1, \xi, (m)}_{t}])](\gv)  - \partial^2_\mu [ b_{i}(t, x, [X^{s, \xi, (m)}_{t}])](\gv) \Big|$ by collecting the above estimates. The proof of the upper-bound for $\Big|  \partial^2_\mu [a_{i, j}(t, x, [X^{s_1, \xi, (m)}_{t}])](\gv)  - \partial^2_\mu [ a_{i, j}(t, x, [X^{s, \xi, (m)}_{t}])](\gv) \Big|$ follows from completely analogous arguments and is thus omitted.\\

\noindent \emph{Step 2: proof of the estimate \eqref{diff:time:second:L:deriv:p:hat}.}\\

We proceed as in the proof of \eqref{diff:lions:deriv:pm:mu:mup}. Namely, we start from the identity \eqref{representation:formula:second:order:deriv:mes:p:hat} and quantify the regularity with respect to the variable $s$ of each term. We first note that from the mean-value theorem, for any $\beta \in [0,1]$, it holds
\begin{align}
\Big| & \left(\int_r^t a(r', y, [X^{s_1, \xi, (m)}_{r'}])\, dr'\right)^{-1}   - \left(\int_r^t a(r', y, [X^{s_2, \xi, (m)}_{r'}])\, dr'\right)^{-1} \Big|   \label{diff:time:inverse:diff:matrix} \\
& \qquad \leq  \frac{K}{(t-r)^2} \int_r^t \max_{i, j} |a_{i, j}(r', y, [X^{s_1, \xi, (m)}_{r'}]) - a_{i, j}(r', y, [X^{s_2, \xi, (m)}_{r'}])| \, dr' \notag \\
&\qquad \leq K \frac{|s_1-s_2|^\beta}{(t-r)^{2}} \int_r^t \frac{1}{(r' -s_1\vee s_2)^{\beta}}\, dr' \notag \\
&\qquad \leq  K \frac{|s_1-s_2|^\beta}{(t-r)(r-s_1\vee s_2)^{\beta}}\notag
\end{align}
\noindent where we used the estimate \eqref{diff:time:drift:diff:coefficients} inside the time integral if $|s_1-s_2| \leq r'-s_1\vee s_2$ or the boundedness of $a_{i, j}$ otherwise. From the identity \eqref{first:order:diff:mat:mes}, the previous estimate, \eqref{deriv:mes:a:m} and \eqref{diff:time:cross:deriv:diff:and:deriv:coeff} both with $n=0$, we get that for any $\beta \in [0,1]$
\begin{align}
\Big| \partial_\mu\Big[\left(\int_r^t a(r', y, [X^{s_1, \xi, (m)}_{r'}])\, dr'\right)^{-1}& \Big](v')  - \partial_\mu\Big[\left(\int_r^t a(r', y, [X^{s_2, \xi, (m)}_{r'}])\, dr'\right)^{-1}\Big](v') \Big| \label{diff:L:deriv:invers:diff:coeff}\\
& \leq K^{+}_{\beta} \frac{|s_1-s_2|^\beta}{(t-r)(r-s_1\vee s_2)^{\frac{1-\eta}{2}+\beta}}. \notag
\end{align}

Then, we use the identity \eqref{representation:formula:second:order:deriv:mes:p:hat} together with the two above estimates, \eqref{diff:time:second:deriv:mes:a:b}, \eqref{recursive:bound:deriv:a:or:b} (combined with \eqref{second:deriv:mes:induction:decoupling:mckean} and the space time inequality \eqref{space:time:inequality}), \eqref{deriv:mes:a:m}, \eqref{gaussian:bound:diff:time:hat:pm:different:time}, \eqref{diff:time:L:deriv:p:hat:different:time} and \eqref{diff:time:cross:deriv:diff:and:deriv:coeff} with $n=0$. After some standard computations that we omit, for any $\beta \in [0,\eta)$, we obtain
\begin{align*}
& | \partial^2_\mu \widehat{p}^y_{m+1}(\mu, s_1, r, t, x, z)(\gv) - \partial^2_\mu \widehat{p}^y_{m+1}(\mu, s_2, r, t, x, z)(\gv)  | \notag\\
& \quad \leq  K^{++}_\beta  \Bigg\{ \frac{|s_1-s_2|^{\frac{\beta}{2}}}{(r-s_1\vee s_2)^{1+\frac{\beta-\eta}{2}}} \notag \\
& \qquad \qquad  + \frac{1}{t-r}   \int_r^t \int_{(\mathbb{R}^d)^2} (|y'-x'|^\eta \wedge 1)|\partial^2_\mu p_m(\mu, s_1, r', x', y')(\gv) - \partial^2_\mu p_m(\mu, s_2, r', x', y')(\gv)|   \,  \mu(dx') \, dy' \, dr' \Bigg\}  \notag\\
& \qquad \qquad \qquad \times  g(c(t-r), z-x)\notag 
\end{align*}
\noindent which completes the proof of the estimate \eqref{diff:time:second:L:deriv:p:hat}.\\

\noindent \emph{Step 3: proof of the estimate \eqref{diff:time:second:L:deriv:p:hat:same:time}.}\\

We first remark that if $|s_1-s_2| \geq t-s_1\vee s_2$ then the announced estimate directly follows from \eqref{bound:second:deriv:mes:hat:p} (with $r=s$) combined with \eqref{second:deriv:mes:induction:decoupling:mckean} recalling that $\mathscr{C}^{n, 0}_{m}(C^+, t-s)\leq K^+ :=\mathscr{C}^{n, 0}_{\infty}(C^+, t-s)=\lim_{m\uparrow \infty} \mathscr{C}^{n,0}_{m}(C^+, t-s) < \infty$. From now on and for the rest of the proof, we assume that $|s_1 - s_2| \leq  t-s_1\vee s_2 $. The proof being quite similar to the previous one, we will be short on some arguments and will omit some technical details. Observe that from the identity \eqref{first:order:diff:mat:mes}, the estimates \eqref{diff:time:inverse:a}, \eqref{deriv:mes:a:m} and \eqref{diff:time:cross:deriv:diff:and:deriv:coeff} with $n=0$, after some standard computations that we omit, we get that for any $\beta \in [0,(1+\eta)/2)$
\begin{align*}
\Big| \partial_\mu\Big[\left(\int_{s_1}^t a(r', y, [X^{s_1, \xi, (m)}_{r'}])\, dr'\right)^{-1}& \Big](v')  - \partial_\mu\Big[\left(\int_{s_2}^t a(r', y, [X^{s_2, \xi, (m)}_{r'}])\, dr'\right)^{-1}\Big](v') \Big| \\
& \leq K^{+}_{\beta} \frac{|s_1-s_2|^\beta}{(t-s_1\vee s_2)^{\frac{3-\eta}{2}+\beta}}.
\end{align*}

Now, we again use the identity \eqref{representation:formula:second:order:deriv:mes:p:hat} (with $r=s$) together with the above estimate, \eqref{diff:time:inverse:a}, \eqref{diff:time:second:deriv:mes:a:b}, \eqref{recursive:bound:deriv:a:or:b} (combined with \eqref{second:deriv:mes:induction:decoupling:mckean} and the space time inequality \eqref{space:time:inequality}), \eqref{deriv:mes:a:m}, \eqref{diff:time:L:deriv:p:hat:same:time}, \eqref{diff:time:cross:deriv:diff:and:deriv:coeff} with $n=0$ and the fact that $|s_1-s_2| \leq t-s_1\vee s_2$. After some standard computations that we omit, for any $\beta \in [0,\eta)$, we obtain
\begin{align*}
& | \partial^2_\mu \widehat{p}_{m+1}(\mu, s_1, t, x, z)(\gv) - \partial^2_\mu \widehat{p}_{m+1}(\mu, s_2, t, x, z)(\gv)  | \notag\\
& \quad \leq  K^{++}_\beta  \Bigg\{ \frac{|s_1-s_2|^{\frac{\beta}{2}}}{(t-s_1\vee s_2)^{1+\frac{\beta-\eta}{2}}}  g(c(t-s_1\vee s_2), z-x) +\frac{|s_1-s_2|^{\frac{\beta}{2}}}{(t-s_1\wedge s_2)^{1+\frac{\beta-\eta}{2}}}  g(c(t-s_1\wedge s_2), z-x) \notag \\
& \qquad \qquad  + \frac{1}{t-s_1\vee s_2}   \int_{s_1\vee s_2}^t \int_{(\mathbb{R}^d)^2} (|y'-x'|^\eta \wedge 1)|\partial^2_\mu p_m(\mu, s_1, r', x', y')(\gv) - \partial^2_\mu p_m(\mu, s_2, r', x', y')(\gv)|   \,\mu(dx')\,  dy' \, dr'  \notag\\
& \qquad \qquad \qquad \times  g(c(t-s_1\vee s_2), z-x) \Bigg\}.\notag 
\end{align*}

The proof of the estimate \eqref{diff:time:second:L:deriv:p:hat:same:time} is now complete.\\

\noindent \emph{Step 4: proof of the estimate \eqref{diff:time:second:deriv:mes::diff:space:a:mu:mua}.}\\

We proceed as in the proof of \eqref{diff:lions:deriv:a:mu:mua}. We first remark that if $|s_1-s_2|>  t-s_1\vee s_2 $ then the result directly follows from \eqref{recursive:bound:second:order:deriv:mes:holder:reg:space:a} with $\beta'=0$ and $\beta'=1$ combined with \eqref{second:deriv:mes:induction:decoupling:mckean} and the space time inequality \eqref{space:time:inequality}. We thus assume that $|s_1-s_2| \leq  t-s_1\vee s_2 $ for the rest of the proof. We now apply the identity \eqref{full:expression:second:deriv} to the map $m \mapsto a_{i, j}(t, x, m)$ so that the difference $\partial^2_\mu [a_{i, j}(t, x, [X^{s_1, \xi, (m)}_{t}])  - a_{i, j}(t, z, [X^{s_1, \xi, (m)}_{t}])](\gv)- \partial^2_\mu[ a_{i, j}(t, x, [X^{s_2, \xi, (m)}_{t}])  -  a_{i, j}(t, z, [X^{s_2, \xi, (m)}_{t}])](\gv)$ can be decomposed as the sum of the following terms
 {\small
\begin{align*}
T^{1}_{i, j}   & :=  \int_{(\mathbb{R}^d)^2} \Big\{\frac{\delta^2 a_{i, j}}{\delta m^2}(t, x, [X^{s_1, \xi , (m)}_t])(z', z'') - \frac{\delta^2 a_{i, j}}{\delta m^2}(t, z, [X^{s_1, \xi , (m)}_t])(z', z'') \\
& \quad  - \Big(\frac{\delta^2 a_{i, j}}{\delta m^2}(t, x, [X^{s_2, \xi , (m)}_t])(z', z'') - \frac{\delta^2 a_{i, j}}{\delta m^2}(t, z, [X^{s_2, \xi , (m)}_t])(z', z'') \Big) \Big\}\\
& \quad \quad  \partial_{x} p_m(\mu, s_1, t, v, z')\otimes \partial_{x} p_m(\mu, s_1, t, v',z'')\, dz'\, dz'' \\
& \quad +  \int_{(\mathbb{R}^d)^2} \Big\{\frac{\delta^2 a_{i, j}}{\delta m^2}(t, x, [X^{s_2, \xi , (m)}_t])(z', z'') - \frac{\delta^2 a_{i, j}}{\delta m^2}(t, z, [X^{s_2, \xi , (m)}_t])(z', z'')  \Big\}\\
& \quad \quad  \Big[ \partial_{x} p_m(\mu, s_1, t, v, z')\otimes \partial_{x} p_m(\mu, s_1, t, v',z'') - \partial_{x} p_m(\mu, s_2, t, v, z')\otimes \partial_{x} p_m(\mu, s_2, t, v',z'')  \Big]\, dz'\, dz'',
\end{align*}
\begin{align*}
T^{2}_{i, j}  & :=  \int_{(\mathbb{R}^d)^3}  \Big\{ \frac{\delta^2 a_{i, j}}{\delta m^2}(t, x, [X^{s_1, \xi , (m)}_t])(z', z'') - \frac{\delta^2 a_{i, j}}{\delta m^2}(t, z, [X^{s_1, \xi , (m)}_t])(z', z'') \\
& \quad - \Big( \frac{\delta^2 a_{i, j}}{\delta m^2}(t, x, [X^{s_2, \xi , (m)}_t])(z', z'') - \frac{\delta^2 a_{i, j}}{\delta m^2}(t, z, [X^{s_2, \xi , (m)}_t])(z', z'') \Big)\Big\}\\
& \quad \quad  \partial_{x}  p_m(\mu, s_1, t, v, z') \otimes \partial_\mu p_m(\mu, s_1, t, x', z'')(v') \, dz'\, dz''\, \mu(dx')\\
& \quad  +   \int_{(\mathbb{R}^d)^3} \Big\{ \frac{\delta^2 a_{i, j}}{\delta m^2}(t, x, [X^{s_2, \xi , (m)}_t])(z', z'') - \frac{\delta^2 a_{i, j}}{\delta m^2}(t, z, [X^{s_2, \xi , (m)}_t])(z', z'')  \Big\}\\
& \quad  \quad   \Big[\partial_{x}  p_m(\mu, s_1, t, v, z') \otimes \partial_\mu p_m(\mu, s_1, t, x', z'')(v')  - \partial_{x}  p_m(\mu, s_2, t, v, z') \otimes \partial_\mu p_m(\mu, s_2, t, x', z'')(v') \Big]  \, dz'\, dz''\, \mu(dx'),\\
T^{3}_{i, j} & := \int_{\mathbb{R}^d}  \Big\{\frac{\delta a_{i, j}}{\delta m}(t, x, [X^{s_1, \xi , (m)}_t])(z') - \frac{\delta a_{i, j}}{\delta m}(t, z, [X^{s_1, \xi , (m)}_t])(z') \\
& \quad - \Big(\frac{\delta a_{i, j}}{\delta m}(t, x, [X^{s_2, \xi , (m)}_t])(z') - \frac{\delta a_{i, j}}{\delta m}(t, z, [X^{s_2, \xi , (m)}_t])(z')\Big) \Big\} \partial_\mu[\partial_x p_m(\mu, s_1, t, v ,z')](v') \, dz' , \\
& \quad +  \int_{\mathbb{R}^d}  \left\{ \frac{\delta a_{i, j}}{\delta m}(t, x, [X^{s_2, \xi , (m)}_t])(z') - \frac{\delta a_{i, j}}{\delta m}(t, z, [X^{s_2, \xi , (m)}_t])(z') \right\} \\
& \quad \Big[ \partial_\mu[\partial_x p_m(\mu, s_1, t, v ,z')](v') - \partial_\mu[\partial_x p_m(\mu, s_2, t, v ,z')](v')  \Big] \, dz' , \\
T^{4}_{i, j} & := \int_{\mathbb{R}^d}  \Big\{\frac{\delta a_{i, j}}{\delta m}(t, x, [X^{s_1, \xi , (m)}_t])(z') - \frac{\delta a_{i, j}}{\delta m}(t, z, [X^{s_1, \xi , (m)}_t])(z') \\
& \quad - \Big(\frac{\delta a_{i, j}}{\delta m}(t, x, [X^{s_2, \xi , (m)}_t])(z') - \frac{\delta a_{i, j}}{\delta m}(t, z, [X^{s_2, \xi , (m)}_t])(z')\Big) \Big\} \partial_x[\partial_\mu p_m(\mu, s_1, t, v' ,z')(v)] \, dz' , \\
& \quad +  \int_{\mathbb{R}^d}  \left\{ \frac{\delta a_{i, j}}{\delta m}(t, x, [X^{s_2, \xi , (m)}_t])(z') - \frac{\delta a_{i, j}}{\delta m}(t, z, [X^{s_2, \xi , (m)}_t])(z') \right\} \\
& \quad \Big[ \partial_x[\partial_\mu p_m(\mu, s_1, t, v' ,z')(v)] -\partial_x[\partial_\mu p_m(\mu, s_2, t, v' ,z')(v)]  \Big] \, dz' , \\
T^{5}_{i, j} & :=  \int_{(\mathbb{R}^d)^3} \Big\{ \frac{\delta^2 a_{i, j}}{\delta m^2}(t, x, [X^{s_1, \xi , (m)}_t])(z', z'') - \frac{\delta^2 a_{i, j}}{\delta m^2}(t, z, [X^{s_1, \xi , (m)}_t])(z', z'') \\
& \quad - \Big(\frac{\delta^2 a_{i, j}}{\delta m^2}(t, x, [X^{s_2, \xi , (m)}_t])(z', z'') - \frac{\delta^2 a_{i, j}}{\delta m^2}(t, z, [X^{s_2, \xi , (m)}_t])(z', z'')\Big) \Big\}\\
& \quad \quad  \partial_\mu p_m(\mu, s_1, t, x', z')(v) \otimes \partial_{x}p_m(\mu, s_1, t, v',z'')\, dz'\, dz''\, \mu(dx') \\
& \quad + \int_{(\mathbb{R}^d)^3} \Big\{ \frac{\delta^2 a_{i, j}}{\delta m^2}(t, x, [X^{s_2, \xi , (m)}_t])(z', z'') - \frac{\delta^2 a_{i, j}}{\delta m^2}(t, z, [X^{s_2, \xi , (m)}_t])(z', z'') \Big\} \\
& \quad  \quad  \Big[ \partial_\mu p_m(\mu, s_1, t, x', z')(v) \otimes \partial_{x}p_m(\mu, s_1, t, v',z'') - \partial_\mu p_m(\mu, s_2, t, x', z')(v) \otimes \partial_{x}p_m(\mu, s_2, t, v',z'')\Big] \, dz'\, dz''\, \mu(dx'), \\
T^{6}_{i, j} & := \int_{(\mathbb{R}^d)^4}\Big\{\frac{\delta^2 a_{i, j}}{\delta m^2}(t, x, [X^{s_1, \xi , (m)}_t])(z', z'')-\frac{\delta^2 a_{i, j}}{\delta m^2}(t, z, [X^{s_1, \xi , (m)}_t])(z', z'') \\
& \quad - \Big(\frac{\delta^2 a_{i, j}}{\delta m^2}(t, x, [X^{s_2, \xi , (m)}_t])(z', z'')-\frac{\delta^2 a_{i, j}}{\delta m^2}(t, z, [X^{s_2, \xi , (m)}_t])(z', z'') \Big) \Big\} \\
& \quad \quad  \partial_\mu p_m(\mu, s_1, t, x', z)(v) \otimes  \partial_\mu p_m(\mu, s_1, t, x'', z')(v')\,dz\, dz'\, \mu(dx'')\, \mu(dx') \\
& \quad + \int_{(\mathbb{R}^d)^4}\Big\{\frac{\delta^2 a_{i, j}}{\delta m^2}(t, x, [X^{s_2, \xi , (m)}_t])(z', z'')-\frac{\delta^2 a_{i, j}}{\delta m^2}(t, z, [X^{s_2, \xi , (m)}_t])(z', z'') \Big\} \\
& \quad \quad  \Big[ \partial_\mu p_m(\mu, s_1, t, x', z)(v) \otimes  \partial_\mu p_m(\mu, s_1, t, x'', z')(v') - \partial_\mu p_m(\mu', s_2, t, x', z)(v) \otimes  \partial_\mu p_m(\mu, s_2, t, x'', z')(v') \Big] \,dz\, dz'\, \mu(dx'')\, \mu(dx'), \\
T^{7}_{i, j}  & := \int_{(\mathbb{R}^d)^2}  \Big\{ \frac{\delta a_{i, j}}{\delta m}(t, x, [X^{s_1, \xi , (m)}_t])(z')- \frac{\delta a_{i, j}}{\delta m}(t, z, [X^{s_1, \xi , (m)}_t])(z') \\
& \quad \quad - \Big(  \frac{\delta a_{i, j}}{\delta m}(t, x, [X^{s_2, \xi , (m)}_t])(z')- \frac{\delta a_{i, j}}{\delta m}(t, z, [X^{s_2, \xi , (m)}_t])(z')\Big)\Big\}\, \partial_\mu^2 p_m(\mu, s_1, t, x', z')(\gv)\,  dz'\, \mu(dx')\\
& \quad + \int_{(\mathbb{R}^d)^2}  \Big\{ \frac{\delta a_{i, j}}{\delta m}(t, x, [X^{s_2, \xi , (m)}_t])(z')- \frac{\delta a_{i, j}}{\delta m}(t, z, [X^{s_2, \xi , (m)}_t])(z')  \\
& \quad \quad - \Big( \frac{\delta a_{i, j}}{\delta m}(t, x, [X^{s_2, \xi , (m)}_t])(x')- \frac{\delta a_{i, j}}{\delta m}(t, z, [X^{s_2, \xi , (m)}_t])(x') \Big)\Big\} \\
& \quad \quad \Big[\partial_\mu^2 p_m(\mu, s_1, t, x', z')(\gv) - \partial_\mu^2 p_m(\mu, s_2, t, x', z')(\gv)\Big]\,  dz'\, \mu(dx').
\end{align*} 
 }

 As previously done, we now quantify the contribution of each term in the above decomposition. Letting $\Theta^{(m)}_{\lambda, t}:= (1-\lambda)[X^{s_1, \xi, (m)}_t] + \lambda [X^{s_2, \xi, (m)}_t]$ and $h \in \left\{ \delta a_{i, j}/\delta m,  \delta^2 a_{i, j}/\delta m^2 \right\}$, it holds
 \begin{align*}
h(t, x, [X^{s_1, \xi, (m)}_t])(.) & - h(t, x, [X^{s_2, \xi, (m)}_t])(.) \\
& = \int_0^1\int_{\mathbb{R}^d} \frac{\delta h}{\delta m}(t, x, \Theta^{(m)}_{t, \lambda})(.)(z') (p_m(\mu, s_1, t, z') - p_{m}(\mu, s_2, t, z')) \, dz' \, d\lambda
\end{align*}

\noindent so that, from similar arguments as those used in order to derive (A.80) of Lemma A.5 in \cite{chaudruraynal:frikha}, namely, using the uniform $\eta$-H\"older regularity of the map $(x, z') \mapsto [\delta h/\delta m](t, x, m)(.)(z')$ together with \eqref{regularity:time:estimate:v1:v2:v3:decoupling:mckean} with $n=0$, we get that for any $\alpha \in [0,\eta]$ and any $\beta \in [0, 1]$
\begin{align}
| h(t, x, [X^{s_1, \xi, (m)}_t])(.)  & - h(t, x, [X^{s_2, \xi, (m)}_t])(.) - [h(t, z, [X^{s_1, \xi, (m)}_t])(.)  - h(t, z, [X^{s_2, \xi, (m)}_t])(.)]| \label{holder:property:h:delta:time} \\
& \leq K^{++}_\beta (|x-z|^\alpha \wedge 1) |s_1-s_2|^\beta \left\{\frac{1}{(t-s_1)^{\beta+\frac{\alpha-\eta}{2}}} + \frac{1}{(t-s_2)^{\beta+\frac{\alpha-\eta}{2}}}\right\}. \notag 
\end{align}
 
We now consider the first term $T^{1}_{i, j}$ which we decompose as the sum of $T^{1,1}_{i, j}$ and $T^{1,2}_{i, j}$ as written above. Then, taking $\alpha=\eta$ in \eqref{holder:property:h:delta:time} and using \eqref{bound:derivative:heat:kernel}, we get
\begin{align*}
| T^{1, 1}_{i, j} | \leq K^{++}_\beta (|x-z|^\eta \wedge 1) \frac{|s_1-s_2|^\beta}{(t-s_1\vee s_2)^{1+\beta}}. 
\end{align*}

We then take $\alpha=0$ in \eqref{holder:property:h:delta:time} so that using again \eqref{bound:derivative:heat:kernel} and the fact that $|s_1-s_2|\leq t-s_1\vee s_2$
\begin{align*}
| T^{1, 1}_{i, j} | \leq K^{++}_\beta  \frac{|s_1-s_2|^\beta}{(t-s_1\vee s_2)^{1+\beta-\frac{\eta}{2}}} 
\end{align*}
\noindent for any $\beta \in [0,1]$. 

Combining the two above estimates, we conclude that for any $\beta \in [0,1]$
$$
| T^{1, 1}_{i, j} | \leq K^{++}_\beta |s_1-s_2|^\beta \Bigg(\frac{|x-z|^{\eta} \wedge 1}{(t-s_1\vee s_2)^{1+ \beta}} \wedge \frac{1}{(t-s_1\vee s_2)^{1+\beta-\frac{\eta}{2}}}\Bigg).
$$

We deal with $T^{1, 2}_{i, j}$ by using a centering argument. Namely, we write
\begin{align*}
 \Big| T^{1,2}_{i, j} \Big| & = \Big|  \int_{(\mathbb{R}^d)^2} \Big[ \Big\{\frac{\delta^2 a_{i, j}}{\delta m^2}(t, x, [X^{s_2, \xi , (m)}_t])(z', z'') - \frac{\delta^2 a_{i, j}}{\delta m^2}(t, z, [X^{s_2, \xi , (m)}_t])(z', z'')  \Big\}  \\
 & \quad  -  \Big\{\frac{\delta^2 a_{i, j}}{\delta m^2}(t, x, [X^{s_2, \xi , (m)}_t])(z', v') - \frac{\delta^2 a_{i, j}}{\delta m^2}(t, z, [X^{s_2, \xi , (m)}_t])(z', v')  \Big\}  \Big]\\
& \quad \quad  \Big[ \partial_{x} p_m(\mu, s_1, t, v, z')\otimes \partial_{x} p_m(\mu, s_1, t, v',z'') - \partial_{x} p_m(\mu, s_2, t, v, z')\otimes \partial_{x} p_m(\mu, s_2, t, v',z'')  \Big]\, dz'\, dz'' \Big| \\
& \leq K^{+}_\beta |s_1-s_2|^\beta \left\{ \frac{|x-z|^\eta\wedge 1}{(t-s_1\vee s_2)^{1+\beta}} \wedge \frac{1}{(t-s_1\vee s_2)^{1-\frac{\eta}{2}+\beta}} \right\}
\end{align*}
\noindent for any $\beta \in [0,1]$, where we used  the uniform $\eta$-H\"older regularity of $[\delta^2 a_{i, j}/\delta m^2](t, ., m)(z',.)$, \eqref{regularity:time:estimate:v1:v2:v3:decoupling:mckean} with $n=1$ and eventually the space time inequality \eqref{space:time:inequality} together with the fact that $|s_1-s_2|\leq t-s_1\vee s_2$. Gathering the estimates on $T^{1,1}_{i, j}$ and $T^{1,2}_{i, j}$, we conclude that $T^{1}_{i, j}$ is bounded by the first term appearing on the right-hand side of \eqref{diff:time:second:deriv:mes::diff:space:a:mu:mua}.

The other terms $T^{\ell}_{i, j}$, $\ell=2, \cdots, 6$, can be treated in the same way so we will be short and omit some technical details. We first use \eqref{holder:property:h:delta:time} with $\alpha=\eta$, \eqref{bound:derivative:heat:kernel}, \eqref{first:second:estimate:induction:decoupling:mckean} to deal with $T^{2,1}_{i, j}$ and then use the boundedness and uniform $\eta$-H\"older regularity of $[\delta^2 a_{i, j}/\delta m^2](t,.,m)(z', z'')$ together with \eqref{regularity:time:estimate:v1:v2:v3:decoupling:mckean}, \eqref{regularity:time:estimate:v1:v2:decoupling:mckean}, the space time inequality \eqref{space:time:inequality} and the fact that $|s_1-s_2|\leq t-s_1\vee s_2$ to handle $T^{2, 2}_{i ,j}$. We thus obtain
$$
 \Big| T^{2}_{i, j} \Big| \leq K^{++}_\beta |s_1-s_2|^\beta \left\{ \frac{|x-z|^\eta\wedge 1}{(t-s_1\vee s_2)^{1+\beta - \frac{\eta}{2}}} \wedge \frac{1}{(t-s_1\vee s_2)^{1+\beta-\frac{\eta}{2}}} \right\}
$$
\noindent for any $\beta \in [0,(1+\eta)/2)$. By symmetry, the two terms $T^{3}_{i, j}$ and $T^{4}_{i, j}$ can be handled by similar arguments. Namely, we use \eqref{holder:property:h:delta:time} with $\alpha=\eta$, \eqref{cross:deriv:mes:space:induction:decoupling:mckean}, \eqref{sensitivity:time:deriv:cross:space:mes:pm} and the boundedness and uniform $\eta$-H\"older regularity of $[\delta a_{i, j}/\delta m](t,.,m)(z')$ so that for any $\beta \in [0,\eta/2)$
$$
\Big| T^{3}_{i, j} \Big| + \Big| T^{4}_{i, j}\Big| \leq K^{++}_\beta |s_1-s_2|^\beta \frac{|x-z|^\eta\wedge 1}{(t-s_1\vee s_2)^{1+\beta-\frac{\eta}{2}}}.
$$

Similarly, using \eqref{holder:property:h:delta:time} with $\alpha=\eta$, \eqref{bound:derivative:heat:kernel}, \eqref{first:second:estimate:induction:decoupling:mckean}, \eqref{regularity:time:estimate:v1:v2:v3:decoupling:mckean}, \eqref{regularity:time:estimate:v1:v2:decoupling:mckean} and the boundedness and uniform $\eta$-H\"older regularity of of $[\delta^2 a_{i, j}/\delta m^2](t,.,m)(z', z'')$. For any $\beta \in [0,(1+\eta)/2)$, we get
$$
\Big| T^{5}_{i, j} \Big| + \Big| T^{6}_{i, j} \Big|   \leq  K^{++}_\beta |s_1-s_2|^\beta \frac{|x-z|^\eta\wedge 1}{(t-s_1\vee s_2)^{1+\beta-\frac{\eta}{2}}}.
$$

 We deal with the last term by decomposing it as the sum of two terms $T_{i, j}^{7, 1}$ and $T_{i, j}^{7, 2}$ as written above. The first is handled using \eqref{holder:property:h:delta:time} with $\alpha=\eta$ and \eqref{second:deriv:mes:induction:decoupling:mckean} so that for any $\beta \in [0,1]$
 $$
 |T^{7, 1}_{i, j}| \leq K^{++}_\beta \frac{|x-z|^\eta\wedge 1}{(t-s_1\vee s_2)^{1+\beta-\frac{\eta}{2}}}.
 $$
 
For the second term, it follows from the boundedness and uniform $\eta$-H\"older regularity of $[\delta a_{i, j}/\delta m](t, ., m)(.)$ that
\begin{align*}
| T^{7, 2}_{i, j} |&  \leq K^{+} \int_{(\mathbb{R}^d)^2} (|z-x|^\eta \wedge |z'- x'|^\eta \wedge 1) |\partial_\mu^2 p_m(\mu, s_1, t, x', z')(\gv) - \partial_\mu^2 p_m(\mu, s_2, t, x', z')(\gv)| \,  \mu(dx') \, dz' \\
& \leq K^{+} (|z-x|^{\eta} \wedge 1)\int_{(\mathbb{R}^d)^2} | \partial_\mu^2 p_m(\mu, s_1, t, x', z')(\gv) -  \partial_\mu^2 p_m(\mu, s_2, t, x', z')(\gv) | \, \mu(dx') \,  dz'\notag\\
&\quad\quad\quad\quad\quad \wedge \int_{(\mathbb{R}^d)^2} (|z'-x'|^{\eta} \wedge 1)  | \partial_\mu^2 p_m(\mu, s_1, t, x', z')(\gv) -  \partial_\mu^2 p_m(\mu, s_2, t, x', z')(\gv) | \, \mu(dx') \, dz'.
\end{align*}

Collecting the above estimates allows to conclude the proof of \eqref{diff:time:second:deriv:mes::diff:space:a:mu:mua}.\\

\noindent \emph{Step 5: proof of the estimate \eqref{diff:time:second:L:deriv:parametrix:kernel:pmp1}.}\\

The proof is quite similar to the one of \eqref{diff:mes:L:deriv:parametrix:kernel:pmp1}, namely, we start from the decomposition of $\partial^2_\mu \mH_{m+1}(\mu, s, r ,t ,x ,z)(\gv)$ given by \eqref{derivsec:mu:parametrix:kernel:mp1} and investigate the regularity of each term with respect to the variable $s$.\\
 
 \noindent (i) \emph{Regularity of the map $s \mapsto \big(\partial_\mu {\rm I}(\mu, s)(\gv)+ \partial_\mu {\rm II}(\mu, s)(\gv)\big)\widehat{p}_{m+1}(\mu, s, r, t, x, z)$.}\\
 
 Again, we first remark that the identity \eqref{second:diff-mat-mes} combined with \eqref{diff:time:inverse:diff:matrix}, \eqref{diff:time:cross:deriv:diff:and:deriv:coeff}, \eqref{diff:time:second:deriv:mes:a:b}, \eqref{deriv:mes:a:m} and \eqref{recursive:bound:deriv:a:or:b} combined with \eqref{second:deriv:mes:induction:decoupling:mckean} (recalling that $\mathscr{C}^{1, 0}_{m}(C^+, t-s) \leq K^{+} := \lim_{m\rightarrow \infty} \mathscr{C}^{1, 0}_{m}(C^+, t-s) < \infty$) imply that for any $\beta \in [0,\eta)$
 \begin{align*}
 \Big| \partial^2_\mu & \Big[H^{i}_1\left(\int_r^{t} a(r',z, [X^{s_1, \xi, (m)}_{r'}]) dr', z-x\right)\Big](\gv) -  \partial^2_\mu  \Big[H^{i}_1\left(\int_r^{t} a(r', z, [X^{s_2, \xi, (m)}_{r'}]) dr', z-x\right)\Big](\gv) \Big| \\
 & \leq K^{++}_\beta \frac{|z-x|}{t-r}\left\{  \frac{|s_1-s_2|^{\frac{\beta}{2}}}{(r-s_1\vee s_2)^{1+\frac{\beta-\eta}{2}}}  \right. \\
 & \quad \left. +  \frac{1}{t-r} \int_r^t \int_{(\mathbb{R}^d)^2} (|y'-x'|^\eta \wedge 1) |\partial^2_\mu p_m(\mu, s_1, r', x', y')(\gv) - \partial^2_\mu p_m(\mu, s_2, r', x', y')(\gv)|  \,  \mu(dx') \, dy' \, dr' \right\}
 \end{align*}
 \noindent and
  \begin{align*}
 \Big| \partial^2_\mu & \Big[H^{i, j}_2\left(\int_r^{t} a(r',z, [X^{s_1, \xi, (m)}_{r'}]) dr', z-x\right)\Big](\gv) -  \partial^2_\mu  \Big[H^{i, j}_2\left(\int_r^{t} a(r',z, [X^{s_2, \xi, (m)}_{r'}]) dr', z-x\right)\Big](\gv) \Big| \\
 & \leq K^{++}_\beta \left\{\frac{|z-x|^2}{(t-r)^2} + \frac{1}{t-r}\right\} \left\{  \frac{|s_1-s_2|^{\frac{\beta}{2}}}{(r-s_1 \vee s_2)^{1+\frac{\beta-\eta}{2}}}  \right. \\
 & \quad \left. +  \frac{1}{t-r} \int_r^t \int_{(\mathbb{R}^d)^2} (|y'-x'|^\eta \wedge 1) |\partial^2_\mu p_m(\mu, s_1, r', x', y')(\gv) - \partial^2_\mu p_m(\mu, s_2, r', x', y')(\gv)|   \, \mu(dx')\, dy' \, dr' \right\}.
 \end{align*}
 
 Then, using the two above estimates,  \eqref{diff:time:second:deriv:mes:a:b}, \eqref{diff:time:second:deriv:mes::diff:space:a:mu:mua}, \eqref{diff:time:inverse:diff:matrix}, \eqref{diff:time:cross:deriv:diff:and:deriv:coeff}, \eqref{diff:time:first:L:deriv:diff:diff:coeff:holder:reg}, \eqref{deriv:mes:a:m}, the uniform boundedness of $b_i$, the uniform $\eta$-H\"older regularity of $a_{i, j}(t,.,m)$ and the space time inequality \eqref{space:time:inequality}, we obtain that for any $\beta \in [0,\eta)$
 \begin{align*}
 \Big| [\partial_\mu & {\rm I}(\mu, s_1)(\gv)- \partial_\mu {\rm I}(\mu, s_2)(\gv)] \widehat{p}_{m+1}(\mu, s_1, r, t, x, z) \Big| \\
 & \leq K^{++}_\beta \left\{  \frac{|s_1-s_2|^{\frac{\beta}{2}}}{(t-r)^{\frac12}(r-s_1\vee s_2)^{1+\frac{\beta-\eta}{2}}} \right.   \\
 & \quad \left. + \frac{1}{(t-r)^{\frac12}} \int_{(\mathbb{R}^d)^2} (|y'-x'|^\eta \wedge 1) |\partial^2_\mu p_m(\mu, s_1, r, x', y')(\gv) - \partial^2_\mu p_m(\mu, s_2, r, x', y')(\gv)|  \, \mu(dx')\, dy'   \right. \\
 & \quad \left. +  \frac{1}{(t-r)^{\frac32}} \int_r^t \int_{(\mathbb{R}^d)^2} (|y'-x'|^\eta \wedge 1) |\partial^2_\mu p_m(\mu, s_1, r', x', y')(\gv) - \partial^2_\mu p_m(\mu, s_2, r', x', y')(\gv)|  \, \mu(dx')  \, dy' \, dr'\right\}\\
 & \quad \quad g(c(t-r), z-x),
 \end{align*}
  \begin{align*}
 \Big| [\partial_\mu & {\rm II}(\mu, s_1)(\gv) - \partial_\mu {\rm II}(\mu, s_2)(\gv)] \widehat{p}_{m+1}(\mu, s_1, r, t, x, z) \Big| \\
 & \leq K^{++}_\beta \left\{ |s_1-s_2|^{\frac{\beta}{2}} \Bigg(\frac{1}{(t-r)^{1-\frac{\eta}{2}}(r-s_1\vee s_2)^{1+\frac{\beta}{2}}} \wedge \frac{1}{(t-r)(r-s_1\vee s_2)^{1+\frac{\beta-\eta}{2}}}\Bigg) \right. \\
 &\quad \left. + \frac{1}{(t-r)^{1-\frac{\eta}{2}}} \int_{(\mathbb{R}^d)^2}  | \partial_\mu^2 p_m(\mu, s_1, r, x', y')(\gv) -  \partial_\mu^2 p_m(\mu, s_2, r, x', y')(\gv) |  \, \mu(dx') \, dy'  \right.   \\
& \quad \quad \left.  \wedge \, \frac{1}{t-r} \int_{(\mathbb{R}^d)^2} (|y'-x'|^\eta \wedge 1)  | \partial_\mu^2 p_m(\mu, s_1, r, x', y')(\gv) -  \partial_\mu^2 p_m(\mu, s_2, r, x', y')(\gv) | \,  \mu(dx')  \, dy' \right. \\
 & \quad \left. +  \frac{1}{(t-r)^{2-\frac{\eta}{2}}} \int_r^t \int_{(\mathbb{R}^d)^2} (|y'-x'|^\eta \wedge 1) |\partial^2_\mu p_m(\mu, s_1, r', x', y')(\gv) - \partial^2_\mu p_m(\mu, s_2, r', x', y')(\gv)|  \, \mu(dx') \, dy' \, dr'\right\}\\
 & \quad \quad g(c(t-r), z-x)
 \end{align*}
  \noindent and, using \eqref{sec:mes:deriv:parametrix:kernel:s:r:t:with:beta:partii} and \eqref{sec:mes:deriv:parametrix:kernel:s:r:t:with:beta:partiii} (combined with \eqref{second:deriv:mes:induction:decoupling:mckean} and the space time inequality \eqref{space:time:inequality}) the latter being used both with $\beta=0$ and $\beta=1$, replacing therein the standard Gaussian estimate on $\widehat{p}_{m+1}$ by \eqref{gaussian:bound:diff:time:hat:pm:different:time} with $n=0$ 
 \begin{align*}
 \Big|  (\partial_\mu & {\rm I}(\mu, s_2)(\gv)+  \partial_\mu  {\rm II}(\mu, s_2)(\gv)) (\widehat{p}_{m+1}(\mu, s_1, r, t, x, z)-\widehat{p}_{m+1}(\mu, s_2, r, t, x, z)) \Big|\\
 & \leq K^+_\beta |s_1-s_2|^{\frac{\beta}{2}} \Bigg(\frac{1}{(t-r)^{1-\frac{\eta}{2}}(r-s_1\vee s_2)^{1+\frac{\beta}{2}}} \wedge \frac{1}{(t-r)(r-s_1\vee s_2)^{1+\frac{\beta-\eta}{2}}}\Bigg) \, g(c(t-r), z-x).
 \end{align*}
  
 Gathering the three previous estimates, we conclude that $\big(\partial_\mu {\rm I}(\mu, s_1)(\gv)+ \partial_\mu {\rm II}(\mu, s_1)(\gv)\big)\widehat{p}_{m+1}(\mu, s_1, r, t, x, z) - \big(\partial_\mu {\rm I}(\mu, s_2)(\gv)+ \partial_\mu {\rm II}(\mu, s_2)(\gv)\big)\widehat{p}_{m+1}(\mu, s_2, r, t, x, z)$ is bounded by the right-hand side of \eqref{diff:time:second:L:deriv:parametrix:kernel:pmp1}.\\
 
 \noindent (ii) \emph{Regularity of the maps $s \mapsto \big({\rm I}(\mu, s)(v)+  {\rm II}(\mu, s)(v)\big) \otimes \partial_\mu \widehat{p}_{m+1}(\mu, s, r, t, x, z)(v'), \,  \partial_\mu\widehat{p}_{m+1}(\mu, s, r, t, x, z)(v) \otimes \partial_\mu {\rm III }(\mu, s)(v')$.}\\
Again these two terms only involve first order L-derivative so that they can be handled using the regularity results established in \cite{chaudruraynal:frikha}. To be more specific, we first use \eqref{estimate:I:II:partial:mes:p:hat:mp1} with $\beta=1$ replacing therein the estimate on $\partial_\mu \widehat{p}_{m+1}$ by \eqref{diff:time:L:deriv:p:hat:different:time} so that 
\begin{align*}
\Big| [{\rm I}(\mu, s_2)(v) + {\rm II}(\mu, s_2)(v)] & \otimes [\partial_\mu \widehat{p}_{m+1}(\mu, s_1, r, t, x, z)(v') - \partial_\mu \widehat{p}_{m+1}(\mu, s_2, r, t, x, z)(v')]\Big| \\
 & \leq K_\beta^{+} \frac{|s_1-s_2|^{\frac{\beta}{2}}}{(t-r)^{1-\frac{\eta}{2}}(r-s_1\vee s_2)^{1+\frac{\beta-\eta}{2}}} \, g(c(t-r), z-x).
\end{align*}

 From \eqref{diff:time:cross:deriv:diff:and:deriv:coeff}, \eqref{diff:time:inverse:diff:matrix}, \eqref{diff:L:deriv:invers:diff:coeff}, \eqref{diff:time:drift:diff:coefficients} if $|s_1-s_2|\leq t-s_1\vee s_2$ and the boundedness of the coefficients otherwise, \eqref{deriv:v:deriv:mes:pm:hat}, \eqref{deriv:mes:a:m}, the space time inequality \eqref{space:time:inequality} and standard computations that we omit, we deduce
 \begin{align*}
\Big| [ {\rm I}(\mu, s_1)(v) - {\rm I}(\mu, s_2)(v)] & \otimes \partial_\mu \widehat{p}_{m+1}(\mu, s_1, r, t, x, z)(v') \Big| \\
& \leq K^{+}_\beta  \frac{|s_1-s_2|^{\frac{\beta}{2}}}{(t-r)^{\frac12}(r-s_1\vee s_2)^{1+\frac{\beta}{2}-\eta}} \, g(c(t-r), z-x).
\end{align*}

From \eqref{diff:time:first:L:deriv:diff:diff:coeff:holder:reg}, \eqref{diff:time:inverse:diff:matrix}, \eqref{diff:L:deriv:invers:diff:coeff}, \eqref{recursive:bound:deriv:mes:holder:reg:a} with $n=0$, $\beta=1$ and $\beta=0$, \eqref{diff:time:with:holder:reg:space:drift:diff:coefficients} with $\alpha=\eta$,  \eqref{deriv:mes:a:m}, the uniform $\eta$-H\"older regularity of $a_{i, j}(t, ., m)$, \eqref{deriv:v:deriv:mes:pm:hat}, the space time inequality \eqref{space:time:inequality} and standard computations that we omit, we obtain
\noindent and
\begin{align*}
 \Big| [{\rm II}(\mu, s_1)(v)\big) &-  {\rm II}(\mu, s_2)(v)]  \otimes \partial_\mu \widehat{p}_{m+1}(\mu, s_1, r, t, x, z)(v') \Big| \\
& \leq K_\beta^{+} |s_1-s_2|^{\frac{\beta}{2}} \left\{ \frac{1}{(t-r)(r-s_1\vee s_2)^{1+\frac{\beta-\eta}{2}}} \wedge \frac{1}{(t-r)^{1- \frac{\eta}{2}}(r-s_1\vee s_2)^{1+\frac{\beta}{2}}} \right\} \, g(c(t-r), z-x).
\end{align*}

Gathering the three above estimates yields
\begin{align*}
 \Big| [{\rm I}(\mu, s_1)(v)\big) &+  {\rm II}(\mu, s_1)(v)]  \otimes \partial_\mu \widehat{p}_{m+1}(\mu, s_1, r, t, x, z)(v') - [{\rm I}(\mu, s_2)(v)\big) +  {\rm II}(\mu, s_2)(v)]  \otimes \partial_\mu \widehat{p}_{m+1}(\mu, s_2, r, t, x, z)(v')  \Big| \\
& \leq K_\beta^{+} |s_1-s_2|^{\frac{\beta}{2}} \left\{ \frac{1}{(t-r)(r-s_1\vee s_2)^{1+\frac{\beta-\eta}{2}}} \wedge \frac{1}{(t-r)^{1- \frac{\eta}{2}}(r-s_1\vee s_2)^{1+\frac{\beta}{2}}} \right\} \, g(c(t-r), z-x).
\end{align*}

Moreover, from the symmetry identity \eqref{identity:partial:mes:pmp1:convolution:partial:mes:parametrix:kernel} and similar arguments, we also deduce
\begin{align*}
\Big| \partial_\mu & \widehat{p}_{m+1}(\mu, s_1, r, t, x, z)(v)  \otimes \partial_\mu {\rm III }(\mu, s_1)(v') - \partial_\mu\widehat{p}_{m+1}(\mu, s_2, r, t, x, z)(v) \otimes \partial_\mu {\rm III }(\mu, s_2)(v') \Big|\\
& \leq K_\beta^{+} |s_1-s_2|^{\frac{\beta}{2}} \left\{ \frac{1}{(t-r)(r-s_1\vee s_2)^{1+\frac{\beta-\eta}{2}}} \wedge \frac{1}{(t-r)^{1- \frac{\eta}{2}}(r-s_1\vee s_2)^{1+\frac{\beta}{2}}} \right\} \, g(c(t-r), z-x).
\end{align*}

\noindent (iii) \emph{Regularity of the maps $s \mapsto {\rm III }(\mu, s)\, \partial_\mu^2\widehat{p}_{m+1}(\mu, s, r, t, x, z)(\gv)$.}\\

From \eqref{diff:time:with:holder:reg:space:drift:diff:coefficients} with $\alpha = \eta$, \eqref{diff:time:drift:diff:coefficients}, \eqref{bound:second:deriv:mes:hat:p} combined with \eqref{second:deriv:mes:induction:decoupling:mckean} and the space time inequality \eqref{space:time:inequality} recalling that $\mathscr{C}^{1,0}_m(C^+, t-s)\leq K^+:=\lim_{m\rightarrow \infty} \mathscr{C}^{1,0}_m(C^+, t-s)$, \eqref{diff:time:inverse:diff:matrix}, the uniform boundedness of $b_i$ and the uniform $\eta$-H\"older regularity of $ a_{i, j}(t, ., m)$ and again the space time inequality \eqref{space:time:inequality}, we deduce that for any $\beta \in [0,\eta)$
\begin{align*}
\Big| \Big({\rm III }(\mu, s_1) -{\rm III}(\mu, s_2) \Big)  \partial_\mu^2\widehat{p}_{m+1}(\mu, s_1, r, t, x, z)(\gv)\Big| \leq K^+  \frac{|s_1-s_2|^{\frac{\beta}{2}}}{(t-r)^{1-\frac{\eta}{2}}(r-s_1\vee s_2)^{1+\frac{\beta-\eta}{2}}} \, g(c(t-r), z-x)
\end{align*} 
  
\noindent and, again from the uniform boundedness of $b_i$ and $\eta$-H\"older regularity of $a_{i, j}(t, ., m)$, \eqref{diff:time:second:L:deriv:p:hat} and the space time inequality \eqref{space:time:inequality}
\begin{align*}
\Big| {\rm III}(\mu, s_2) & \Big(\partial_\mu^2\widehat{p}_{m+1}(\mu, s_1, r, t, x, z)(\gv) - \partial_\mu^2\widehat{p}_{m+1}(\mu, s_2, r, t, x, z)(\gv) \Big) \Big| \\
& \leq K^{++}_\beta  \left\{ \frac{|s_1-s_2|^{\frac{\beta}{2}}}{(t-r)^{1-\frac{\eta}{2}}(r-s_1\vee s_2)^{1+\frac{\beta-\eta}{2}}} \right. \\ 
& \quad \left. +\frac{1}{(t-r)^{2-\frac{\eta}{2}}}   \int_r^t \int_{(\mathbb{R}^d)^2} (|y'-x'|^{\eta} \wedge1) | \partial_\mu^2 p_m(\mu, s_1, r', x', y')(\gv) -  \partial_\mu^2 p_m(\mu, s_2, r', x', y')(\gv) | ' \, \mu'(dx') \, dy \, dr' \right\} \\
& \quad \quad \times g(c(t-r), z-x).
\end{align*}
 
 Gathering the two above estimates, we thus obtain
 \begin{align*}
\Big| {\rm III }(\mu, s_1)\, & \partial_\mu^2\widehat{p}_{m+1}(\mu, s_1, r, t, x, z)(\gv) - {\rm III }(\mu, s_2)\, \partial_\mu^2\widehat{p}_{m+1}(\mu, s_2, r, t, x, z)(\gv) \Big) \Big| \\
& \leq K^{++}_\beta  \left\{ \frac{|s_1-s_2|^{\frac{\beta}{2}}}{(t-r)^{1-\frac{\eta}{2}}(r-s_1\vee s_2)^{1+\frac{\beta-\eta}{2}}} \right. \\ 
& \quad \left. +\frac{1}{(t-r)^{2-\frac{\eta}{2}}}   \int_r^t \int_{(\mathbb{R}^d)^2} (|y'-x'|^{\eta} \wedge1) | \partial_\mu^2 p_m(\mu, s_1, r', x', y')(\gv) -  \partial_\mu^2 p_m(\mu, s_2, r', x', y')(\gv) | \, \mu'(dx') \, dy' \, dr' \right\} \\
& \quad \quad \times g(c(t-r), z-x).
\end{align*}

Collecting the above estimates concludes the proof of \eqref{diff:time:second:L:deriv:parametrix:kernel:pmp1}.

\bibliographystyle{alpha}
\bibliography{bibli}
\end{document}